\documentclass[12pt]{article}

\usepackage[margin=1.20in]{geometry}

\usepackage{amssymb,amsmath}
\usepackage{amsthm,enumitem}
\usepackage{hyperref}
\usepackage{mathrsfs}
\usepackage{textcomp}
\hypersetup{
colorlinks,%
citecolor=black,%
filecolor=black,%
linkcolor=black,%
urlcolor=black
}
\usepackage[abbrev,nobysame]{amsrefs}
\usepackage{todonotes}

\usepackage{verbatim}
\usepackage{tikz}
\usetikzlibrary{calc}
\usepackage{sectsty}

\makeatletter

\newdimen\bibspace
\makeatother

\newtheorem{Theorem}{Theorem}[section]
\newtheorem{Lemma}[Theorem]{Lemma}
\newtheorem{Proposition}[Theorem]{Proposition}
\newtheorem*{Assumption*}{Assumption (H)}
\newtheorem*{Assumption**}{Assumption (G)}

\newtheorem{ex}[Theorem]{Example}
\newtheorem{Corollary}[Theorem]{Corollary}
\newtheorem{Remark}[Theorem]{Remark}

\def\XXint#1#2#3{{\setbox0=\hbox{$#1{#2#3}{\int}$}
\vcenter{\hbox{$#2#3$}}\kern-.5\wd0}}

\newcommand{\Om}{\Omega}                
           \newcommand{\ud}{\mathrm{d}}
\newcommand{\be}{\begin{equation}}      \newcommand{\ee}{\end{equation}}

\newcommand{\T}{\mathcal{T}}

\newcommand{\A}{\mathcal{A}}
\newcommand{\C}{\mathcal{C}}
\newcommand{\Q}{\mathcal{Q}}
\newcommand{\R}{\mathbb{R}}              
\newcommand{\D}{\mathcal{D}}

\newcommand{\E}{\mathcal{E}}

\newcommand{\M}{\mathscr{M}}

\begin{document}

\title{\textbf{Extremal Alexandrov estimates: singularities, obstacles, and stability}\bigskip}

\author{\medskip  Tianling Jin,\footnote{T. Jin was partially supported by NSFC grant 12122120, and Hong Kong RGC grants GRF 16304125, GRF 16303624 and GRF 16303822.}\quad Xushan Tu, \quad Jingang Xiong\footnote{J. Xiong was partially supported by NSFC grants 12325104.}}

\date{}

\maketitle

\begin{center}
\vspace{-1cm}
\emph{\small{Dedicated to Henri Berestycki on the occasion of his 75th birthday, with \\ admiration and friendship.\medskip}}
\end{center}

\begin{abstract} 
The classical Alexandrov estimate controls the oscillation of a convex function by the mass of its associated Monge-Amp\`ere measure and yields, for two convex functions of $n$ variables with the same boundary values, a sup-norm bound with exponent $1/n$ in the measure discrepancy. We show that this exponent is not optimal in the small-discrepancy regime once one of the functions is non-degenerate in the sense of having Monge-Amp\`ere density bounded above and below by two positive constants.

We prove sharp quantitative estimates comparing two convex functions by the total variation of the difference of their Monge-Amp\`ere measures: in dimensions $n\ge 3$ the optimal dependence is quadratic in the natural mass scale, while in dimension $n=2$ the optimal dependence contains a logarithmic correction. These rates are shown to be optimal for all small discrepancies.

A key structural ingredient is a characterization of extremizers. We identify the pointwise minimizers and maximizers in the admissible class and prove that they are realized, respectively, by solutions to Monge-Amp\`ere equations with an isolated singularity and by solutions to Monge-Amp\`ere equations with a linear obstacle. This extremal description reduces the sharp estimates to a precise asymptotic analysis of these two model configurations.

Assuming further that the domain and the non-degenerate reference function are $C^{2,\alpha}$ and uniformly convex, we obtain sharp pointwise two-sided asymptotics at interior points with explicit leading constants. Finally, in dimensions $n\ge 3$ we establish a stability phenomenon: if the pointwise estimate is nearly saturated, then the measure discrepancy must concentrate near the point at the natural scale, quantifying rigidity of almost-extremal configurations.

\medskip

\noindent{\it Keywords}: Alexandrov estimates, Monge-Amp\`ere equation, obstacle problem, isolated singularity.

\medskip

\noindent {\it MSC (2020)}: Primary 35B25; Secondary 35J96, 35R35.

\end{abstract}

\tableofcontents

\section{Introduction}\label{sec:introduction}
 
The Alexandrov estimate plays a fundamental role in the theory of elliptic partial differential equations. It asserts (see, e.g., Lemma 9.2 in Gilbarg-Trudinger \cite{GT}) that for every $w \in C(\overline{\Omega})$ defined on a bounded convex domain $\Omega \subset \R^n$, 
\begin{equation}\label{eq:abp classical}
\inf_{\Omega} w \geq \inf_{\partial \Omega} w-  \omega_n^{-\frac{1}{n}} \operatorname{diam}(\Omega)  \M \Gamma_w(\{w=\Gamma_{w}\})^{\frac{1}{n}},
\end{equation}
where $\Gamma_{w}$ denotes the convex envelop of $w$,  $\M \Gamma_{w}$ is the Monge-Amp\`ere measure associated with $\Gamma_{w}$, $\operatorname{diam}(\Omega)$ is the diameter of $\Omega$, and $\omega_n$ is the volume of the unit ball in $\R^n$.

Let $u, \varphi \in C(\overline{\Omega})$ be convex and satisfy $u=\varphi$ on $\partial\Omega$. Applying \eqref{eq:abp classical} to $\pm(u - \varphi)$ yields the following Alexandrov estimate:
\begin{equation} \label{eq:abp gene 1/n}
\|u - \varphi\|_{L^{\infty}(\Omega)} \leq \omega_n^{-\frac{1}{n}} \operatorname{diam}(\Omega)   |\M u - \M \varphi|(\Omega)^{\frac{1}{n}},
\end{equation}
where $|\M u - \M \varphi|$ stands for the total variation of the signed measure $\M u - \M \varphi$. 

\subsection{Main results}

While the exponent $\frac{1}{n}$ in \eqref{eq:abp gene 1/n} is optimal as the total variation $|\M u - \M \varphi|(\Omega)\to+\infty$ (corresponding to the degenerate case $\varphi\to 0$ under scaling), we show in this paper that it is not optimal when the total variation is tending to $0^+$ as long as $\varphi$ is non-degenerate in the sense that
\begin{equation}\label{eq:varphi equation}
0<\lambda \leq \det D^2 \varphi \leq \Lambda<\infty  
\end{equation} 
 for some  constants $\lambda$ and $\Lambda$, and obtain the following improved and optimal estimates.

\begin{Theorem}\label{thm:ordera2}
Let $\Omega \subset \R^n$, $n \geq 2$, be a convex domain satisfying $B_1(0) \subset \Omega \subset B_n(0)$, and let $\varphi \in C(\overline{\Omega})$ be a convex function satisfying \eqref{eq:varphi equation} in $\Omega$. Then there exists a positive constant $C$ depending only on $n,\lambda$ and $\Lambda$ such that for every convex $u \in C(\overline{\Omega})$ with $u=\varphi$ on $\partial \Omega$, there holds
\begin{equation}\label{eq:abp gene 2/n log}
\left\| u - \varphi \right\|_{L^{\infty}(\Omega)} \leq
\begin{cases}
Ca^2 (|\log a|^2+1) & \text{if } n = 2, \\
Ca^2 (|\log a|+1) & \text{if } n \geq 3,
\end{cases} 
\end{equation}
where  
\begin{equation}\label{defn:a}
a:=\omega_n ^{-\frac{1}{n}} |\M u - \M \phi|(\Omega)^{\frac{1}{n}}.
\end{equation}
If we additionally assume $\varphi$ is strictly convex and satisfies \eqref{eq:varphi equation} in an extended convex domain $\widetilde{\Omega}$ with $\Omega \subset \subset \widetilde{\Omega} \subset B_{2n}(0)$, then
\begin{equation} \label{eq:abp gene 2/n}
\left\| u - \varphi \right\|_{L^{\infty}(\Omega)} \leq
\begin{cases}
C_{\varphi}a^2 (|\log a|+1) & \text{if } n = 2, \\
C_{\varphi}a^2 & \text{if } n \geq 3,
\end{cases}
\end{equation} 
where $C_{\varphi}>0$ depends only on $n$, $\lambda$, $\Lambda$,   and the lower bound of $\displaystyle \inf_{p \in \partial \varphi(\Omega)} \inf_{x\in \partial \widetilde{\Omega}} (\varphi(x) - \ell_p(x))$, $\ell_p$ denoting the support function of $\varphi$ with slope $p$.
\end{Theorem}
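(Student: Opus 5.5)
We reduce the estimate to two one-sided comparisons, one against an extremal subsolution and one against an extremal supersolution. Set $\mu:=\M u-\M\varphi$ and write $\mu=\mu^+-\mu^-$, so that $\mu^\pm(\Omega)\le \omega_n a^n$.

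\emph{Lower bound for $u-\varphi$.} The function $u$ lies below $\varphi$ only where extra mass has been added. Since only the total excess matters, the pointwise minimizer is obtained by concentrating all the excess mass at one point. Fix $x_0\in\Omega$ and let $v$ be the convex solution of
\[
\M v=\M\varphi+\omega_n a^n\delta_{x_0}\ \text{in }\Omega,\qquad v=\varphi\ \text{on }\partial\Omega .
\]
We will show $u(x_0)\ge v(x_0)$ for every admissible $u$. For this we compare $u$ with the Perron-type subsolution in which the excess mass is concentrated at $x_0$, using the comparison principle together with a subgradient-image argument in the spirit of Alexandrov's estimate: $\partial u(\Omega)$ carries at most $\omega_n a^n$ more mass than $\partial\varphi(\Omega)$.

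Next we estimate $\varphi(x_0)-v(x_0)$. Near $x_0$ the function $v$ looks like a cone, and its subgradient image contains a set of measure $\omega_n a^n$, comparable to a ball of radius $a$, around $\nabla\varphi(x_0)$. Hence $v$ deviates from $\varphi$ on a region of size $r$, with cone-slope deficit of order $a^n/r^{n-1}$. Integrating the linearized Green function behaviour, $v-\varphi\approx -a^n G(x,x_0)$ away from $x_0$, gives matching at scale $r\sim a$: the inner region contributes $a\cdot a=a^2$. In dimension $n=2$ the Green function is logarithmic, which produces the factor $a^2|\log a|$.

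\emph{Upper bound for $u-\varphi$.} Here $u$ lies above $\varphi$ by removing mass. The extremal configuration is the solution $w$ of the obstacle problem with a linear obstacle: $w\ge \ell$, with $\M w=\M\varphi$ off the contact set $\{w=\ell\}$ and $\M w=0$ on it. Its contact set has measure of order $a^n$, which gives a flat piece of diameter about $a$ near $x_0$. The resulting gap is then again $a^2$ times the Green-function growth.

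\emph{Uniform versus refined constants.} For the first estimate \eqref{eq:abp gene 2/n log} we only have the bound \eqref{eq:varphi equation}. We use Caffarelli's sections theory: at scale $a$, sections of $\varphi$ have height $h\sim a^2$. The perturbation propagates through dyadic sections, and each dyadic scale contributes $O(a^2)$ times a geometric factor. We sum over the $|\log a|$ scales up to a boundary layer, where the boundary could be degenerate; this loses one extra logarithm in each dimension.

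Under the extension hypothesis, sections centered anywhere in $\Omega$ stay compactly inside $\widetilde\Omega$ up to a definite height, namely the stated infimum. Engulfing properties then give a Green-function bound $G(x,x_0)\lesssim |x-x_0|^{2-n}$ in the section metric. For $n\ge3$ this sum converges and removes the logarithm; for $n=2$ a single logarithm remains.

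\emph{Main obstacle.} The main difficulty is the extremality step, namely proving that the isolated-singularity and obstacle solutions are genuinely the pointwise minimizer and maximizer. This requires redistributing an arbitrary signed measure discrepancy. I would first try a mass-transport argument on subgradient images: move $\mu^+$ to $\delta_{x_0}$ while keeping $\partial u(\Omega)$ controlled, and apply the comparison principle on the sets $\{u<v\}$. The second difficulty is the quantitative Green-function estimate in the non-smooth setting, which I would obtain from Caffarelli–Gutiérrez type estimates for the linearized Monge–Ampère operator.
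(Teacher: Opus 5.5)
There is a genuine gap, most seriously in the global estimate \eqref{eq:abp gene 2/n log}.

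Your dyadic argument assumes that the sections of $\varphi$ centred at the point are compactly contained in $\Omega$. Caffarelli's section geometry, engulfing and the Caffarelli--Guti\'errez Harnack inequality all need this. But in \eqref{eq:abp gene 2/n log} $\varphi$ is not assumed strictly convex: for $n\ge3$ it may contain segments reaching $\partial\Omega$, and at points of such a segment no section is compactly contained. Even for strictly convex $\varphi$, a dyadic sum centred at $x_0$ stops at the maximal section $S_{t_0}(x_0)$, which touches $\partial\Omega$. On $\partial S_{t_0}(x_0)$ you have nothing better than the classical $O(a)$ bound. ``Summing over $|\log a|$ scales up to a boundary layer'' does not say what ends the sum or why each scale costs only $O(a^2)$.

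The paper supplies two ideas here. First, a large deviation forces non-degeneracy. Suppose $h:=\varphi(x)-u_a(x,x)\ge Ca^2$. Then $u_a(\cdot,x)+h$ touches $\varphi$ from above at $x$ and has a section of height $h$ inside $\Omega$. After normalization, a compactness argument (using the strong maximum principle) plus Harnack shows three things: $\varphi$ is strictly convex and $C^{1,\alpha}$ at $x$; it has a section of height $\gtrsim h$ inside $\Omega$; and $\varphi-u_a(\cdot,x)\ge h-Ca^2$ on the half-height section (Lemmas~\ref{lem:varphi sc from u} and~\ref{lem:varphi sc from v}).

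Second, one iterates over \emph{points} rather than scales. Take $z\in\partial\Omega\cap\partial S_{t_0}(x_0)$ with inner normal $e_n$. Choose $x_{k+1}$ on the boundary of the half-height section at $x_k$ so that $(x_{k+1}-z)\cdot e_n\le(1-\sigma)(x_k-z)\cdot e_n$. By extremality the deviation drops by at most $Ca^2$ per step. The chain is stopped by the boundary Alexandrov bound $|u-\varphi|(y)\lesssim a\operatorname{dist}(y,\partial\Omega)^{1/n}$. Since the chain can run for $\gtrsim h/a^2$ steps but has length $O(|\log a|)$, this gives $h\lesssim a^2(|\log a|+1)$.

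The remaining steps are only asserted.

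\emph{Extremality.} No mass transport is needed. Lemma~\ref{lem:comparison gene} says: if $\M u(E)\le\M\varphi(E)$ for every Borel $E\supset W$, then $\max_W(u-\varphi)\ge\min_{\partial\Omega}(u-\varphi)$. With $W=\{x_0\}$ this gives the lower side, which is essentially your $\{u<v\}$ argument. For the upper side, take $p\in\partial u(x_0)$ and write the support plane of $u$ at $x_0$ as $\ell_p+h$, with $\ell_p$ supporting $\varphi$. Applying the lemma to $v_a(\cdot,p)$ and $u$, with $W$ the coincidence set, yields $h\le h_{a,p}$. Because $p$ is a slope of $u$ and not $\nabla\varphi(x_0)$, you also need $K_{a,p}\subset S^\varphi_{Ca^2,p}(x_p)$ (Lemma~\ref{lem:coincidencesetcontrol}, via Savin's quadratic growth off the coincidence set). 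Your ``flat piece of diameter $a$ near $x_0$'' assumes exactly this, and it is what lets the interior propagation start.

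\emph{Per-scale decay.} The decay $a^nh^{-(n-2)/2}$ does not follow from a Green function bound for $L_\varphi$. The difference $u-\varphi$ solves an equation whose coefficient is the averaged cofactor matrix $A^{ij}$, which depends on the unknown $u$. Meanwhile the barriers $\varphi+tG$ are supersolutions for either sign of $t$, by concavity of $\det^{1/n}$. So they bound $u_a$ and $v_a$ only from above: the wrong direction for $u_a\le\varphi$, and only relative to the unknown maximum for $v_a$.

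The paper's substitute is Lemma~\ref{lem:divergence relation}: if $\operatorname{supp}\mu\subset S_{1/16}$, then $|u-\varphi|\lesssim|\mu|(\Omega)$ outside $S_{1/2}$. The proof has three ingredients:
\begin{enumerate}
\item Harnack for $L_u$ and $L_\varphi$ on the annulus, giving $\varphi-u\approx\kappa$ there.
\item The flux identity $\int_{\partial\Omega}A^{ij}\partial_i(u-\varphi)\nu_j\,dS=\mu(\Omega)$.
\item A lower bound of the flux by $c\kappa$, via a convex barrier and $A^{ij}\ge\frac1n\operatorname{Cof}D^2\varphi$.
\end{enumerate}
Rescaled to $S_{2h}$ and summed dyadically, this gives \eqref{eq:abp gene 2/n}.
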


The estimates in Theorem \ref{thm:ordera2} are stated for normalized convex domains and extend to arbitrary convex domains via affine transformations;  see Section \ref{sec:normalization reduction}. It is worth noting that, for $n \geq 3$, the estimate \eqref{eq:abp gene 2/n} is affine invariant.

The asymptotic order $a^2$ or $a^2|\log a|$ in \eqref{eq:abp gene 2/n} is \emph{optimal} for all small $a$, and is achieved by solutions of either Monge-Amp\`ere equations with isolated singularities or Monge-Amp\`ere equations with linear obstacles. 
%They are extremal configurations for the variational problem \eqref{eq:abp class Da}; see Theorem \ref{thm:abp extremal}. 
For example, when $\varphi=\frac12|x|^2$ and $\Omega$ is a ball, then the optimality is achieved by an additive translate of
\begin{equation}\label{eq:isolated global solution}
W_a(x)=\int_{0}^{|x|}  \left(   r^n+a^n \right)  ^{\frac{1}{n}}\,\ud r,
\end{equation}
which is a solution of  
\[ 
\det D^2 W_a= 1 + \omega_n a^n \delta_0,
\]
and by  an additive translate of its Legendre transform
\begin{equation}\label{eq:obstacle global solution}
W_a^*(x):= \int_{0}^{|x|}   \max\left\{ r^n-a^n,0\right\}^{\frac{1}{n}}\,\ud r,
\end{equation} 
which is a solution of 
\[
\det D^2 W_a^*=\chi_{\left\{W_a^*>0\right\}}
\] 
satisfying $\left| \left\{ W_a^*=0\right\} \right|= \omega_n a^n$. Here, $\delta_y$ denotes the Dirac measure centered at the point $y$,  $\chi_E$ is the characteristic function of a set $E$, and  $|E|$ is its  Lebesgue measure. These two functions $W_a$ and $W_a^*$ will serve as our model functions in our analysis.

%\textcolor{blue}{This paragraph is the old one, and is to be deleted:} The estimate \eqref{eq:abp gene 2/n log} is a global estimate, for which we do not assume the strict convexity of $\varphi$. An important step of proving \eqref{eq:abp gene 2/n log} is that if there is a point where the deviation causing the failure of \eqref{eq:abp gene 2/n} occurs, then $\varphi$ will be strictly convex and $C^{1,\alpha}$ near that point. The estimate \eqref{eq:abp gene 2/n log} then follows from an iterative argument, at the cost of an additional $|\log a|$ factor. 

If we further assume $\partial \Omega \in C^{2,\alpha}$ and $\varphi \in \C_+^{2,\alpha}(\overline{\Omega})$,
where $\C_+^{2,\alpha}$ denotes the space of $C^{2,\alpha}$ functions with \emph{positive definite Hessians}, then we obtain a sharp pointwise estimate:
 
\begin{Theorem}\label{thm:ordera2 more}
Suppose that $\Omega \subset \R^n$, $n\geq 2$, is a bounded convex domain with $\partial \Omega \in C^{2,\alpha}$, where $\alpha \in (0,1)$. Let $\varphi \in \C_+^{2,\alpha}(\overline{\Omega})$ and $x_0 \in \Omega$. Define $\lambda_0 := (\det D^2 \varphi(x_0))^{1/n}$.  
For every convex function $u \in C(\overline{\Omega})$ with $u=\varphi$ on $\partial \Omega$, we have:
\begin{itemize}
\item If $n=2$, then \begin{equation}\label{eq:perturb-result n=2}
- \frac{1}{2}\lambda_0^{-1}   -  \frac{C}{|\log a| }\leq \frac{u(x_0) - \varphi(x_0)}{ a^2|\log a|} \leq \frac{1}{2}\lambda_0  +  \frac{C}{|\log a| };
\end{equation}
\item If $n\geq 3$, then 
\begin{equation}\label{eq:perturb-result}
- \lambda_0^{-1}   - C a^{ \beta}   \leq \frac{u(x_0) - \varphi(x_0)}{d_{n,0}a^2} \leq \lambda_0  + C a^{ \beta} ,  
\end{equation}
\end{itemize}
where $a$ is the one defined in \eqref{defn:a}, $\beta = \frac{(n-2)\alpha}{n+\alpha}$,  $d_{n,0} = \frac{\Gamma\left(\frac{1}{n}\right)\Gamma\left(\frac{n-2}{n}\right)}{2n\Gamma\left(\frac{n-1}{n}\right)}  > 0$ for $n\ge 3$  is given by
\[
\int_{0}^{|x|} \left( r^n + 1 \right)^{\frac{1}{n}} \ud r = \frac{1}{2} |x|^2+ d_{n,0} + O(|x|^{-1}) \quad \text{as } |x| \to \infty,
\]
and $C> 0$ depends only on $n$, $\alpha$, $\operatorname{diam}(\Omega)$, $\left\|\partial \Omega\right\|_{C^{2,\alpha}}$,
$\left\|D^2\varphi\right\|_{C^{\alpha}(\overline{\Omega})}$ and $\left\|(D^2\varphi)^{-1}\right\|_{L^{\infty} (\Omega)}$.
\end{Theorem}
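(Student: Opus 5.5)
The proof rests on the extremal characterization announced in the abstract: among convex $u$ with $u=\varphi$ on $\partial\Omega$ and $|\M u-\M\varphi|(\Omega)\le \omega_n a^n$, the value $u(x_0)$ is smallest for the solution $\underline u$ of $\det D^2\underline u=\det D^2\varphi+\omega_n a^n\delta_{x_0}$ with boundary data $\varphi$. It is largest for the solution $\overline u$ of the obstacle problem $\det D^2\overline u=\det D^2\varphi\,\chi_{\{\overline u>\ell\}}$, where $\ell$ is an affine function touching at $x_0$ and the contact set has $\M\varphi$-mass $\omega_n a^n$. I would first establish this reduction by comparison and mass-transport arguments, pushing any excess mass toward $x_0$ and removing mass near $x_0$. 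After that, Theorem \ref{thm:ordera2 more} reduces to two-sided asymptotics of $\underline u(x_0)-\varphi(x_0)$ and $\overline u(x_0)-\varphi(x_0)$. Note that $\underline u(x_0)-\varphi(x_0)$ is the lower bound and $\overline u(x_0)-\varphi(x_0)$ is the upper bound.

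\textbf{Normalization.} Subtract the tangent plane of $\varphi$ at $x_0$, translate $x_0$ to $0$, and apply the affine map $A$ with $A^TD^2\varphi(0)A=\lambda_0 I$ and $\det A=1$, so that the Monge--Amp\`ere masses are preserved. By $\varphi\in\C^{2,\alpha}_+$ we then have $\varphi=\frac{\lambda_0}{2}|x|^2+O(|x|^{2+\alpha})$ near $0$, and $\det D^2\varphi=\lambda_0^n(1+O(|x|^\alpha))$.

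\textbf{Singular model.} The model is the rescaled $W_a$. The function $\frac{\lambda_0}{2}\cdot\frac{2}{1}\,\lambda_0^{-1}\cdot$ (rescaled) $\lambda_0 W_{a/\lambda_0}$ solves $\det=\lambda_0^n+\omega_n a^n\delta_0$, and its profile is $\frac{\lambda_0}{2}|x|^2+\lambda_0 d_{n,0}(a/\lambda_0)^2=\frac{\lambda_0}2|x|^2+\lambda_0^{-1}d_{n,0}a^2$ at infinity when $n\ge3$. Hence the expected depression is $\underline u(0)-\varphi(0)\approx-\lambda_0^{-1}d_{n,0}a^2$. The obstacle model $W^*$ similarly gives $+\lambda_0 d_{n,0}a^2$, since under the Legendre duality $\lambda_0\leftrightarrow\lambda_0^{-1}$. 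In dimension $n=2$ the tail $(r^2+a^2)^{1/2}-r\sim a^2/(2r)$ integrates to $\frac{a^2}{2}\log(1/a)$, which gives the $\frac12\lambda_0^{\mp1}a^2|\log a|$ constants.

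\textbf{Matching.} To make the heuristic rigorous, I would build barriers in an intermediate annulus $|x|\sim\rho$ with $a\ll\rho\ll1$. Inside $B_\rho$, take the model plus a quadratic correction $\pm C\rho^\alpha|x|^2$ absorbing the density error, so it is a sub/supersolution. Outside, write $\varphi+\epsilon$ and solve a linearized correction: the difference $\underline u-\varphi$ satisfies approximately the linearized equation $L v=\omega_na^n\delta_0$, with $L$ the uniformly elliptic $C^\alpha$-coefficient operator $\operatorname{tr}((\det D^2\varphi)(D^2\varphi)^{-1}D^2\cdot)$. Green's function estimates for $L$ (Grüter--Widman with $C^\alpha$ coefficients, which needs $\partial\Omega\in C^{2,\alpha}$) give the outer error $O(a^n\rho^{2-n})$. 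The mismatch across $|x|=\rho$ is $O(a^2(a/\rho)^{n-2})$ from the model tail, plus $O(\rho^\alpha)\cdot a^2$-type and $O(\rho^{2+\alpha})$ errors from the freezing of coefficients. The freezing error is sharpened by scaling, giving a relative error $(a/\rho)^{n-2}+\rho^{\alpha}\cdot(\text{something})$. Optimizing with $\rho=a^{n/(n+\alpha)}$ should produce the relative error $a^\beta$ with $\beta=\frac{(n-2)\alpha}{n+\alpha}$. For $n=2$, the logarithm makes the matching errors relatively $O(1/|\log a|)$.

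\textbf{Main obstacle.} The delicate step is the obstacle side. The contact set of $\overline u$ is a free boundary, and I must show it is close to the ball of radius $a/\lambda_0$ in the normalized coordinates, up to relative error $a^\beta$. I would try to obtain this via the Legendre transform: $\overline u^*$ solves a singular-type problem with a point mass $\omega_na^n$ at the slope of the contact plane, with density $1/\det D^2\varphi$ evaluated on the gradient image. This converts the obstacle case into the singular case already treated, at the cost of controlling the boundary behaviour of $\overline u^*$ on $\partial\overline u(\Omega)$ via Theorem \ref{thm:ordera2}'s $O(a^2)$ bound.
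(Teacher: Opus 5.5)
Your outline has the same structure as the paper's proof: the extremal reduction via Theorem~\ref{thm:abp extremal}, followed by comparison with rescaled $W_a$, $W_a^*$ on $\partial B_r$ with $r=a^{n/(n+\alpha)}$, which is where $\beta$ comes from. The upper side, however, has two genuine problems.

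\textbf{The slope of the extremal obstacle.} You take the maximizer to be a single obstacle solution built on the tangent plane of $\varphi$ at $x_0$, and then normalize at $x_0$. In the proof of Theorem~\ref{thm:abp extremal}, the slope $p$ is a subgradient of the competitor $w$ at $x_0$; this is how Lemma~\ref{lem:comparison gene} is applied with $W=K_{a,p}$. So one only gets $\sup_w w(x_0)=\sup_{p\in\partial\varphi(\Omega)}v_a(x_0,p)$, and Example~\ref{exam:supremum for obs} shows this can be strictly larger than the value at $p\in\partial\varphi(x_0)$. Analyzing the obstacle centered at $x_0$ therefore bounds only one competitor.

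The missing step is the localization used in the paper's proof. For the optimal $p=\nabla\varphi(x_p)$ with $x_0\in K_{a,p}$, Lemma~\ref{lem:coincidencesetcontrol} gives $x_0\in S^{\varphi}_{Ca^2}(x_p)$, hence $|x_p-x_0|\le Ca$. Then $v_a(x_0,p)-\varphi(x_0)\le h_{a,p}=v_a(x_p,p)-\varphi(x_p)$, the model asymptotics are applied at $x_p$, and $|\lambda_p-\lambda_0|\le Ca^\alpha$ costs only $O(a^{2+\alpha})$.

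There are two smaller omissions. First, you need the Whitney extension and the reduction of Section~\ref{sec:extension}, so that $B_r(x_0)$ stays inside the domain uniformly in $x_0$. Second, the outer decay $|u-\varphi|\lesssim a^n|x|^{2-n}$ does not follow from Green's function bounds for $L_\varphi$ alone, because $u-\varphi$ solves an equation whose coefficients depend on $u$. The paper takes this decay from Lemmas~\ref{lem:interior estimate u} and~\ref{lem:interior estimate v}.

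\textbf{The obstacle constant.} Your duality step is miscounted. The transform $\overline u^*$ carries point mass $|K|$ at the contact slope, not $\omega_na^n$. The constraint is $\M\varphi(K)=\omega_na^n$, so $|K|\approx\lambda_0^{-n}\omega_na^n$.

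The Legendre route is also not a reduction to the singular case. The dual density $1/\det D^2\varphi(\nabla\overline u^*)$ depends on the unknown solution, and the dual problem lives on the unknown set $\partial\overline u(\Omega)$ with no Dirichlet data. The paper instead treats $v_a$ directly: it pins $h_a$ by comparing $v_a-h_a$ with slightly dilated or contracted copies of $W^*_a$ on $\partial B_r$, and derives a contradiction with the mass constraint.

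Computing with your own model already gives a different constant. With contact ball of radius $a/\lambda_0$ and $\overline u\approx h+\lambda_0W^*_{a/\lambda_0}$, you get $h\approx\lambda_0d_{n,0}(a/\lambda_0)^2=\lambda_0^{-1}d_{n,0}a^2$, not $\lambda_0d_{n,0}a^2$.

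Scaling forces this. The map $w\mapsto w/\lambda$ sends $\D_{a,\varphi}$ onto $\D_{a/\lambda,\varphi/\lambda}$, replaces $\lambda_0$ by $\lambda_0/\lambda$, and multiplies $\frac{w(x_0)-\varphi(x_0)}{d_{n,0}a^2}$ by $\lambda$. Taking $\lambda=\lambda_0$ reduces to the case $\lambda_0=1$, where both sharp constants are $\pm1$. Hence in general both sharp constants are $\pm\lambda_0^{-1}$, and $\pm\frac12\lambda_0^{-1}$ when $n=2$.

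In particular, for $\lambda_0<1$ the radial obstacle solution for $\varphi=\frac{\lambda_0}{2}|x|^2$ on a ball gives $u(0)-\varphi(0)=\lambda_0^{-1}d_{n,0}a^2(1+o(1))$. This violates the upper bound as stated.

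The paper's Proposition~\ref{prop:pointwise sharp} contains the same slip. The substitution $\varphi(A^{-1}x)$ replaces $a$ by $\lambda_0^{-1/2}a$, so Lemma~\ref{lem:dna2 obs ap} actually yields $v_a(0)\approx\lambda_0^{-1}d_{n,0}a^2$. What your method, and the paper's, can prove is the upper bound with $\lambda_0^{-1}$ in place of $\lambda_0$.
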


The terms $\frac{1}{2}\lambda_0^{-1}$, $\frac{1}{2}\lambda_0$, and $a^2|\log a|$ in \eqref{eq:perturb-result n=2}, as well as $\lambda_0^{-1}$, $\lambda_0$, and $d_{n,0}a^2$ in \eqref{eq:perturb-result}, are all \emph{optimal}. This optimality is again attained by the extremal configurations for the variational problem \eqref{eq:abp class Da} --namely, the isolated singularity problem and the obstacle problem for Monge-Amp\`ere equations.

Moreover, these two extremal configurations for the estimate \eqref{eq:perturb-result} are actually \emph{stable}, in the sense that if $\displaystyle\frac{u(x_0) - \varphi(x_0)}{d_{n,0}a^2}$ approaches the lower or upper bounds (for small $a>0$) in \eqref{eq:perturb-result}, then the corresponding measure 
\[
\mu:=\M u -\M \varphi 
\]
tends to concentrate around $x_0$. The details of this stability are as follows. 
\begin{Theorem}\label{thm:rigidity}
Suppose $n \geq 3$ and all assumptions in Theorem \ref{thm:ordera2 more} are satisfied. Let $x_0 \in \Omega$, $\lambda_{0}=\left( \det D^2 \varphi(x_0)\right)^{\frac{1}{n}}$, $r > 0$, and define the Hessian-scaled ellipsoid
\[
E (x_0, r):= \left\{ x \in \Omega :\; (x-x_0) \cdot D^2 \varphi (x_0) \cdot (x-x_0)^T  \leq r^2 \right\}.
\]
Then there exists $\varepsilon_0>0$  depending only on $n$, $\alpha$, $\operatorname{diam}(\Omega)$, $\left\|\partial \Omega\right\|_{C^{2,\alpha}}$,
$\left\|D^2\varphi\right\|_{C^{\alpha}(\overline{\Omega})}$ and $\left\|(D^2\varphi)^{-1}\right\|_{L^{\infty} (\Omega)}$, such that for every $\rho \in (0,1)$ and every convex function $u \in C(\overline{\Omega})$ with $u = \varphi$ on $\partial \Omega$ satisfying $a \leq \varepsilon_0$ (where $a$ is defined in \eqref{defn:a}), there exist positive constants $c_\rho$ and  $C_{\rho}$ both of which depend only on $n$, $\alpha$, $\rho$, $\operatorname{diam}(\Omega)$, $\left\|\partial \Omega\right\|_{C^{2,\alpha}}$,  $\left\|D^2\varphi\right\|_{C^{\alpha}(\overline{\Omega})}$ and $\left\|(D^2\varphi)^{-1}\right\|_{L^{\infty} (\Omega)}$,  such that:
\begin{align}
\frac{ u(x_0)-\varphi(x_0)}{d_{n,0}a^2} 
& \geq -\lambda_{0}^{-1}-C_{\rho}a^{\beta}+c_{\rho}\left(\frac{\omega_na^n-\mu \left( E \left(x_0, \rho a\right)\right) }{\omega_na^n} \right)^\frac{n}{2}, \label{eq:stabilitypositive} \\
\frac{ u(x_0)-\varphi(x_0)}{d_{n,0}a^2} 
& \leq \lambda_{0} +C_{\rho}a^{\beta}-c_{\rho}\left(\frac{\omega_na^n+\mu \left( E (x_0, (1+\rho)a)\right)}{ \omega_na^n} \right)^n. \label{eq:stabilitynegative} 
\end{align}
\end{Theorem}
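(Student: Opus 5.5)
We reduce to the extremal description used for Theorem \ref{thm:ordera2 more}. After an affine normalization we may take $x_0=0$, $D^2\varphi(0)=\lambda_0 I$, and we rescale so that $\varphi$ is close to $\frac{\lambda_0}{2}|x|^2$ in $C^{2,\alpha}$ on the natural scale. We decompose $\mu=\mu^+-\mu^-$ with $\mu^+(\Omega)+\mu^-(\Omega)=\omega_n a^n$.

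\emph{Lower bound \eqref{eq:stabilitypositive}.} The pointwise minimizer of $u(0)$ is the isolated-singularity solution $\det D^2 w=\det D^2\varphi+\omega_na^n\delta_0$. It attains $-\lambda_0^{-1}d_{n,0}a^2$ to leading order, since the model is $\lambda_0^{-1}$-rescaled $W_a$ with expansion $\frac12|x|^2+d_{n,0}$.

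\begin{enumerate}[label=(\roman*)]
\item Use comparison to replace $u$ by the worst competitor carrying the same positive part of the mass. Split $\mu^+=\mu^+\llcorner E(0,\rho a)+\mu^+\llcorner E(0,\rho a)^c$; the negative part only raises $u(0)$. Let $\theta:=(\omega_na^n-\mu(E(0,\rho a)))/(\omega_na^n)\ge(\omega_na^n-\mu^+(E(0,\rho a)))/(\omega_na^n)$.
\item By subadditivity (superposition of Alexandrov solutions, using the monotonicity of the extremal problem), bound $\varphi(0)-u(0)$ by the contribution of the inner mass $(1-\theta)\omega_na^n$, concentrated as a Dirac mass, plus that of mass $\theta\omega_na^n$ located at distance $\ge\rho a/C$ from $0$.
\item Compute the gain on the model $W$. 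A Dirac mass $b^n$ yields depth $d_{n,0}b^2$, which is $2/n$-homogeneous in the mass, so the concentrated configuration is optimal. A mass at distance $\ge\rho a$ contributes strictly less at $0$ than the same mass placed at $0$. The resulting deficit is obtained by analyzing the radial ODE $\int_0^{r}((s^n+b^n)^{1/n}-s)\,ds$, comparing the profile at $0$ with the profile evaluated at distance $\rho a$.
\item Strict concavity of $t\mapsto t^{2/n}$ for $n\ge3$ together with the spatial decay gives a deficit $\ge c_\rho\theta^{n/2}$. The perturbation errors from $\varphi$ not being exactly quadratic are the $C_\rho a^\beta$ terms, handled as in Theorem \ref{thm:ordera2 more}.
\end{enumerate}

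\emph{Upper bound \eqref{eq:stabilitynegative}.} The maximizer is the linear-obstacle problem, in which the contact set has measure $\omega_na^n$ and coincides with $E(0,a)$ to leading order. The argument is dual.
\begin{enumerate}[label=(\roman*)]
\item The negative mass $\mu^-$ raises $u(0)$ most effectively when it fills the ellipsoid $E(0,a)$. Mass removed outside $E(0,(1+\rho)a)$ cannot be as effective.
\item Set $\theta:=(\omega_na^n+\mu(E(0,(1+\rho)a)))/(\omega_na^n)$, which controls the mass of $\mu^-$ lying outside $E(0,(1+\rho)a)$ together with the mass of $\mu^+$ inside.
\item Compare $u$ with the solution of an obstacle problem that is forced to use a contact set of measure $(1-\theta)\omega_na^n$ near $0$ plus a remote piece. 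The height of an obstacle solution at $0$ behaves like $d_{n,0}b^2$ in terms of the contact volume $b^n$, and a remote contact set lifts $0$ only by a lower-order amount depending on $\rho$.
\item This gives a deficit $\ge c_\rho\theta^n$. The exponent comes from the displacement by $\rho a$ of the free boundary in the Legendre-dual picture.
\end{enumerate}

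\emph{Main obstacle.} The hardest step is the localization for general non-radial and non-atomic measures: converting the decomposition of the measure into an additive deficit of the extremal values. The Monge--Amp\`ere equation is nonlinear, so superposition fails. I would first try to make it work by passing to Legendre transforms, in which the obstacle problem and the singular problem exchange roles. Near $0$ I would use the linearization of the equation at $\frac{\lambda_0}2|x|^2$, with error $O(a^{2+\beta})$, turning contributions of well-separated masses into Green-function superpositions for the Laplacian on scale $a$. In dimension $n\ge3$ the Green function has decay $|x|^{2-n}$, which is exactly what yields the quantitative loss for mass at distance $\rho a$. This is also why the statement is restricted to $n\ge 3$.
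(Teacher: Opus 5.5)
There is a genuine gap in the step everything else rests on: turning ``part of $\mu$ lies outside $E(x_0,\rho a)$'' into a quantitative loss at $x_0$. You propose to bound the depth $\varphi(0)-u(0)$ by the sum of the depths produced separately by the inner mass and the outer mass. There is no superposition principle for $\M$, and even if such a subadditivity bound were available it could not produce a deficit.

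\begin{itemize}
\item Already for coincident masses, the bound is $d_{n,0}a^2\bigl((1-\theta)^{2/n}+\theta^{2/n}\bigr)>d_{n,0}a^2$. This is weaker than Theorem \ref{thm:ordera2 more}, so concavity of $t\mapsto t^{2/n}$ works against you, not for you.
\item Take a mass $\theta\omega_na^n$ at distance $\rho a$ with $\theta\ll\rho^n$. The expansion $W_1(x)=\frac12|x|^2+d_{n,0}-\frac{1}{n(n-2)}|x|^{2-n}+\dots$ shows its separate contribution at the origin is about $\frac{\theta}{n(n-2)}\rho^{2-n}a^2$. For small $\rho$ this exceeds the concavity gain $\approx\frac{2}{n}d_{n,0}\theta a^2$, yet the theorem is claimed for every $\rho\in(0,1)$.
\item Your proposed repair, linearizing at $\frac{\lambda_0}{2}|x|^2$ with error $O(a^{2+\beta})$, fails at exactly the scale that matters. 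On $B_{Ca}$ the perturbation $W_a-\frac12|x|^2$ is as large as the background. The constant $d_{n,0}$ is genuinely nonlinear: the linearized profile $\varphi+\omega_na^nG$ is unbounded below at $0$. Linearization is legitimate only for $|x|\gg a$.
\end{itemize}

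So the exponents $n/2$ and $n$ in your step (iv) are asserted rather than derived. The ``free-boundary displacement in the Legendre picture'' is not an argument for the exponent $n$.

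The paper never decomposes $\mu$; it tests $u$ against explicit two-point competitors. For the lower bound it normalizes $x_0=0$, $D^2\varphi(0)=I$, and writes $u(0)-\varphi(0)=-(1-\varepsilon_1)d_{n,0}a^2$, $\varepsilon=\varepsilon_1+2Ca^\beta$, $\mu(B_{\rho a})=(1-\kappa)^n\omega_na^n$. The argument then runs as follows.

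\begin{itemize}
\item For each $y\in\partial B_{\rho a}(0)$ it takes $\omega(\cdot,y)$ with $\M\omega=\M\varphi+(1-\varepsilon)\omega_na^n\delta_0+\varepsilon\omega_na^n\delta_y$.
\item Lemma \ref{lem:two singularity} shows $\omega(0,y)-u_a(0)\approx\varepsilon a^2$ and $u_a(y)-\omega(y,y)\lesssim\varepsilon^{2/n}a^2$. Its proof uses the boundary contact Lemma \ref{lem:extends to boundary}, Harnack, the Brunn--Minkowski Lemma \ref{lem:subdifferential relation} and the local estimate \eqref{eq:abp inf 2/n}.
\item Near-extremality gives $u(0)<\omega(0,y)$. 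Lemma \ref{lem:comparison gene} with $W=\{0,y\}$ then forces $u(y)\ge\omega(y,y)$. Hence $u\ge W_a-d_{n,0}a^2-C\varepsilon^{2/n}a^2$ on all of $\partial B_{\rho a}$.
\item Only now does the mass inside the ball enter. A suitably shifted barrier $(1+Ca^\alpha)W_{(1-\kappa)a}$, compared with $u$ through Lemma \ref{lem:comparison gene} at $W=\{0\}$, gives $\kappa\lesssim\varepsilon^{2/n}$, i.e.\ $\varepsilon\gtrsim\theta^{n/2}$.
\end{itemize}

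Thus the $2/n$-homogeneity enters as a \emph{loss}, namely the depth of the displaced mass at its own location, not as a concavity gain.

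The upper bound runs the dual scheme with the two-obstacle solutions of Lemmas \ref{lem:exist kappa4} and \ref{lem:double obs 1}.

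\begin{itemize}
\item Comparison on $W=K_3\cup K_4$ yields contact points $z_q$ within $C\varepsilon^{1/n}a$ of $y_q\in\partial B_{(1+\rho)a}$. The distance $C\varepsilon^{1/n}a$ is the diameter of a secondary coincidence set of measure $\lesssim\varepsilon a^n$.
\item The convex hull of these points bounds $u$ from above on a ball of radius $(1+\rho-C\varepsilon^{1/n})a$, up to an error $C\varepsilon^{1/n}a^2$.
\item A barrier built from $W^*_{(1-\kappa)a}$ gives $\kappa\lesssim\varepsilon^{1/n}$.
\end{itemize}

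That $\varepsilon^{1/n}$ is the actual source of the exponent $n$.
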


Finally, we note that Theorem \ref{thm:ordera2 more} has an application to the strict convexity and regularity of solutions to Monge-Amp\`ere equations with multiple isolated singularities
\begin{equation*}
\det D^2 u=\det D^2 \varphi+\omega_n a^n \sum_{i=1}^{m}  b_i \delta_{y_i},
\end{equation*}  
as stated in Corollary \ref{prop:strictly convex cond}. 

In the companion work \cite{jin2026sharp}, we study the global version (i.e., \(\Omega=\R^n\)) of our extremal Alexandrov estimate. There we prove that, for \(n\ge 3\), if \(u:\R^n\to\R\) is convex and \(a:=\omega_n ^{-\frac{1}{n}} |\M u - 1|(\R^n)^{\frac{1}{n}}<\infty\), then there exist $A \in \A_n$, $b \in \mathbb{R}^n$, and $c \in \mathbb{R}$ such that
\[
\limsup_{|x| \to \infty} \left| u(x) - \left(\frac{1}{2}x^{\top}Ax  + b \cdot x + c\right) \right| = 0,
\]
where $\A_n$ denotes the set of positive definite symmetric $n \times n$ matrices with determinant $1$. Moreover, 
\[
\left\| u(x) - \left(\frac{1}{2}x^{\top}Ax  + b \cdot x + c\right) \right\|_{L^{\infty}(\R^n)}
\le d_{n,0}a^2
\]
with equality if and only if $u=W_a$ or $u=W_a^*$. Two of the essential ingredients of the proof are the sharp pointwise estimate in Theorem~\ref{thm:ordera2 more}
and the quantitative stability in Theorem~\ref{thm:rigidity}.

\subsection{Ideas of the proof}

The starting point of our approach is that the optimal comparison between $u$ and a non-degenerate background $\varphi$ is not captured by a single application of the classical estimate \eqref{eq:abp classical}, which is designed to be robust in degenerate scaling regimes. 
Instead, we introduce a pointwise variational viewpoint. 
Fix $x_0\in\Omega$ and a mass budget $\omega_na^n:= |\M u-\M\varphi|(\Omega)$. We consider the class of convex functions $w$ satisfying $w=\varphi$ on $\partial\Omega$ and $|\M w-\M\varphi|(\Omega)\le \omega_n a^n$, and ask how far $w(x_0)$ can deviate from $\varphi(x_0)$. 
A main conceptual contribution is that this extremal problem is \emph{explicitly solvable}: the pointwise minimizers and maximizers are realized by two canonical Monge--Amp\`ere configurations (Theorem \ref{thm:abp extremal}).

On the ``lower'' side, the extremizer is an isolated singularity solution, obtained by concentrating the entire discrepancy as a Dirac mass:
one replaces $\M\varphi$ by $\M\varphi+\omega_n a^n\delta_{x_0}$. 
On the ``upper'' side, the extremizer is an obstacle solution: one imposes a linear obstacle given by a supporting hyperplane of $\varphi$, shifted up by an additive constant, and uses the mass budget to create a nontrivial coincidence set. 
These two mechanisms are dual to each other (as illustrated by $W_a$ and its Legendre transform $W_a^*$ in the quadratic background), and they provide a structural explanation for why the small-discrepancy behavior differs from the classical exponent $1/n$ in \eqref{eq:abp gene 1/n}. The obstacle problem is typically more delicate, since one must control the location of the coincidence set within the sections of the reference function.

Once the extremizers are identified, the sharp bounds in Theorem \ref{thm:ordera2} are reduced to a quantitative analysis of the corresponding extremal problems. A useful ingredient in the proof of the sharp estimate  \eqref{eq:abp gene 2/n}  is a quantitative propagation estimate for small Monge-Amp\`ere perturbations (Subsection \ref{subsection:2/n}). 
Roughly speaking, if the discrepancy measure $\mu:=\M u-\M\varphi$ is supported in a deep interior section of $\varphi$, then the deviation $u-\varphi$ on an outer annulus of sections cannot be large: it is controlled linearly by $|\mu|(\Omega)$. 
The proof combines the Caffarelli--Guti\'errez's \cite{caffarelli1997properties}  Harnack inequality  for the linearized Monge--Amp\`ere equations on annular sections and a flux identity in divergence form (cf.\ \eqref{eq:div identity smooth}). 
Iterating this estimate across dyadic sections yields the sharp decay profile and ultimately the $a^2$ (resp.\ $a^2|\log a|$) scale in \eqref{eq:abp gene 2/n}.

A further novelty is that we obtain a \emph{global} estimate \eqref{eq:abp gene 2/n log} without assuming strict convexity of $\varphi$. We show that any point where the sharp estimate \eqref{eq:abp gene 2/n} could fail must belong to a region where $\varphi$ is automatically strictly convex and enjoys $C^{1,\alpha}$ regularity. This mechanism is quantified in Lemma \ref{lem:varphi sc from u} and Lemma \ref{lem:varphi sc from v}. We then propagate the local sharp estimate across scales by an iteration in sections, at the cost of an additional logarithmic factor.

Under higher regularity assumption on the domain and the reference function, we push this analysis to obtain sharp \emph{pointwise} asymptotics with optimal constants (Theorem \ref{thm:ordera2 more}). 
Here the isolated singularity and obstacle profiles determine not only the correct order but also the leading coefficients.  This sharp pointwise estimate is obtained by a sharp small--$a$ asymptotic analysis of these extremizers. 
After a Whitney extension, we may work in an interior regime and normalize at the point $x_0$. 
Theorem \ref{thm:abp extremal} reduces the lower bound to the isolated singularity solution $u_a(\cdot,x_0)$ and the upper bound to an obstacle solution $v_a(\cdot,p)$ whose coincidence set contains $x_0$. We compare $u_a$ and $v_a$ with carefully constructed barrier functions at the $a$--scale.
Subsection \ref{sec:singularity problem} provides precise asymptotic expansions for these two model configurations (summarized in Proposition \ref{prop:pointwise sharp}), including the optimal constants in front of $a^2|\log a|$ (when $n=2$) and $a^2$ (when $n\ge3$). 
To transfer the obstacle asymptotics to the value at $x_0$, we localize the relevant contact point $x_p$ with $D\varphi(x_p)=p$ using the section geometry of the coincidence set (Lemma \ref{lem:coincidencesetcontrol}) and the $\C_+^{2,\alpha}$ regularity of $\varphi$, which yields $|x_p-x_0|= O(a)$ and hence the required control of $(\det D^2\varphi)^{1/n}$ along this shift.

Finally, in dimensions $n\ge 3$ we prove a quantitative stability result  (Theorem \ref{thm:rigidity}): if the pointwise inequality is close to equality at $x_0$, then the signed measure $\mu=\M u-\M\varphi$ must concentrate near $x_0$ at the natural $a$-scale. The proof relies on a qualitative analysis of solutions to the Monge-Amp\`ere equations with \emph{two} isolated singularities or \emph{two} linear obstacles,  subject to the same mass budget. Lemma \ref{lem:extends to boundary} (a boundary contact lemma) shows, roughly speaking, that if two convex functions agree on the boundary and their Monge--Amp\`ere measures differ only by a compactly supported perturbation with zero total mass, then the regions where each function dominates reach the boundary. This separation mechanism is repeatedly used in the subsequent analysis. Lemma \ref{lem:two singularity} quantifies the discrepancy created by shifting a small fraction of the Dirac mass away from \(x_0\) in the two-singularity model, while Lemma \ref{lem:double obs 1}  quantifies the discrepancy created by lowering the primary obstacle and thereby activating a second obstacle in the two-obstacle model. They furnish delicate barrier functions that yield a rigidity mechanism ruling out ``macroscopic splitting'' of the discrepancy measure in almost-extremal situations.

\medskip

This paper is organized as follows. In Section \ref{sec:notation}, we introduce the basic notation and technical reductions adopted throughout the paper.
In Section \ref{sec:alexandrov estimate classical}, we revisit the classical Alexandrov estimate and characterize the extremal configurations in Theorem \ref{thm:abp extremal}—namely, the isolated singularity problem and the obstacle problem. 
In Section \ref{sec:alexandrov estimate affine}, we derive estimates for the extremal configurations and complete the proof of Theorem \ref{thm:ordera2}.
In Section \ref{sec:asymptotic behavior}, we further analyze the asymptotic behavior of the extremal configurations, demonstrate the sharpness of the order and constants in \eqref{eq:perturb-result n=2} and \eqref{eq:perturb-result}, and prove Theorem \ref{thm:ordera2 more}, along with Corollary \ref{prop:strictly convex cond}.
In Section \ref{sec:rigidity a2}, we perform a qualitative analysis of solutions to Monge-Ampère equations with two isolated singularities or two linear obstacles, leading to the proof of Theorem \ref{thm:rigidity}.

%\par\bigskip\noindent
%\textbf{Acknowledgments.}	

\medskip 

\noindent \textbf{Conflict of interest:} All authors certify that there is no actual or potential conflict of interest about this article.

\section{Notation and Conventions}\label{sec:notation}

Throughout this paper, we denote by $c$ and $C$ universal positive constants that depend at most on 
$n$, $\alpha$, $\rho$, $\lambda$, $\Lambda$, $\operatorname{dist}(\widetilde{\Omega},\Omega)$,  $\operatorname{diam}(\Omega)$, $\displaystyle \inf_{p \in \partial \varphi(\Omega)} \inf_{x\in  \partial \widetilde{\Omega}} (\varphi(x) - \ell_p(x))$, $\left\|\partial \Omega\right\|_{C^{2,\alpha}}$,  $\left\|D^2\varphi\right\|_{C^{\alpha}(\overline{\Omega})}$ and $\left\|(D^2\varphi)^{-1}\right\|_{L^{\infty} (\Omega)}$ whenever they are involved.
Relevant dependencies on parameters will be highlighted by placing them in parentheses if necessary. These constants may change from line to line.

For two non-negative (or non-positive) quantities $ b_1$ and $b_2 $, we adopt the notation 
\[
b_1 \lesssim b_2
\]
to indicate that $ b_1 \leq Cb_2 $ for some universal constant $C$. 
We also denote 
\[
b_1 \approx b_2 
\] 
if  $ b_1 \lesssim b_2 $  and $b_2 \lesssim b_1$.  
For matrices $ A $ and $B $, we write $ A \leq B $ when $ B - A $ is positive semidefinite. Similarly, we use $A \lesssim B$ for two positive semidefinite matrices $A$ and $B$  to indicate that  $A \leq CB $, 
and we use $A \approx B$ if  $A \lesssim B $  and $B \lesssim A$.  
In the case $c$ and $C$ 
depend on specific parameters, such as $ a $ or $ p $, we will use the notations $\lesssim_{a,p}$, $\approx_{a,p}$to emphasize these dependencies. The symbol $ \gtrsim$ is understood similarly.

The section of $u$ at $x_0$ with height $t$  for subgradient $p \in \partial u(x_0)$ is denoted as
\[
S_{t}(x_0)= S_{t,p}^u(x_0) = \left\{ x  \in \overline{\Omega}  :\;  u(x) < u(x_0)+p\cdot (x-x_0)+t  \right\}.
\]
A convex function $u$ is strictly convex at $x_0 \in \Omega$ if there exists a $p \in \partial u(x_0)$ such that
\[ 
u(x)>u(x_0)+p \cdot(x-x_0)  \text{ for all } x\in \Omega,\ x \neq x_0,
\]
and $u$ is strictly convex in $\Omega$ if  $u$ is strictly convex at every point in $\Omega$.   
Additionally, $u$ is uniformly convex if the eigenvalues of $D^2 u$ are greater than some positive constant in $\Omega$.
We denote by $\mathcal{C}^{1,1}_{+}$ the class of uniformly convex $C^{1,1}$ functions. For $k\ge 2$ and $\alpha\in(0,1)$, we set
\[
\mathcal{C}^{k,\alpha}_{+}:= C^{k,\alpha}\cap \mathcal{C}^{1,1}_{+}.
\]
Moreover, we equip $\mathcal{C}^{1,1}_{+}$ with the norm
\[
\left\|u \right\|_{\C_+^{1,1}}= \left\| u\right\|_{C^{1,1}}+\left\|(D^2 u)^{-1}\right\|_{L^\infty}.
\] 

A  bounded convex domain $\Omega$  is said to be strictly convex, uniformly convex, $\C_+^{1,1}$ or $\C_+^{k,\alpha}$, if it can be locally represented by functions that are strictly convex, uniformly convex, $\C_+^{1,1}$ or $\C_+^{k,\alpha}$.   
The norm $\left\|\partial \Omega\right\|_{\C_+^{1,1}}$ and $\left\|\partial \Omega\right\|_{C^{k,\alpha}}$ are defined in the usual way via local graphical parametrizations of $\partial \Omega$, together with the diameter of $\Omega$ to fix scaling.

\subsection{Reduction by normalization}\label{sec:normalization reduction}

As mentioned earlier, the estimates in Theorem \ref{thm:ordera2} are presented for normalized convex domains. By the affine invariance of the Monge–Ampère equation, these estimates, when written in an appropriate affine form, extend to arbitrary convex domains. The normalization argument presented below carries out this reduction without loss of generality; the same reasoning applies to both interior and pointwise estimates.

More precisely, let $\Omega\subset\mathbb{R}^{n}$ be an arbitrary bounded convex domain. Since all the estimates are invariant under the subtraction of affine functions, we may replace $u$ and $\varphi$ by $u-\ell$ and $\varphi-\ell$ for a suitable linear function $\ell$ without affecting any of the estimates. Moreover, after applying a normalization (affine) transformation, we have
\begin{equation}\label{eq:simplify normalization 1}
u_{\T}=(\det \T)^{-\frac{2}{n}} u(\T x),\ \  {\varphi}_{\T} =(\det \T)^{-\frac{2}{n}} \varphi(\T x) ,\ \   B_1(0) \subset {\Omega}_{\T}:= \T^{-1} \Omega \subset B_n(0),
\end{equation}
and the total variation of the associated measure $\mu = \M u - \M \varphi$ becomes 
\begin{equation}\label{eq:simplify normalization 2}
|{\mu}_{\T}| (\Omega_{\T}) = (\det \T)^{-1} |\mu|(\Omega),\quad \text{where }  {\mu}_{\T}=\M  {u }_{\T}- \M {\varphi}_{\T}.
\end{equation}
Let $a > 0$ be defined by $|\mu|(\Omega) = \omega_n a^n$. 
\begin{itemize}
\item The classical Alexandrov estimate \eqref{eq:abp gene 1/n}, applied to $\pm(u_{\T}-\varphi_{\T})$, implies:
\begin{equation}\label{eq:scaledA}
\|u - \varphi\|_{L^{\infty}(\Omega)} \leq    C(n) a |\Omega|^{\frac{1}{n}}.
\end{equation}
\item  Estimate \eqref{eq:abp gene 2/n}, applied to $u_{\T}-\varphi_{\T}$, implies:
\[
\left\| u - \varphi \right\|_{L^{\infty}(\Omega)} \leq 
\begin{cases}
C_{\varphi_{\T}}a^2 (|\log (a|\Omega|^{-\frac{1}{n}})|+1) & \text{if } n = 2, \\
C_{\varphi_{\T}}a^2 & \text{if } n \geq 3.
\end{cases}
\]
Let $\epsilon\in (0,1)$. If $\varphi$ is non-degenerate (in the sense of \eqref{eq:varphi equation}) in $S_{h}^{\varphi}$, then for $\Omega = S_{(1-\epsilon)h}^{\varphi}$, the quantity $C_{\varphi_{\T}}$ becomes a universal constant that depends only on $n$, $\epsilon$, $\lambda$, and $\Lambda$. Consequently, the estimate \eqref{eq:abp gene 2/n} for $n \geq 3$ is affine invariant in this setting.
\end{itemize}
 
\subsection{Reduction by the comparison principle}\label{app:comparison reduction}

Throughout this paper, we use the Jordan decomposition
\[
\mu = \M u - \M  \varphi = \mu_+ - \mu_-,
\]
where $\mu_+$ and $\mu_-$ are nonnegative finite measures. 
Our analysis also relies on several technical reductions that are readily justified by the comparison principle, as explained below. In the main text, we will therefore simply invoke them as ``by the comparison principle''.
 
These reductions rely on the existence of solutions to the relevant Dirichlet problems, the proof of which will be provided in Proposition~\ref{prop:finitetoinfinite}. We invoke the reductions only after Proposition~\ref{prop:finitetoinfinite} has been established.

\subsubsection{Reduction to one-signed measures}\label{sec:signed}
Let $u_+$ and $u_-$ be the convex solutions to 
\begin{align*}
& \M u_{+}=\M \varphi+\mu_+  \quad \text{in } {\Omega}, \quad u_{+} =u\quad \text{on } \partial \Omega, \\
& \M   u_{-} =\M \varphi-\mu_- \quad \text{in } {\Omega}, \quad   u_{-}  =u\quad \text{on } \partial {\Omega},  
\end{align*} 
respectively. The comparison principle yields
\[
u_{+} \leq u  \leq u_{-} \quad \mbox{and} \quad u_{+} \leq \varphi  \leq u_{-}.
\]
Consequently,
\[
u_+- \varphi\le u-\varphi\le u_--\varphi.
\]
Thus, estimating $u_+ - \varphi$ and $u_- - \varphi$ directly provides lower and upper bounds for $u - \varphi$.
In view of this reduction, we may assume without loss of generality that $\mu$ is either nonnegative or nonpositive, which implies $u \leq \varphi$ or $u \geq \varphi$, respectively. 

\subsubsection{Reduction to interior estimates} \label{sec:extension}
Suppose $\varphi$ extends to a convex function $\widetilde{\varphi}$ defined on a larger convex domain $\widetilde{\Omega}$. 
Let $\widetilde{u}_+$ and $\widetilde{u}_-$ solve 
\begin{align*}
\M \widetilde{u}_+ &= \M \widetilde{\varphi} + \mu_+ \quad \text{in } \widetilde{\Omega}, \quad \widetilde{u}_+  = \widetilde{\varphi} \quad \text{on } \partial \widetilde{\Omega}, \\
\M \widetilde{u}_- &= \M \widetilde{\varphi} - \mu_- \quad \text{in } \widetilde{\Omega}, \quad  \widetilde{u}_-  = \widetilde{\varphi} \quad \text{on } \partial \widetilde{\Omega},
\end{align*}
respectively, where we extend $\mu_+$ and $\mu_-$ to be zero outside $\Omega$. By the comparison principle on $\widetilde{\Omega}$, we have
\[
\widetilde{u}_+ \leq \widetilde{\varphi} \leq \widetilde{u}_- \quad \text{in } \widetilde{\Omega},  \quad \text{and then} \quad 
\widetilde{u}_+ \leq u \leq \widetilde{u}_- \quad \text{in } \Omega.
\]
Thus,
\[
\widetilde{u}_+ - \widetilde{\varphi} \leq u - \widetilde{\varphi} \leq \widetilde{u}_- - \widetilde{\varphi} \quad \text{in } \Omega.
\]
Therefore, estimates of $\widetilde{u}_+ - \widetilde{\varphi}$ and $\widetilde{u}_-- \widetilde{\varphi}$ in $\Omega$ yield direct bounds on $u - \varphi$.

With these simplifications, the original global estimate $\|u-\varphi\|_{L^\infty(\Omega)}$ reduces to an interior estimate. By translation invariance, it suffices to bound $|u(0)-\varphi(0)|$ under suitable scale condition $B_{c_\varphi}(0) \subset \Omega \subset B_{C_\varphi}(0)$ for some positive constant $c_\varphi$ and $C_\varphi$.

\subsubsection{Reduction to one-sided boundary inequalities}\label{sec:onesided}
To establish a one-sided bound for $u$, the boundary condition $u = \varphi$ can be relaxed to the corresponding inequality.
Let $\hat{u}_+$ and $\hat{u}_-$ be the convex solutions to 
\begin{align*}
& \M \hat{u}_{+}=\M \varphi+\mu_+  \quad \text{in } {\Omega}, \quad \hat{u}_{+} =\varphi+\inf_{\partial \Omega}(u-\varphi)\quad \text{on } \partial \Omega, \\
& \M   \hat{u}_{-} =\M \varphi-\mu_- \quad \text{in } {\Omega}, \quad   \hat{u}_{-}  =\varphi+\sup_{\partial \Omega}(u-\varphi)\quad \text{on } \partial {\Omega},
\end{align*} 
respectively. The comparison principle yields
\[
\hat{u}_{+} \leq u  \leq \hat{u}_{-} .
\]
Therefore, estimates of $\hat{u}_{+}-\varphi$ and $\hat{u}_{-}- \varphi$ yield direct lower and upper bounds on $u - \varphi$.

This leads to several equivalent formulations of the Alexandrov-type estimates.
For example, for $n \geq 3$, the estimate \eqref{eq:abp gene 2/n} in Theorem~\ref{thm:ordera2} is equivalent to 
\begin{align}
\label{eq:abp inf 2/n} \inf_{\Omega} (u-\varphi) 
&\geq \inf_{\partial \Omega} (u-\varphi)-C\left( \int_{\Omega} \ud \mu_+\right)^{\frac{2}{n}}, \\
\label{eq:abp sup 2/n} \sup_{\Omega} (u-\varphi) 
&\leq \sup_{\partial \Omega} (u-\varphi)+C \left( \int_{\Omega} \ud \mu_-\right)^{\frac{2}{n}}.
\end{align} 
Similar inequalities hold with the right-hand sides replaced by $\left( \int_{\Omega} \ud \mu_{\pm}\right) \log\left(\int_{\Omega} \ud \mu_{\pm}\right)$ if $n=2$.

\section{The extremal configurations in Alexandrov estimates}\label{sec:alexandrov estimate classical}

Let $a\ge 0$ be a constant, and $\varphi\in C(\overline{\Omega})$ be a convex function in $\Omega$. Define \begin{equation}\label{eq:abp class Da}
\D_{a, \varphi} = \left\{ u \in C(\overline\Omega):    u  \text{ is convex},\   u=\varphi \text{ on } \partial \Omega, \ \mbox{and } |\M u - \M \varphi|(\Omega)\leq \omega_n a^n \right\}.
\end{equation}
Notably, $u \in \D_{a,\varphi}$ implies $\varphi \in \D_{a,u}$. The  Alexandrov estimate \eqref{eq:abp gene 1/n} implies that
\begin{equation}\label{eq:abp simplify}
\sup_{u \in \D_{a,\varphi}} \|u - \varphi\|_{L^{\infty}(\Omega)} \leq   \operatorname{diam}(\Omega)   a.
\end{equation}

Let us consider two  kinds of special functions in the space $\D_{a,\varphi}$ defined via:
\begin{itemize}
    \item[(i).] solutions of the isolated singularity problem
\begin{equation}\label{eq:defnuay}
\M u_{a}(\cdot, y) =\M\varphi+ \omega_na^n\delta_y  \quad\text{in }   \Omega, \quad u_{a}(\cdot, y) =\varphi \quad\text{on } \partial \Omega,
\end{equation}
where $\delta_y$ represents the Dirac measure centered at $y \in \Omega$.  The graphs of these two functions can be illustrated as follows:

\begin{center}
\begin{tikzpicture}[scale=1.5, line cap=round, line join=round]
  \draw[very thick, red]
    plot[domain=-2:2, samples=200] (\x,{0.5*\x*\x});
  \node[red!80!black] at (-1.6,1.75) {$\varphi$};
\draw[very thick, blue]
    plot[domain=-2:0.5, samples=300] (\x,{0.4*(\x - 1)*(\x - 1) - 1.6});
\draw[very thick, blue]
  plot[domain=0.5:2, samples=300] (\x,{(14.0/15.0)*\x*\x - (26.0/15.0)});
  \node[blue!80!black] at (-2,0.5) {$u_a(\cdot,y)$};
\fill[blue] (0.5,-1.5) circle (1.6pt);
\foreach \k in {1,2,3} {
  \fill[black] (0.5,{-1.5 - 0.15*\k}) circle (1pt);
}
\node[blue!80!black, anchor=north] at (0.5,{-1.5 - 0.5}) {$y$};
\end{tikzpicture}
\end{center}

    \item[(ii).] solutions to the (hyperplane) obstacle problem of the form
\begin{equation}\label{eq:defnva}
\M v_{a}(\cdot, p) =\M\varphi \cdot \chi_{\left\{ v_{a}(\cdot, p)> \ell_{p}+h_{a,p}\right\}}   \quad\text{in }   \Omega, \quad v_{a}(\cdot, p) =\varphi \quad\text{on } \partial \Omega,
\end{equation}
where 
\begin{itemize}
\item $p \in \partial \varphi(\Omega)$ is a subgradient, 
\item $\ell_p(x) = \varphi(x_p) + p\cdot(x-x_p)$ is a support function of $\varphi$ at some $x_p \in \Omega$,  
\item the parameter $h_{a,p} \in [0, \inf_{\partial \Omega } (\varphi-\ell_p)]$  is chosen such that either $h_{a,p} = \inf_{\partial \Omega} (\varphi - \ell_p)$, or the corresponding solution $v_a(\cdot, p)$ satisfies
\begin{equation}\label{eq:defnvaq 2}
\omega_n a^n=|\M v_{a}(\cdot, p) - \M \varphi|(\Omega),
\end{equation}
\item $\chi_E$ is the characteristic function of a set $E$.  
\end{itemize}
The graph of the function $v_a(\cdot,p)$ can be illustrated as follows: 
\begin{center}
\begin{tikzpicture}[scale=1.1, line cap=round, line join=round]
  % Function: \varphi(x) = x^2 / 2
  \draw[very thick, red]
    plot[domain=-3:3, samples=200] (\x,{0.5*\x*\x});

  % Label \varphi near the left side of the curve
  \node[red!80!black] at (-2.6,2.75) {$\varphi$};

  % Tangent at x_p = 0.5
  \def\xp{0.5}
  \pgfmathsetmacro{\yp}{0.5*\xp*\xp}  % \varphi(x_p)
  \pgfmathsetmacro{\m}{\xp}           % slope \varphi'(x_p) = x_p

  % Tangent line: y = yp + m (x - xp)
  \draw[very thick, red]
    plot[domain=-1:3, samples=2] (\x,{\yp + \m*(\x-\xp)});
  % Label for the tangent line \ell_p (below the line on the right)
  \node[red!80!black, below] at (2.7,{\yp + \m*(2.7-\xp) - 0.05}) {$\ell_p$};

  % Mark the tangency point
  \fill[red!80!black] (\xp,\yp) circle (2pt);
  % Smaller font for the tangent point label; slightly right and up
  \node[red!80!black, anchor=north west, xshift=2pt, yshift=4pt]
        at (\xp,\yp) {\scriptsize $(x_p,\;\varphi(x_p))$};

  % Parallel shifted line: \ell_p + 1.5, labeled as \ell_p + h_{a,p}
  \def\hoffset{1.5} % vertical offset
  \draw[very thick, red]
    plot[domain=-2:3, samples=2] (\x,{\yp + \m*(\x-\xp) + \hoffset});
  \node[red!80!black, anchor=north west, xshift=-5pt] at (2.7,{\yp + \m*(2.7-\xp) + \hoffset - 0.05})
        { $\ell_p + h_{a,p}$};
        
  % Blue function: (25/72) x^2 + 11/8 on x in [-3, 0]
  \draw[very thick, blue]
    plot[domain=-3:0, samples=200] (\x,{(25.0/72.0)*\x*\x + 11/8.0});
  \node[blue!80!black, anchor=north west, xshift=7pt]
        at (-2.9,{(25.0/72.0)*(-2.9)*(-2.9) + 11/8.0})
        {$v_a(\cdot, p)$};
  % NEW blue function: (1/3) x^2 - x/48 + 25/16 on x in [1, 3]
  \draw[very thick, blue]
    plot[domain=1:3, samples=200] (\x,{(1.0/3.0)*\x*\x - (1.0/48.0)*\x + 25/16.0});
  \node[blue!80!black, anchor=west, xshift=3pt]
        at (3,{(1.0/3.0)*3*3 - (1.0/48.0)*3 + 25/16.0})
        {};

  % NEW: Blue line y = 0.5*x + 11/8 on x in [0, 1]
  \draw[very thick, blue]
    plot[domain=0:1, samples=2] (\x,{0.5*\x + 11/8.0});
  \node[blue!80!black, anchor=west, xshift=2pt]
        at (1,{0.5*1 + 11/8.0})
        { };
        
\end{tikzpicture}
\end{center}

When $\M\varphi$ has unbounded density, the first equation in \eqref{eq:defnva} is interpreted as
\begin{equation}\label{eq:defnvaq 1}
\M v_{a}(\cdot, p) = \M \varphi \quad \text{on } \left\{ v_{a}(\cdot, p) >  \ell_{p}+h_{a,p}\right\},\quad  
\M v_{a}(\cdot, p) \leq \M \varphi .
\end{equation} 
For given $p \in \partial\varphi(\Omega)$ and $h_{a,p} \in [0, \inf_{\partial\Omega} (\varphi - \ell_p)]$, we will prove in Section~\ref{sec:obstacle problem rough} the existence and uniqueness of the solution to \eqref{eq:defnva}, and demonstrate that the mapping $h_{a,p} \mapsto a$ is a strictly increasing continuous bijection. The obstacle for \eqref{eq:defnva} is given by $\ell_p + h_{a,p}$, and $\{x:\;v_{a}(x, p) = \ell_p(x) + h_{a,p}\}$ is the coincidence set. 

\end{itemize}

We will show that they serve as extremal configurations for the Alexandrov estimate \eqref{eq:abp gene 1/n} in the following precise sense:
 
\begin{Theorem}\label{thm:abp extremal}
Let $a\ge 0$ be a constant, and $\varphi\in C(\overline{\Omega})$ be a convex function. 
For each $y \in \Omega$, we have
\begin{equation}\label{eq:pointwise ext lower}
\inf_{w \in \D_{a, \varphi} } w(y )=u_a(y ,y ),
\end{equation}
and
\begin{equation}\label{eq:pointwise ext upper}
\sup_{w\in \D_{a, \varphi} } w(y )=\sup_{p \in \partial \varphi (\Omega)}v_{a}(y ,p),
\end{equation}
where the supremum on the right-hand side of \eqref{eq:pointwise ext upper} is actually achieved by some $v_{a}(\cdot,p)$ with $p\in \partial \varphi (\Omega)$ and $y $ in its coincidence set.
\end{Theorem}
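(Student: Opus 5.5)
The plan is to prove the two identities separately. In each, the inequality "extremizer $\le$ / $\ge$ every competitor" comes from a total-mass comparison on the set where the competitor beats the extremizer. Attainment comes from $u_a(\cdot,y)\in\D_{a,\varphi}$ and $v_a(\cdot,p)\in\D_{a,\varphi}$, which hold by construction and by \eqref{eq:defnvaq 2}. Throughout I use the existence, uniqueness and continuity facts for \eqref{eq:defnuay} and \eqref{eq:defnva} (Proposition~\ref{prop:finitetoinfinite} and Section~\ref{sec:obstacle problem rough}), including continuity of $a\mapsto u_a$ and the fact that $h_{a,p}\mapsto a$ is a continuous increasing bijection. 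By the one-signed reduction of Section~\ref{sec:signed}, for the lower bound I may replace $w$ by $w_+$ with $\M w_+=\M\varphi+\mu_+$ and $w_+\le w$. For the upper bound I may replace $w$ by $w_-$ with $\M w_-=\M\varphi-\mu_-$ and $w_-\ge w$. In both cases the one-signed mass is at most $\omega_na^n$.

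\emph{Lower bound \eqref{eq:pointwise ext lower}.} Let $w\in\D_{a,\varphi}$ with $\M w=\M\varphi+\mu_+$, and suppose $w(y)<u_a(y,y)$. By continuity in $a$, pick $a'>a$ with $w(y)<u_{a'}(y,y)$ and set $u:=u_{a'}(\cdot,y)$. Let $U$ be the connected component of $\{w<u\}$ containing $y$. Since $w=u$ on $\partial\Omega$, we have $U\subset\subset\Omega$ up to the boundary, and $w=u$ on $\partial U$.

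The standard Alexandrov sliding argument then applies. Take any slope $q\in\partial u(U)$ and translate the corresponding supporting plane of $u$ downward until it touches $w$. It first touches inside $U$, because $w<u$ in $U$ and $w=u$ on $\partial U$. Hence $\partial u(U)\subset\partial w(U)$ up to a null set, and so
\[
\M\varphi(U)+\omega_n a'^n=\M u(U)\le \M w(U)\le \M\varphi(U)+\omega_na^n.
\]
Here $\M u(U)$ picks up the whole Dirac mass because $y\in U$. This is a contradiction since $a'>a$. Equality in \eqref{eq:pointwise ext lower} follows because $u_a(\cdot,y)\in\D_{a,\varphi}$.

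\emph{Upper bound \eqref{eq:pointwise ext upper}.} Let $w\in\D_{a,\varphi}$ with $w\ge\varphi$ and $\M w=\M\varphi-\mu_-$.
\begin{enumerate}
\item Take $q\in\partial w(y)$ and the plane $L=w(y)+q\cdot(x-y)\le w$. Lower $L$ until it touches $\varphi$, obtaining $L=\ell_p+h$ with $p=q$ and $h=\sup(L-\varphi)\ge 0$.
\item Check that $p\in\partial\varphi(\Omega)$ and $h\le\inf_{\partial\Omega}(\varphi-\ell_p)$. The second holds because $L\le w=\varphi$ on $\partial\Omega$. For the first, if the contact point of $L-h$ with $\varphi$ is on $\partial\Omega$, I would perturb the slope slightly, or use $y$ itself; this is a technical point to be handled.
\item Let $V$ be the obstacle solution with data $(p,h)$ and deficit $m(h)=\M\varphi(\Omega)-\M V(\Omega)$. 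The key claim is
\[
m(h)\le\mu_-(\Omega)\le\omega_na^n .
\]
Granting it, monotonicity of $h\mapsto m(h)$ gives $h\le h_{a,p}$. Then $y$ lies in the contact region of $w$ with $L$, and
\[
v_a(y,p)\ge \ell_p(y)+h_{a,p}\ge L(y)=w(y).
\]
Combined with $v_a(\cdot,p)\in\D_{a,\varphi}$, this proves \eqref{eq:pointwise ext upper}. If some $w$ attains the supremum, the same argument exhibits $y$ in the coincidence set of the corresponding $v_a(\cdot,p)$. For the attainment statement in general, I would take a maximizing sequence $p_k$, use compactness of $\partial\varphi(\Omega)$ closures and stability of obstacle solutions in $(p,h)$, and exclude escape of $x_{p_k}$ to $\partial\Omega$ using $w(y)>\varphi(y)$.
\end{enumerate}

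\emph{Main obstacle: the mass inequality $\M w(\Omega)\le\M V(\Omega)$ in step 3.} The difficulty is that $\M w$ and $\M V$ are not ordered as measures; only $\M w\le\M\varphi$ holds, while $V$ is singular on its coincidence set. My first attempt is to work with slopes. Since $w\ge L$ and $w=\varphi=V$ on $\partial\Omega$, I would show $\partial w(\Omega)\subset\partial V(\Omega)$ up to a null set. Take $q'\in\partial w(x)$ and slide its plane down to touch $V$. If it touches in $\{V>L\}$, then near the contact point $V$ coincides with the solution of $\M V=\M\varphi$, and a local comparison on the component of $\{w>V\}$ (where $\M V=\M\varphi\ge\M w$) gives the inclusion. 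If it touches on $\{V=L\}$, then $q'$ is a slope of a plane lying below $L$ on the coincidence set and below $\varphi$ on $\partial\Omega$. Such slopes belong to $\partial V(\{V=L\})$, which equals the slopes of the convex hull of the graph of $L$ over the coincidence set together with the boundary data. If the direct inclusion fails, I would fall back on the same perturbation trick as in the lower bound. Assume $w(y)>\sup_p v_a(y,p)$, replace $a$ by a slightly smaller $a'$, and run a total-mass contradiction on the component of $\{w>v_{a'}(\cdot,p)\}$ containing $y$, using that $\M v_{a'}=\M\varphi\ge\M w$ off the coincidence set and that the deficit of $v_{a'}$ is exactly $\omega_na'^n$.
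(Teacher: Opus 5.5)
Your lower bound is correct and is essentially the paper's argument. The paper applies Lemma~\ref{lem:comparison gene} with $W=\{y\}$, and your perturbation $a'>a$ with a mass comparison on $\{w<u_{a'}(\cdot,y)\}$ is exactly what the proof of Theorem~\ref{thm:abp 1/n} does.

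The genuine gap is step~3 of the upper bound. You correctly flag it as the crux, but you do not close it, and both routes you sketch fail as written.

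\emph{The sliding attempt.} Whether the translated plane touches $V$ in $\{V>L\}$ or on $\{V=L\}$ is beside the point. Any interior contact already puts $q'$ in $\partial V(\Omega)$. The only failure mode is contact solely on $\partial\Omega$, and that cannot happen once you know $V\le w$. Your ``local comparison on the component of $\{w>V\}$, where $\M V=\M\varphi\ge\M w$'' is unjustified. The set $\{w>V\}$ typically contains points of the coincidence set $\{V=L\}$, where $\M V$ can be strictly smaller than $\M\varphi$; it vanishes on the interior of $\{V=L\}$.

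\emph{The fallback.} It perturbs in the wrong direction. For $a'<a$ the deficit of $v_{a'}(\cdot,p)$ is $\omega_na'^n<\omega_na^n$, so comparing masses yields no contradiction. Moreover, the component of $\{w>v_{a'}(\cdot,p)\}$ containing $y$ need not contain the coincidence set, which is where the deficit lives.

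\emph{The missing observation} is that $V\le w$, and it is immediate. After your reduction, $w$ is convex, $w\ge \ell_p+h$, $\M w\le \M\varphi$ and $w=\varphi$ on $\partial\Omega$. That is, $w\in\E_h$, the Perron class whose pointwise infimum is $V=w_h$. Equivalently, run your comparison on $\{V>w\}$ instead. There $V>w\ge L$, so $\M V=\M\varphi\ge\M w$, and the comparison principle shows this set is empty. The standard lemma then gives $\partial w(\Omega)\subset\partial V(\Omega)$, which is exactly your inequality $\M w(\Omega)\le\M V(\Omega)$.

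\emph{The paper's route} avoids both the one-signed reduction and the monotonicity of $h\mapsto m(h)$. It applies Lemma~\ref{lem:comparison gene} to $v_a(\cdot,p)$ and $w$ with $W=K_{a,p}$, using that $\M v_a(E)=\M\varphi(E)-\omega_na^n\le \M w(E)$ for every Borel $E\supset W$. This yields $x\in W$ with $\ell_p(x)+h\le w(x)\le v_a(x,p)=\ell_p(x)+h_{a,p}$, hence $h\le h_{a,p}$; the case $a\ge c_p$ is trivial.

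Your fallback can also be repaired. If $h>h_{a,p}$, take $a'>a$ with $h_{a',p}<h$. Then the whole coincidence set of $v_{a'}(\cdot,p)$ lies in $\{w>v_{a'}(\cdot,p)\}$, and the mass comparison gives $\omega_na'^n\le\omega_na^n$, a contradiction.

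Two smaller remarks:
\begin{itemize}
\item Step~2 is not an issue. Since $w\ge\varphi$, you have $\max_{\overline\Omega}(L-\varphi)\ge (L-\varphi)(y)\ge 0\ge\max_{\partial\Omega}(L-\varphi)$, so the maximum is attained in $\Omega$.
\item Attainment needs no stability of $v_a(\cdot,p)$ in $p$. The paper extracts a uniformly convergent subsequence of a maximizing sequence $v_a(\cdot,p_i)$, observes that the limit $w_0$ lies in $\D_{a,\varphi}$, and applies the first part to $w_0$. This is precisely your ``if some $w$ attains the supremum'' remark.
\end{itemize}
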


\begin{Remark}
It is possible that $\sup_{p \in \partial \varphi (\Omega)}v_{a}(y ,p)>\sup_{p \in \partial \varphi (y)}v_{a}(y ,p)$. See Example \ref{exam:supremum for obs}.
\end{Remark}

\subsection{The isolated singularity problem and Alexandrov estimates revisited}\label{sec:isolated singularity}

This subsection is devoted to establishing fundamental properties of the solutions $u_{a}(\cdot,y)$ to the isolated singularity problem \eqref{eq:defnuay}. By the comparison principle, we first have
\begin{equation}\label{eq:isp control by Linfinite}
\varphi(x) - (\varphi(y)-u_{a}(y, y)  ) \leq u_{a}(x, y) \leq \varphi(x), \quad  \forall  x\in \Omega,
\end{equation}

For the special case of $\varphi\equiv 0$, the function $u_{a}(\cdot, y)/a$ reduces to a convex cone function, that is, the solution to
\begin{equation}\label{eq:cone function}
\M w(\cdot, y) =\omega_n \delta_{y} \quad \text{in } \Omega,  \quad w(\cdot, y) =0 \quad \text{on } \partial \Omega.
\end{equation}

For general convex $\varphi\in C(\overline{\Omega})$, the function $\varphi(x) + a w(x,y)$ serves as a subsolution of \eqref{eq:defnuay}, which guarantees the existence of the solution $u_{a}(\cdot, y)$ to \eqref{eq:defnuay}. 
The comparison principle then yields
\begin{equation}\label{eq:isp control by cone}
\varphi(x) +aw(x,y) \leq u_{a}(x, y) \leq \varphi(x), \quad  \forall  x\in \Omega,
\end{equation} 
and
\[
\varphi(x) +aw(x,y) \leq u_{a}(x, y) \leq  \left\|\varphi\right\|_{L^{\infty}(\Omega)} + aw(x,y), \quad  \forall  x\in \Omega.
\] 
This implies that as $a \to \infty$, the rescaled function $u_a(\cdot,y)/a$ converges uniformly on $\overline{\Omega}$ to the cone function $w(\cdot,y)$. Consequently, we obtain the optimality of  the order $a$ in the right-hand side of \eqref{eq:abp simplify} for general functions $\varphi$.

\begin{Theorem}\label{thm:abp 1/n} 
Let $\varphi \in C(\overline{\Omega})$ be a given convex function. Let $\D_{a,\varphi}$ be defined in \eqref{eq:abp class Da}. 
For each $y \in \Omega$, let $w(\cdot,y)$ denote the cone function defined in \eqref{eq:cone function}. Then we have
\begin{equation}\label{eq:alexandrov estimate}
|u(y) - \varphi(y)| \leq  a |w(y,y)|,\quad \forall u \in \D_{a,\varphi}.
\end{equation}
Consequently, we obtain both the Alexandrov estimate  \eqref{eq:abp simplify} and
\begin{equation}\label{eq:alexandrov maximum principle}
|u(y)-\varphi(y) |\leq  C(n)\operatorname{diam}(\Omega)^{\frac{n-1}{n}}\operatorname{dist}(y ,\partial \Omega)^\frac{1}{n} a,  \quad \forall\ y \in \Omega.
\end{equation}
\end{Theorem}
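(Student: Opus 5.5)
The plan is to prove the lower bound $u(y)-\varphi(y)\ge a\,w(y,y)$ for every $u\in\D_{a,\varphi}$, and then get the upper bound for free from the symmetry $u\in\D_{a,\varphi}\Leftrightarrow\varphi\in\D_{a,u}$. Since $w(\cdot,y)\le 0$, these two one-sided bounds together give \eqref{eq:alexandrov estimate}.

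\emph{Lower bound.} By Theorem \ref{thm:abp extremal}, $\inf_{w'\in\D_{a,\varphi}}w'(y)=u_a(y,y)$, so it suffices to bound $u_a(y,y)$ from below. For this I use \eqref{eq:isp control by cone}, i.e.\ $u_a(\cdot,y)\ge\varphi+a\,w(\cdot,y)$. It rests on two facts.
\begin{itemize}
\item $\varphi+a w(\cdot,y)$ is a subsolution of \eqref{eq:defnuay}, i.e.\ $\M(\varphi+aw)\ge\M\varphi+\omega_n a^n\delta_y$. This follows from superadditivity of the Monge--Amp\`ere measure. At $y$ one also uses that $\partial(\varphi+aw)(y)\supset\partial\varphi(y)+a\,\partial w(y,y)$, and that $\partial w(y,y)$ has measure $\omega_n$, so the Brunn--Minkowski inequality gives the Dirac part.
\item Both functions equal $\varphi$ on $\partial\Omega$, so the comparison principle applies.
\end{itemize}
Hence $u(y)\ge u_a(y,y)\ge\varphi(y)+a\,w(y,y)$.

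If one prefers not to invoke Theorem \ref{thm:abp extremal}, the same conclusion follows from the reduction of Section \ref{sec:signed}. One has $u\ge u_+$ with $\M u_+=\M\varphi+\mu_+$ and $\mu_+(\Omega)\le\omega_n a^n$. The remaining step is to compare $u_+$ with $\varphi+aw(\cdot,y)$, which is exactly the content of the extremal theorem. This superadditivity/subsolution step is the only delicate point; everything else is comparison.

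\emph{Upper bound.} Since $|\M\varphi-\M u|(\Omega)\le\omega_na^n$ and $\varphi=u$ on $\partial\Omega$, we have $\varphi\in\D_{a,u}$. Applying the lower bound with the roles of $u$ and $\varphi$ exchanged gives $\varphi(y)\ge u(y)+a\,w(y,y)$. Note that the cone $w$ depends only on $\Omega$ and $y$, not on the reference function. Thus $u(y)-\varphi(y)\le a|w(y,y)|$.

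\emph{Consequences.} It remains to bound $h:=|w(y,y)|$ geometrically, using that $\partial w(\cdot,y)(\Omega)=\partial w(y,y)$ has measure $\omega_n$.
\begin{itemize}
\item For \eqref{eq:abp simplify}: for every $|p|<h/\operatorname{diam}(\Omega)$, the plane $-h+p\cdot(x-y)$ stays below the zero boundary values, so it supports $w$ at $y$. Hence $\partial w(y,y)\supset B_{h/\operatorname{diam}\Omega}$, giving $\omega_n(h/\operatorname{diam}\Omega)^n\le\omega_n$, i.e.\ $h\le\operatorname{diam}(\Omega)$.
\item For \eqref{eq:alexandrov maximum principle}: let $z\in\partial\Omega$ be a nearest point to $y$ and $\nu$ the outer normal of a supporting hyperplane of $\Omega$ at $z$. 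Then $p_0=h\nu/\operatorname{dist}(y,\partial\Omega)$ also belongs to $\partial w(y,y)$, because the plane $-h+p_0\cdot(x-y)$ is $\le0$ on $\Omega$.
\item By convexity, $\partial w(y,y)$ contains the convex hull of $B_{h/\operatorname{diam}\Omega}$ and $p_0$. This hull has volume at least $c(n)\,(h/\operatorname{diam}\Omega)^{n-1}\,h/\operatorname{dist}(y,\partial\Omega)$.
\item Comparing with $\omega_n$ gives $h^n\le C(n)\operatorname{diam}(\Omega)^{n-1}\operatorname{dist}(y,\partial\Omega)$, which is \eqref{eq:alexandrov maximum principle}.
\end{itemize}
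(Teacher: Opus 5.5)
Your outline matches the paper's. The lower bound goes through $u_a(\cdot,y)$ and \eqref{eq:isp control by cone}, the upper bound comes from exchanging the roles of $u$ and $\varphi$, and then $|w(y,y)|$ is estimated. Your polar-set computation for the cone (the ball of radius $h/\operatorname{diam}(\Omega)$ and the point $h\nu/\operatorname{dist}(y,\partial\Omega)$ both lying in $\partial w(y,y)$) is correct. It is exactly the argument behind Guti\'errez's Theorem 1.4.2, which the paper only cites.

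The gap is the step that carries the actual content of the theorem: $u(y)\ge u_a(y,y)$ for every $u\in\D_{a,\varphi}$. You import it from Theorem \ref{thm:abp extremal}. That theorem, although stated earlier, is proved only later, via Lemma \ref{lem:comparison gene}. That lemma's proof solves $\M\varphi_\epsilon=\M\varphi+\epsilon\,\ud x$ with $\M\varphi$ only locally finite, which is what Proposition \ref{prop:finitetoinfinite} provides. That proposition in turn rests on Lemma \ref{lem:finitemodulus}, which explicitly reuses the proof of \eqref{eq:ubiggerua} given inside the present theorem. So the appeal to Theorem \ref{thm:abp extremal} is circular. Your fallback through Section \ref{sec:signed} is circular as well: those reductions are only available after Proposition \ref{prop:finitetoinfinite}, and you again defer the key comparison to the extremal theorem.

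Relatedly, the delicate point is not the subsolution property of $\varphi+aw(\cdot,y)$; it is this comparison. The measures $\M u$ and $\M u_a(\cdot,y)$ are not ordered, since the excess $\mu_+$ of $u$ can sit anywhere. So the usual comparison principle does not apply, and $u\ge u_a(\cdot,y)$ is in fact false in general. Already for $\varphi=0$, the cone $aw(\cdot,y')$ with $y'\neq y$ dips below $aw(\cdot,y)$ somewhere.

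The missing idea is a localized comparison with a strict mass excess at $y$. For $\varepsilon>0$ let $E=\{u<u_{a+\varepsilon}(\cdot,y)\}$. Since $u=u_{a+\varepsilon}(\cdot,y)$ on $\partial E$, \cite[Lemma 1.4.1]{gutierrez2016monge} gives $\M\big(u_{a+\varepsilon}(\cdot,y)\big)(E)\le\M u(E)$. On the other hand, $y\in E$ would give
\[
\M\big(u_{a+\varepsilon}(\cdot,y)\big)(E)=\M\varphi(E)+\omega_n(a+\varepsilon)^n>\M\varphi(E)+\omega_na^n\ge\M u(E).
\]
Hence $y\notin E$, i.e.\ $u(y)\ge u_{a+\varepsilon}(y,y)$, and letting $\varepsilon\to0$ yields $u(y)\ge u_a(y,y)$. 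With this inserted in place of the appeal to Theorem \ref{thm:abp extremal}, the rest of your argument goes through as written.
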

\begin{proof}
Let $u_a(\cdot,y)$ be the one defined in \eqref{eq:defnuay}. For every $u\in \D_{a,\varphi}$ and $\varepsilon>0$, from \cite[Lemma 1.4.1]{gutierrez2016monge} or \cite[Lemma 2.7]{figalli2017monge}, we obtain 
$\M u_{a+\varepsilon}(E) \leq \M u(E)$ for the set $E = \{u < u_{a+\varepsilon}(\cdot,y)\}$. This inequality implies $y \notin E$, since otherwise, we have
\[
\M u_{a+\varepsilon}(E)\ge \M\varphi (E)+ \omega_n(a+ \varepsilon)^n > \M u(E).
\]
Therefore, $u(y) \geq u_{a+\varepsilon}(y,y)$.  Sending $\varepsilon\to 0$, we obtain 
\begin{equation}\label{eq:ubiggerua}
u(y)\ge u_a(y,y),
\end{equation}
and thus,
\[
u(y)-\varphi(y)\ge u_a(y,y)-\varphi(y) \ge a w(y,y),
\]
where the last inequality follows from \eqref{eq:isp control by cone}.
By interchanging the roles of $u$ and $\varphi$, this also yields $\varphi(y) -u(y)\geq aw(y,y)$.

By estimating the cone function $w(y,y)$ following Theorem 1.4.2 in \cite{gutierrez2016monge}, we derive both the inequalities \eqref{eq:abp simplify} and  \eqref{eq:alexandrov maximum principle}.
\end{proof}

\begin{Lemma}\label{lem:finitemodulus}
Let $u,\varphi \in C(\overline{\Omega})$ be two convex functions such that $\varphi = u$ on $\partial \Omega$. Suppose the positive part $\mu_+$ of the measure $\M u - \M \varphi$ satisfies $\mu_+(\Omega)<+\infty$. Then 
\begin{equation}\label{eq:holder continuity}
\omega_{u}(r) \leq \omega_{\varphi}(r) + C(n,\Omega) \mu_{+}(\Omega)^{\frac{1}{n}}r^{\frac{1}{n}},
\end{equation}
where $\omega_{u}(r)$ and $\omega_{\varphi}(r)$ denote the modulus of continuity of $u$ and $\varphi$ on $\overline{\Omega}$, respectively.
\end{Lemma}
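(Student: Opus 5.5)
We compare $u$ with the isolated-singularity solutions of \eqref{eq:defnuay} centered at varying points, rather than at one fixed point. The lower bound comes from the previous proof: for every $y\in\Omega$, the argument leading to \eqref{eq:ubiggerua} shows $u(y)\ge \tilde u_b(y,y)$. Here $b:=\omega_n^{-1/n}\mu_+(\Omega)^{1/n}$, and $\tilde u_b(\cdot,y)$ solves $\M \tilde u_b=\M u_+ +\omega_n b^n\delta_y$ with the correct boundary data. For the modulus we need a two-point comparison, which we obtain through the one-signed reduction. Let $u_+$ be the solution of $\M u_+=\M\varphi+\mu_+$ with $u_+=\varphi$ on $\partial\Omega$. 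The comparison principle gives $u_+\le u$ and $u_+\le\varphi$; existence is guaranteed by the subsolution $\varphi+b\,w$-type construction used above.

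\textbf{Step 1: reduce to $u\le\varphi$.} We will show that the modulus of continuity of any convex $u$ is controlled by $\omega_\varphi$ plus the size of $\varphi-u$, measured at the boundary scale. Take $x,z\in\overline\Omega$ with $|x-z|\le r$. By convexity, the slope of $u$ along the segment through $x$ and $z$ can be large only near the boundary. More precisely, extend the segment until it hits $\partial\Omega$ at a point $\xi$ beyond $z$, with $z$ lying between $x$ and $\xi$. Since $u(\xi)=\varphi(\xi)$, convexity gives
\[
\frac{u(z)-u(x)}{|z-x|}\le \frac{u(\xi)-u(z)}{|\xi-z|}=\frac{\varphi(\xi)-\varphi(z)+(\varphi(z)-u(z))}{|\xi-z|}.
\]
Doing the same in the opposite direction bounds $u(x)-u(z)$ in the same way. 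This reduces the problem to bounding $\varphi(z)-u(z)$ in terms of $\operatorname{dist}(z,\partial\Omega)$.

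\textbf{Step 2: boundary-weighted bound.} We invoke \eqref{eq:alexandrov maximum principle} from Theorem \ref{thm:abp 1/n}. Applied to the pair $(u_+,\varphi)$, which has discrepancy $\mu_+$, it gives
\[
0\le\varphi(z)-u(z)\le \varphi(z)-u_+(z)\le C(n,\Omega)\,\mu_+(\Omega)^{1/n}\operatorname{dist}(z,\partial\Omega)^{1/n}.
\]
Note that only the lower side of $u-\varphi$ is needed, and only $\mu_+$ appears in it. This matches the statement. The estimate for $\varphi(z)-u(z)$ holds even without $u\le\varphi$, because $u\ge u_+$.

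\textbf{Step 3: combine.} Write $d=|\xi-z|\ge\operatorname{dist}(z,\partial\Omega)$ and set $M=\mu_+(\Omega)^{1/n}$. We split into two cases.

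If $d\ge r$, choose the point $\xi'$ on the segment at distance exactly $r$ beyond $z$ instead of $\xi$. Then $u(\xi')\le$ the linear interpolation between $u(z)$ and $u(\xi)$, and monotonicity of slopes gives
\[
u(z)-u(x)\le \frac{r}{d}\big(\omega_\varphi(d)+CMd^{1/n}\big).
\]
The first term is at most $\omega_\varphi(r)$, up to concavity of the modulus; if needed we replace $\omega_\varphi$ by its concave majorant, which is at most twice it. The second term equals $CMr\,d^{1/n-1}\le CMr^{1/n}$.

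If $d<r$, we directly have $u(z)-u(x)\le \omega_\varphi(r)+CMr^{1/n}$.

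\textbf{Main obstacle.} The delicate point is this first case: turning the slope bound into a modulus bound with exactly $\omega_\varphi(r)$ rather than a multiple of it. I would handle it by comparing $u(z)-u(x)$ with $\varphi$ evaluated along the same segment, plus the error $\varphi-u$, which is controlled at both $z$ and $\xi$. The loss of a constant factor in front of $\omega_\varphi$ is harmless if the statement is read up to such normalization. Otherwise one uses the convexity of $\varphi-(\varphi-u)$ more carefully, via the chord inequality written in terms of $\varphi$ itself.
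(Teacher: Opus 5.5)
Your ingredients are the paper's: a one-sided pointwise bound $\varphi-u\le C\mu_+(\Omega)^{1/n}\operatorname{dist}(\cdot,\partial\Omega)^{1/n}$, the boundary equality $u=\varphi$, and convexity of $u$ along segments. However, the chord you chose in Step 1 does not close the argument.

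\textbf{The chord in Step 1.} You compare the slope on $[x,z]$ with the slope on $[z,\xi]$ and evaluate $\varphi-u$ at $z$. With $s=|z-x|$ and $d=|\xi-z|$, this gives $u(z)-u(x)\le \frac{s}{d}\bigl(\omega_\varphi(d)+CMd^{1/n}\bigr)$. When $z$ is much closer to $\partial\Omega$ than to $x$, the factor $s/d$ is unbounded: $\frac{s}{d}CMd^{1/n}=CMs\,d^{\frac1n-1}\to\infty$ as $d\to0$. So your case $d<r$ (``we directly have \dots'') is not backed by anything you derived.

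The repair, which is what the paper's proof does, is to use the whole chord $[x,\xi]$ and evaluate $\varphi-u$ at $x$. Set $L=|\xi-x|=s+d\ge s$ and $e=(\xi-x)/L$. Then
\[
u(z)-u(x)\le \frac{s}{L}\bigl(u(\xi)-u(x)\bigr)=\frac{s}{L}\bigl(\varphi(\xi)-\varphi(x)\bigr)+\frac{s}{L}\bigl(\varphi(x)-u(x)\bigr).
\]
Since $\operatorname{dist}(x,\partial\Omega)\le L$, the last term is at most $CMs\,L^{\frac1n-1}\le CMs^{1/n}$. By convexity of $\varphi$ on $[x,\xi]$ (chord slopes with common endpoint $\xi$ increase), $\frac{s}{L}\bigl(\varphi(\xi)-\varphi(x)\bigr)\le\varphi(\xi)-\varphi(\xi-se)\le\omega_\varphi(s)$.

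This gives exactly $\omega_\varphi(r)+CMr^{1/n}$, with no case split and no concave majorant. It also resolves the constant in front of $\omega_\varphi$ that you left open. The paper's phrase ``together with the convexity of $u$'' leaves this last step implicit.

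\textbf{The detour through $u_+$.} Drop it. The proof of \eqref{eq:ubiggerua} uses only $\mu_+(\Omega)\le\omega_nb^n$. It therefore yields $u(y)\ge u_b(y,y)$, and hence $\varphi(y)-u(y)\le b|w(y,y)|\le Cb\operatorname{dist}(y,\partial\Omega)^{1/n}$ directly, exactly as the paper does. No two-point comparison is ever needed.

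Your justification for the existence of $u_+$ is also not correct as stated. $\varphi+bw(\cdot,y)$ is a subsolution only for a Dirac mass. In the paper, existence for $\M\varphi+\mu_+$ is Proposition \ref{prop:finitetoinfinite}, whose proof uses this very lemma.

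\textbf{Minor points.} $0\le\varphi(z)-u(z)$ is false without a sign condition on $\mu$; only the upper bound is used. The equation for $\tilde u_b$ should read $\M\tilde u_b=\M\varphi+\omega_nb^n\delta_y$.
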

\begin{proof}
Let us denote $a_+ = \omega_n^{-\frac{1}{n}} \mu_{+}(\Omega)^{\frac{1}{n}}$. The proof of \eqref{eq:ubiggerua} actually leads to
\[
u(y)\ge u_{a_+}(y,y).
\]
Therefore,
\[
\varphi(y) - u(y) \leq \varphi(y) - u_{a_+}(y,y)    \leq C\,  a_+ |w(y,y)| \leq  C \, a_+\operatorname{dist}(y ,\partial \Omega)^{\frac{1}{n}}.
\]
Since $\varphi = u$ on $\partial \Omega$,
it then follows that
\[
u(x) -u(y) =\varphi(x) -u(y)  \leq  \omega_{\varphi}(|x-y|) + C\, a_+ |x-y|^{\frac{1}{n}}, \quad \forall x\in \partial \Omega.
\] 
Together with the convexity of $u$, this implies the desired estimate for  $\omega_{u}(r)$ on $\overline{\Omega}$. 
\end{proof}

\begin{Remark}
The asymptotic behavior of $w(p, p)$ as $p \to \partial\Omega$ depends on the domain geometry; a detailed discussion can be found in \cite{griffin2023alexandrov}. In particular, if $\partial\Omega \in \C^{1,1}_+$, the exponent $\frac{1}{n}$ in \eqref{eq:holder continuity} can be improved to $\frac{1}{2} + \frac{1}{2n}$.
\end{Remark}

\begin{Proposition}\label{prop:finitetoinfinite}
Let $\varphi \in C(\overline{\Omega})$ be a convex function. Let $\mu$ be a locally finite Borel measure, and $\mu:=\mu_+-\mu_-$ be its Jordan decomposition. Suppose $\mu_{+}(\Omega) < \infty$ and the measure $\M \varphi + \mu$ is nonnegative. Then there exists a unique convex solution $u \in C(\overline{\Omega})$ to the Dirichlet problem
\begin{equation}\label{eq:dirichlet problem}
\M u = \M \varphi + \mu \quad \text{in } \Omega,\quad u = \varphi \quad \text{on } \partial \Omega.
\end{equation}
\end{Proposition}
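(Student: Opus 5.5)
Uniqueness comes straight from the comparison principle for Monge--Amp\`ere measures (\cite[Lemma 1.4.1]{gutierrez2016monge}). If $u_1,u_2$ both solve \eqref{eq:dirichlet problem}, then $\M u_1=\M u_2$ and $u_1=u_2$ on $\partial\Omega$. Apply the comparison principle to $u_1$ and $u_2+\varepsilon(|x|^2-C)$, which has strictly larger measure, and let $\varepsilon\to0$. This gives $u_1\le u_2$, and by symmetry $u_1=u_2$. The rest of the plan concerns existence, which we obtain by approximation and stability of Monge--Amp\`ere measures under uniform convergence.

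\textbf{Step 1: finite measures.} First assume $\mu_-(\Omega)<\infty$ as well, so that $\nu:=\M\varphi+\mu$ is a nonnegative measure. We use Perron's method: set
\[
u=\sup\{v \text{ convex}:\ \M v\ge \nu,\ v\le \varphi \text{ on }\partial\Omega\}.
\]
A lower barrier comes from the subsolution built from $\varphi$ and the cone $w$. Since $\M(\varphi+ w)\ge \M\varphi+\M w$, solving $\M \psi=\mu_+$ with zero boundary data (possible because $\mu_+$ is finite) and taking $\varphi+\psi$ gives a subsolution of $\M\cdot \ge\M\varphi+\mu_+\ge \nu$ with boundary value $\varphi$.

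An upper barrier with the correct boundary values is the delicate part. Here I would use the solution $\hat u$ with $\M\hat u=\M\varphi$ restricted away from $\operatorname{supp}\mu_-$; more simply, $\varphi$ itself is a supersolution when $\mu_-=0$. In general, approximate $\mu_-$ by measures $\mu_-^k\le \M\varphi$ with compact support.

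Continuity up to the boundary follows from Lemma~\ref{lem:finitemodulus}. That lemma gives a modulus of continuity controlled by $\omega_\varphi$ and $\mu_+(\Omega)$, uniformly along the approximations, so Arzel\`a--Ascoli applies. Alternatively, approximate everything by discrete measures: solve with $\nu_k=\sum c_i\delta_{x_i}$ plus a boundary approximation of $\varphi$ by piecewise-linear data, as in the standard solution of the Dirichlet problem (\cite[Theorem 1.6.2]{gutierrez2016monge}). The weak convergence $\M u_k\to\M u$ under uniform convergence identifies the limit.

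\textbf{Step 2: locally finite $\mu_-$.} Take $\Omega_k\uparrow\Omega$ and set $\mu^k=\mu_+-\mu_-\chi_{\Omega_k}$. Each $\M\varphi+\mu^k\ge0$ is finite, so Step 1 gives a solution $u_k$. By the comparison principle $u_k$ is monotone increasing in $k$. It is bounded above by the solution $\bar u$ of $\M\bar u=\M\varphi\cdot\chi_{\{\cdot\}}$... More concretely, $u_k$ is bounded above by the convex function $\bar u$ with $\M\bar u=0$ and $\bar u=\varphi$ on $\partial\Omega$, that is, the largest convex function with these boundary values (the convex hull of the boundary data). It is bounded below by the solution with measure $\M\varphi+\mu_+$.

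Both barriers are continuous and equal $\varphi$ on $\partial\Omega$, and Lemma~\ref{lem:finitemodulus} gives the uniform modulus $\omega_\varphi(r)+C\mu_+(\Omega)^{1/n}r^{1/n}$ independent of $k$. Hence $u_k\to u$ uniformly on $\overline\Omega$ by Dini/Arzel\`a--Ascoli. Weak convergence then gives $\M u=\lim \M u_k=\M\varphi+\mu$ locally, since on each compact set $K\subset\Omega_k$ the measures eventually coincide.

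\textbf{Main obstacle.} I expect the main obstacle to be guaranteeing continuity up to $\partial\Omega$ with the prescribed boundary data when $\mu_-$ is infinite near $\partial\Omega$. The sandwich between the Step~2 barriers, both attaining $\varphi$ on $\partial\Omega$, is what handles this. I will check carefully that the upper barrier (the convex envelope of boundary data, i.e.\ the maximal convex extension) is continuous at $\partial\Omega$; this holds because $\varphi$ itself is a convex continuous competitor lying below it.
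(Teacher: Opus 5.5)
Your uniqueness argument is fine. The limiting machinery in Step~2 is also fine: the uniform modulus from Lemma~\ref{lem:finitemodulus}, Arzel\`a--Ascoli/Dini, and weak stability of Monge--Amp\`ere measures. The existence argument, however, has a genuine gap: you truncate the wrong measure. The hypothesis $\M\varphi+\mu\ge 0$ means $\mu_-\le \M\varphi+\mu_+$, so $\mu_-(\Omega)\le \M\varphi(\Omega)+\mu_+(\Omega)$. Hence $\mu_-(\Omega)=\infty$ can only occur when $\M\varphi(\Omega)=\infty$. This is entirely possible for convex $\varphi\in C(\overline\Omega)$: for $\varphi=-\sqrt{1-|x|^2}$ on $B_1$, the gradient image is all of $\R^n$. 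So in exactly the case your Step~2 is built for, the claim ``each $\M\varphi+\mu^k$ is finite'' is false, since $(\M\varphi+\mu^k)(\Omega)=\M\varphi(\Omega)+\mu_+(\Omega)-\mu_-(\Omega_k)=\infty$. Conversely, when $\M\varphi(\Omega)<\infty$, $\mu_-$ is automatically finite and Step~2 is vacuous.

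Everything therefore rests on Step~1 with a measure $\nu$ of infinite total mass near $\partial\Omega$. There the finite-measure Dirichlet theory you invoke does not apply. The Perron sketch does not close the gap either: it never shows that the Perron supremum satisfies $\M u\le\nu$, which is the whole content. The usual lifting step, replacing $u$ on a ball by a local solution, does not preserve convexity. That is why the standard theory passes through discrete approximations, and that route needs finite total mass for its uniform estimates. Incidentally, $\varphi\le\bar u$ only gives lower semicontinuity of your upper barrier at $\partial\Omega$. You do not actually need $\bar u$, since Lemma~\ref{lem:finitemodulus} already controls the boundary behaviour.

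The missing idea is to exhaust the \emph{domain} rather than the measure, which is what the paper does. Set $\Omega_\epsilon=\{x\in\Omega:\operatorname{dist}(x,\partial\Omega)>\epsilon\}$. There both $\M\varphi$ and $\mu$ have finite mass, because they are locally finite. The standard theory then gives a convex $u_\epsilon\in C(\overline{\Omega_\epsilon})$ with $\M u_\epsilon=\M\varphi+\mu$ in $\Omega_\epsilon$ and $u_\epsilon=\varphi$ on $\partial\Omega_\epsilon$. Apply Lemma~\ref{lem:finitemodulus} on $\Omega_\epsilon$; the positive part of $\M u_\epsilon-\M\varphi$ still has mass at most $\mu_+(\Omega)$. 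This gives the modulus $\omega_\varphi(r)+C\mu_+(\Omega)^{1/n}r^{1/n}$ uniformly in $\epsilon$, and hence $|u_\epsilon-\varphi|(x)\le 2\omega_\varphi(d)+Cd^{1/n}$ with $d=\operatorname{dist}(x,\partial\Omega)$. So a subsequential limit $u$ lies in $C(\overline\Omega)$ and equals $\varphi$ on $\partial\Omega$, and weak stability identifies $\M u=\M\varphi+\mu$. With this replacement, your limiting argument goes through unchanged and no truncation of $\mu_-$ is needed.
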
 
 \begin{proof}
For $\epsilon > 0$, define $\Omega_{\epsilon} = \{x \in \Omega : \operatorname{dist}(x, \partial \Omega) > \varepsilon\}$.
Since the measure $(\M\varphi+\mu)(\Omega_{\epsilon})$ is finite, it follows from \cite{gutierrez2016monge,hartenstine2006dirichlet} that there exists a unique convex solution $u_{\epsilon} \in C(\overline{\Omega_{\epsilon}})$ to
\[
\M u_{\epsilon} = \M \varphi + \mu \quad \text{in } \Omega_{\epsilon},\quad u_{\epsilon} = \varphi \quad \text{on } \partial \Omega_{\epsilon}.
\]
By the estimate  \eqref{eq:holder continuity}, the functions $u_{\epsilon}$ are uniformly continuous in $\overline{\Omega}_{\epsilon}$. Thus, as $\epsilon \to 0$, we may extract a subsequence that converges locally uniformly to a convex function $u \in C(\overline{\Omega})$ satisfying $u = \varphi$ on $\partial \Omega$.
\cite[Lemma 1.2.2]{gutierrez2016monge} then implies that $u$ solves the Dirichlet problem \eqref{eq:dirichlet problem}, and the uniqueness of the solution follows from the standard comparison principle given in \cite[Theorem 1.4.6]{gutierrez2016monge}. 
\end{proof}

\subsection{The obstacle problem}\label{sec:comparison principle}\label{sec:obstacle problem rough}
 
Next, we discuss some results concerning convex solutions to the (hyperplane) obstacle problem. This problem has been studied by Savin \cite{savin2005obstacle} and Huang-Tang-Wang \cite{huang2024regularity}, and subsequently extended in \cite{jin2025regularity}. The results in \cite{savin2005obstacle, huang2024regularity} remain valid if the obstacle $0$ is replaced by another linear function $\ell$. 

Let $\varphi\in C(\overline{\Omega})$ be a convex function. 
Let $p \in \partial \varphi(\Omega)$ be a subgradient, and let $\ell_p(x) = \varphi(x_p) + p\cdot(x-x_p)$ be a support function of $\varphi$ at some $x_p \in \Omega$.
For each $0 \leq h\leq   \inf_{\partial \Omega} \left\{ \varphi - \ell_{p}\right\}$,
consider the pointwise infimum 
\[
w_h(x) := \inf_{w \in  \E_{h}}w(x),  \quad x\in \Omega,
\]
where $\E_{h}$ denotes the class of admissible supersolutions:  
\[
\E_{h} = \left\{   w \geq  \ell_p+h\text{ is convex}:\;  \M w\leq  \M \varphi \text{ in } \Omega, \ \mbox{and }  w=\varphi \text{ on } \partial \Omega \right\}.
\] 
Following \cite{savin2005obstacle} (c.f. Proposition 2.2 and Lemma 2.3 in \cite{jin2025regularity}), the infimum $w_h$ is the unique convex solution to the obstacle problem:
\begin{equation*}\label{eq:obse ap 1}
\M w_{h}(\cdot, p) = \M \varphi(\cdot, p) \quad \text{on } \left\{ w_{h}(\cdot, p) >  \ell_{p}+h\right\},\quad  
\M w_{h}(\cdot, p) \leq \M \varphi
\end{equation*} 
under the Dirichlet boundary condition $w_{h}=\varphi$ on $\partial \Omega$.
For the case where $\M\varphi$ has a bounded density, this reduces to: 
\[
\M w_h= \M \varphi \cdot  \chi_{\{w_h> \ell_p + h\}}\quad \text{in } \Omega,\quad w_{h}=\varphi \quad \text{on } \partial \Omega.
\]

When $h=0$, the obstacle solution $w_0=\varphi$.
Applying the comparison principle (Lemma 2.3 in \cite{jin2025regularity}) yields
\begin{equation}\label{eq:xpinK}
\varphi \leq w_{h}  \leq \varphi+h \quad \text{in } \Omega ,\quad w_{h}(x_p)=\ell_p(x_p)+h=\varphi(x_p)+ h,
\end{equation}
and the following

\begin{Lemma}\label{lem:comparison principle va on h}
Assuming  $h_1 > h_2 \geq 0$, it follows that $\M w_{h_1} \not\equiv \M w_{h_2}$,  
\[
w_{h_1} \geq w_{h_2} \geq w_{h_1} - h_1 + h_2, \quad \left\{ w_{h_2} = h_2 \right\} \subset \left\{ w_{h_1} = h_1 \right\},\quad  \M w_{h_1} \leq \M w_{h_2}.
\]
Moreover, for $\M\varphi$ with bounded density, we have $\left\{ w_{h_2} = h_2 \right\} \subsetneq \left\{ w_{h_1} = h_1 \right\}$.
\end{Lemma}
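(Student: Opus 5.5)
Throughout, $w_h$ denotes the obstacle solution with height $h$ (the dependence on $p$ is suppressed), and $\{w_h=h\}$ is shorthand for the coincidence set $\{w_h=\ell_p+h\}$. The plan is to argue mainly through the Perron characterization $w_h=\inf_{w\in\E_h}w$ together with the comparison principle (Lemma 2.3 in \cite{jin2025regularity}) already used for \eqref{eq:xpinK}. We treat the claims in order.

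\textbf{Monotonicity and the lower bound.} Since $h_1>h_2$, we have $\E_{h_1}\subset\E_{h_2}$: any $w\ge\ell_p+h_1$ also satisfies $w\ge\ell_p+h_2$. Taking infima gives $w_{h_1}\ge w_{h_2}$. For the other inequality, set $\tilde w:=\max\{w_{h_1}-(h_1-h_2),\,w_{h_2}\}$. On $\partial\Omega$ it equals $\varphi$, because $h_1-h_2>0$ and $w_{h_1}=w_{h_2}=\varphi$ there. It is convex, and $\tilde w\ge w_{h_2}\ge\ell_p+h_2$.

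To see $\M\tilde w\le\M\varphi$, we split $\Omega$ into three regions:
\begin{itemize}
\item On the open set $\{w_{h_2}>w_{h_1}-(h_1-h_2)\}$, $\tilde w=w_{h_2}$ and the bound is known.
\item On the open set where the other term is strictly larger, $\M\tilde w=\M w_{h_1}\le\M\varphi$.
\item On the equality set we use the standard fact that the Monge--Amp\`ere measure of a maximum is at most the measure of whichever function it touches, via $\partial\tilde w(x)\subset\partial w_{h_1}(x)\cap\partial w_{h_2}(x)$ at such points. Then $\M\tilde w\le\M w_{h_2}\le\M\varphi$ there.
\end{itemize}
It is cleaner to phrase this with the subdifferential inclusion pointwise on all of $\Omega$. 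Hence $\tilde w\in\E_{h_2}$, so $w_{h_2}\le\tilde w$. In fact the comparison principle gives $w_{h_2}\ge w_{h_1}-(h_1-h_2)$ more directly: $w_{h_1}-(h_1-h_2)$ is a subsolution of the $h_2$-problem off its coincidence set, and lies below $\ell_p+h_2$ on that set. Either route is acceptable.

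\textbf{Coincidence sets and measures.} If $w_{h_2}(x)=\ell_p(x)+h_2$, then $w_{h_1}(x)\le w_{h_2}(x)+h_1-h_2=\ell_p(x)+h_1$, while $w_{h_1}\ge\ell_p+h_1$. Hence $x\in\{w_{h_1}=h_1\}$, which gives the inclusion.

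For $\M w_{h_1}\le\M w_{h_2}$ we argue set by set.
\begin{itemize}
\item On $\Omega\setminus\{w_{h_2}=h_2\}$, $w_{h_2}$ solves $\M w_{h_2}=\M\varphi\ge\M w_{h_1}$.
\item On the coincidence set $K_2:=\{w_{h_2}=h_2\}$, both functions are affine translates of the obstacle with slopes in the same hyperplane. Since $w_{h_1}-w_{h_2}$ attains its maximum $h_1-h_2$ on $K_2$, at each $x\in K_2$ we get $\partial w_{h_1}(x)\subset\partial w_{h_2}(x)$. Thus $\M w_{h_1}(E)\le\M w_{h_2}(E)$ for Borel $E\subset K_2$.
\end{itemize}
Combining the two pieces gives the measure inequality.

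\textbf{Strictness.} For $\M w_{h_1}\not\equiv\M w_{h_2}$, suppose equality holds. Both functions agree on $\partial\Omega$, so uniqueness for the Dirichlet problem (Proposition \ref{prop:finitetoinfinite}) forces $w_{h_1}=w_{h_2}$. This contradicts $w_{h_i}(x_p)=\varphi(x_p)+h_i$ from \eqref{eq:xpinK}.

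Finally, suppose $\M\varphi$ has bounded density and $K_1:=\{w_{h_1}=h_1\}$ equals $K_2$. Then off $K_1$ we have $\M w_{h_1}=\M\varphi=\M w_{h_2}$. On $K_1$, in the bounded-density case the coincidence set carries $\M w_{h_i}$ only through its boundary, and $\M w_{h_i}(K)\cdot$ can be computed as $|\partial w_{h_i}(K)|$. We therefore need $|\partial w_{h_1}(K_1)|=|\partial w_{h_2}(K_2)|$ to derive a contradiction.

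This is the \textbf{main obstacle}. Mass balance is the way through: since $w_{h_i}=\varphi$ on $\partial\Omega$, the total masses $\M w_{h_i}(\Omega)=|\partial w_{h_i}(\Omega)|$ are determined by boundary data and are comparable. Instead, we plan to argue via strict comparison. If $K_1=K_2$, then $w_{h_1}-(h_1-h_2)$ and $w_{h_2}$ coincide on $K_2$, and both solve $\M=\M\varphi$ in $\Omega\setminus K_2$. Their boundary values there are $\varphi-(h_1-h_2)<\varphi$ on $\partial\Omega$ and equal values on $\partial K_2$. Thus $w_{h_2}-(w_{h_1}-h_1+h_2)$ vanishes on $\partial K_2$ and is positive near $\partial\Omega$.

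Using $\M w_{h_1}=\M w_{h_2}$ on $\Omega\setminus K$, bounded density, and the fact that near free boundary points both functions share the same tangent plane, a strong-maximum-principle or Hopf-type argument should contradict the touching. The most robust version we would try first is to perturb. Replace $w_{h_1}-(h_1-h_2)$ by a slightly lowered and slightly more convex function. This function lies strictly below $w_{h_2}$ on $\partial(\Omega\setminus K)$, yet is $\ge\ell_p+h_2$, and the construction shows that $w_{h_2}$ is not the minimal supersolution. Alternatively, one can exploit the fact that $\M w_{h_1}(K)>\M w_{h_2}(K)$ would then be forced by total mass, combined with the inequality $\M w_{h_1}\le\M w_{h_2}$ and non-identity of the measures.
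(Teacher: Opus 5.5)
Your first four assertions are correct and follow the paper's argument:
\begin{itemize}
\item $\E_{h_1}\subset\E_{h_2}$ gives $w_{h_1}\ge w_{h_2}$.
\item Comparison on the open set $\{w_{h_1}-(h_1-h_2)>w_{h_2}\}$ (your ``second route'') gives the lower bound, and the inclusion of coincidence sets follows from it.
\item The touching inclusion $\partial w_{h_1}(x)\subset\partial w_{h_2}(x)$ on $\{w_{h_2}=h_2\}$ gives $\M w_{h_1}\le\M w_{h_2}$.
\item Dirichlet uniqueness together with \eqref{eq:xpinK} gives $\M w_{h_1}\not\equiv\M w_{h_2}$.
\end{itemize}
You should drop your first route for the lower bound rather than call it acceptable. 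At a point where two convex functions agree, the subdifferential of their maximum \emph{contains} $\partial w_{h_1}(x)\cup\partial w_{h_2}(x)$; for example $\max\{x,-x\}=|x|$. So the inclusion you wrote is backwards, and a maximum of supersolutions is in general not a supersolution.

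The genuine gap is the last assertion, $\{w_{h_2}=h_2\}\subsetneq\{w_{h_1}=h_1\}$ under bounded density, which you leave unproved. None of your sketched routes works as stated.
\begin{itemize}
\item The total mass $\M w_{h_i}(\Omega)$ is \emph{not} determined by the boundary values: $0$ and $|x|^2-1$ agree on $\partial B_1$ but have different total mass. So the mass-balance idea and the ``forced by total mass'' idea both fail.
\item The perturbation is self-contradictory. If $K_1=K_2=:K$, then $w_{h_1}-(h_1-h_2)=\ell_p+h_2$ on $K$, so lowering it violates the obstacle. Making it more convex destroys $\M w\le\M\varphi$.
\item A Hopf argument at the free boundary would need regularity and nondegeneracy there that you have not established.
\end{itemize}

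The missing observation is elementary. With bounded density, the coincidence set carries \emph{no} Monge--Amp\`ere mass, so the equation holds as $\M w_h=\M\varphi\cdot\chi_{\{w_h>\ell_p+h\}}$ in all of $\Omega$, exactly as the paper states. To see this, note that $K_h=\{w_h\le\ell_p+h\}$ is convex, so its boundary is Lebesgue-null and $\M w_h(\partial K_h)\le\M\varphi(\partial K_h)=0$. Also, the subdifferential of $w_h$ is $\{p\}$ at interior points of $K_h$. You were one step away from this when you wrote that $K$ carries mass only through its boundary. Hence $\M w_{h_i}(E)=\M\varphi(E\setminus K_i)$ for every Borel $E$. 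So $K_1=K_2$ would force $\M w_{h_1}=\M w_{h_2}$, contradicting the non-identity you already proved. This is the paper's one-line conclusion.
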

\begin{proof}
By the uniqueness of solutions to the Monge-Amp\`ere equation, the relation $\M w_1 \equiv \M w_2$ would imply $w_{h_1} = w_{h_2}$, contradicting $h_1 > h_2$. 

By the comparison principle for the Monge-Ampère obstacle problem (Lemma 2.3 in \cite{jin2025regularity}), we have $w_{h_1} \geq w_{h_2} \geq w_{h_1} - h_1 + h_2$ and $\{w_{h_2} = h_2\} \subset \{w_{h_1} = h_1\}$.  Since $w_{h_2}$ touches $w_{h_1} - h_1 + h_2$ from above on $\{w_{h_2} = h_2\}$, it follows that $\M w_{h_1} \leq \M w_{h_2}$.
As $\M w_{h_1} \not\equiv \M w_{h_2}$,  we obtain $\left\{ w_{h_2} = h_2 \right\} \subsetneq \left\{ w_{h_1} = h_1 \right\}$ when $\M\varphi$ has bounded density. 
\end{proof}

\begin{Lemma}\label{lem:uniqueha}
Let us fix $p \in \partial \varphi(\Omega)$ and define $c_p\ge 0$ by
\[
\omega_nc_p^n=|\M w_{\inf_{\partial \Omega } (\varphi-\ell_p) }(\cdot,p) - \M \varphi|(\Omega).
\]
For any  $a \in [0,c_p] $, there exists a unique $h_{a,p}\in[0, \inf_{\partial \Omega } (\varphi-\ell_p)]$ such that \eqref{eq:defnvaq 2} holds. 
\end{Lemma}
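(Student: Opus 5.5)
Define $F(h) := |\M w_h - \M\varphi|(\Omega)$ for $h\in[0,H]$, where $H:=\inf_{\partial\Omega}(\varphi-\ell_p)$. We have $F(0)=0$ since $w_0=\varphi$, and $F(H)=\omega_n c_p^n$ by the definition of $c_p$. The plan is to show that $F$ is continuous and strictly increasing on $[0,H]$. The intermediate value theorem then gives existence of $h_{a,p}$ for every $a\in[0,c_p]$, and strict monotonicity gives uniqueness.

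\emph{Simplification of $F$.} Because $\M w_h\le \M\varphi$, the discrepancy is a nonpositive measure. Hence
\[
F(h)=\M\varphi(\Omega)-\M w_h(\Omega),
\]
where both quantities are understood locally if $\M\varphi(\Omega)=\infty$. In that case one should write $F(h)=(\M\varphi-\M w_h)(\Omega)$, which is a well-defined total mass of a nonnegative measure. This mass is finite: the measure $\M\varphi-\M w_h$ is supported in the coincidence set $\{w_h=\ell_p+h\}$, which is compactly contained in $\Omega$ for $h<H$. For $h=H$, finiteness follows from the assumption $c_p<\infty$, implicit in the definition of $c_p$.

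\emph{Strict monotonicity.} Take $h_1>h_2$. Lemma~\ref{lem:comparison principle va on h} gives $\M w_{h_1}\le \M w_{h_2}$ and $\M w_{h_1}\not\equiv\M w_{h_2}$. Therefore $\M w_{h_2}-\M w_{h_1}$ is a nonzero nonnegative measure, and
\[
F(h_1)-F(h_2)=(\M w_{h_2}-\M w_{h_1})(\Omega)>0.
\]

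\emph{Continuity.} Let $h_k\to h$. The comparison bounds $w_{h_1}\ge w_{h_2}\ge w_{h_1}-(h_1-h_2)$ show that $\|w_{h_k}-w_h\|_{L^\infty}\le |h_k-h|$, so $w_{h_k}\to w_h$ uniformly. Weak convergence of Monge--Ampère measures under uniform convergence then gives $\M w_{h_k}\rightharpoonup \M w_h$ locally in $\Omega$. The difficulty is to upgrade this to convergence of the total masses of $\M\varphi-\M w_{h_k}$, since mass could escape to $\partial\Omega$ or concentrate on the limiting contact set.

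For the boundary, the measures $\M\varphi-\M w_{h_k}$ are supported in the coincidence sets $\{w_{h_k}=\ell_p+h_k\}$. These sets are monotone in $k$ and, for $h_k\le h'<H$, contained in the set for $h'$, which is a compact subset of $\Omega$. Indeed, $w_{h'}=\varphi>\ell_p+h'$ on $\partial\Omega$, so by continuity the coincidence set stays away from $\partial\Omega$. Thus on $[0,h']$ all the discrepancy measures live in a fixed compact set $K\subset\Omega$. Testing the weak convergence against a cutoff equal to $1$ on $K$ gives $F(h_k)\to F(h)$ for $h<H$.

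The endpoint $h=H$ is the main obstacle, since there the coincidence set can touch $\partial\Omega$. Left-continuity at $H$ is not needed as a separate statement, because $F$ is monotone. It suffices to show $\lim_{h\uparrow H}F(h)=F(H)$, which one can obtain as follows. Monotonicity and lower semicontinuity of mass under weak convergence give $\liminf_{h\uparrow H} F(h)\ge F(H)$ on every compact subset, and letting the compact sets exhaust $\Omega$ gives the inequality on $\Omega$. The reverse inequality $F(h)\le F(H)$ is exactly monotonicity. Hence $F$ is continuous on $[0,H]$, and the intermediate value theorem completes the proof.
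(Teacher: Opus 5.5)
Your proposal is correct and follows the same route as the paper. Both arguments get strict monotonicity of $h\mapsto|\M w_h-\M\varphi|(\Omega)$ from Lemma~\ref{lem:comparison principle va on h}, get continuity from the bound $w_{h_1}\ge w_{h_2}\ge w_{h_1}-h_1+h_2$ together with weak stability of Monge--Amp\`ere measures, and conclude with the intermediate value theorem. Your extra care about mass escaping to $\partial\Omega$ fills in what the paper compresses into the phrase ``stability of Monge--Amp\`ere measures under uniform convergence'': for $h<h'<H$ the discrepancy is supported in the compact coincidence set of $w_{h'}$, and at $h=H$ lower semicontinuity combined with monotonicity gives left-continuity.
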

\begin{proof}
For $h_1 > h_2 \geq 0$, we have $\M w_{h_1} \leq \M w_{h_2}$ with $\M w_{h_1} \not\equiv \M w_{h_2}$, which implies that $|\M w_{h}(\cdot,p) - \M \varphi|(\Omega)$ is strictly increasing in $h$. Noting that $w_{h_1} \geq w_{h_2} \geq w_{h_1} - h_1 + h_2$ and using the stability of Monge-Amp\`ere measures under uniform convergence of solutions, we conclude that the total variation $|\M w_h - \M \varphi|(\Omega)$ is also continuous in $h$. By the invertibility property of strictly increasing continuous functions,  for every $a \in [0,c_p]$, there exists a unique $h_{a,p}\in[0, \inf_{\partial \Omega } (\varphi-\ell_p)]$ such that \eqref{eq:defnvaq 2} holds.
\end{proof}

\subsection{Proof of Theorem \ref{thm:abp extremal} and further discussions}\label{sec:abp classical}

In this subsection, we first prove Theorem \ref{thm:abp extremal}, which provides a novel interpretation of \eqref{eq:abp gene 1/n}. We then conduct a further analysis of this interpretation, focusing on the extremal configurations of Alexandrov estimates that underlie its specific form. 

The proof of Theorem \ref{thm:abp extremal} relies on a nonstandard comparison principle. We state this key tool next, as it will be frequently employed in our subsequent analysis.
\begin{Lemma}\label{lem:comparison gene} 
Let $u, \varphi \in C(\overline{\Omega})$ be convex functions, and let $W\subset \Om$ be a closed set such that
\[
\M u(E) \leq \M \varphi(E) \quad \text{ for every Borel set } E \text{ satisfying } W\subset E \subset \Om.
\] 
Then
\[
\max_{x \in W}\{u(x)-\varphi(x)\} \geq \min _{x \in \partial \Omega}\{u(x)-\varphi (x)\} .
\]	
\end{Lemma}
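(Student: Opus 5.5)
We argue by contradiction. Suppose $\max_W (u-\varphi) < \min_{\partial\Omega}(u-\varphi)$. Then there is a constant $c$ with
\[
\max_W (u-\varphi) < c < \min_{\partial\Omega}(u-\varphi).
\]
Set $\tilde u := u - c$. On $\partial\Omega$ we have $\tilde u > \varphi$, while on $W$ we have $\tilde u < \varphi$. Define the open set $G := \{x\in\Omega : \tilde u(x) < \varphi(x)\}$. It contains the closed set $W$, and since $\tilde u>\varphi$ on $\partial\Omega$, continuity gives $\overline G\subset\Omega$. The Monge--Amp\`ere measure is unchanged by subtracting a constant, so $\M\tilde u=\M u$. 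In particular $\M\tilde u(E)\le \M\varphi(E)$ for every Borel $E$ with $W\subset E\subset\Omega$.

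Next we perturb strictly. Pick $\varepsilon>0$ small and $R$ large, with $\Omega\subset B_R(0)$ and $\varepsilon R^2$ smaller than half the gap $\min_{\overline G\setminus G'}$-type margins. Concretely, first shrink $c$ slightly so that the inequalities above stay strict with a quantitative margin $\eta>0$, namely $\tilde u<\varphi-\eta$ on $W$ and $\tilde u>\varphi+\eta$ on $\partial\Omega$. Then let $\tilde u_\varepsilon := \tilde u + \varepsilon(|x|^2 - R^2)$ with $\varepsilon R^2<\eta$. On $\partial\Omega$ we still have $\tilde u_\varepsilon \ge \tilde u - \varepsilon R^2 > \varphi$, and on $W$ we still have $\tilde u_\varepsilon<\varphi$. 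Put $E:=\{\tilde u_\varepsilon<\varphi\}$; this set is open, contains $W$, and has compact closure in $\Omega$. By the standard Alexandrov comparison lemma (\cite[Lemma 1.4.1]{gutierrez2016monge}, used above in the proof of Theorem~\ref{thm:abp 1/n}), which applies because $\tilde u_\varepsilon=\varphi$ on $\partial E$ and $\tilde u_\varepsilon<\varphi$ in $E$, we get
\[
\M\tilde u_\varepsilon(E)\ \ge\ \M\varphi(E).
\]

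We now compare this with the strict increase from the quadratic. Write $\partial\tilde u_\varepsilon(x) \supset \partial\tilde u(x)+2\varepsilon x$. A standard Brunn--Minkowski superadditivity argument for the Monge--Amp\`ere measure of a sum of convex functions gives
\[
\M\tilde u_\varepsilon(E)\ \ge\ \M\tilde u(E) + (2\varepsilon)^n|E|.
\]
The clean route to this is that $\M(f+g)\ge \M f+\M g$ for convex $f,g$, which follows from Minkowski's determinant inequality for smooth functions and then by approximation. Combining, we obtain
\[
\M\varphi(E)\ \ge\ \M\tilde u(E)+(2\varepsilon)^n|E|.
\]
The hypothesis, however, gives $\M\tilde u(E)\le \M\varphi(E)$, which is the wrong direction for an immediate contradiction. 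So we must compare in the opposite orientation.

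The roles should therefore be arranged so that the set where $\tilde u$ is below $\varphi$ is forced to carry more $\M\tilde u$-mass, and the correct lemma gives the following: if $\tilde u<\varphi$ inside $E$ and $\tilde u=\varphi$ on $\partial E$, then $\partial\varphi(E)\subset\partial\tilde u(E)$, i.e. $\M\varphi(E)\le\M\tilde u(E)$. Here $E$ contains $W$, so the hypothesis gives $\M\tilde u(E)\le\M\varphi(E)$, and hence equality must hold. To break this equality we perturb $\varphi$ instead. Set $\varphi_\varepsilon:=\varphi+\varepsilon(|x|^2-R^2)$ with $\varepsilon R^2<\eta$, and let $E_\varepsilon:=\{\tilde u<\varphi_\varepsilon\}\supset W$, which is open with closure in $\Omega$ and has positive measure because it contains a neighborhood of $W$ (we may assume $W\neq\emptyset$; if $W=\emptyset$, take $E=\emptyset$, which forces the conclusion trivially via a degenerate reading, and this case needs a separate brief comment). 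The comparison lemma then gives
\[
\M\tilde u(E_\varepsilon)\ \ge\ \M\varphi_\varepsilon(E_\varepsilon)\ \ge\ \M\varphi(E_\varepsilon)+(2\varepsilon)^n|E_\varepsilon|\ >\ \M\varphi(E_\varepsilon),
\]
which contradicts the hypothesis applied to $E=E_\varepsilon$.

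The main obstacle is the superadditivity $\M(\varphi+q)\ge\M\varphi+\M q$ for merely continuous convex $\varphi$. We justify it by mollifying $\varphi$ on compact subsets and using weak convergence of Monge--Amp\`ere measures, or directly by noting that $\partial(\varphi+q)(E)\supset\{p+2\varepsilon x\}$ and applying Brunn--Minkowski to the sets of gradients over small cells.
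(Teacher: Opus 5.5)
Your final argument, the one using $\varphi_\varepsilon=\varphi+\varepsilon(|x|^2-R^2)$, is correct. It uses the same mechanism as the paper: raise the Monge--Amp\`ere measure of $\varphi$ by a multiple of Lebesgue measure, apply \cite[Lemma 1.4.1]{gutierrez2016monge} on the set where $u$ lies below the perturbed function to get $\M\varphi_\varepsilon(E)\le\M u(E)$, and contradict the hypothesis.

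The implementation differs. The paper proceeds as follows:
\begin{itemize}
\item it normalizes $\min_{\partial\Omega}(u-\varphi)=0$;
\item it solves the Dirichlet problem $\M\varphi_\epsilon=\M\varphi+\epsilon\,\ud x$ with $\varphi_\epsilon=\varphi$ on $\partial\Omega$;
\item it concludes $\max_W(u-\varphi_\epsilon)\ge 0$ and lets $\epsilon\to0$, treating the case $\M\varphi(W)=0$ separately.
\end{itemize}
You instead argue by contradiction with a margin $\eta$ and an explicit quadratic perturbation with $\varepsilon R^2<\eta$. This avoids an auxiliary Dirichlet problem with measure data, the limit $\varphi_\epsilon\to\varphi$, and the case split. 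It also gives $\overline{E_\varepsilon}\subset\Omega$, so every mass in
\[
\M u(E_\varepsilon)\ge\M\varphi_\varepsilon(E_\varepsilon)\ge\M\varphi(E_\varepsilon)+(2\varepsilon)^n|E_\varepsilon|
\]
is finite, and the final strict inequality really contradicts the hypothesis.

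The price is the superadditivity $\M(\varphi+\varepsilon|x|^2)\ge\M\varphi+(2\varepsilon)^n\,\ud x$ for merely continuous convex $\varphi$. Your mollification route handles this correctly: use the pointwise bound $\det(A+2\varepsilon I)\ge\det A+(2\varepsilon)^n$ for $A\ge 0$, then pass to the limit against nonnegative compactly supported test functions on a neighborhood of $\overline{E_\varepsilon}$.

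When you write it up, make three cleanups.
\begin{itemize}
\item Delete the first attempt with $\tilde u_\varepsilon$. Its ``Combining'' step is a non sequitur: both displayed inequalities bound $\M\tilde u_\varepsilon(E)$ from below, so nothing relating $\M\varphi(E)$ and $\M\tilde u(E)$ follows.
\item Drop the ``Brunn--Minkowski over small cells'' alternative. The set $\partial(\varphi+\varepsilon|x|^2)(Q)=\{p+2\varepsilon x: x\in Q,\ p\in\partial\varphi(x)\}$ is \emph{contained in} the Minkowski sum $\partial\varphi(Q)+2\varepsilon Q$, so Brunn--Minkowski bounds the wrong set.
\item Simply assume $W\neq\emptyset$, which the statement tacitly requires. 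For $W=\emptyset$ the conclusion is not ``trivially forced''; it is meaningless.
\end{itemize}
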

\begin{proof}
Without loss of generality, we assume $\min _{x \in \partial \Omega}\{u(x)-\varphi (x)\}=0$. 

If $\M \varphi(W) =0$, our result follows from the comparison principle \cite[Theorem 1.4.6]{gutierrez2016monge}. 

If $\M \varphi(W) >0$, let us consider for each small $\epsilon>0$ the convex solution to
\[
\M \varphi_{\epsilon}= \M \varphi + \epsilon\ud x \quad \text{in } \Omega,\quad \varphi_{\epsilon}=\varphi  \quad \text{on }\partial \Omega.
\]
From \cite[Lemma 1.4.1]{gutierrez2016monge} or \cite[Lemma 2.7]{figalli2017monge}, we have $\M \varphi_{\epsilon}  \left(\left\{\varphi_{\epsilon} >u\right\}\right) \leq \M u  \left(\left\{\varphi_{\epsilon} >u\right\}\right) $. This implies that $W\not\subset \left\{\varphi_{\epsilon} >u\right\}$, since otherwise,
\[
\M \varphi_{\epsilon}  \left(\left\{\varphi_{\epsilon} >u\right\}\right) \leq \M u  \left(\left\{\varphi_{\epsilon} >u\right\}\right) \le \M \varphi\left(\left\{\varphi_{\epsilon} >u\right\}\right),
\]
which is a contradiction. This implies  $\max_{x \in W}\{u(x)-\varphi_{\epsilon}(x)\} \geq 0$.
We conclude the proof by sending $\epsilon\to 0 $.
\end{proof}

\begin{proof}[Proof of Theorem \ref{thm:abp extremal}.]
Let $u_a(\cdot,y)$ be the one defined in \eqref{eq:defnuay}. So $u_a(\cdot,y)\in \D_{a,\varphi}$. For every $w \in \D_{a,\varphi}$, applying Lemma \ref{lem:comparison gene} to the functions $w(\cdot), u_a(\cdot,y)$ and the singleton set $W_1 = \{y\}$ yields $w(y) \geq u_a(y,y)$. This proves \eqref{eq:pointwise ext lower}.

We now establish \eqref{eq:pointwise ext upper}. 
Consider $w\in \D_{a, \varphi}$ that satisfies $w(y)\ge\varphi(y)$. Let $p \in \partial w(y)$. Then $
p \in \partial \varphi (x_p) $ for some $x_p \in \Omega$. Let $\ell_p$ be a support function of $\varphi$ at $x_p$. Then $\ell_p+h$ is a support function of $w$ at $y$ for some $h \geq 0$. 
Consider the solution  $v_a(\cdot,p)$ defined in \eqref{eq:defnva}. If $a \geq c_p$, then we have $v_a (y,p) \geq \ell_p (y)+  \inf_{\partial\Omega}(\varphi-\ell_p) \geq  w(y)$. If $ a< c_p $, noting that 
\[
w \geq \ell_p+h =v_a +h-h_{a,p} \quad \text{on } W: =\{v_a=\ell_p+h_{a,p}\}.
\]
An application of Lemma \ref{lem:comparison gene} yields
\[
0 \leq \sup_{x \in W} \big\{v_a(x,p) - w(x)\big\} \leq h_{a,p} - h .
\]
This implies $h \leq h_{a,p}$, and thus, $w(y)=\ell_p(y)+h\le\ell_p(y)+h_{a,p}  \leq v_a(y,p)$. This proves \eqref{eq:pointwise ext upper}.

Next, we select a sequence of $p_i \in \partial \varphi(\Omega)$ such that $v_{a}(y,p_i)$ approaches the supremum in the right-hand side of \eqref{eq:pointwise ext upper}. Since these convex functions satisfy $v_a(\cdot,p_i) \geq \varphi$, they have uniform modulus of continuity up to the boundary. Therefore, we  can extract a subsequence converging to some $w_0 \in \D_{a,\varphi}$.
Applying the previous analysis for $w$ to $w_0$, we find a specific $p_0\in \partial w_0(y)\subset \partial \varphi(\Omega)$ such that $v_a(y,p_0) \geq w_0(y) =\sup_{p  \in \partial \varphi(\Omega)}v_{a}(y,p)\ge v_a(y,p_0)$.  Therefore,
\[
v_a(y,p_0)\ge \ell_{p_0}(y)+h_{a,p_0} \ge w_0(y)=v_a(y,p_0).
\]
Namely, $y \in \{x : v_a(x,p_0) = \ell_{p_0}(x) + h_{a,p_0}\}$ and $v_a(y,p_0) =\sup_{p  \in \partial \varphi(\Omega)}v_{a}(y,p)$.
\end{proof}

Note that there may hold $\sup_{p \in \partial \varphi (\Omega)}v_{a}(y ,p)>\sup_{p \in \partial \varphi (y)}v_{a}(y ,p)$,  as demonstrated by the following example.

\begin{ex}\label{exam:supremum for obs}
Let $\Omega = B_2(0)$ and define $\tilde{\varphi} = \frac{1}{2}(|x|^2 - 4)$.  
Consider the obstacle problem solutions $\tilde{v}_1(\cdot, \pm \varepsilon e_n)$ associated with $\tilde{\varphi}$, and let  $\ell_{\pm}$ denote the corresponding obstacle functions. 
Then, we have 
\[
\tilde{v}_1(\cdot,  \varepsilon e_n)> \tilde{v}_1(\cdot,- \varepsilon e_n) \quad \text{in } \{ x_n >0\}.
\]
Note that if $\varepsilon=0$, the coincidence set of $\tilde{v}_1(\cdot,  0)$ is $B_1$. Hence, we can fix $\varepsilon \in (0, \frac{1}{4})$ sufficiently small, so that $B_{1/2}(0) \subset \{\tilde{v}_1(\cdot, \pm \varepsilon e_n) = \ell_{\pm}\}$. Let $\tilde{y}=\frac{1}{8}e_n$. For a sufficiently large parameter $a > 0$, let $\varphi_a$ denote the solution to
\[
\M \varphi_a= \chi_{\{ \tilde{v}_1(\cdot,\varepsilon e_n)> \ell_+\}\cap \{\tilde{v}_1(\cdot,-\varepsilon e_n) >\ell_-\} } +\omega_na^n\delta_{\tilde{y} }  \quad \text{in } B_2(0),  \quad \varphi_a=0 \quad \text{on } \partial B_2(0).
\]
The function $\varphi_a$ possesses a cone singularity at $\tilde{y} $. As $a \to \infty$, the normalized function $a^{-1}\varphi_a$ converges uniformly to the cone function $w(\cdot,\tilde{y} )$. This convergence implies $\partial(a^{-1}\varphi_a(\tilde{y} )) \to \partial w(\tilde{y} ,\tilde{y} )$, and consequently $\pm\varepsilon e_n \in \partial\varphi_a(\tilde{y} )$.

Fix a large $a$. Consider a smooth approximation of $\omega_n a^n \delta_{\tilde{y} }$, denoted by $f$, with $\operatorname{supp} f \subset B_{1/2}(0)$ such that $\log f$ is bounded and positive in $B_{1/4}(0)$ and $\int f = \omega_n a^n$.
The perturbed solution $\varphi$, derived from $\varphi_a$, is almost a cone in $B_{1/2}(0)$, is $C^2$-regular and strictly convex near $\tilde y$. Hence, the  points $x_{\pm \varepsilon e_n}$ satisfying $\nabla\varphi(x_{\pm \varepsilon e_n}) = \pm \varepsilon e_n$ are distinct and close to  $\tilde{y}$.

For this $\varphi$, let us consider the solution $v_{\tilde a_{\pm}}(\cdot, \pm \varepsilon e_n)$ of \eqref{eq:defnva}, where $\tilde a_{\pm}\ge0$ are chosen so that the obstacles are given by $\ell_{\pm}$, respectively. Then we have $\tilde{v}_1(\cdot, \pm \varepsilon e_n) = v_{\tilde a_{\pm}}(\cdot, \pm \varepsilon e_n)$, and
\begin{align*}
\omega_n (\tilde a_{\pm})^n&= |\M \tilde{v}_1(\cdot, \pm \varepsilon e_n)-\M\varphi |(B_2) \\
&= \omega_n - |\{ \tilde{v}_1(\cdot,\varepsilon e_n)= \ell_+\}\cap \{\tilde{v}_1(\cdot,-\varepsilon e_n) =\ell_-\} |+\omega_n a^n.
\end{align*}
Therefore, $\tilde a_+=\tilde a_-$, and denote them as $\tilde a$. With this construction, 
\[
v_{\tilde a}(y,  \varepsilon e_n ) > v_{\tilde a}(y,-\varepsilon e_n)\quad \text{for }y=x_{-\varepsilon e_n} \in \{x_n>0\}.
\]
\end{ex}

We will use the following geometric estimate for the coincidence set.

\begin{Lemma}\label{lem:coincidencesetcontrol}
Assume that $\varphi\in(\overline\Omega)$ is convex and satisfies the non-degeneracy condition \eqref{eq:varphi equation} in $\Omega$. Let $v_a(\cdot,p)$ be the solution to the obstacle problem \eqref{eq:defnva}, where $p \in \partial \varphi(x_p)$ for some $x_p \in \Omega$. Set
\[
K_{a,p} := \{ x \in \Omega : v_a(x,p) = \ell_p(x) +h_{a,p}\},
\]
and let $y \in K_{a,p}$. Then there exist constants $C_1 = C_1(n,\lambda,\Lambda)$ and $C_2 = C_2(n,\lambda,\Lambda)$ with $C_1 > C_2 > 0$ such that, if $S_{C_1 a^2,q}^{\varphi}(y)   \subset \Omega$ for some $q\in\partial\varphi(y)$, we have
\begin{equation}\label{eq:coincidence inclusion}
K_{a,p} \subset S_{C_2 a^2,p}^{\varphi}(x_p) \subset S_{C_1 a^2,q}^{\varphi}(y).
\end{equation} 
\end{Lemma}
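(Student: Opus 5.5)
The plan is to first bound the obstacle height $h:=h_{a,p}$ by $Ca^2$, which gives the first inclusion in \eqref{eq:coincidence inclusion} almost directly. The second inclusion will then follow from the engulfing property of sections of $\varphi$.

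\emph{Step 1 (coincidence set lies in a section of height $h$).} By \eqref{eq:xpinK} we have $\varphi\le v_a(\cdot,p)\le \varphi+h$. Since $v_a\ge \ell_p+h$ everywhere, the set $K_{a,p}=\{v_a\le \ell_p+h\}$ is convex. For $x\in K_{a,p}$ we get $\varphi(x)\le v_a(x)=\ell_p(x)+h$. Hence $K_{a,p}\subset \overline{S^{\varphi}_{h,p}(x_p)}$, and it remains to show $h\le C a^2$.

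\emph{Step 2 (the key estimate $h\lesssim a^2$).} Set $S:=S^{\varphi}_{h,p}(x_p)$, which is convex with $\varphi=\ell_p+h$ on $\partial S\cap\Omega$.
\begin{itemize}
\item On $\partial S$ we have $v_a-\varphi\ge 0$.
\item At the center, $v_a(x_p)-\varphi(x_p)=h$ by \eqref{eq:xpinK}.
\item In the case $a<c_p$, we have $|\M v_a-\M\varphi|(S)\le\omega_n a^n$ by \eqref{eq:defnvaq 2}. If instead $h=\inf_{\partial\Omega}(\varphi-\ell_p)$ and the total mass $\omega_nc_p^n$ is at most $\omega_na^n$, the same bound holds.
\end{itemize}
I would apply the classical Alexandrov estimate in its one-sided, scale-invariant form (\eqref{eq:scaledA} combined with the reduction of Section~\ref{sec:onesided}) on the domain $S$. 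This gives
\[
h= v_a(x_p)-\varphi(x_p)\le C(n)\, a\,|S|^{1/n}.
\]
The lower bound $\det D^2\varphi\ge\lambda$ yields the standard volume bound $|S^{\varphi}_{h,p}(x_p)|\le C(n,\lambda)h^{n/2}$: normalize $S$ by John's lemma and compare $\varphi-\ell_p-h$ with a quadratic vanishing on the normalized boundary. Then $h\le Ca\,h^{1/2}$, so $h\le C a^2$. If $S$ reaches $\partial\Omega$, I would run the same argument on $S\cap\Omega$, using that $v_a=\varphi$ on $\partial\Omega$. This gives the first inclusion with $C_2:=C$, possibly enlarged slightly to pass from the closed section to the open one.

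\emph{Step 3 (engulfing).} Since $y\in K_{a,p}\subset S^{\varphi}_{C_2a^2,p}(x_p)$, I would invoke the engulfing property of sections for solutions of \eqref{eq:varphi equation} (Guti\'errez, Caffarelli--Guti\'errez). It says that $y\in S_{t,p}(x_p)$ implies $S_{t,p}(x_p)\subset S_{\theta t,q}(y)$ for some $\theta=\theta(n,\lambda,\Lambda)$. With $C_1:=\theta C_2$, this is exactly the second inclusion.

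The main obstacle is that engulfing is valid only when the relevant sections are compactly contained in the region where \eqref{eq:varphi equation} holds. The hypothesis $S^{\varphi}_{C_1a^2,q}(y)\subset\Omega$ is what supplies this. I would first use it to guarantee that $S^{\varphi}_{C_2a^2,p}(x_p)$ is interior: argue by a continuity/doubling argument in the height, using the chain of sections and the engulfing constant. Only then would I apply the engulfing inclusion.
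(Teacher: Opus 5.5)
\textbf{There is a genuine gap in Step 2.} The claim it aims at is false in the lemma's generality, so the step cannot be repaired as stated.

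The bound $h_{a,p}\le Ca^2$ fails for $n=2$. By Lemma \ref{lem:dna2 va n=2}, for $\varphi$ normalized as in \eqref{eq:varphi normalized s7} one has $h_{a,0}=v_a(0)-\varphi(0)=\tfrac12a^2|\log a|+O(a^2)$. The present lemma is invoked in exactly that situation, in Lemmas \ref{lem:interior estimate v} and \ref{lem:dna2 va n=2}.

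For $n\ge 3$, a bound $h_{a,p}\lesssim a^2$ does hold in the normalized interior setting. However, the paper gets it from Lemma \ref{lem:interior estimate v}, whose proof uses the present lemma, so it is not available here.

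The concrete misstep is the Alexandrov estimate on $S=S^{\varphi}_{h,p}(x_p)$. The one-sided form gives $\sup_S(v_a-\varphi)\le\sup_{\partial S}(v_a-\varphi)+Ca|S|^{1/n}$, so you need an \emph{upper} bound for $v_a-\varphi$ on $\partial S$. You only know $0\le v_a-\varphi\le h$ there, since $v_a=\varphi$ on $\partial\Omega$, not on $\partial S$. What you get is the trivial inequality $h\le h+Ca|S|^{1/n}$.

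\textbf{What is missing} is an argument that localizes $K_{a,p}$ at scale $a$ even when $h_{a,p}\gg a^2$. Your Step 1 only covers the regime $h_{a,p}\le C_2a^2$, which is the paper's Case 2. For $h_{a,p}\ge C_2a^2$, the paper uses that the coincidence set is small independently of $h_{a,p}$:
\begin{itemize}
\item $K_{a,p}$ is convex, contains $x_p$ by \eqref{eq:xpinK}, and $\M\varphi(K_{a,p})=\omega_na^n$ gives $|K_{a,p}|\approx a^n$. So after a unimodular normalization, $B_{ca}(m_{a,p})\subset K_{a,p}\subset B_{Ca}(x_p)$, where $m_{a,p}$ is the center of mass.
\item We have $\det D^2v_a=\det D^2\varphi\ge\lambda$ off $K_{a,p}$, and $S^{\varphi}_{C_2a^2,p}(x_p)\subset S^{\varphi}_{h_{a,p},p}(x_p)\subset\Omega$ gives room. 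Savin's quadratic nondegeneracy lemma for the obstacle problem then yields $v_a-\ell_p-h_{a,p}\ge c\operatorname{dist}(\cdot,K_{a,p})^2$ near $K_{a,p}$.
\item Combined with $v_a\le\varphi+h_{a,p}$, this traps $S^{\varphi}_{ca^2,p}(x_p)$ inside $B_{2Ca}(x_p)$.
\item Caffarelli's balancing of sections (the section has volume $\approx a^n$ and is balanced about $x_p$) gives $B_{c_1a}(x_p)\subset S^{\varphi}_{ca^2,p}(x_p)$.
\item Hence $K_{a,p}$ lies in the $C_3$-dilate about $x_p$ of $S^{\varphi}_{c_1a^2,p}(x_p)$, which is contained in $S^{\varphi}_{C_2a^2,p}(x_p)$.
\end{itemize}

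Your Step 3 (engulfing, with interiority supplied by the hypothesis $S^{\varphi}_{C_1a^2,q}(y)\subset\Omega$) matches the paper's treatment of the second inclusion.
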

\begin{proof}
Note that if $S_{C_1 a^2,q}^{\varphi}(y)   \subset \Omega$, then, since $\varphi$ satisfies the non-degeneracy condition \eqref{eq:varphi equation} in $\Omega$, $\varphi$ is locally $C^{1,\alpha}$ in $S_{C_1 a^2,q}^{\varphi}(y)$. Therefore, in the proof below we drop the subscript from the section notation, since the subgradient at each point is unique.

The constants $C_2$ and $C_1/C_2$ will be large and determined in the end. The following argument holds if $S_{C_1 a^2}^{\varphi}(y)   \subset \Omega$.

We consider two cases:

Case 1: $h_{a,p} \geq C_2 a^2$. Then $S_{C_2 a^2}^{\varphi}(x_p) \subset S_{h_{a,p}}^{\varphi}(x_p) \subset \Omega$. For simplicity, assume $x_p = 0$ and $\ell_p +h_{a,p}= 0$. 
Since $\M\varphi(K_{a,p})=\omega_n a^n$, after a unimodular affine transformation, we may assume  
\[
B_{c a}(0) + m_{a,p} \subset K_{a,p} \subset B_{C a}(0),
\]
where $m_{a,p}$ denotes the mass center of $K_{a,p}$. Note that $\varphi + h_{a,p}$ touches $v_a(\cdot,p)$ from above at $x_p$ by \eqref{eq:xpinK}.  Applying \cite[Lemma 2]{savin2005obstacle}, we find that
\[
\varphi(x) + h_{a,p} \geq v_a (x, p) \geq c \operatorname{dist}(x,K_{a,p})^2 \quad \text{in } B_{2C a}(0),
\]
which implies $S_{c a^2}^{\varphi}(0) = S_{c a^2}^{\varphi+h_{a,p}}(0) \subset B_{2 C a}(0)$.
By the regularity theory in \cite{caffarelli1990ilocalization}, the section $S_{c a^2}^{\varphi}(0)$ is balanced around $0$ and satisfies $|S_{c a^2}^{\varphi}(0)| \approx a^n$. Combining this with $S_{c a^2}^{\varphi}(0) \subset B_{2 C a}(0)$, we obtain
\[
B_{c_1 a}(0) \subset S_{c a^2}^{\varphi}(0) \subset B_{2 C a}(0)
\]
for some $c_1=c_1(n,\lambda,\Lambda)>0$.
Consequently, $K_{a,p} \subset C_3  S_{c_1 a^2}^{\varphi}(0)$ for a sufficiently large $C_3$. Moreover, applying \cite{caffarelli1990ilocalization}, we further deduce that $C_3S_{c_1 a^2}^{\varphi}(0) \subset S_{C_2 a^2}^{\varphi}(0)$ for a suitably large ratio $C_2/C_3$. This establishes that $K_{a,p} \subset S_{C_2 a^2}^{\varphi}(x_p)$.

Case 2: $h_{a,p} \leq C_2 a^2$. Note that $S_{h_{a,p}}^{\varphi}(x_p) := \{\varphi \leq \ell_p + h_{a,p}\} \subset \Omega$. Then it is clear that $K_{a,p} \subset S_{h_{a,p}}^{\varphi}(x_p) \subset S_{C_2 a^2}^{\varphi}(x_p)$.

In each case, we have established the first inclusion. Therefore, $y\in K_{a,p}  \subset S_{C_2 a^2}^{\varphi}(x_p)$. Then the second inclusion $S_{C_2 a^2}^{\varphi}(x_p) \subset S_{C_1 a^2}^{\varphi}(y)$ follows from the engulfing property of sections provided that $C_1/C_2$ is large (see \cite{caffarelli1990ilocalization,gutierrez2016monge}).
\end{proof}
 
\section{An optimal Alexandrov estimate}\label{sec:alexandrov estimate affine}
 
When $\partial \Omega \in \C_+^{3,\alpha}$ and $u, \varphi \in \C_+^{3,\alpha}(\overline{\Omega})$, we can use a divergence identity that relates the difference of their Monge-Amp\`ere measures to the boundary data. Indeed, consider the divergence-form equation
\begin{equation}\label{eq:div equation}
\partial_i \big( A^{ij} \partial_j (u - \varphi) \big)
= A^{ij} \partial_{ij} (u - \varphi)
= \det D^2 u - \det D^2 \varphi
=: \mu \quad \text{in } \Omega,
\end{equation}
where the coefficient matrix ${ A^{ij} } = \int_0^1 \operatorname{Cof} \left( t D^2 u + (1-t) D^2 \varphi \right) dt$ is defined via the cofactor matrix operator $\operatorname{Cof}$.
Then, for any $C^1$ subdomain $\Omega_2 \subset \Omega$, integration by parts yields
\begin{equation}\label{eq:div identity smooth}
\int_{\partial \Omega_2} A^{ij} \partial_i (u - \varphi)\, \nu_j \, dS \;=\; \mu(\Omega_2),
\end{equation}
where $\nu = (\nu_1,\dots,\nu_n)$ is the outward unit normal to $\partial \Omega_2$.

In general, we may invoke \eqref{eq:div identity smooth} via a standard smooth-approximation (see below) whenever the resulting estimates depend only on structural properties and not on the precise smoothness of the domain or the functions. 

Let $\{\Omega_\epsilon\}_{\epsilon>0}$ be a family of $\C_+^{3,\alpha}$ convex domains that are compactly contained in $\Omega$ and exhaust $\Omega$ as $\epsilon \to 0$.
Let $\varphi_\epsilon \in \mathcal{C}_+^{3,\alpha}(\overline{\Omega})$ converge uniformly to $\varphi$ in $\overline{\Omega}$, and assume that $\M\varphi_\epsilon + \mu_\epsilon \in C^{1,\alpha}(\overline{\Omega})$ converges weakly in $\Omega$ to $\M\varphi + \mu$, where, if necessary, $\mu_\epsilon$ is chosen to satisfy the same structural assumptions as $\mu$.

Suppose $u_\epsilon \in C(\overline{\Omega_\epsilon})$ solves
\[
\M u_{\epsilon} =\M \varphi_{\epsilon}+\mu_{\epsilon}  \quad \text{in } \Omega_{\epsilon},\quad u_{\epsilon}= \varphi_{\epsilon} \quad  \text{on } \partial \Omega_{\epsilon}.
\]
By the global regularity results in \cite{caffarelli1984dirichlet,trudinger2008boundary}, we have $u_\epsilon \in \mathcal{C}_+^{3,\alpha}(\overline{\Omega_\epsilon})$. As $\epsilon \to 0$, the sequence $u_\epsilon$ converges locally uniformly to a function $u$ in $\Omega$. The desired estimates for $u - \varphi$ then follow by deriving estimates for $u_\epsilon - \varphi_\epsilon$ that are uniform in $\epsilon$ and passing to the limit. This smooth-approximation scheme justifies both the regularity assumptions and the use of the divergence identity \eqref{eq:div identity smooth} in the general setting.

\subsection{Proof of estimate \eqref{eq:abp gene 2/n}}\label{subsection:2/n}
 
As outlined in Section \ref{app:comparison reduction}, the comparison principle allows us to reduce the proof of estimate \eqref{eq:abp gene 2/n} to the cases in which $\mu = \M u - \M\varphi$ is either nonnegative or nonpositive. Moreover, it suffices to establish the interior estimate at the origin, so we may restrict to the following setting:
\begin{Assumption**}
Let $\Omega \subset \R^n$ ($n \geq 2$) be a convex domain with $0 \in \Omega \subset B_{4n}(0)$. Let $\varphi \in C(\overline{\Omega})$ be a convex function satisfying the non-degeneracy condition \eqref{eq:varphi equation} in $\Omega$ 
and
\[
0 \in \{ \varphi <0\} \subset \Omega,\quad  \inf_{\Omega} \varphi=\varphi (0) = -1.
\]  
The sections of $\varphi$ at $0$ are defined as
\[
S_h := S_h^\varphi(0) = \{ x \in \overline{\Omega} : \varphi(x) < - 1+h  \}.
\]
\end{Assumption**}

The assumption (G) implies that $B_c(0) \subset S_1 :=\{ \varphi < 0 \}\subset B_{4n}(0)$ holds for some $c(n,\lambda,\Lambda)>0$.

We will estimate $|u(0)-\varphi(0)|$ under the assumption (G).
The following lemma is inspired by \cite[Lemma 3.2]{tian2008classsobolev}.

\begin{Lemma}\label{lem:divergence relation}
Assume (G).  Let $u \in C(\overline{\Omega})$ be convex and satisfy $u=\varphi$ on $\partial\Omega$. Suppose
\[
\operatorname{supp}(\mu) \subset S_{1/16}, 
\]
where $\mu := \M u - \M \varphi$.
Then the following estimate holds:
\[
|u-\varphi| \;\le\; C(n,\lambda,\Lambda)\, |\mu|(\Omega)
\quad \text{in } \Omega \setminus S_{1/2}.
\]
\end{Lemma}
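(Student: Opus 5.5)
We reduce to the case where $\mu$ has one sign, using the comparison principle of Section \ref{sec:signed}. The functions $u_\pm$ solving $\M u_\pm=\M\varphi\pm\mu_\pm$ with $u_\pm=\varphi$ on $\partial\Omega$ have measure discrepancy supported in $S_{1/16}$ and total variation at most $|\mu|(\Omega)$. Moreover, $u_+-\varphi\le u-\varphi\le u_--\varphi$. So it suffices to treat $\mu\ge0$, in which case $v:=\varphi-u\ge0$; the case $\mu\le0$ is symmetric. By the smooth-approximation scheme described above, we may assume that $u,\varphi\in\C_+^{3,\alpha}$ on a smooth exhaustion of $\Omega$, and that $\mu$ is a smooth density still supported in a slightly larger section, say $S_{1/8}$. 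The identity \eqref{eq:div identity smooth} is then available. All constants must depend only on $n,\lambda,\Lambda$ and the geometry from (G).

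In $\Omega\setminus \overline{S_{1/16}}$ we have $\M u=\M\varphi$, so $v$ solves the linear equation $A^{ij}\partial_{ij}v=0$. Here $A^{ij}=\int_0^1\operatorname{Cof}(tD^2u+(1-t)D^2\varphi)\,dt$. I would rather view $v$ as a solution of a linearized Monge--Amp\`ere equation. Set $\varphi_t=tu+(1-t)\varphi$; this is convex, and by Minkowski's determinant inequality $\det D^2\varphi_t\ge \lambda$ there. An upper bound $\det D^2\varphi_t\le\Lambda$ holds outside the support as well. Then $\int_0^1 \operatorname{Cof}D^2\varphi_t\,dt$ is an average of linearized Monge--Amp\`ere operators.

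The first step is the flux identity. For every section $S_h$ with $h\in[1/8,1]$ we have
\[
\int_{\partial S_h} A^{ij}\partial_j(u-\varphi)\nu_i\,dS=\mu(\Omega).
\]
In terms of $v$, the inward conormal flux of $v$ through $\partial S_h$ equals $|\mu|(\Omega)$ for each such $h$. Integrating in $h\in[1/4,3/4]$ with the coarea formula turns this into a volume integral over the annulus $S_{3/4}\setminus S_{1/4}$, namely $\int A^{ij}\partial_j v\,\partial_i\varphi\, dx$ up to a factor $|D\varphi|^{-1}$ that cancels.

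The second step uses $v\ge0$ and the Harnack inequality of Caffarelli--Guti\'errez \cite{caffarelli1997properties} on the annular region. That inequality gives $\sup v\lesssim\inf v$ on $S_{3/4}\setminus S_{1/4}$, after chaining over finitely many interior sections covering the annulus, whose number is bounded via the engulfing property. The quantity $\int A^{ij}\partial_jv\,\partial_i\varphi$ on the annulus can be rewritten by integrating by parts against a cutoff $\eta(\varphi)$. Taking $\eta$ Lipschitz, equal to $1$ on $S_{1/4}$ and to $0$ outside $S_{3/4}$, the identity becomes
\[
|\mu|(\Omega)=\int A^{ij}\partial_jv\,\partial_i\eta \;=\; -\int v\,\partial_i(A^{ij}\partial_j\eta),
\]
using $\partial_iA^{ij}=0$. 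I would instead use a weak Harnack / lower bound for a positive supersolution to get $\inf_{\partial S_{1/2}}v\lesssim |\mu|(\Omega)$. This is the main obstacle: the coefficients $A^{ij}$ are only controlled through $\varphi_t$, and one needs the Caffarelli--Guti\'errez theory to apply uniformly to the averaged operator. I would handle this either by observing that $v$ solves the linearized equation of the single function $\varphi_{t}$ in the Hessian-average sense, or by replacing the flux argument with a barrier. The barrier alternative would compare $v$ with $\kappa\,G$, where $G$ is a Green-type function for $\varphi$'s linearized operator with pole region in $S_{1/16}$, to obtain $v\le C|\mu|(\Omega)$ at one point of $\partial S_{1/2}$.

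Finally, Harnack on the annulus upgrades the one-point bound to $\sup_{\partial S_{1/2}}v\lesssim|\mu|(\Omega)$. Since $v$ vanishes on $\partial\Omega$ and satisfies the maximum principle for the elliptic operator in $\Omega\setminus \overline{S_{1/2}}$, we get $0\le v\le C|\mu|(\Omega)$ there. Passing to the limit in the approximation yields the lemma.
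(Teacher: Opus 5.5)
Your skeleton (reduction to $\mu\ge0$, smooth approximation, flux identity, Harnack on $S_{3/4}\setminus S_{1/4}$, maximum principle in $\Omega\setminus S_{1/2}$) matches the paper's. However, there is a genuine gap at the decisive step: showing that $v\approx\kappa$ on the annulus forces $\kappa\lesssim\mu(\Omega)$.

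\textbf{The flux step.} Your cutoff identity $\mu(\Omega)=-\int v\,\partial_i(A^{ij}\partial_j\eta)$ cannot give this. Since $\eta$ is constant near both ends of the annulus, the distribution $\partial_i(A^{ij}\partial_j\eta)$ has total mass zero and changes sign. For piecewise linear $\eta(\varphi)$, the surface term on $\partial S_{3/4}$ enters with the opposite sign to the one on $\partial S_{1/4}$ and to the interior term. So two-sided comparability $v\approx\kappa$ yields no lower bound on $\mu(\Omega)$. You must use $v=0$ on $\partial\Omega$, i.e.\ a Hopf-type lower bound on the flux. Neither fallback supplies this:
\begin{itemize}
\item A weak Harnack inequality for supersolutions bounds $v$ from below, which is the wrong direction.
\item Comparing with $\kappa G$ for the Green function of $L_\varphi$ also only bounds the $L_\varphi$-supersolution $v$ from below. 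Moreover, it would require relating $\Phi^{ij}$-fluxes to the $A^{ij}$-flux in the identity, which is not controlled under (G).
\end{itemize}

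The paper closes this step as follows. Take a convex $\Omega_0$ with $\partial\Omega_0\subset S_{3/4}\setminus S_{1/4}$, and let $w$ solve $A^{ij}w_{ij}=0$ in $\Omega\setminus\Omega_0$ with $w=0$ on $\partial\Omega$ and $w=-c\kappa$ on $\partial\Omega_0$. Then:
\begin{itemize}
\item $w\ge u-\varphi$, and both vanish on $\partial\Omega$, so the $A$-flux of $w$ through $\partial\Omega$ is at most that of $u-\varphi$, namely $\mu(\Omega)$.
\item Since $\partial_jA^{ij}=0$, this equals the flux of $w$ through $\partial\Omega_0$.
\item Let $\psi$ be convex with $\Omega_0=\{\psi<0\}$ and $\psi\le1$. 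Comparing $w+c\kappa$ with $c\kappa\psi$ (both vanish on $\partial\Omega_0$) bounds this flux below by $c\kappa\int_{\Omega_0}A^{ij}\psi_{ij}\ge c\kappa$.
\end{itemize}
The structural input is $A\ge\frac1n\operatorname{Cof}D^2\varphi$, which gives $A^{ij}\psi_{ij}\ge c(n,\lambda)(\det D^2\psi)^{1/n}$ for convex $\psi$. The same input also gives a shortcut: Green's identity against a convex $\psi$ with $\det D^2\psi=1$ and $\psi=0$ on $\partial\Omega$ yields $\int_\Omega v\,A^{ij}\psi_{ij}=\int_\Omega|\psi|\,d\mu$. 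Hence $\int_\Omega v\lesssim\mu(\Omega)$, which Harnack converts into $\kappa\lesssim\mu(\Omega)$.

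\textbf{The Harnack step.} This is also not justified as written. $A^{ij}$ is an average of cofactor matrices, not $\operatorname{Cof}D^2$ of a single convex function. So the Caffarelli--Guti\'errez theory does not apply to it, and ``the linearized equation of $\varphi_t$ in the Hessian-average sense'' is not a valid substitute. Your claim $\det D^2\varphi_t\le\Lambda$ is false in general: Minkowski gives only the lower bound, as averaging $\operatorname{diag}(N,1/N)$ and $\operatorname{diag}(1/N,N)$ shows.

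The paper instead uses concavity of $\det^{1/n}$. Where $\M u=\M\varphi$, this gives $L_u(\varphi-u)\ge0$ and $L_\varphi(\varphi-u)\le0$. Combining the local maximum principle for $L_u$-subsolutions with the weak Harnack inequality for $L_\varphi$-supersolutions then gives $\varphi-u\approx\kappa$ on the annulus. Applying this to $L_u$ requires $u$ to be strictly convex with controlled sections there. The paper secures this by first reducing to small $|\mu|(\Omega)$ via the classical Alexandrov estimate: if $|\mu|(\Omega)\ge c_0$, then $|u-\varphi|\le C|\mu|(\Omega)^{1/n}\le C|\mu|(\Omega)$ already. In the small case, $u$ is uniformly close to $\varphi$. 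This reduction is missing from your proposal.
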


\begin{proof} 
The classical Alexandrov estimate \eqref{eq:abp gene 1/n} implies 
\[
\|u - \varphi\|_{L^\infty(\Omega)} \leq C|\mu|(\Omega)^{\frac{1}{n}}.
\]
This bound allows us to focus on the case in which the total variation $|\mu|(\Omega)$ is sufficiently small, ensuring the difference between $u$ and $\varphi$ remains sufficiently small.  

By the comparison principle and the reduction in Section \ref{sec:signed}, we may assume without loss of generality that $\mu \geq 0$ (the case $\mu \leq 0$ being analogous), and hence 
\[
u \leq \varphi \quad \text{in }\Omega.
\]
Given that $\varphi - u = 0$ on $\partial \Omega$, the comparison principle yields
\[
\varphi-u \leq  \sup_{\partial S_{1/2}} (\varphi-u  ) \quad \text{in } \Omega \setminus S_{1/2}.
\]
Our main objective reduces to establishing the estimate
\[
\varphi - u \leq C\mu(\Omega)\quad \text{on }\partial S_{1/2}.
\]

Since $\varphi$ is normalized in $S_1 \subset B_{4n}(0)$ and the difference between $u$ and $\varphi$ is small,  $u$ remains strictly convex in the annular region $S_{7/8} \setminus S_{1/8}$. 
This strict convexity allows us to apply the Harnack inequality established in \cite{caffarelli1997properties} to subsolutions and supersolutions of the linearized Monge–Ampère equation. Specifically, by linearizing the equation around $u$ and $\varphi$, we obtain the following:
\[
L_{u}(\varphi-u)\geq 0,\quad    L_{\varphi}(\varphi- u) \leq 0 \quad \text{in }  \Omega\setminus S_{1/8},
\]
which implies, by \cite{caffarelli1997properties}, the existence of a constant $\kappa > 0$ such that
\[
\varphi-u  \approx \kappa  \quad \text{in } S_{3/4} \setminus S_{1/4}.
\]  
It therefore suffices to show that
\[
\kappa \leq C(n,\lambda,\Lambda) \mu(\Omega).
\]

By approximation, we may assume $\partial \Omega \in \C_+^{3,\alpha}$ and $u, \varphi \in \C_+^{3,\alpha}(\overline{\Omega})$.
Since $\varphi$ is normalized in $S_1 \subset B_{4n}(0)$ and satisfies $\lambda \leq \det D^2\varphi \leq \Lambda$, 
the annulus $S_{3/4} \setminus S_{1/4}$ is non-degenerate and bounded away from the origin. 
We can therefore construct a convex domain $\Omega_0$ such that $\partial\Omega_0 \subset S_{3/4} \setminus S_{1/4}$ 
and $\|\partial\Omega_0\|_{\C_+^{1,1}} \leq C(n,\lambda,\Lambda)$.
Let $w$ solve
\begin{align*}
L[w] &= A^{ij} w_{ij} = (A^{ij} w_i)_j = 0 \quad \text{in } \Omega \setminus \Omega_0,\\
w &= 0 \quad \text{on } \partial\Omega, \quad w = -c\kappa \geq u - \varphi \quad \text{on } \partial\Omega_0,
\end{align*}
where the coefficient matrix ${ A^{ij} } = \int_0^1 \operatorname{Cof} \left( t D^2 u + (1-t) D^2 \varphi \right) dt$ is defined via the cofactor matrix operator $\operatorname{Cof}$. 
Observing that $L[u - \varphi] = \det D^2 u - \det D^2 \varphi = 0$ in $\Omega \setminus \Omega_0$, the comparison principle implies $w \geq u - \varphi$. Combining this with the divergence formula and the boundary condition $w = u - \varphi = 0$ on $\partial \Omega$, we conclude that
\[
\int_{\partial \Omega_0}  A^{ij}(x) w_i \nu_j= \int_{\partial \Omega}  A^{ij}(x) w_i \nu_j  
\leq \int_{\partial \Omega}  A^{ij}(x) (u-\varphi)_i \nu_j \overset{\eqref{eq:div identity smooth}}{=}\mu(\Omega),
\]
where $\nu$ denote the outer normal on $\partial \Omega$ and $\partial  \Omega_0$, respectively.

We now define the barrier function $\psi$ in terms of the domain $\Omega_0$ constructed above. Let $\psi \in \C_+^{1,1}(B_{4n}(0))$ be the convex function obtained via a homogeneous extension of degree 2 and a suitable constant adjustment, such that 
\[
\Omega_0=\{\psi <  0\}, \quad  
\|\psi\|_{\C_+^{1,1}(B_{4n}(0))} \le C(n,\lambda,\Lambda), \quad  \text{and} \quad \psi \le 1 \quad \text{in }  \Omega\subset B_{4n}(0).
\]  
Since $\psi$ is convex, we observe that 
\[
L[c\kappa \psi]  \geq 0 = L[w+c\kappa]  \quad \text{in } \Omega \setminus \Omega_0.
\]
Together with the boundary conditions, the comparison principle implies
\[
c\kappa \psi \leq w+c\kappa  \quad \text{in } \Omega\setminus \Omega_0.
\]
Then, it follows from $c\kappa \psi=0= w+c\kappa$ on $\partial \Omega_0$ that
\begingroup
\allowbreak
\begin{align*}
\int_{\partial \Omega_0}  A^{ij}(x) w_i \nu_j 
& \geq c\kappa\int_{\partial \Omega_0}  A^{ij}(x) \psi_i   \nu_j\\
&=c\kappa\int_{\Omega_0}(A^{ij}(x) \psi_i )_j\\
&=c\kappa\int_{\Omega_0}A^{ij}(x) \psi_{ij}\\
& \geq  \frac{c\kappa}{n}\int_{\Omega_0} \left(\det A^{ij} \cdot \det D^2 \psi\right)^{\frac{1}{n}} \\
&\geq  c(n,\lambda)|\Omega_0|\kappa\\
&\geq c(n,\lambda,\Lambda) \kappa,
\end{align*}
\endgroup
where we used that $ A^{ij} \geq \int_{0}^1 \operatorname{Cof}\left(tD^2{\varphi} \right)  \ud t  = \frac{1}{n}\operatorname{Cof}D^2{\varphi} $.
We therefore conclude that
\[
\kappa  \leq C(n,\lambda,\Lambda)\int_{\partial \Omega_0}  A^{ij}(x) w_i \nu_j  \leq C(n,\lambda,\Lambda)\mu(\Omega) .
\]
\end{proof}

In Lemmas \ref{lem:interior estimate u} and \ref{lem:interior estimate v} below, we establish the decay estimate $a^n h^{(n-2)/2}$ in $\Omega \setminus S_h^{\varphi}$ for $h \geq C a^2$.

\begin{Lemma}\label{lem:interior estimate u}
Assume (G). 
Let $u_a(x) := u_a(x,0)$ denote the solution to the isolated singularity problem \eqref{eq:defnuay} with $y=0$.  
If $a \leq \frac{1}{4}$, then for every $h\in(0,1/2)$, the following holds:
\begin{itemize}
\item if $n=2$, then
\begin{equation}\label{eq:ua-vaphi n=2}
\varphi(x) \geq u_{a}(x)  \geq
\varphi(x) -C(n,\lambda,\Lambda)  \min\left\{ |\log h|+1, |\log a| \right\}  a^2  \quad  \text{in }\Omega \setminus S_h.
\end{equation}  
\item if $n \geq 3$, then
\begin{equation}\label{eq:ua-vaphi}
\varphi(x) \geq u_{a}(x)  \geq
\varphi(x) -C(n,\lambda,\Lambda)\min\left\{ a^{n} h^{-\frac{n-2}{2}}, a^2 \right\}\quad  \text{in }\Omega \setminus S_h.
\end{equation}  
\end{itemize}
\end{Lemma}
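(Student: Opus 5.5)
The plan is to prove the upper bound directly, and the lower bound by iterating Lemma \ref{lem:divergence relation} over dyadic sections of $\varphi$ centred at $0$, using affine invariance. The upper bound $u_a\le\varphi$ is immediate from the comparison principle, since $\M u_a=\M\varphi+\omega_na^n\delta_0\ge\M\varphi$ and the two functions agree on $\partial\Omega$. For the lower bound, set
\[
M_t:=\sup_{\Omega\setminus S_t}(\varphi-u_a)\ \ (\ge 0).
\]
Outside $\{0\}$ both functions have the same Monge--Amp\`ere measure. By the comparison principle on $\Omega\setminus \overline{S_t}$, $M_t$ is therefore attained on $\partial S_t$, and $t\mapsto M_t$ is nonincreasing.

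\emph{Step 1: the initial scale.} Since $\operatorname{supp}\mu=\{0\}\subset S_{1/16}$, Lemma \ref{lem:divergence relation} gives $M_{1/2}\le C\omega_na^n$.

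\emph{Step 2: the one-step recursion.} For $t\le 1/2$, let $\T_t$ be the affine map normalizing $S_t$, as in Section \ref{sec:normalization reduction}. Because $\varphi$ is non-degenerate in $S_1$, Caffarelli's localization theory \cite{caffarelli1990ilocalization} gives $\det\T_t\approx t^{n/2}$, and the rescaled function $\varphi_t(x):=t^{-1}(\varphi(\T_tx)+1)-1$ satisfies (G) with constants depending only on $n,\lambda,\Lambda$. Its section $S_{1/16}^{\varphi_t}$ corresponds to $S_{t/16}\ni 0$.

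On $S_t$, compare $u_a$ with the solution $\hat u$ of $\M\hat u=\M\varphi+\omega_na^n\delta_0$ in $S_t$ and $\hat u=\varphi-M_t$ on $\partial S_t$; this is the one-sided reduction of Section \ref{sec:onesided}. It gives $u_a\ge\hat u$, and $\hat u+M_t$ has the same boundary values as $\varphi$ on $\partial S_t$. In normalized variables the mass is $(\det\T_t)^{-1}\omega_na^n\approx a^nt^{-n/2}$, while function values scale by $t$. Lemma \ref{lem:divergence relation} applied after rescaling then yields
\[
M_{t/2}\le M_t+C\,t\cdot a^nt^{-n/2}=M_t+C a^n t^{-\frac{n-2}{2}}.
\]

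\emph{Step 3: summing over scales.} Iterate along $t_k=2^{-k}$ from $t=1/2$ down to the first $t_k\le h$. For $n\ge3$ the sum $\sum_k a^n t_k^{-(n-2)/2}$ is geometric and dominated by its last term, so $M_h\lesssim a^nh^{-(n-2)/2}$. For $n=2$ every term equals $Ca^2$, and there are $\approx|\log h|+1$ of them, so $M_h\lesssim a^2(|\log h|+1)$.

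\emph{Step 4: the caps.} For $h\le a^2$ we have $M_h\le\sup_\Omega(\varphi-u_a)$, so it suffices to bound $\varphi-u_a$ inside $S_{a^2}$. Run the iteration down to $t=a^2$; this gives $M_{a^2}\lesssim a^2$ for $n\ge3$ and $\lesssim a^2|\log a|$ for $n=2$. Inside $S_{a^2}$, apply the classical Alexandrov estimate \eqref{eq:scaledA} to $\hat u$ on $S_{a^2}$, relative to boundary data $\varphi-M_{a^2}$. Since $|S_{a^2}|^{1/n}\approx a$, this gives
\[
\varphi-u_a\le M_{a^2}+C a\cdot a \quad\text{in } S_{a^2}.
\]
This yields the caps $a^2$ for $n\ge 3$ and $a^2|\log a|$ for $n=2$; the hypothesis $a\le1/4$ guarantees $a^2<1/2$, so the iteration makes sense. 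For $n=2$, the case $h\ge a^2$ gives $|\log h|+1\lesssim |\log a|$, and the case $h<a^2$ uses the cap, so the bound in \eqref{eq:ua-vaphi n=2} follows with the minimum.

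The main obstacle is Step 2. The normalized section $\varphi_t$ must satisfy (G) uniformly, including the containment $\Omega\subset B_{4n}$ for the rescaled domain $\T_t^{-1}S_t$ and the location of the minimum at $0$; this needs the sections to be balanced around $0$, so I would invoke \cite{caffarelli1990ilocalization}. In addition, Lemma \ref{lem:divergence relation} must be applied with equal boundary data, which is why I shift by the constant $M_t$. This shift is legitimate because adding a constant to $\hat u$ does not change its Monge--Amp\`ere measure.
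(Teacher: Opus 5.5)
Your proposal is correct and follows essentially the same route as the paper. The paper likewise bounds $\sup_{\Omega\setminus S_{1/2}}(\varphi-u_a)\le Ca^n$ via Lemma \ref{lem:divergence relation}, then gets the one-step increment $\bar t(h)-\bar t(2h)\le Ca^nh^{-\frac{n-2}{2}}$ from a normalized, comparison-principle variant of that lemma on $S_{2h}$, sums dyadically, and caps the bound using the rescaled Alexandrov estimate \eqref{eq:scaledA} in $S_{2a^2}$. Your explicit auxiliary solution $\hat u$ with shifted boundary data, and your remarks on the balanced normalization of sections, simply spell out details the paper leaves implicit.
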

\begin{proof}
In view of the comparison principle, it suffices to estimate the following quantity:
\[
\bar{t}(h):=\sup_{\partial S_h} (\varphi-u_a)  =\sup_{\Omega \setminus S_h} (\varphi-u_a)  .
\] 

Lemma \ref{lem:divergence relation} implies the existence of a structural constant $C(n,\lambda,\Lambda) > 1$ such that   
\[
\bar{t}(1/2)=\sup_{\Omega \setminus S_{1/2}} ( \varphi -u_a) \leq C a^n.
\] 
Furthermore, since $u_a + \bar{t}(2h) \geq \varphi$ on $\partial S_{2h}$, 
we apply a variant of Lemma~\ref{lem:divergence relation}—obtained by combining it with the comparison principle—to the normalization of $\varphi$ and $u_a + \bar{t}(2h) $ over the section $S_{2h}$, as defined in \eqref{eq:simplify normalization 1} and \eqref{eq:simplify normalization 2}, to obtain
\[
\varphi - (u_a+\bar{t}(2h)) \leq  C a^nh^{-\frac{n-2}{2}} \quad \text{in } S_{2h}\setminus S_{h},
\]
which yields that
\[
\bar{t}(h)-\bar{t}(2h)  \leq  C a^nh^{-\frac{n-2}{2}}.
\]
Summing over $h_k=2^{-k}$ for $k=1,2,\dots$, we obtain,
\[
 \bar{t}(h_k) \leq     
\begin{cases}
Ca^2 k & \text{if }  n=2,\\
Ca^n2^{\frac{(n-2)k}{2}}  & \text{if }  n \geq 3.
\end{cases}   
\]  
Since $\bar{t}(h)$ is monotonically decreasing in $h$, this yields in $\Omega\setminus S_h$ that
\[
\varphi-u_{a} \leq \bar{t}(h) \leq     
\begin{cases}
Ca^2(|\log  h|+1)& \text{if }  n=2,\\
Ca^nh^{-\frac{n-2}{2}}  & \text{if }  n \geq 3.
\end{cases}   
\] 

In addition, we apply the rescaled classical Alexandrov estimate \eqref{eq:scaledA} in $S_{2a^2}$ and combine it with the comparison principle to obtain
\[
\varphi - u_a \leq \bar{t}(2a^2)+ Ca^2 \leq     
\begin{cases}
Ca^2|\log  a|& \text{if }  n=2\\
Ca^2  & \text{if }  n \geq 3
\end{cases}   
\quad \text{in } S_{2a^2}.
\]

In conclusion, we obtain  \eqref{eq:ua-vaphi n=2} and \eqref{eq:ua-vaphi}.
\end{proof}

\begin{Lemma}\label{lem:interior estimate v} 
Assume (G). Let $v_a(x) := v_a(x,p)$ denote the solution to the obstacle problem \eqref{eq:defnva}, where $p\in\partial\varphi(\Omega)$. Suppose the coincidence set $K_a$ of $v_a$ contains the origin. If $a \leq \frac{1}{4}$, then for every $h\in(0,1/2)$,  
the following holds:
\begin{itemize}
\item if $n=2$, then
\begin{equation}\label{eq:va-vaphi n=2}
\varphi(x) \leq v_a(x)  \leq
\varphi(x) +C(n,\lambda,\Lambda)  \min\left\{ |\log h|+1, |\log a| \right\}  a^2  \quad  \text{in }\Omega \setminus S_h.
\end{equation}  
\item if $n \geq 3$, then
\begin{equation}\label{eq:va-vaphi}
\varphi(x) \leq v_{a}(x)  \leq
\varphi(x) +C(n,\lambda,\Lambda)\min\left\{ a^{n} h^{-\frac{n-2}{2}}, a^2 \right\}\quad  \text{in }\Omega \setminus S_h.
\end{equation}  
\end{itemize}
\end{Lemma}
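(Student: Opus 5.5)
The argument follows the proof of Lemma \ref{lem:interior estimate u}, with the roles of $u$ and $\varphi$ reversed. The lower bound $v_a\ge\varphi$ is \eqref{eq:xpinK}. For the upper bound we set
\[
\bar t(h):=\sup_{\partial S_h}(v_a-\varphi)=\sup_{\Omega\setminus S_h}(v_a-\varphi),
\]
where the second equality holds by the comparison principle: $\M v_a\le \M\varphi$, so $v_a-\varphi$ is a subsolution-type difference in $\Omega\setminus S_h$ and attains its maximum on $\partial S_h\cup\partial\Omega$, and $v_a=\varphi$ on $\partial\Omega$. The discrepancy measure $\mu=\M v_a-\M\varphi=-\M\varphi\,\chi_{K_a}$ is nonpositive, has mass $\omega_na^n$, and is supported on the coincidence set $K_a$.

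The first step is to localize $K_a$. Since $0\in K_a$, Lemma \ref{lem:coincidencesetcontrol} (applied with $y=0$, which is legitimate because $S_{C_1a^2}(0)\subset S_{1/16}\subset\Omega$ once $a$ is small; for $a\le 1/4$ but not small, the classical bound \eqref{eq:abp gene 1/n} already gives $Ca\lesssim a^2$ up to constants) yields $K_a\subset S_{C_1a^2}$. Hence $\operatorname{supp}\mu\subset S_{C_1a^2}$. Lemma \ref{lem:divergence relation}, in its case $\mu\le0$, then gives $\bar t(1/2)\le Ca^n$.

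The second step is the dyadic iteration. For $h\ge C a^2$ (with a large constant), $\operatorname{supp}\mu\subset S_{C_1a^2}\subset S_{2h/16}$. We compare $v_a$ with $\varphi+\bar t(2h)$ on $S_{2h}$. Using the one-sided boundary reduction of Section \ref{sec:onesided}, $v_a\le \hat v$, where $\hat v$ solves $\M\hat v=\M v_a$ in $S_{2h}$ with $\hat v=\varphi+\bar t(2h)$ on $\partial S_{2h}$. We normalize via \eqref{eq:simplify normalization 1}–\eqref{eq:simplify normalization 2}; the measure scales by the factor $h^{-n/2}$ and heights by $h^{-1}$. Lemma \ref{lem:divergence relation} then yields
\[
\bar t(h)-\bar t(2h)\le C a^n h^{-n/2}\cdot h = Ca^nh^{-\frac{n-2}{2}} .
\]
Summing over $h_k=2^{-k}$ down to $h\approx a^2$ gives $\bar t(h)\le Ca^2(|\log h|+1)$ for $n=2$ and $\bar t(h)\le Ca^nh^{-(n-2)/2}$ for $n\ge3$, the latter sum being geometric and dominated by its last term.

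The third step handles $h\lesssim a^2$. On $S_{Ca^2}$ we apply the rescaled classical Alexandrov estimate \eqref{eq:scaledA}, which contributes an extra $O(a\cdot a)=O(a^2)$ because $|S_{Ca^2}|^{1/n}\approx a$, on top of $\bar t(Ca^2)$. This gives the bounds $Ca^2|\log a|$ for $n=2$ and $Ca^2$ for $n\ge3$ everywhere, which produces the min in \eqref{eq:va-vaphi n=2} and \eqref{eq:va-vaphi}.

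The main obstacle is the first step, namely showing that the coincidence set lies deep inside $S_h$ at every scale used. Unlike the Dirac mass in Lemma \ref{lem:interior estimate u}, the support of $\mu$ here is not a priori located at $0$. Lemma \ref{lem:coincidencesetcontrol}, which relies on the engulfing property together with Savin's nondegeneracy, is exactly what places $K_a$ inside $S_{C_1a^2}(0)$. One must also check that the normalized problems at each scale still satisfy the support hypothesis $\operatorname{supp}\mu\subset S_{1/16}$ of Lemma \ref{lem:divergence relation}, which holds once $h\ge 16C_1a^2$.
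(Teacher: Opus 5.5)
Your proposal is correct and follows the paper's proof essentially step for step: localize $K_a\subset S_{Ca^2}$ via Lemma \ref{lem:coincidencesetcontrol}, get $\bar t(1/2)\lesssim a^n$ from Lemma \ref{lem:divergence relation}, iterate the normalized version of that lemma over the dyadic sections $S_{2h}$ for $h\gtrsim a^2$, and finish inside $S_{Ca^2}$ with the rescaled estimate \eqref{eq:scaledA}.

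One small slip concerns the identity $\sup_{\partial S_h}=\sup_{\Omega\setminus S_h}$. The inequality $\M v_a\le\M\varphi$ gives a \emph{minimum} principle for $v_a-\varphi$, not a maximum principle; indeed $v_a-\varphi$ attains its global maximum $h_{a,p}$ at the interior point $x_p$. The identity should instead be justified by $\M v_a=\M\varphi$ outside $S_h$ once $K_a\subset S_h$, i.e.\ for $h\ge C_1a^2$. Alternatively, you can avoid it entirely by defining $\bar t(h)$ as the supremum over $\Omega\setminus S_h$, which is all the iteration actually uses.
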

\begin{proof} 
By the classical Alexandrov estimate \eqref{eq:abp gene 1/n}, it suffices to consider the case of small $a$. The argument parallels that of Lemma~\ref{lem:interior estimate u}. The only difference is that, invoking \eqref{eq:coincidence inclusion}, we obtain $K_a \subset S_{C a^2}$, which in turn justifies the application of Lemma~\ref{lem:divergence relation}.

Let
\[
\hat{t}(h):=\sup_{\partial S_h} (v_a- \varphi)  =\sup_{\Omega \setminus S_h} (v_a-\varphi).
\] 
Lemma \ref{lem:divergence relation} implies the existence of a structural constant $C(n,\lambda,\Lambda) > 1$ such that   
\[
\hat{t}(1/2)=\sup_{\Omega \setminus S_{1/2}} ( v_a-\varphi  ) \leq C a^n.
\] 
Assume $h \geq Ca^2$. 
Since $\varphi \geq v_a-\hat{t}(2h)$ on $\partial S_{2h}$ and $K_a \subset S_{Ca^2} \subset S_h$,  
we apply a variant of Lemma~\ref{lem:divergence relation}—obtained by combining it with the comparison principle—to the normalization of $\varphi$ and $v_a- \hat{t}(2h)$ over the section $S_{2h}$, as defined in \eqref{eq:simplify normalization 1} and \eqref{eq:simplify normalization 2}, to obtain 
\[
v_a-\hat{t}(2h)- \varphi \leq  C a^nh^{-\frac{n-2}{2}} \quad \text{in } S_{2h}\setminus S_{h},
\]
which yields that
\[
\hat{t}(h)-\hat{t}(2h)  \leq  C a^nh^{-\frac{n-2}{2}}.
\]
Summing over $h_k=2^{-k} \geq Ca^2$ for $k=1,2,\dots$, we obtain,
\[
 \hat{t}(h_k) \leq     
\begin{cases}
Ca^2 k & \text{if }  n=2,\\
Ca^n2^{\frac{(n-2)k}{2}}  & \text{if }  n \geq 3.
\end{cases}   
\]  
Since $\hat{t}(h)$ is monotonically decreasing in $h$, this yields in $\Omega\setminus S_h$ that
\[
v_{a} -\varphi \leq \hat{t}(h) \leq     
\begin{cases}
Ca^2(|\log  h|+1)& \text{if }  n=2,\\
Ca^nh^{-\frac{n-2}{2}}  & \text{if }  n \geq 3.
\end{cases}   
\] 

In addition, we apply the rescaled classical Alexandrov estimate \eqref{eq:scaledA} in $S_{Ca^2}$ and combine it with the comparison principle to obtain
\[
v_a- \varphi  \leq \hat{t}(Ca^2)+ Ca^2 \leq     
\begin{cases}
Ca^2|\log  a|& \text{if }  n=2\\
Ca^2  & \text{if }  n \geq 3
\end{cases}   
\quad \text{in } S_{Ca^2}.
\]

In conclusion, we obtain \eqref {eq:va-vaphi n=2} and \eqref{eq:va-vaphi}.
\end{proof}

These two lemmas lead to the following interior estimate.
 
\begin{Theorem}\label{thm:localized abp}
Let $\Omega \subset \R^n$, $n \geq 2$, be a convex domain satisfying $B_1(0) \subset \Omega \subset B_n(0)$, and let $\varphi \in C(\overline{\Omega})$ be a strictly convex function satisfying \eqref{eq:varphi equation} in $\Omega$. Let $u \in C(\overline{\Omega})$ be convex and satisfy $u=\varphi$ on $\partial\Omega$. Denote $\mu := \M u - \M \varphi$ and define $a = \omega_n^{-1/n} |\mu|(\Omega)^{1/n}$. Then for any open set $\Omega_1 \subset\subset \Omega$, we have
\[
\left\| u-\varphi\right\|_{L^{\infty}(\Omega_1)}\leq
 \begin{cases}
C_1 a^2 \left( \left|\log a\right| + 1 \right) & \text{if } n = 2, \\
C_1 a^2 & \text{if } n \geq 3,
\end{cases}
\]
where $C_1$ depends only on $n$, $\lambda$, $\Lambda$ and $\displaystyle \inf_{p \in \partial \varphi(\Omega_1)} \inf_{x\in \partial \Omega} (\varphi(x) - \ell_p(x))$.
\end{Theorem}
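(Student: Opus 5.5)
Fix $y\in\Omega_1$. We bound $u(y)-\varphi(y)$ from below and from above by reducing, via Theorem \ref{thm:abp extremal}, to the two extremal configurations, and then estimating those configurations with Lemmas \ref{lem:interior estimate u} and \ref{lem:interior estimate v}.

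\emph{Extremal reduction.} Since $u\in\D_{a,\varphi}$, Theorem \ref{thm:abp extremal} gives
\[
u_a(y,y)\le u(y)\le v_a(y,p)
\]
for some $p\in\partial\varphi(\Omega)$ with $y\in K_{a,p}$. Note that $p$ need not lie in $\partial\varphi(y)$, so a priori we do not control it in terms of $\Omega_1$. By the classical estimate \eqref{eq:abp simplify} we may assume $a\le \varepsilon$ for a small constant $\varepsilon$ depending on the allowed data; for $a>\varepsilon$ the bound $Ca\le C\varepsilon^{-1}a^2$ already suffices. Set
\[
h_0:=\inf_{q\in\partial\varphi(\Omega_1)}\inf_{\partial\Omega}(\varphi-\ell_q)>0 .
\]

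\emph{Lower bound.} Pick $q\in\partial\varphi(y)$. The section $S:=S^\varphi_{h_0/2,q}(y)$ is compactly contained in $\Omega$. Normalize it by an affine map $\T$ as in Section \ref{sec:normalization reduction}, shifting so that $y\mapsto0$ and $\varphi-\ell_q-h_0/2$ has minimum $-1$ after the scaling \eqref{eq:simplify normalization 1}. Since $S$ is comparable to a ball of size controlled by $h_0$, $n,\lambda,\Lambda$, the map $\T$ has controlled norm and determinant. By the reduction of Section \ref{sec:onesided} together with the comparison principle, $u_a(\cdot,y)$ is bounded below on $S$ by the isolated-singularity solution with data $\varphi+\inf_{\partial S}(u_a-\varphi)$ on $\partial S$. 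Here $\inf_{\partial S}(u_a-\varphi)\ge -Ca^n$: this follows from Lemma \ref{lem:divergence relation} applied on a larger normalized section containing $S$, or crudely from applying Lemma \ref{lem:interior estimate u} on a section of height $h_0$. Lemma \ref{lem:interior estimate u} at the centre, with $h\sim a^2$, then gives $\varphi(y)-u_a(y,y)\le Ca^2$ for $n\ge3$ and $\le Ca^2|\log a|$ for $n=2$. Here the rescaled $a$ differs from $a$ by a controlled factor.

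\emph{Upper bound.} This is the main obstacle, because the coincidence set $K_{a,p}$ is centred at $x_p$, not at $y$. I would use Lemma \ref{lem:coincidencesetcontrol}. With $q\in\partial\varphi(y)$, strict convexity and \eqref{eq:varphi equation} give $S^\varphi_{C_1a^2,q}(y)\subset\Omega$ once $C_1a^2<h_0$. Hence
\[
K_{a,p}\subset S^\varphi_{C_2a^2,p}(x_p)\subset S^\varphi_{C_1a^2,q}(y),
\]
so $x_p$ lies at distance $O(a^2)$-height from $y$. By engulfing, $\inf_{\partial\Omega}(\varphi-\ell_p)\ge h_0/2$ for small $a$. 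Now normalize the section $S^\varphi_{h_0/4,p}(x_p)$ with $x_p\mapsto0$, which places us in Assumption (G) with $K_a\ni0$ after replacing the centre. I would apply the argument of Lemma \ref{lem:interior estimate v} centred at $x_p$, noting that its proof only uses $K_a\subset S_{Ca^2}$, which we have. The boundary discrepancy on the normalized section is again $\le Ca^n$ by Lemma \ref{lem:divergence relation}. Then $y\in S_{Ca^2}$ gives $v_a(y,p)-\varphi(y)\le Ca^2$ for $n\ge3$ and $\le Ca^2(|\log a|+1)$ for $n=2$, via the last step of that lemma, namely the rescaled Alexandrov estimate in $S_{Ca^2}$.

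Combining the two bounds and taking the supremum over $y\in\Omega_1$ gives the theorem, with $C_1$ depending on $n,\lambda,\Lambda$ and $h_0$. The delicate points are the uniformity of the normalizations in $h_0$ and the verification that $p$, although only in $\partial\varphi(\Omega)$, inherits a lower bound on $\inf_{\partial\Omega}(\varphi-\ell_p)$ from the localization of $K_{a,p}$ near $y$.
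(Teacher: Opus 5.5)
Your argument is correct and follows the paper's route. You reduce via Theorem \ref{thm:abp extremal} to $u_a(\cdot,y)$ and to an extremal $v_a(\cdot,p)$ with $y\in K_{a,p}$, normalize, and read off both bounds from Lemmas \ref{lem:interior estimate u} and \ref{lem:interior estimate v}. The paper instead normalizes only once, at $y$ (subtract the support plane at $y$ and divide by $b=h_0$, which already gives Assumption (G) on all of $\Omega$), and then applies Lemma \ref{lem:interior estimate v} directly: that lemma only assumes $0\in K_a$ and absorbs the gap between $x_p$ and $y$ through Lemma \ref{lem:coincidencesetcontrol}. Consequently your localization to a sub-section for the lower bound, and your recentering at $x_p$ with the extra step bounding $\inf_{\partial\Omega}(\varphi-\ell_p)$ from below, are valid but unnecessary.
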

\begin{proof}
Let $y \in \Omega_1$, and let $\ell$ be a support function of $\varphi$ at $y$, and set 
\[
b= \min_{p \in \partial \varphi (\Omega_1) } \min_{x\in \partial  \Omega} (\varphi(x) - \ell_p(x))  \leq C(n,\lambda,\Lambda).
\]
Define the translated domain $\Omega_y = \Omega - y$ and  denote
\[
\varphi_y(x)=\frac{\varphi(x+ y) - \ell (x+ y)}{b},  \quad u_y(x)=\frac{u(x+ y) - \ell (x+ y)}{b}.
\]  
Under this transformation, $\varphi_y$ and $\Omega_y$ satisfy Assumption~(G)  with modified ellipticity constants $\lambda_b = b^{-n}\Lambda$ and $\Lambda_b = b^{-n}\lambda$, depending only on $n$, $b$, $\lambda$, and $\Lambda$. The desired conclusion then follows by combining Theorem~\ref{thm:abp extremal} with the interior estimates established in Lemmas~\ref{lem:interior estimate u} and~\ref{lem:interior estimate v}, applied to $\varphi_y$. Specifically, Theorem~\ref{thm:abp extremal} yields the two-sided bound
\[
u_a(\cdot,0)- \varphi_y(0) \;\leq\; u_y(0) - \varphi_y(0) \;\leq\; v_a(\cdot,p)-\varphi_y(0).
\]
The conclusion follows from Lemmas~\ref{lem:interior estimate u} and~\ref{lem:interior estimate v} by sending $h\to 0^+$, respectively. 
\end{proof}

\begin{proof}[Proof of \eqref{eq:abp gene 2/n} in Theorem~\ref{thm:ordera2}]
Since $\varphi$ is defined on a larger domain $\widetilde{\Omega}\supset\supset \Omega$, an extension argument analogous to the one used in the reduction outlined in Section \ref{app:comparison reduction}, combined with the interior estimates established in Theorem~\ref{thm:localized abp} applied on $\widetilde{\Omega}$, yields the desired estimate in $\Omega$.
\end{proof}

\subsection{Proof of estimate \eqref{eq:abp gene 2/n log}}

\begin{Lemma}\label{lem:varphi sc from u}
Let $\Omega \subset \R^n$ be a convex domain, $a\ge0$, and let $ \varphi \in C(\overline{\Omega})$ be a convex function satisfying \eqref{eq:varphi equation} in $\Omega$. Let $\tilde{u} $ be a convex function satisfying
\[
\M \tilde{u} =\M \varphi+\omega_na^n \delta_y \quad \text{in } \Omega.
\] 
Suppose $\tilde{u} \ge \varphi$ and that $\tilde{u}$ touches $\varphi$ from above at $y \in \Omega$.
Let $p\in\partial \varphi(y)$. 
Then there exist positive constants $C$ and $\widetilde{C} $, both of which depend only on $n$, $\lambda$ and $\Lambda$, such that if
$h \geq Ca^2$ and $S_{h,p}^{\tilde{u}}(y)   \subset \Omega$,  we have $S_{h/2,p}^\varphi(y)  \subset S_{h,p}^{\tilde{u}}(y)$ and 
\[
\varphi \leq \tilde{u} \leq \varphi+ 
\begin{cases}
\widetilde{C}  a^2 \left( \left|\log \frac{a}{h^{1/2}}\right| + 1 \right) & \text{if } n = 2, \\
\widetilde{C}  a^2 & \text{if } n \geq 3,
\end{cases}
\quad 
\text{in } S_{h/2,p}^\varphi(y).
\]
Furthermore, $\varphi$ is strictly convex and  $C^{1,\alpha}$ in $S_{h/2,p}^\varphi(y)$, and thus, $p=\nabla \varphi(y)$. 
\end{Lemma}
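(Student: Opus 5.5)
The plan is to compare $\tilde u$ with an auxiliary solution having the same boundary data on the section, then transfer the resulting bounds to $\varphi$ by a Harnack inequality. Write $S:=S_{h,p}^{\tilde u}(y)\subset\Omega$ and $\ell_p(x)=\varphi(y)+p\cdot(x-y)$. Since $\tilde u\ge\varphi$ with equality at $y$, we have $p\in\partial\tilde u(y)$. Hence $\tilde u\ge \ell_p$ on $S$, and $\tilde u=\ell_p+h$ on $\partial S$. Let $\hat\varphi$ solve $\M\hat\varphi=\M\varphi$ in $S$ with $\hat\varphi=\tilde u=\ell_p+h$ on $\partial S$. Then $\tilde u$ is exactly the isolated singularity solution $u_a(\cdot,y)$ of \eqref{eq:defnuay} with reference function $\hat\varphi$ on $S$. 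By comparison, $\tilde u\le\hat\varphi$ and $\varphi\le\hat\varphi$ in $S$. Moreover $\hat\varphi-\ell_p-h$ vanishes on $\partial S$ and has Monge--Amp\`ere density in $[\lambda,\Lambda]$. After the normalization \eqref{eq:simplify normalization 1}--\eqref{eq:simplify normalization 2}, which maps $S$ to a domain between $B_1$ and $B_n$ and replaces $a$ by $a h^{-1/2}$, Caffarelli's theory makes $\hat\varphi$ strictly convex and $C^{1,\alpha}$ in the interior, with the standard section geometry.

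The first step is to locate $y$ deep inside $S$. The rescaled classical estimate \eqref{eq:scaledA} gives $\hat\varphi-\tilde u\le C a|S|^{1/n}\le Cah^{1/2}$. Hence $0\le \hat\varphi(y)-\ell_p(y)\le Cah^{1/2}\le h/C'$ once $h\ge Ca^2$ with $C$ large. Since $\hat\varphi-\ell_p\ge\tilde u-\ell_p\ge 0$, this shows that $y$ lies in a section of $\hat\varphi$ of height $\ll h$. By Caffarelli's localization, that section is compactly inside $S$, at a normalized distance $\gtrsim 1$ from $\partial S$. I can then apply Lemma~\ref{lem:interior estimate u} (equivalently Theorem~\ref{thm:localized abp}) to the normalized pair $(\hat\varphi,\tilde u)$ on $S$. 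Undoing the scaling, where the normalized mass parameter is $ah^{-1/2}$ and heights scale by $h$, this yields
\[
0\le\hat\varphi-\tilde u\le \widetilde C a^2\bigl(|\log (a h^{-1/2})|+1\bigr)\ (n=2),\qquad \le\widetilde C a^2\ (n\ge3)
\]
on a fixed interior sub-section of $S$, and in particular at $y$.

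The second step transfers this to $\varphi$. Both $\varphi$ and $\hat\varphi$ solve the same equation, $\varphi\le\hat\varphi$, and $\hat\varphi(y)-\varphi(y)=\hat\varphi(y)-\tilde u(y)$ is of the size above. I would apply the Caffarelli--Guti\'errez Harnack inequality to $\hat\varphi-\varphi\ge0$, which is a solution of a linearized equation $A^{ij}\partial_{ij}(\hat\varphi-\varphi)=0$, sandwiched as in Lemma~\ref{lem:divergence relation}, to get $\hat\varphi-\varphi\lesssim \hat\varphi(y)-\varphi(y)$ on interior sections of $\hat\varphi$. Then $\varphi\le\tilde u\le\hat\varphi\le\varphi+\widetilde C a^2(\cdots)$ there. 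Because this error is $\le h/4$ for $h\ge Ca^2$, the sets $S^{\varphi}_{h/2,p}(y)$, $S^{\hat\varphi}$ and $S^{\tilde u}$ at comparable heights are nested. This gives $S_{h/2,p}^\varphi(y)\subset S_{h,p}^{\tilde u}(y)$, and the estimate holds on all of $S_{h/2,p}^\varphi(y)$. Finally, $S^\varphi_{h/2,p}(y)$ is a section of $\varphi$ compactly contained in $\Omega$, on whose boundary $\varphi=\ell_p+h/2$. Caffarelli's strict convexity and $C^{1,\alpha}$ theory therefore applies, and since $p\in\partial\varphi(y)$ and $\varphi$ is differentiable at $y$, we get $p=\nabla\varphi(y)$.

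The main obstacle is the Harnack step. It requires strict convexity and a nondegenerate geometry for $\varphi$ on an annular region before we know $\varphi$ is close to $\hat\varphi$, which is nearly circular. If the Harnack route fails, I would replace it by a barrier argument via Lemma~\ref{lem:comparison gene}. Apply it on $S$ to $\varphi$ versus $\hat\varphi-t$, with $W=\{y\}$, using that $\M\varphi=\M\hat\varphi$ on $S$ and $\varphi(y)=\tilde u(y)\ge\hat\varphi(y)-\widetilde Ca^2(\cdots)$, to rule out $\varphi$ dropping far below $\hat\varphi$ on the smaller section. The geometric control of $y$ from the first step is what makes this localization possible.
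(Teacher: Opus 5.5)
Your opening is sound. Your $\hat\varphi$ on $S=S^{\tilde u}_{h,p}(y)$ is the paper's $\tilde\varphi$. The Alexandrov bound $\hat\varphi-\tilde u\le Cah^{1/2}$ gives comparable heights and $|S|\approx h^{n/2}$. The interior estimate with reference $\hat\varphi$ then gives $0\le\hat\varphi-\tilde u\le\eta$, with $\eta$ the bound claimed in the lemma. (Use Theorem~\ref{thm:localized abp} here, since Lemma~\ref{lem:interior estimate u} needs the singular point to be the minimum point of the reference function.)

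\textbf{The gap: the transfer to $\varphi$.} This is where the content of the lemma lies. From $\varphi\le\hat\varphi$, $\M\varphi=\M\hat\varphi$ in $S$ and $(\hat\varphi-\varphi)(y)\le\eta$, you want $\hat\varphi-\varphi\lesssim\eta$ on a fixed portion of $S$. That is a Harnack statement. The half you need, the bound on $\sup(\hat\varphi-\varphi)$, comes from the subsolution inequality $L_{\varphi}(\hat\varphi-\varphi)\ge 0$, and its constants depend on the sections of $\varphi$. You have no control of those:
\begin{itemize}
\item A priori only $S^{\tilde u}_{h,p}(y)\subset S^{\varphi}_{h,p}(y)$ is known, and the latter need not lie in $\Omega$.
\item Nothing in your argument excludes, say, $\varphi-\ell_p$ vanishing on a segment through $y$ that leaves $S$. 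Then $(\hat\varphi-\varphi)(y)\le\eta$, while $\hat\varphi-\varphi=h$ where the segment meets $\partial S$.
\item Excluding this is exactly the strict-convexity conclusion of the lemma, so it cannot be presupposed to run Harnack.
\end{itemize}
In Lemma~\ref{lem:divergence relation} the needed geometry came for free, because the two functions share boundary data and are uniformly close by Alexandrov. That is not the case here.

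Your fallback does not help. Since $\M\varphi=\M\hat\varphi$, Lemma~\ref{lem:comparison gene} with $W=\{y\}$ on a domain $D\ni y$ only gives $\min_{\partial D}(\hat\varphi-\varphi)\le(\hat\varphi-\varphi)(y)$. That is an upper bound on a boundary minimum. Comparison can push a lower bound for $\varphi-\hat\varphi$ from $\partial D$ inward, but you have no such bound on any $\partial D$.

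\textbf{The missing idea.} The paper uses a compactness argument together with a strong maximum principle for Monge--Amp\`ere.
\begin{itemize}
\item Normalize $h=1$ and $y=0$. Suppose $\|\tilde u-\varphi\|_{L^\infty(S^{\tilde u}_{7/8})}\ge\sigma_0$ along a sequence $a_k\to0$.
\item The limits satisfy $\varphi_\infty\le\tilde u_\infty$ and $\det D^2\tilde u_\infty=\det D^2\varphi_\infty\in[\lambda,\Lambda]$, they touch at the origin, and $\tilde u_\infty$ is strictly convex.
\item The strong maximum principle then forces $\tilde u_\infty\equiv\varphi_\infty$, without assuming strict convexity of $\varphi_\infty$.
\item Hence $\|\tilde u-\varphi\|\le\sigma(a)\to0$, so $S^\varphi_{7/8-\sigma}\subset S^{\tilde u}_{7/8}$. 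By Caffarelli, $\varphi$ and $\tilde u$ are then uniformly strictly convex on the annulus $S^\varphi_{3/4}\setminus S^\varphi_{1/4}$.
\end{itemize}
Only at this point does the paper apply Harnack on that annulus. It finishes by comparing $\tilde u$ with the isolated-singularity solution on $S^\varphi_{1/2}$ with boundary data $\varphi$, via Lemma~\ref{lem:interior estimate u}. Once this qualitative closeness is available, your Harnack step for $\hat\varphi-\varphi$ also becomes legitimate, and your route can then be completed.
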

\begin{proof}
For notational simplicity, we assume $y=0$, $\varphi(0)=0$, $p=0$,  and write
\[
S_h^{\tilde{u}} := \{ \tilde{u} < h\}\quad \text{and}\quad S_{h}^\varphi := \{\varphi < h\} \quad\text{for } h>0.
\] 

For $h \geq a^2$, let $\tilde{\varphi}=\tilde{\varphi}_h$ be the solution of
\[
\M \tilde{\varphi} =\M \varphi  \quad \text{in }  S_{h}^{\tilde{u}}, \quad \tilde{\varphi} =\tilde{u}(=h) \quad \text{on } \partial  S_{h}^{\tilde{u}}.
\] 
Then $\tilde{u} \leq \tilde{\varphi}$, and $ \ S_{h}^{\tilde{u}}=\{ \tilde{\varphi} < h\}=S_{\tilde h,0}^{\tilde{\varphi}}(\tilde x)$ is a section of $\tilde{\varphi}$ of some height $\tilde{h}>0$ at some point $\tilde x$. Furthermore, it satisfies that
\[
h -\| \tilde{u}-\tilde{\varphi}\|_{L^{\infty}(S_{h}^{\tilde{u}})}  \leq \tilde{h} \leq  h, \quad \M \tilde{\varphi}(S_{\tilde h}^{\tilde{\varphi}}) \approx |S_{\tilde h}^{\tilde{\varphi}}| \approx  \tilde{h}^{\frac{n}{2}}. 
\]
Then, the Alexandrov estimate \eqref{eq:scaledA} implies $\| \tilde{u}-\tilde{\varphi}\|_{L^{\infty}(S_{h}^{\tilde{u}})}  \leq  Ca\tilde{h}^{\frac{1}{2}}$.
We conclude that $h - \tilde{h} \leq Ca\tilde{h}^{\frac{1}{2}}$, which yields $\tilde{h} \approx h$ for $h \geq a^2$.

Hence, $|S_h^{\tilde{u}}|=|S_{\tilde h}^{\tilde{\varphi}}| \approx  \tilde{h}^{\frac{n}{2}} \approx h^{\frac{n}{2}}$. Therefore, we may apply the affine transformation defined in \eqref{eq:simplify normalization 1} and \eqref{eq:simplify normalization 2} to normalize the section $S_h^{\tilde{u}}$ and thus, we may assume $h = 1$ and $a$ is sufficiently small if $C$ is sufficiently large.

Under this normalization, we claim that
\[
\|\tilde{u} - \varphi\|_{L^{\infty}(S_{7/8}^{\tilde{u}})} \leq \sigma(n,a) ,
\]
where $\sigma(n,a)  \to 0$ as $a \to 0$.
Suppose, for contradiction, that there exist $\sigma_0> 0$ and a sequence $a_k \to 0$ with corresponding functions $\tilde{u}_k$, $\varphi_k$ such that
$\|\tilde{u}_k - \varphi_k\|_{L^{\infty}(S_{7/8}^{\tilde{u}_k})} \geq \sigma_0$.
After passing to a subsequence, $\tilde{u}_k$ and $\varphi_k$ converge locally uniformly to convex functions $\tilde{u}_{\infty}$ and $\varphi_{\infty}$ in $S_1^{\tilde{u}_{\infty}}$ satisfying
\[
\lambda \leq \det D^2 \tilde{u}_{\infty} = \det D^2 \varphi_{\infty} \leq \Lambda \quad \text{and} \quad 0 \leq \varphi_{\infty} \leq \tilde{u}_{\infty} \quad \text{in } S_1^{\tilde{u}_{\infty}},
\]
with $\tilde{u}_{\infty}$ and $\varphi_{\infty}$ touching at the origin.
Since $\tilde{u}_{\infty}$ is strictly convex in $S_1^{\tilde{u}_{\infty}}$, the strong maximum principle \cite{jian2025strong} implies $\tilde{u}_{\infty} \equiv \varphi_{\infty}$ in $S_1^{\tilde{u}_{\infty}}$. This contradicts the inequality
\[
\|\tilde{u}_{\infty} - \varphi_{\infty}\|_{L^{\infty}(S_{7/8}^{\tilde{u}_{\infty}})} =\lim_{k \to \infty} \|\tilde{u}_k - \varphi_k\|_{L^{\infty}(S_{7/8}^{\tilde{u}_k})} \geq \sigma_0 > 0,
\]
thus establishing the claim.

Then we have $S_{7/8-\sigma}^\varphi \subset S_{7/8}^{\tilde{u}}$. When $\sigma(n,a)$ is sufficiently small, the strict convexity of $\varphi$ (see \cite{caffarelli1990ilocalization}) then implies a uniform strict convexity of $\tilde u$ and $\varphi$ in the annular region $S_{3/4}^{\varphi} \setminus S_{1/4}^{\varphi}$.
Applying the Harnack inequality from \cite{caffarelli1997properties} to the linearized Monge–Ampère inequalities  
\[
L_{\tilde u}(\varphi - \tilde{u}) \geq 0 \quad \text{and} \quad L_\varphi(\varphi - \tilde{u}) \leq 0,\quad\text{in } S_{3/4}^{\varphi} \setminus S_{1/4}^{\varphi},
\]
we obtain  
\[
\sup_{\partial S_{1/2}^{\varphi}}(\tilde{u} - \varphi) \leq C\inf_{\partial S_{1/2}^{\varphi}}(\tilde{u} - \varphi).
\]
Since $\tilde{u}(0)=\varphi(0)$, the comparison principle implies
\[
0 \leq \tilde{u}- \varphi \leq \sup_{\partial S_{1/2}^{\varphi}}(\tilde{u} - \varphi)\leq C\inf_{\partial S_{1/2}^{\varphi}}(\tilde{u} - \varphi)\quad \text{in }S_{1/2}^{\varphi}.
\]
Therefore, it suffices to estimate $\inf_{\partial S_{1/2}^{\varphi}}(\tilde{u}-\varphi)$.

Let $u$ be the solution to
\[
\M u =\M \varphi+\omega_na^n \delta_y \quad \text{in } S_{1/2}^{\varphi},\quad 
u=\varphi \quad \text{on } \partial S_{1/2}^{\varphi}.
\] 
By the comparison principle, we obtain 
\[
u + \inf_{\partial S_{1/2}^{\varphi}}(\tilde{u}-\varphi) =u + \inf_{\partial S_{1/2}^{\varphi}}(\tilde{u}-u)\leq \tilde{u}  \quad \text{in } S_{1/2}^{\varphi}.
\]
Combining this with Lemma~\ref{lem:interior estimate u}, we derive that
\[
\inf_{\partial S_{1/2}^{\varphi}}(\tilde{u}-\varphi)  \leq \tilde{u} (0)- u(0)  = \varphi (0) - u(0) 
\leq
\begin{cases}
Ca^2|\log a|& \text{if } n = 2, \\
Ca^2& \text{if } n \geq 3. 
\end{cases}  
\] 
The desired estimate in the original setting then follows by reversing the normalization.

The interior regularity of $\varphi$ in $S_{h/2,p}^\varphi(y)$ then follows from the equation \eqref{eq:varphi equation} and Caffarelli's result \cite{caffarelli1990ilocalization}.
\end{proof}

The following lemma is established similarly.
\begin{Lemma}\label{lem:varphi sc from v}
Let $\Omega \subset \mathbb{R}^n$ be a convex domain, and let $\varphi \in C(\overline{\Omega})$ be a convex function satisfying \eqref{eq:varphi equation}. 
Fix $y \in \Omega$, and let $\ell$ be a support function to $\varphi$ at $y$.
Suppose $\tilde{v}$ is a convex function such that
\[
\M\tilde{v} = \M\varphi \cdot \chi_{\{\tilde{v}>\ell\}} \quad \text{in } \Omega, 
\quad \tilde{v} \ge \ell, 
\quad \M\varphi\big(\{\tilde{v}=\ell\}\big) \le \omega_n a^n.
\] 
Assume furthermore that $\tilde{v} \le \varphi$ in $\Omega$ and that $\tilde{v}$ touches $\varphi$ from below at $y$.
Then there exist positive constants $C$ and $\widetilde{C}$, depending only on $n$, $\lambda$, and $\Lambda$, such that if
$h \geq Ca^2$ and $S_{h, \nabla\ell}^{\varphi}(y) \subset \Omega$,  we have $S_{h/2, \nabla\ell}^{\tilde{v}}(y)  \subset S_{h, \nabla\ell}^\varphi(y)$ and    
\[
\tilde{v} \leq \varphi \leq \tilde{v} +  
\begin{cases}
\widetilde{C} a^2 \left( \left|\log \frac{a}{h^{1/2}}\right| + 1 \right) & \text{if } n = 2, \\
\widetilde{C} a^2 & \text{if } n \geq 3,
\end{cases}
\quad \text{in } S_{h/2, \nabla\ell}^{\tilde{v}}(y).
\] 
\end{Lemma}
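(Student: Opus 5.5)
The plan is to mirror the proof of Lemma \ref{lem:varphi sc from u}, with the roles of the two functions exchanged and the coincidence set playing the part of the Dirac mass. Normalize $y=0$, $\ell\equiv 0$, $\varphi(0)=0$, and write $S_h^{\varphi}=\{\varphi<h\}$ and $S_h^{\tilde v}=\{\tilde v<h\}$. Since $\tilde v\le\varphi$, we always have $S_h^{\varphi}\subset S_h^{\tilde v}$. The goal is the reverse inclusion at half height together with the two-sided bound.

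\textbf{Step 1 (comparable sections).} For $h\ge Ca^2$, let $\tilde\varphi$ solve $\M\tilde\varphi=\M\varphi$ in $S_h^{\varphi}$ with $\tilde\varphi=\tilde v$ on $\partial S_h^{\varphi}$. The total discrepancy $|\M\tilde v-\M\tilde\varphi|(S_h^\varphi)\le \M\varphi(\{\tilde v=0\})\le\omega_na^n$, so the rescaled Alexandrov estimate \eqref{eq:scaledA} gives $\|\tilde v-\tilde\varphi\|_{L^\infty(S_h^\varphi)}\le Ca\,h^{1/2}$. The comparison principle gives $\tilde\varphi\le\varphi$ and $\tilde\varphi\ge\varphi-\sup_{\partial S_h^\varphi}(\varphi-\tilde v)$. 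Arguing as in Lemma \ref{lem:varphi sc from u}, I will use the fact that $\tilde v$ is the obstacle solution lying below $\varphi$ to deduce that $S_h^{\tilde v}$ and $S_h^\varphi$ have comparable size $\approx h^{n/2}$, and that $S_{h/2}^{\tilde v}\subset S_h^{\varphi}$ once $a/h^{1/2}$ is small. Rigorously, I expect this to go via a compactness argument after normalization.

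\textbf{Step 2 (closeness by compactness).} Normalize $S_h^\varphi$ to unit size via \eqref{eq:simplify normalization 1}, so that $a$ becomes small. Argue by contradiction: take limits $\tilde v_k\to\tilde v_\infty$ and $\varphi_k\to\varphi_\infty$. The coincidence sets have $\M\varphi$-measure $\le\omega_na_k^n\to0$, so $\det D^2\tilde v_\infty=\det D^2\varphi_\infty\in[\lambda,\Lambda]$, with $\tilde v_\infty\le\varphi_\infty$ touching at $0$. The strong maximum principle \cite{jian2025strong}, applicable because $\varphi_\infty$ is strictly convex by \cite{caffarelli1990ilocalization}, forces equality. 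This gives $\|\varphi-\tilde v\|\le\sigma(a)$ on $S_{7/8}^\varphi$, and hence the inclusion $S_{1/2}^{\tilde v}\subset S_{1}^{\varphi}$ after rescaling heights.

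\textbf{Step 3 (quantitative bound).} On the annulus $S^\varphi_{3/4}\setminus S^\varphi_{1/4}$ both functions are uniformly strictly convex, and $\varphi-\tilde v\ge0$ solves linearized inequalities of both signs. The Harnack inequality of \cite{caffarelli1997properties} then yields $\sup_{\partial S^\varphi_{1/2}}(\varphi-\tilde v)\le C\inf_{\partial S^\varphi_{1/2}}(\varphi-\tilde v)$. By comparison, $\varphi-\tilde v$ is bounded in $S_{1/2}^\varphi$ by its boundary sup; the coincidence set lies in a small section by Lemma \ref{lem:coincidencesetcontrol}. To bound the inf, let $v$ be the obstacle solution \eqref{eq:defnva} in $S^\varphi_{1/2}$ with boundary data $\varphi$, slope $0$, and mass $\omega_na^n$. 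Comparison, via Lemma \ref{lem:comparison gene} applied on the coincidence set, gives $\tilde v+\inf_{\partial S^\varphi_{1/2}}(\varphi-\tilde v)\le v$. Evaluating at $0$, where $\tilde v(0)=\varphi(0)$, we get
\[
\inf_{\partial S^\varphi_{1/2}}(\varphi-\tilde v)\le v(0)-\varphi(0),
\]
and Lemma \ref{lem:interior estimate v} bounds the right side by $Ca^2$ (times $|\log a|$ when $n=2$). Undoing the normalization turns $a$ into $a/h^{1/2}$, which produces the stated logarithm.

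I expect the main obstacle to be the comparison in Step 3. One must check that $v$ has $0$ in its coincidence set (so that Lemma \ref{lem:interior estimate v} applies) and that the ordering holds despite $\tilde v$ possibly having less coincidence mass. I would handle the latter by monotonicity in $h_{a,p}$ (Lemma \ref{lem:comparison principle va on h}).
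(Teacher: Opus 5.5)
Your argument is correct, and its skeleton is the paper's: normalize $S^\varphi_h$, obtain closeness by compactness and the strong maximum principle, apply the Caffarelli--Guti\'errez Harnack inequality on an annulus, and then bound $m:=\inf(\varphi-\tilde v)$ on a middle level set by comparing with an obstacle configuration to which Lemma \ref{lem:interior estimate v} applies. The real difference is in that last comparison.

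The paper changes the background. It solves $\M\tilde\varphi=\M\varphi$ in $S^{\tilde v}_{1/2}$ with $\tilde\varphi=\tilde v$ on $\partial S^{\tilde v}_{1/2}$. Then $\tilde v$ is itself an obstacle solution relative to $\tilde\varphi$, with coincidence mass $\M\tilde\varphi(\{\tilde v=0\})\le\omega_na^n$. So $\tilde\varphi+m\le\varphi$ is the ordinary comparison principle, and Lemma \ref{lem:interior estimate v} is applied to the pair $(\tilde v,\tilde\varphi)$.

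You instead keep $\varphi$ as background, literally mirroring Lemma \ref{lem:varphi sc from u}, and build a fresh $v=v_a(\cdot,0)$ on $S^\varphi_{1/2}$. This forces you to handle the mass deficit of $\tilde v$, and your proposed tool does so in one line:
\begin{itemize}
\item By \eqref{eq:xpinK}, $0\in K_v:=\{v=h_{a,0}\}$.
\item For small normalized $a$ the obstacle height is not saturated, so $\M\varphi(K_v)=\omega_na^n$.
\item Hence for every Borel $E\supset K_v$, $\M v(E)=\M\varphi(E)-\omega_na^n\le \M\varphi(E)-\M\varphi(E\cap\{\tilde v=0\})=\M\tilde v(E)$.
\item Lemma \ref{lem:comparison gene}, with $W=K_v$, then gives $x\in K_v$ with $h_{a,0}=v(x)\ge \tilde v(x)+m\ge m$, i.e.\ $m\le v(0)-\varphi(0)$.
\end{itemize}
This is just the upper-bound argument of Theorem \ref{thm:abp extremal} with one-sided boundary data, and it makes Lemma \ref{lem:comparison principle va on h} unnecessary. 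Your version uses Lemma \ref{lem:interior estimate v} in its normalized form, with the minimum of $\varphi$ at the evaluation point. The paper's version avoids any discussion of coincidence masses.

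Three bookkeeping remarks.
\begin{itemize}
\item Your Step 1 is superfluous. $|S^\varphi_h|\approx h^{n/2}$ holds directly because $\varphi$ is non-degenerate and $S^\varphi_h\subset\Omega$. The inclusion $S^{\tilde v}_{h/2}\subset S^\varphi_h$ follows from Step 2 plus convexity of $\tilde v$ along rays from $0$, which gives $S^{\tilde v}_{7/8-\sigma}\subset S^\varphi_{7/8}$ after normalization.
\item Lemma \ref{lem:coincidencesetcontrol} does not apply to $\tilde v$, since its boundary values are not $\varphi$ and it lies below $\varphi$. Step 2 already gives $\{\tilde v=0\}\subset\{\varphi\le\sigma\}$, which is all the Harnack step needs.
\item You bound $\varphi-\tilde v$ only on $S^\varphi_{1/2}$, whereas the statement asks for the larger set $S^{\tilde v}_{1/2}$. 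Either run the maximum principle on $S^\varphi_{5/8}\supset S^{\tilde v}_{1/2}$ (Harnack controls every level set $\partial S^\varphi_t$ for $t\in[3/8,5/8]$), or work with $\partial S^{\tilde v}_{1/2}$ as the paper does.
\end{itemize}
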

\begin{proof}
The proof follows an argument analogous to that of Lemma~\ref{lem:varphi sc from u}; we therefore only sketch the main steps.
For notational convenience, we still assume $y=0$ and $l=0$, and write $S_h^{\tilde{v}} := \{\tilde v<h \}$ and $ S_{h}^\varphi := \{\varphi<h\}$.

First, noting that $|S_h^{\varphi}| \approx h^{\frac{n}{2}}$, we apply the affine transformations in \eqref{eq:simplify normalization 1} and \eqref{eq:simplify normalization 2} to normalize the section $S_h^{\varphi}$ so that we can assume $h = 1$. It then suffices to treat the case when the parameter $a$ is sufficiently small. 

A similar argument to that of Lemma~\ref{lem:varphi sc from u} shows that  $\|\tilde{v} -\varphi\|_{L^{\infty}(S_{7/8}^{\varphi})} \leq \sigma(n,a)$,
where $\sigma(n,a) \to 0$ as $a \to 0$. 
It follows that $S_{7/8-\sigma}^{\tilde{v}} \subset S_{7/8}^{\varphi}$ and $S_{0}^{\tilde{v}}:=\{\tilde{v}=0\} \subset S_{\sigma}^{\varphi}$.
For small $a$, an application of the Harnack inequality from \cite{caffarelli1997properties}, combined with the comparison principle, now yields 
\[
0 \leq \varphi-\tilde{v} \leq \sup_{\partial S_{1/2}^{\tilde v}}(\varphi-\tilde{v} )\leq C\inf_{\partial S_{1/2}^{\tilde v}}(\varphi-\tilde{v} ) \quad  \text{in }  \partial S_{1/2}^{\tilde v},
\]  
and the proof reduces to estimate $\inf_{\partial S_{1/2}^{\tilde v}}(\varphi-\tilde{v} ) $.

To this end, let $\tilde{\varphi}$ be the solution to
\[
\M \tilde{\varphi}=\M \varphi  \quad \text{in } S_{1/2}^{\tilde v},\quad 
\tilde{\varphi}=\tilde v \quad \text{on } \partial S_{1/2}^{\tilde v}
\] 
By the comparison principle, we obtain 
\[
\tilde{\varphi}+\inf_{\partial S_{1/2}^{\tilde v}}(\varphi-\tilde{v} ) =\tilde{\varphi}+\inf_{\partial S_{1/2}^{\tilde v}}(\varphi-\tilde{\varphi} ) \leq \varphi  \quad \text{in } S_{1/2}^{\tilde{v}}.
\]
Combining this with Lemma~\ref{lem:interior estimate v}, we derive that 
\[
\inf_{\partial S_{1/2}^{\tilde v}}(\varphi-\tilde{v} )  \leq \varphi(0) - \tilde{\varphi}(0) =\tilde{v}(0) - \tilde{\varphi}(0) 
\leq
\begin{cases}
Ca^2|\log a|& \text{if } n = 2, \\
Ca^2& \text{if } n \geq 3. 
\end{cases}   
\] 
\end{proof}

\begin{proof}[Proof of \eqref{eq:abp gene 2/n log} in Theorem \ref{thm:ordera2}.]
By the Alexandrov estimate \eqref{eq:abp gene 1/n}, it suffices to consider the case when $a > 0$ is sufficiently small. 
The subsequent analysis will focus on the case $n \geq 3$; the two-dimensional case $n=2$ can be handled similarly with minor modifications.
 
\textbf{Step 1.}
We first consider the case $\mu:=\M u-\M\varphi \geq 0$ and prove 
\[
\varphi(x)\leq  u(x) + C(n,\lambda,\Lambda)a^2(|\log a|+1) \quad \text{in } \Omega.
\] 
An application of Theorem~\ref{thm:abp extremal} shows that it suffices to prove that
\begin{equation}\label{eq:abp gene 2/n log positive}
\varphi(x)-u_a(x,x)\leq  C(n,\lambda,\Lambda)a^2(|\log a|+1) \quad \text{in } \Omega.
\end{equation}
Suppose, for contradiction, that there exists a point $x_0 \in \Omega$ at which the maximal deviation
\[
\Theta:= \frac{h_0}{a^2} ,\quad \text{where }h_0 := \|u_a(\cdot,x_0) - \varphi\|_{L^\infty(\overline{\Omega})} = \varphi(x_0) - u_a(x_0,x_0),
\]
satisfies 
\[
\frac{\Theta}{|\log a|+1} >> 1.
\]

Let $p\in\partial\varphi(x_0)$ and $\tilde{u}_0 := u_a(\cdot,x_0) + h_0$. Then $\tilde u_0$ touches $\varphi$ from above at $x_0$. This implies that $p\in\partial \tilde u_0(x_0)=\partial u_a(x_0,x_0)$ and $S_{h_0,p}^{\tilde{u}_0}=S_{h_0,p}^{u_a(\cdot,x_0)}\subset\Omega$. Given that $h_0 \geq C(n,\lambda,\Lambda) a^2$, applying Lemma \ref{lem:varphi sc from u} to $\tilde{u}_0$ and $\varphi$ yields $p=\nabla\varphi(x_0)$ and $S_{h_0/2, p}^{\varphi}(x_0) \subset \Omega$. From now on, in the proof below we drop the subscript from the section notation, since the subgradient at each point we consider is unique. Let $S_{t_0}^{\varphi}(x_0)$ be the maximal section of $\varphi$ at $x_0$; that is, $t_0$ is the supremum of all $t>0$ with $S_{t}^{\varphi}(x_0) \subset \Omega$. 
Then $S_{h_0+t_0}^{\tilde{u}_0}(x_0)=S_{h_0+t_0}^{u_a(\cdot,x_0)}(x_0) \subset \Omega$. Applying Lemma \ref{lem:varphi sc from u} once more then gives $S_{(h_0+t_0)/2}^{\varphi}(x_0) \subset \Omega$. From the maximality of $t_0$, it follows that $t_0 \geq h_0$.

Let us choose $z \in \partial\Omega \cap \partial S_{t_0}^{\varphi}(x_0)$. After a rotation, we assume that the inner normal direction of $\partial\Omega$ at $z$ is given by $e_n$.
Let $\sigma(n,\lambda,\Lambda)>0$ be a small constant, and $C(n,\lambda,\Lambda)>$ be a large constant. 
We proceed by induction on $0 \leq k \leq c\Theta- 2C$ to construct points $x_k \in \Omega$ such that $x_k$ and the corresponding maximal section $S_{t_k}^{\varphi}(x_k)$ of $\varphi$ at $x_k$ satisfy $z \in \partial \Omega \cap \partial  S_{t_k}^{\varphi}(x_k)$,
\[
(x_k-z)\cdot e_n  \leq (1-\sigma)(x_{k-1}-z)\cdot e_n,
\]
and 
\[
h_k:=\left\|u_k - \varphi \right\|_{L^{\infty}(\overline{\Omega})} =\varphi(x_k)-u_k(x_k) \geq h_{k-1}-Ca^2 \geq Ca^2,
\]
where $u_k(x) := u_{a}(x,x_k) \in \mathcal{D}_{a,\varphi}$.

Assuming the inductive hypothesis holds for $k$, we now prove the case for $k+1$. Since $h_k \geq C(n,\lambda,\Lambda) a^2$ and $S_{h_k+t_k}^{u_k}(x_k)\subset \Omega$, we apply Lemma \ref{lem:varphi sc from u} to the functions $\tilde{u}_k := u_k + h_k$ and $\varphi$, which yields $S_{(h_k+t_k)/2}^{\varphi}(x_k)\subset \Omega$ so that $t_k \geq  h_k \ge C(n,\lambda,\Lambda) a^2$, 
and 
\[
\varphi - u_k = \varphi - \tilde{u}_k + h_k \geq h_k - C a^2 \quad \text{in }S_{t_k/2}^{\varphi}(x_k).
\]

From the equation \eqref{eq:varphi equation} of $\varphi$, we observe that 
\[
y_{k+1}:= \sigma z + (1-\sigma)x_k \in S_{t_k/2}^{\varphi}(x_k)
\]
holds whenever $\sigma(n,\lambda,\Lambda) > 0$ is sufficiently small. We then select the point
\[
x_{k+1} \in \partial S_{t_k/2}^{\varphi}(x_k)
\]
with the property that $\nabla\varphi(x_{k+1}) - \nabla\varphi(x_k)$ points along the negative $e_n$-direction. 
Theorem \ref{thm:abp extremal} then implies $h_{k+1} =\varphi(x_{k+1}) - u_{k+1}(x_{k+1}) \geq \varphi(x_{k+1}) - u_k(x_{k+1}) \geq h_k -Ca^2$.
Furthermore, this choice of $x_{k+1}$ yields
\[
(x_{k}-x_{k+1})\cdot e_n  =\sup_{x\in S_{t_k/2}^{\varphi}(x_k)} (x_{k}-x)\cdot e_n  \geq (x_{k}-y_{k+1})\cdot e_n  \geq \sigma (x_{k}-z)\cdot e_n.
\]
Consequently, 
\[
\operatorname{dist}(x_{k+1},\partial \Omega)\le (x_{k+1}-z)\cdot e_n  \leq (1-\sigma)(x_{k}-z)\cdot e_n \leq \dots \leq (1-\sigma)^{k+1}(x_{0}-z)\cdot e_n.
\]
Furthermore, the maximal section $S_{t_{k+1}}^{\varphi}(x_{k+1})$ of $\varphi$  at $x_{k+1}$ also contacts $\partial\Omega$ at $z$, thereby completing the inductive step.

By iteration, we obtain the height estimate
\[
h_{k+1} \geq h_0- C(n,\lambda,\Lambda)ka^2 .
\]
Consequently, the induction process can be continued as long as $Ck \leq   \Theta- 2C$, and we have
\[
Ca^2 \leq \varphi(x_k)- u_k(x_k)  \leq C\operatorname{dist}(x_k,\partial \Omega)^{\frac{1}{n}}a \leq (1-\sigma)^{\frac{k}{n}}\operatorname{diam}( \Omega)^{\frac{1}{n}}a,
\]
where we used \eqref{eq:alexandrov maximum principle} in the second inequality. This implies for a larger $C(n,\lambda,\Lambda)$ that
\[
\Theta \leq C(1+n|\log_{(1-\sigma)}a|) ,
\]  
which completes the proof of \eqref{eq:abp gene 2/n log positive}.

\textbf{Step 2.}
We consider the case $\mu \leq 0$ and prove 
\[
u(x) \leq  \varphi(x)+  C(n,\lambda,\Lambda)a^2(|\log a|+1) \quad \text{in } \Omega.
\] 
An application of Theorem~\ref{thm:abp extremal} shows that it suffices to prove that 
\begin{equation}\label{eq:abp gene 2/n log negative}
v_a(x,p)-\varphi(x)\leq  C(n,\lambda,\Lambda)a^2(|\log a|+1) \quad \text{in } \Omega
\end{equation}
holds for any $v_{a}(\cdot,p)$ having $x$ in its coincidence set.
As in the case of $\mu \geq 0$, we argue by contradiction. Suppose there exist a point $x_0 \in \Omega$ and $p_0 \in \partial \varphi(\Omega)$ at which the maximal deviation
\[
\Xi:= \frac{h_0}{a^2} ,\quad \text{where }h_0 := \|v_a(\cdot,p_0) - \varphi\|_{L^\infty(\overline{\Omega})} = v_a(x_0,p_0)- \varphi(x_0) ,
\]
satisfies 
\[
\frac{\Xi}{|\log a|+1} >> 1.
\]

Let $p_0 \in \partial \varphi(\tilde{x}_0)$ for some $\tilde{x}_0 \in \Omega$. Since $v_a(\cdot,p_0) = \varphi$ on $\partial \Omega$ and $v_a(\cdot,p_0)$ touches $\varphi + h_0$ from below at $\tilde{x}_0$ (see \eqref{eq:xpinK}),
it follows that $S_{h_0,p_0}^{\varphi}(\tilde{x}_0) \subset \Omega$, $\varphi$ is strictly convex and $C^{1,\alpha}$ in  $S_{h_0,p_0}^{\varphi}(\tilde{x}_0)$, and hence, $p_0 = \nabla\varphi(\tilde{x}_0)$. Note that, by \eqref{eq:coincidence inclusion}, the coincidence set of $v_a(\cdot,x_0)$ is contained in $S_{C a^2}^{\varphi}(\tilde{x}_0)$. So $\varphi$ is  $C^{1,\alpha}$ near $x_0$ as well.
Again, from now on, we will drop the subscript from the section notation, since the subgradient at each point we consider is unique. By \eqref{eq:coincidence inclusion} again, the coincidence set of $v_a(\cdot,p_0)$ is contained in $S_{C a^2}^{\varphi}(\tilde{x}_0) \cap S_{C a^2}^{\varphi}(x_0)$. 
Since $h_0 \ge C a^2$, the engulfing property implies
$S_{c h_0}^{\varphi}(x_0) \subset S_{h_0}^{\varphi}(\tilde{x}_0) \subset \Omega$.
Let $S_{t_0}^{\varphi}(x_0)$ be the maximal section of $\varphi$ at $x_0$; that is, $t_0$ is the supremum of all $t>0$ with $S_{t}^{\varphi}(x_0) \subset \Omega$. We conclude that 
\[
t_0 \geq ch_0 \geq Ca^2 \quad \text{and} \quad S_{c^2t_0}^{\varphi}(x_0)\subset S_{ct_0}^{\varphi}(\tilde{x}_0) \subset S_{t_0}^{\varphi}(x_0)\subset\Omega.
\]

Let us choose $z \in \partial\Omega \cap \partial S_{t_0}^{\varphi}(x_0)$. After a rotation, we assume that the inner normal direction of $\partial\Omega$ at $z$ is given by $e_n$.
We proceed by induction on $0 \leq k \leq c\Xi - 2C$ to construct points $x_k \in \Omega$ and $p_k \in \partial \varphi(\Omega)$ such that:  $x_k$ and the corresponding maximal section $S_{t_k}^{\varphi}(x_k)$ of $\varphi$ at $x_k$ satisfy $z \in \partial \Omega \cap \partial  S_{t_k}^{\varphi}(x_k)$,
\[
(x_k-z)\cdot e_n  \leq (1-\sigma)(x_{k-1}-z)\cdot e_n,
\]
and 
\[
h_k:=\left\|v_k - \varphi \right\|_{L^{\infty}(\overline{\Omega})} =v_k(x_k)-\varphi(x_k)\geq h_{k-1}-Ca^2 \geq Ca^2,
\]
where $v_k(x) := v_a(x, p_k) \in \mathcal{D}_{a,\varphi}$ denotes an extremal obstacle solution at $x_k$.

We now verify the inductive step by an argument analogous to the case $\mu \geq 0$. Let $p_k \in \partial\varphi(\tilde{x}_k)$ for some $\tilde{x}_k \in \Omega$. Since $h_k \ge C(n,\lambda,\Lambda) a^2$, $v_k = \varphi$ on $\partial \Omega$, and $v_k$ touches $\varphi + h_k$ from below at $\tilde{x}_k$, repeating the argument used to show $t_0 \ge c h_0$ yields $p_k = \nabla\varphi(\tilde{x}_k)$,
\[
t_k \geq ch_k \geq Ca^2 \quad \text{and} \quad S_{c^2t_k}^{\varphi}(x_k)\subset S_{ct_k}^{\varphi}(\tilde{x}_k) \subset S_{t_k}^{\varphi}(x_k)\subset\Omega.
\]
Then, we apply Lemma~\ref{lem:varphi sc from v} to the functions $\tilde{v}_k := v_k - h_k$ and $\varphi$, which yields
\[
v_k -\varphi =h_k -( \varphi- \tilde{v}_k) \geq h_k - C a^2 \quad \text{in } S_{ct_k}^{\varphi}(\tilde{x}_k)  \supset S_{c^2t_k}^{\varphi}(x_k).
\]
With these facts in hand, the inductive step for $\mu \le 0$ is identical to that for $\mu \ge 0$, upon replacing $S_{t_k/2}^{\varphi}(x_k)$ by $S_{c^2 t_k}^{\varphi}(x_k)$.
Combining this inductive construction with the Alexandrov estimate \eqref{eq:alexandrov maximum principle} yields, for a suitably large constant $C(n, \lambda, \Lambda)$, that
\[
\Xi \leq C(1+n|\log_{(1-\sigma)}a|) ,
\]  
which completes the proof of \eqref{eq:abp gene 2/n log negative}.

\textbf{Step 3.} Finally, the estimate \eqref{eq:abp gene 2/n log} follows by combining the results of Steps 1 and 2 via the Jordan decomposition argument outlined in Section \ref{app:comparison reduction}.
\end{proof}
 
\begin{Remark}\label{rem:pogorolev}
Assume $n \geq 3$. In the proof of estimate \eqref{eq:abp gene 2/n log} above, we have shown that if at some point $x \in \Omega$ the deviation $h:=|u(x)-\varphi(x)|>C a^2$ for a sufficiently large $C$, then $\varphi$ is $C^{1,\alpha}$ and strictly convex at $x$, and the section $S_{h/2,\nabla \varphi(x)}^{\varphi}(x)\subset\Omega$ . 
Consequently, at any point $x$ where $\varphi$ is not strictly convex, we deduce that
\[
\sup_{u\in \D_{a,\varphi}} |u(x) - \varphi(x)|  \leq C(n,\lambda,\Lambda) a^2 .
\] 
\end{Remark}

The following boundary estimate will be used in proving Theorems \ref{thm:ordera2 more} and \ref{thm:rigidity}. Its proof is deferred to Appendix \ref{app:boundary regularity}.

\begin{Proposition}\label{prop:dirichlet problem main} 
Let $\Omega \in C^{2,\alpha}$ be a bounded convex domain and $\Omega_1 \subset \subset \Omega$ a convex subdomain. 
Let $\varphi \in C(\overline{\Omega}) \cap \C_+^{2,\alpha}(\overline{\Omega\setminus\Omega_1})$ be a convex function. 
Let $u \in C(\overline{\Omega})$ be a convex function satisfying:
\[
u = \varphi \quad\text{on } \partial\Omega \quad \text{and} \quad \operatorname{supp}\mu \subset\subset \Omega_1,
\]
where $\mu=\M u-\M\varphi$. Then there exists a sufficiently small positive constant $c_1$ depending only on $n$, $\alpha$,
$\operatorname{dist}(\Omega_1,\partial\Omega)$, $\operatorname{diam}(\Omega)$,
$\|\partial\Omega\|_{C^{2,\alpha}}$,
$\left\|D^2\varphi\right\|_{C^{\alpha}(\overline{\Omega\setminus\Omega_1})}$
and 
$\left\|(D^2\varphi)^{-1}\right\|_{L^{\infty} (\Omega\setminus\Omega_1)}$,
such that if $|\mu|(\Omega) \leq c_1$, then $u \in \C_+^{2,\alpha}(\overline{\Omega} \setminus \overline{\Omega_1})$ and the divergence formula \eqref{eq:div equation} holds. Moreover, for any open convex set $\Omega_2$ satisfying $\Omega_1 \subset\subset \Omega_2 \subset \subset \Omega$, we have
\[
|u-\varphi|+|Du-D\varphi|+|D^2 u-D^2 \varphi | \leq C_2|\mu|(\Omega)  \quad \text{in } \Omega \setminus \overline{\Omega_2},
\]
where $C_2>0$ depends only on $n$, $\alpha$,
$\operatorname{dist}(\Omega_1,\partial\Omega_2)$, $\operatorname{dist}(\Omega_2,\partial\Omega)$, $\operatorname{diam}(\Omega)$,
$\|\partial\Omega\|_{C^{2,\alpha}}$,    $\left\|D^2\varphi\right\|_{C^{\alpha}(\overline{\Omega\setminus\Omega_1})}$ and 
$\left\|(D^2\varphi)^{-1}\right\|_{L^{\infty} (\Omega\setminus\Omega_1)}$.
\end{Proposition}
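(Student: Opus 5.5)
The plan is to treat $u$ as a small perturbation of $\varphi$ in the annular region $\Omega\setminus\overline{\Omega_1}$. There both functions solve $\M u=\M\varphi$, and $\varphi$ is uniformly convex and $C^{2,\alpha}$ up to $\partial\Omega$. The argument then has three parts: a $C^0$-smallness statement, a perturbative $C^{2,\alpha}$ regularity statement, and a linearization step that upgrades smallness to a linear bound.

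\textbf{Step 1: $C^0$ smallness.} By the classical Alexandrov estimate \eqref{eq:abp gene 1/n}, we have $\|u-\varphi\|_{L^\infty(\Omega)}\le C|\mu|(\Omega)^{1/n}$, which is small once $c_1$ is small. Set $U:=\Omega\setminus\overline{\Omega_1'}$, where $\operatorname{supp}\mu\subset\subset\Omega_1'\subset\subset\Omega_1$ is a smoothed convex neighbourhood. On $U$ we have $\det D^2u=\det D^2\varphi=:f\in C^\alpha$, with $f$ bounded between positive constants, and $u=\varphi$ on $\partial\Omega$.

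\textbf{Step 2: regularity of $u$ near the boundary and on the annulus.} I would argue by compactness and perturbation.
\begin{itemize}
\item Near $\partial\Omega$: boundary data $\varphi\in C^{2,\alpha}$ on a uniformly convex $C^{2,\alpha}$ boundary, with $C^\alpha$ right-hand side. The global estimates of Caffarelli and Trudinger--Wang \cite{caffarelli1984dirichlet,trudinger2008boundary} give uniform $\C_+^{2,\alpha}$ bounds in a neighbourhood of $\partial\Omega$. These apply after we know $u$ is close to $\varphi$, hence has sections behaving like those of $\varphi$.
\item In the interior of the annulus: $u$ is uniformly close to the uniformly convex $\varphi$, so its sections are strictly convex and comparable to those of $\varphi$. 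Caffarelli's interior $C^{2,\alpha}$ estimates then apply, with uniform bounds on $D^2u$ and $(D^2u)^{-1}$.
\end{itemize}
Alternatively, one can run a contradiction/compactness argument: if $u_k\to\varphi$ uniformly with $|\mu_k|\to0$, then the $u_k$ are uniformly $\C_+^{2,\alpha}$ and $u_k\to\varphi$ in $C^{2,\beta}$ on $\overline{\Omega}\setminus\Omega_1''$. Either route yields $u\in\C_+^{2,\alpha}(\overline\Omega\setminus\overline{\Omega_1})$, and the divergence identity \eqref{eq:div equation} then holds classically there.

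\textbf{Step 3: linear bound on $|u-\varphi|$.} The difference $w=u-\varphi$ solves the uniformly elliptic linear equation $A^{ij}w_{ij}=0$ in $U$, with $C^\alpha$ coefficients controlled by Step 2 and $w=0$ on $\partial\Omega$. To get a $C^0$ bound linear in $|\mu|(\Omega)$, I would imitate Lemma~\ref{lem:divergence relation}. By the sign reduction of Section~\ref{sec:signed}, and since one-signed solutions bracket $u$, it suffices to treat $\mu\ge0$. In that case $\varphi-u\ge0$ is a solution in $U$, so the Harnack inequality on a compact annulus shows it is comparable to a constant $\kappa$ on a smooth convex surface $\partial\Omega_0$ with $\Omega_1\subset\subset\Omega_0\subset\subset\Omega_2$. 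The flux identity \eqref{eq:div identity smooth}, combined with the barrier $\psi$ used in that lemma, gives $\kappa\le C\mu(\Omega)$. The maximum principle then yields $|u-\varphi|\le C|\mu|(\Omega)$ on $\Omega\setminus\Omega_0$.

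\textbf{Step 4: derivatives.} Apply global Schauder estimates for $A^{ij}w_{ij}=0$ on $\Omega\setminus\overline{\Omega_0}$, with $w=0$ on the $C^{2,\alpha}$ boundary $\partial\Omega$, and interior estimates near $\partial\Omega_0$. Shrinking to $\Omega\setminus\overline{\Omega_2}$, this gives $\|w\|_{C^{2,\alpha}}\le C\|w\|_{L^\infty}\le C_2|\mu|(\Omega)$.

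\textbf{Main obstacle.} The delicate point is Step 2: obtaining uniform $C^{2,\alpha}$ control of $u$ up to $\partial\Omega$ with constants depending only on the listed quantities. In particular one must ensure that the interior singular part of $\M u$ does not destroy the strict convexity needed for Caffarelli's estimates on the annulus. I would first try the compactness argument, using uniqueness of the limit $\varphi$ together with the quantitative stability of $C^{2,\alpha}$ estimates under $C^0$-small perturbations.
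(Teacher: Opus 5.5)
There is a genuine gap, and it sits in your Step 2 at $\partial\Omega$.

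\textbf{What works.} Your Steps 3 and 4 are sound once uniform $\C_+^{2,\alpha}$ control near $\partial\Omega$ is available. The Harnack/flux/barrier argument modelled on Lemma~\ref{lem:divergence relation} is a legitimate substitute for the paper's Harnack--Hopf argument in Lemma~\ref{lem:com a G 02}. The lower bound on the flux through $\partial\Omega_0$ only needs uniform ellipticity of $A^{ij}$ on $\partial\Omega_0$. The Schauder step is the same as the paper's. A minor point: work on $\Omega\setminus\overline{\Omega_1}$ rather than $\Omega\setminus\overline{\Omega_1'}$, since $\det D^2\varphi$ is not controlled inside $\Omega_1$.

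\textbf{Where Step 2 fails.} The global estimates you invoke \cite{caffarelli1984dirichlet,trudinger2008boundary} need a uniformly convex domain with $C^3$ boundary and $C^3$ boundary data. Here $\partial\Omega$ is only $C^{2,\alpha}$ and is not assumed uniformly convex, and $\varphi|_{\partial\Omega}$ is only $C^{2,\alpha}$. Under such hypotheses boundary $C^{2,\alpha}$ regularity can genuinely fail (Wang's examples \cite{wang1996regularity}). The Remark after Lemma~\ref{lem:dirichlet problem} shows it fails in exactly this setting when $\mu_-$ is not small. So the smallness must be used at the boundary in a specific way. Your bound $\|u-\varphi\|_{L^\infty}\le C|\mu|(\Omega)^{1/n}$ does not do this, because it gives no information about the normal derivative of $u$ on $\partial\Omega$. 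The compactness alternative assumes the very uniform $\C_+^{2,\alpha}$ bounds up to $\partial\Omega$ that are in question.

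\textbf{Missing ingredients.}
\begin{enumerate}
\item A boundary Lipschitz bound $|u-\varphi|\le C|\mu|(\Omega)^{1/n}\operatorname{dist}(x,\partial\Omega)$, as in Lemma~\ref{lem:global lipschitz 1}. It follows from the cone-function estimate \eqref{eq:alexandrov estimate} on $\Omega_1$ together with the convex subsolution $\varphi-Ca\operatorname{dist}(\cdot,\partial\Omega)$ on $\Omega\setminus\Omega_1$. Apply it both to $u$ and to $u_-$.
\item This bound controls the canonical boundary gradient: $\nabla u(x_0)=\nabla\varphi(x_0)+t\nu$ with $-C\mu_-(\Omega)^{1/n}\le t\le C|\mu|(\Omega)^{1/n}$. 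On $\partial\Omega$ near $x_0$ one has $-C|x-x_0|^2\le (x-x_0)\cdot\nu\le 0$. Hence the uniform convexity of $\varphi$ (not of $\partial\Omega$) gives the quadratic separation \eqref{eq:sqc} of the boundary data from the tangent plane of $u$ at $x_0$, provided $\mu_-(\Omega)$ is small.
\item Savin's pointwise boundary Schauder estimate \cite{savin2013pointwise} then applies under \eqref{eq:sqc}, uniformly over the compact class of admissible $u$.
\end{enumerate}
You also need strict convexity of $u$ up to $\partial\Omega$. Closeness in $C^0$ does not exclude short segments ending on the boundary; the paper obtains it from Caffarelli's localization results. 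Only after these steps does your interior argument close the proof (Caffarelli's estimates, or Savin's perturbation theorem, on the annulus).
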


\section{Asymptotic analysis}\label{sec:asymptotic behavior}

In this section, we always assume that $\partial\Omega \in C^{2,\alpha}$ and $\varphi \in \C_+^{2,\alpha}(\overline{\Omega})$ for some $\alpha\in (0,1)$. Under these regularity conditions, we give a detailed analysis of the asymptotic behavior of the solutions to the isolated singularity and the obstacle problem as the parameter $a  \to 0^+$, and complete the proof of Theorem \ref{thm:ordera2 more}.

All constants in the subsequent estimates depend only on $n$, $\alpha$,  $\operatorname{diam}(\Omega)$, $\|\partial\Omega\|_{C^{2,\alpha}}$,
$\left\|D^2\varphi\right\|_{C^{\alpha}(\overline{\Omega})}$ and $\left\|(D^2\varphi)^{-1}\right\|_{L^{\infty} (\Omega)}$, and are independent of the small parameter $a$ defined in \eqref{defn:a}. Furthermore, by the Alexandrov estimate \eqref{eq:abp gene 1/n}, we may restrict our analysis to the case where $a$ is small.

\subsection{Asymptotic analysis as $a\to 0^+$}\label{sec:singularity problem}

Without loss of generality, we assume that
\begin{equation}\label{eq:varphi normalized s7}
B_{2c}(0) \subset \Omega \subset B_C(0), \quad \varphi(0) = 0, \quad D \varphi(0) = 0, \quad D^2 \varphi(0) = I_n.
\end{equation}
Under these assumptions, we study the asymptotic behavior as $a \to 0^+$ of the solutions $u_a(\cdot,0)$ and $v_a(\cdot,0)$, which correspond to the isolated singularity problem and the obstacle problem, respectively.

We recall that the family convex functions $W_a$ defined in \eqref{eq:isolated global solution} is increasing in $a > 0$, and satisfies $ \det D^2 W_a(a)= 1 + \omega_n a^n \delta_0$,   $W_a(ax)=a^2W_1(x)$, 
with the following asymptotic expansion as $x$ approaches to infinity,
\[
W_{a}(x) 
= \begin{cases}
\frac{1}{2}|x|^2 +\frac{1}{2}a^2 \log (|x|/a)+ O\left(a^2 \right) & \text{if }  n=2, \\
\frac{1}{2}|x|^2 +d_{n,0} a^2   + O\left(\frac{a^n}{|x|^{n-2}}\right) & \text{if }  n\geq 3,
\end{cases}
\]
where $d_{n,0}$ are explicit constants given in Theorem \ref{thm:ordera2 more}. 
Moreover, we have
\[ 
\left| W_{(1+\sigma)a}(x)  -W_a(x)\right|  \leq C_n|\sigma| a^2 \quad \text{if } n \geq 3,
\] 
provided that  $|\sigma| \lesssim 1$, where $C_n$ depends only on $n$.

Let us consider the solution $u_{a}:=u_a(\cdot,0)$ of \eqref{eq:defnuay}, i.e.,
\begin{equation}\label{eq:sing u a 1}
\M u_{a} =\M \varphi+ \omega_na^n\delta_0  \quad \text{in }   \Omega, \quad u_{a} =\varphi \quad \text{on } \partial \Omega.
\end{equation} 

\begin{Lemma}\label{lem:dna2 uaap}
Let $n \geq 3$,  $\beta = \frac{(n-2)\alpha}{n+\alpha}$, and $0< a \leq c$ be small.
Suppose assumption \eqref{eq:varphi normalized s7} holds.  Let $u_a$ be defined by \eqref{eq:sing u a 1}. 
Then we have
\begin{equation}\label{eq:dna2uaat0}
\left| u_{a}(0)-\varphi(0) + d_{n,0} a^2   \right| \lesssim a^{2+\beta}  .
\end{equation}   
\end{Lemma}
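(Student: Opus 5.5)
The plan is to compare $u_a$ with the model profile $W_a$ on an intermediate ball $B_r$, with $a\ll r\ll 1$, and to balance the two error sources by the choice of $r$.

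\textbf{Sources of error.} Under \eqref{eq:varphi normalized s7} and $\varphi\in\C_+^{2,\alpha}$ we have
\[
|\varphi(x)-\tfrac12|x|^2|\lesssim |x|^{2+\alpha}
\quad\text{and}\quad
|\det D^2\varphi(x)-1|\lesssim |x|^\alpha .
\]
So on $B_r$ the background differs from the quadratic one by $O(r^{2+\alpha})$ in value and by $O(r^\alpha)$ in density. Outside $B_r$, Lemma~\ref{lem:interior estimate u} (sections $S_h$ with $h\approx r^2$) gives
\[
0\le \varphi-u_a\lesssim a^n r^{-(n-2)} .
\]
Thus the boundary data of $u_a$ on $\partial B_r$ equal $\tfrac12|x|^2+O(r^{2+\alpha}+a^nr^{2-n})$.

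\textbf{Barriers.} Inside $B_r$ I build sub- and supersolutions from $W_{(1\pm\sigma)a}$. The term
\[
(1\pm Cr^\alpha)^{1/n}\, W_a\bigl((1\pm Cr^\alpha)^{-1/n}\,\cdot\,\bigr)
\]
absorbs the density variation. One can also add small convex or concave quadratics $\pm C r^{\alpha}|x|^2$ so that the Monge--Amp\`ere measure dominates, resp.\ is dominated by, $\det D^2\varphi+\omega_na^n\delta_0$ on $B_r$.

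A cleaner variant uses a general $\varphi$-adapted comparison. The two functions
\[
\varphi + W_{a(1\pm\epsilon)} - \tfrac12|x|^2 \;\pm\; C r^{\alpha}\bigl(|x|^2-r^2\bigr)
\]
have Hessians $D^2\varphi + (D^2W-I) \pm Cr^\alpha I$. Near the origin $D^2W$ is large and the determinant is governed by the singular part; away from it $D^2W-I$ is small.

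The delta mass is matched exactly, with $\epsilon=0$. Since $\det$ is not additive, I expect the error to come from the mixed terms, roughly $|D^2W-I|\cdot|x|^\alpha$. Bounding these by the correction $Cr^\alpha I$ via the concavity of $\det^{1/n}$ gives
\[
\det\bigl(D^2\varphi+(D^2W-I)\pm Cr^\alpha I\bigr)\gtrless \det D^2\varphi\,\det D^2 W .
\]
Rescaling then yields a sub- or supersolution up to multiplicative factors $1\pm Cr^\alpha$ in the density.

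\textbf{Comparison and conclusion.} The comparison principle on $B_r$, together with $W_a(0)=0$ and
\[
W_a=\tfrac12|x|^2+d_{n,0}a^2+O(a^nr^{2-n}) \quad\text{on } \partial B_r ,
\]
gives
\[
\bigl|u_a(0)+d_{n,0}a^2\bigr|\lesssim r^{2+\alpha}+a^nr^{2-n}+r^\alpha a^2 .
\]
Here the last term comes from $|W_{(1+\sigma)a}-W_a|\lesssim \sigma a^2$ with $\sigma\approx r^\alpha$, and from the quadratic correction evaluated at the origin, which is $Cr^{2+\alpha}$. Optimizing $r^{2+\alpha}=a^nr^{2-n}$ gives $r=a^{n/(n+\alpha)}$. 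Then
\[
r^{2+\alpha}=a^{2+\beta}, \qquad \beta=\frac{n(2+\alpha)}{n+\alpha}-2=\frac{(n-2)\alpha}{n+\alpha},
\]
and $r^\alpha a^2\le a^{2+\beta}$ because $\frac{n\alpha}{n+\alpha}\ge\beta$. This produces exactly the claimed exponent.

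\textbf{Main obstacle.} The hard step is constructing the barriers inside $B_r$ rigorously so that the quadratic correction costs only $O(r^{2+\alpha})$ at the origin. The difficulty is that $D^2W_a$ is singular at $0$ while $\det D^2\varphi$ varies at the $|x|^\alpha$ scale. If the additive construction fails near the singularity, I would fall back on comparing $u_a$ with a rescaled solution on $B_r$ that has constant density $1\pm Cr^\alpha$, using the exact radial solutions available for constant right-hand sides, and applying Lemma~\ref{lem:comparison gene} to handle the Dirac mass.
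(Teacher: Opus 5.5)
Your proposal is correct and is essentially the paper's proof. Both arguments take the boundary control on $\partial B_r$ from Lemma~\ref{lem:interior estimate u}, together with $|\varphi-\tfrac12|x|^2|\lesssim r^{2+\alpha}$ and the expansion of $W_a$; both then compare on $B_r$ with $W_a$-based barriers, whose error at the origin is $O(a^nr^{2-n}+r^{2+\alpha})$ because $W_a(0)=0$, and both choose $r=a^{n/(n+\alpha)}$. The paper's barriers are simply $(1\pm\delta)W_a\mp C\bigl(a^nr^{2-n}+r^{2+\alpha}\bigr)$, with a small factor $\delta$ absorbing the density variation on $B_r$, so your additive ``$\varphi$-adapted'' variant is unnecessary---and it would be delicate anyway, since the radial eigenvalue of $D^2W_a$ tends to $0$ at the origin and subtracting $Cr^\alpha I$ destroys convexity there.
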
 
\begin{proof}  
By the assumption on $\varphi$, we have $B_{ch^{1/2}}\subset S_h^\varphi(0) \subset B_{Ch^{1/2}}$. By applying Lemma \ref{lem:interior estimate u}, we obtain 
\begin{equation}\label{eq:dna2 uasquare}
\varphi(x) \geq u_{a}(x)  \geq
\varphi(x) -C\min\left\{  \frac{a^n}{|x|^{n-2}}, a^2 \right\}\quad  \text{in }\Omega.
\end{equation}
Consequently, for any small $Ca \leq r \leq c$ and $
\sigma_0=\sigma_0(r,a):= 	\frac{a^{n-2}}{r^{n-2}} + \frac{r^{2+\alpha}}{a^2}$, we have
\[
\begin{split}
&\left| u_{a}(x) +d_{n,0} a^2  - W_{a}\left(x\right) \right| \\
&\leq  \left| u_{a}(x) -\varphi(x)\right|+\left| \varphi(x)-\frac{1}{2}|x|^2 \right|+\left|\frac{1}{2}|x|^2+d_{n,0} a^2  - W_{a}\left(x\right) \right| 
\\
&\le  \ C \left(\frac{a^{n}}{r^{n-2}}+r^{2+\alpha}+\frac{a^{n}}{r^{n-2}}\right)
\\
&\lesssim   \ \sigma_0 a^2  \quad \text{on }\partial B_{r}(0),
\end{split} 
\]
where  $W_a$ is given by \eqref{eq:isolated global solution}. 
By selecting a sufficiently large constant $C_1 > 0$, this yields on $\partial B_{r}(0)$ that
\[
\left(1+C_1a^{\alpha}\right)W_{ a}(x)-C_1^2\sigma_0 a^2   \leq u_{a}(x)+d_{n,0} a^2\leq  \left(1-C_1a^{\alpha}\right)W_{a}\left(x\right)+C_1^2\sigma_0 a^2  .
\]
The comparison principle then implies
\[
\left| u_{a}(x) +d_{n,0} a^2- W_{a}\left(x\right) \right| \lesssim a^{\alpha} W_{a}\left(x\right) +\sigma_0 a^2  \lesssim \sigma_0 a^2  \quad \text{in }B_{r}(0).
\]

We now choose $r = a^{\frac{n}{n+\alpha}}$, which leads to
$\sigma_0=2a^{\beta}$. 
Substituting this into the above estimate yields
\begin{equation}\label{eq:dna2uaaround0}
\left| u_{a}(x) + d_{n,0} a^2- W_{a}\left(x\right)  \right| \lesssim a^{2+\beta}  \quad \text{in }B_{a^{\frac{n}{n+\alpha}}}(0).
\end{equation} 
In particular,  \eqref{eq:dna2uaat0} follows from setting $x=0$ in \eqref{eq:dna2uaaround0}.
Moreover, by using \eqref{eq:isp control by Linfinite} and then noticing
$
\|u_a-\varphi\|_{L^{\infty}(\Omega)} = \varphi(0)- u_a(0)\le d_{n,0}a^2+C a^{2+\beta},
$ 
we further obtain
\begin{equation}\label{eq:dna2 uaap}
\varphi(x) \geq u_{a}(x)  \geq \varphi(x) -\min\left\{ \frac{C a^{n}}{\left|x\right|^{n-2}}, d_{n,0}a^2+C a^{2+\beta} \right\}\quad  \text{in }\Omega
\end{equation}   
with the help of \eqref{eq:dna2 uasquare}.
\end{proof}

Let $\Phi^{ij} =\operatorname{Cof} D^2 \varphi =\det D^2 \varphi (D^2 \varphi)^{-1}$ denote the cofactor matrix of $D^2 \varphi$. Consider the linearized Monge-Amp\`ere operator at $\varphi$:
\begin{equation}\label{eq:varphi linearized}
L_{\varphi} w :=\Phi^{ij}  w_{ij}=(\Phi^{ij}  w_{i})_j.
\end{equation}
Let $G(x)$ be the (negative) Green’s function of the linearized operator $L_{\varphi} $ at $0$, that is,   
\[  
L_{\varphi}G =\delta_0\quad \text{in }\Omega, \quad   G =0 \quad \text{on } \partial \Omega.  
\] 
Since  $\{\Phi^{ij}\}$ is $\alpha$-h\"older continuous and uniformly elliptic, we have
\[
G(x)\approx  -\operatorname{dist}\left(x,\partial\Omega\right) \quad \text{in }\Omega \setminus B_{ c }(0) ,
\] 
and we have the decomposition
\[
G(x)=\Q (|x|)+H(x),
\]
where 
\[ 
\Q(x) = \Q(|x|) =
\begin{cases}
\frac{1}{2\pi} \log |x|& \text{if }  n=2,\\
-\frac{1}{(n-2)\omega_n} |x|^{2-n} & \text{if }  n \geq 3,
\end{cases} 
\]
is the Green function for the Laplacian equation in $\R^n$,
and the function $H(x)$ satisfies
\[
\|H\|_{C^{1,\alpha}(\overline\Omega)} \lesssim 1, \quad \text{if } n=2.
\]

\begin{Lemma}\label{lem:dna2 ua n=2}
Let $n = 2$, and $0< a \leq c$ be small. Suppose assumption \eqref{eq:varphi normalized s7} holds. 
Let $u_a$ be defined by \eqref{eq:sing u a 1}.  Then we have
\[
\left|u_a(0)-\varphi(0)+\frac{1}{2} a^2 |\log a| \right| \lesssim a^2.
\]
\end{Lemma}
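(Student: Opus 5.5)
The approach mirrors the proof of Lemma~\ref{lem:dna2 uaap}: compare $u_a$ with the model $W_a$ on a small ball $B_r(0)$, with the comparison constants fixed on $\partial B_r$. The difference is that for $n=2$ the correction on $\partial B_r$ is no longer a bounded constant but a logarithm. The $C^{1,\alpha}$ regularity of the regular part $H$ of the Green's function will be used to pin down the constant term at order $a^2$ with no logarithmic loss.

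\textbf{Step 1 (outer region: $u_a-\varphi \approx a^2 G$).} I will show that
\[
|u_a-\varphi-a^2 G|\lesssim a^2
\]
on $\partial B_r$ for a fixed small $r\approx c$. Here $\mu=\omega_2 a^2\delta_0$, so $G$ is the natural first-order profile: $\pi a^2 G$ solves $L_\varphi(\pi a^2 G)=\mu$, and $\omega_2=\pi$. To justify this, I will apply Proposition~\ref{prop:dirichlet problem main} with $\Omega_1=B_{c}$. This gives $u_a\in\C_+^{2,\alpha}$ away from $B_c$ with $|D^2u_a-D^2\varphi|\lesssim a^2$ there. The function $\psi=u_a-\varphi$ then satisfies $\partial_i(A^{ij}\psi_j)=0$ off the origin, and the divergence identity \eqref{eq:div identity smooth} shows its flux through any surface enclosing $0$ equals $\pi a^2$. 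Moreover $|A^{ij}-\Phi^{ij}|\lesssim a^2$ there. Hence $\psi-\pi a^2 G$ solves an equation with a right-hand side of size $O(a^4)$ in the annulus, vanishes on $\partial\Omega$, and has zero flux. On $\partial B_c$ it is bounded by $O(a^2)$: both $\psi$ and $a^2G$ are $O(a^2)$ there, by Lemma~\ref{lem:interior estimate u} (for $h\approx 1$) and the bound $|G|\lesssim 1$. Since I only need an $O(a^2)$ error, a crude bound suffices. Lemma~\ref{lem:interior estimate u} with $h\approx r^2$ already gives $|\varphi-u_a|\lesssim a^2(|\log r|+1)\lesssim a^2$ on $\partial B_r$ for fixed $r$. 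So for fixed $r$, Step 1 reduces to that lemma, and the finer Green's-function description is needed only to let $r$ shrink.

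\textbf{Step 2 (inner comparison).} I will choose $r$ with $Ca\le r\le c$ and compare on $\partial B_r$ using the $n=2$ expansion
\[
W_a(x)=\tfrac12|x|^2+\tfrac12 a^2\log(|x|/a)+O(a^2)
\]
and $|\varphi-\tfrac12|x|^2|\lesssim r^{2+\alpha}$. If I knew that
\[
u_a(x)-\varphi(x)=\tfrac12 a^2\log|x|+O(a^2)
\]
on $\partial B_r$, then
\[
u_a-\bigl(W_a-\tfrac12 a^2\log(1/a)\bigr)=O(a^2+r^{2+\alpha})
\]
on $\partial B_r$. The comparison principle with the multiplicative perturbations $(1\pm C a^\alpha)W_a$, as in Lemma~\ref{lem:dna2 uaap}, then gives the same bound inside $B_r$. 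Choosing $r\approx a^{2/(2+\alpha)}$ and evaluating at $0$, where $W_a(0)=0$, yields
\[
u_a(0)=-\tfrac12 a^2|\log a|+O(a^2).
\]
The constant is consistent: $\pi a^2\cdot\frac1{2\pi}\log|x|=\tfrac12 a^2\log|x|$.

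\textbf{Step 3 (main obstacle: the sharp expansion on the small circle).} The required estimate is
\[
u_a-\varphi=\pi a^2 G+O(a^2)=\tfrac12 a^2\log|x|+O(a^2)
\]
uniformly on $\partial B_r$ for $r$ down to $a^{2/(2+\alpha)}$. Here $\pi a^2 H=O(a^2)$ is harmless, but the linearization error must be controlled across all the dyadic annuli between $r$ and $c$. I will handle this by the dyadic-section iteration of Lemma~\ref{lem:interior estimate u}, sharpened. On each normalized annulus $S_{2h}\setminus S_h$, the flux identity fixes the total flux at $\pi a^2$ exactly. The coefficients $A^{ij}$ agree with $\Phi^{ij}$ up to a relative error $O(a^2/h+h^{\alpha/2})$, coming from $|D^2(u_a-\varphi)|\lesssim a^2/h$ there and from the H\"older continuity of $\Phi$. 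Consequently the increment $\bar t(h)-\bar t(2h)$ equals $\tfrac{1}{2}a^2\log 2\cdot\tfrac12\cdot 2$ (the $G$-increment) up to an error $O(a^2(a^2/h+h^{\alpha/2}))$. This error is summable over dyadic $h\ge r^2$, with total $O(a^2)$. If the oscillation bounds on each annulus turn out too weak, my fallback is a direct barrier in $B_c\setminus B_r$: the functions $\varphi+\pi a^2(1\pm\epsilon(x))G\pm Ca^2$, with $\epsilon(x)\approx a^2|x|^{-2}+|x|^{\alpha}$, serve as sub- and supersolutions of the Monge--Amp\`ere equation in the annulus, and the comparison principle then gives the expansion directly.
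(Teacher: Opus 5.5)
Your Step 2 is the paper's final comparison with $W_a$ on $B_r$, $r=a^{2/(2+\alpha)}$. Step 3, however, which you rightly call the crux, does not work as proposed.

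\textbf{The dyadic scheme does not control the increments.} The flux identity fixes the flux through each annulus $S_{2h}\setminus S_h$ at $\pi a^2$. This determines the increment $\bar t(h)-\bar t(2h)$ only up to the oscillation of $u_a-\varphi$ on $\partial S_h$ and $\partial S_{2h}$. A zero-flux, almost $A$-harmonic perturbation, e.g.\ a small multiple of a linear function in normalized coordinates, shifts the supremum across an annulus by an amount comparable to its oscillation. The only local oscillation bound you have is $O(a^2)$ per annulus (Harnack, as in Lemma~\ref{lem:divergence relation}). Summing over the $\approx|\log a|$ annuli between $r^2$ and $1$ gives $O(a^2|\log a|)$, which is the size of the main term. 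Your claimed per-annulus error $O(a^2(a^2/h+h^{\alpha/2}))$ does not follow from the flux identity and the smallness of $A-\Phi$. It would need a multiscale control of the non-radial part of $u_a-\varphi$, which you do not supply.

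\textbf{The fallback is circular.} The barriers $\varphi+\pi a^2(1\pm\epsilon)G\pm Ca^2$ do have the right signs, since in two dimensions $\det D^2(\varphi+w)=\det D^2\varphi+L_\varphi w+\det D^2 w$. But comparing them with $u_a$ on $B_c\setminus B_r$ requires the inequalities on $\partial B_r$, and those are exactly the expansion you are trying to prove. No comparison on an annulus can fix the coefficient $\pi a^2$ in front of $G$. That is a global constraint coming from the flux, and it is coupled to the unknown level $u_a(0)$ near the singularity.

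\textbf{The missing idea} is to leave this coefficient free, tie it to $u_a(0)$, and determine it from two one-sided inequalities.
The paper sets $t_a:=2u_a(0)/(a^2\log a)$, which is bounded by Lemma~\ref{lem:interior estimate u}. It applies Lemma~\ref{lem:varphi sc from u} to $u_a+\frac12 t_a a^2|\log a|$, which touches $\varphi$ from above at $0$. This gives $u_a\le\varphi+t_a^-\pi a^2 G$ on $\partial B_a$, where $t_a^-=t_a-C/|\log a|$. Since $\det D^2 G\le 0$ off the origin, $\varphi+t\pi a^2G$ is a supersolution; this is your barrier with $\epsilon\equiv 0$. Hence $u_a\le\varphi+t_a^-\pi a^2G$ on all of $\Omega\setminus B_a$. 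The two inequalities then follow:
\begin{enumerate}
\item[(i)] Both sides vanish on $\partial\Omega$, so you can compare normal derivatives in \eqref{eq:div identity smooth}. This needs $|A-\Phi|\lesssim a^2$ only near $\partial\Omega$ (Proposition~\ref{prop:dirichlet problem main}) and gives $t_a\le 1+C/|\log a|$.
\item[(ii)] Feed the same one-sided bound on $\partial B_r$ into your Step 2 comparison with $W_a$. Evaluating at $0$, where $u_a(0)=\frac12 t_a a^2\log a$, gives $t_a\ge 1-C/|\log a|$.
\end{enumerate}
So only an upper bound on $\partial B_r$ is needed, and no control of $A$ or $D^2(u_a-\varphi)$ on the intermediate annuli.

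Your Step 1 idea (zero flux, vanishing on $\partial\Omega$) could instead be used on the single annulus $\Omega\setminus B_r$ rather than dyadically. There you would sandwich $u_a-\varphi$ between multiples of the $A$-harmonic measure of $\partial B_r$. Its inverse capacity is $\frac{1}{2\pi}|\log r|+O(1)$, which makes an $O(a^2)$ oscillation on $\partial B_r$ harmless. But this route requires controlling $A-\Phi$ at all intermediate scales, which the paper's argument avoids.
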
 
\begin{proof}
Since $\varphi(0)=0$, we can define
\[
t_a=\frac{2 u_a(0)}{ a^2\log a}=\frac{2 (u_a(0)-\varphi(0))}{ a^2\log a}=\frac{2\|u_a-\varphi\|_{L^{\infty}(\Omega)} }{  a^2|\log a|}>0.
\]
The comparison principle implies 
\[
\varphi \leq u_a +\frac{1}{2} t_a a^2|\log a|\leq \varphi+\frac{1}{2} t_a a^2|\log a| \quad \text{in }\Omega.
\]
Moreover, by \eqref{eq:ua-vaphi n=2}, we have $t_a\le C$. 

 Applying Lemma~\ref{lem:varphi sc from u} to $\tilde{u}=u_a+\tfrac{1}{2}t_a a^2|\log a|$, we obtain
\[
u_a+\tfrac{1}{2}t_a a^2|\log a| \leq \varphi + C a^2 \quad \text{in } B_a(0).
\]
This implies, for $t_a^- := t_a - \tfrac{C}{|\log a|}$, that
\[
u_a \leq \varphi - \tfrac{1}{2} t_a a^2 |\log a| + C a^2 \leq \varphi + t_a^- \,\pi a^2 G 
\qquad \text{on } \partial B_a(0),
\]
where $G$ denotes the Green’s function of the linearized operator $L_{\varphi}$ defined in~\eqref{eq:varphi linearized}.
For any $t \in \R$, noting that
\[
\det D^{2}(\varphi+t  G) - \det D^2\varphi
= t  L_{\varphi}G + t^2  \det D^2 G
= t^2  \det D^2 G \le 0 \quad \text{in } \Omega\setminus\{0\},
\]
we see that $\varphi + t G$ is a supersolution in $\Omega\setminus\{0\}$. The comparison principle (which remains valid even when the supersolution is not convex, by considering its convex hull) then yields
\begin{equation}\label{eq:ua0 upper bound n=2}
u_a \leq \varphi +t_a^- \pi a^2 G \quad \text{in } \Omega\setminus B_a(0) .
\end{equation}

Let $C_1$ be sufficiently large. For any small $r \ge a$, we compare $u_a$ with the upper barrier
\[
\frac{1}{2} t_a^- a^2 \log r
+ \bigl(1 - C_1 r^\alpha\bigr)\!\left(W_a(x) - \frac{1}{2} a^2 \log (r/a) + C a^2\right)
+ C_1^2 r^{2+\alpha}
\]
on $\partial B_r(0)$ using~\eqref{eq:ua0 upper bound n=2}, and hence in $B_r(0)$ by the comparison principle. Since $u_a(0)=\tfrac{1}{2} t_a a^2 \log a$ and $W_a(0)=0$, choosing $r=a^{\frac{2}{2+\alpha}}$ and evaluating at the origin yields
\[
\frac{1}{2} t_a a^2 |\log a^{\frac{\alpha}{2+\alpha}}|
\geq \frac{1}{2} a^2 |\log a^{\frac{\alpha}{2+\alpha}}|- C a^2.
\]
This yields the lower bound
\[
t_a \geq 1- C |\log a|^{-1}.
\]

Finally, an application of Proposition \ref{prop:dirichlet problem main} yields the estimate $|D u_a - D \varphi| +|D^2 u_a - D^2 \varphi| \lesssim a^2$ near $\partial\Omega$. Consequently, the matrix ${A^{ij}}$ defined in \eqref{eq:div equation} satisfies $|A^{ij} - \Phi^{ij}| \lesssim a^2$ near $\partial\Omega$, where $\Phi^{ij} = \operatorname{Cof}(D^2 \varphi)$.
Combining \eqref{eq:ua0 upper bound n=2} with the divergence identity \eqref{eq:div identity smooth}, we obtain 
\begingroup
\allowbreak
\begin{align*}
\pi a^2
&=\int_{\partial  \Omega}A^{ij} \partial_i({u}_a-{\varphi})  \nu_jdS  \\
&\geq \int_{\partial  \Omega}\Phi^{ij} \partial_i({u}_a-{\varphi})  \nu_jdS-Ca^4 \\
&\geq  t_a^-\pi a^2\int_{\partial  \Omega}\Phi^{ij} G_i \nu_jdS-Ca^4 \\
&\geq  t_a \pi a^2-Ca^2 |\log a|^{-1},
\end{align*}
\endgroup
which leads to the upper bound
\[
t_a \leq 1+C |\log a|^{-1}.
\]
This completes the proof.
\end{proof}

We recall that the family of convex functions $W_a^*$ defined in \eqref{eq:obstacle global solution} is decreasing in $a>0$, and satisfies
$\det D^2 W_a^*=\chi_{\left\{W_a^*>0\right\}}$, 
$\left| \left\{ W_a^*=0\right\} \right|= \omega_n a^n$,   $W_a^*(ax)=a^2W_1^*(x)$, 
with the following asymptotic expansion as $x$ approaches to infinity,
\[
W_a^*(x) 
= \begin{cases}
\frac{1}{2}|x|^2 - \frac{1}{2} a^2 \log (|x|/a)+ O\left(a^2 \right) & \text{if }  n=2, \\
\frac{1}{2}|x|^2 -d_{n,0} a^2   + O\left(\frac{a^n}{|x|^{n-2}}\right) & \text{if }  n\geq 3,
\end{cases}
\]
where $d_{n,0} >0$ is the same constant that arises in the expansion of $W_a$. 
Moreover, we have
\[ 
\left| W_{(1+\sigma)a}^*(x)  -W_a^*(x)\right|  \leq C_n|\sigma| a^2, \quad \text{if } n \geq 3,
\] 
provided that  $|\sigma| \lesssim 1$, where $C_n$ depends only on $n$.

For small $a > 0$, consider the obstacle solution $v_a := v_a(\cdot, 0)$ defined by \eqref{eq:defnva} with obstacle $h_a = h_{a,0}$, i.e.,
\begin{equation}\label{eq:obse ap 3}
\M v_a=\M\varphi \cdot \chi_{\left\{ v_a>h_a\right\}}   \quad\text{in }   \Omega, \quad v_a =\varphi \quad \text{on } \partial \Omega,
\end{equation} 
with 
\[
\omega_na^n=\M \varphi (\{ v_a=h_a\}).
\] 

\begin{Lemma}\label{lem:dna2 obs ap}
Let $n \geq 3$,  $\beta = \frac{(n-2)\alpha}{n+\alpha}$, and $0< a \leq c$ be small.
Suppose assumption \eqref{eq:varphi normalized s7} holds. Let $v_a$ be defined by \eqref{eq:obse ap 3}.
Then we have 
\begin{equation}\label{eq:dna2 obs ap vaball}
(1-C a^{\beta} )B_a(0)\subset \left\{v_{a}= h_a\right\} \subset (1+C a^{\beta}   )B_a(0),
\end{equation}    
and the quantity $v_a(0)=h_a$ satisfies
\begin{equation}\label{eq:dna2vaat0}
\left| v_{a}(0)-\varphi(0) - d_{n,0} a^2  \right| \lesssim a^{2+\beta}   .
\end{equation} 
\end{Lemma}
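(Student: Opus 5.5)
The plan mirrors the proof of Lemma \ref{lem:dna2 uaap}, with the model function $W_a^*$ in place of $W_a$ and with the coincidence set $K_a:=\{v_a=h_a\}$ in place of the Dirac mass. The argument has three stages: a far-field bound, a local barrier comparison, and a separate argument locating $K_a$.

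\textbf{Far-field bound.} Since $D\varphi(0)=0$ and $0\in K_a$ (indeed $x_p=0$), Lemma \ref{lem:coincidencesetcontrol} gives $K_a\subset S^\varphi_{Ca^2}(0)\subset B_{Ca}(0)$. Lemma \ref{lem:interior estimate v} then gives
\[
\varphi\le v_a\le \varphi+C\min\{a^n|x|^{2-n},a^2\}\quad\text{in }\Omega,
\]
using $B_{ch^{1/2}}\subset S_h^\varphi\subset B_{Ch^{1/2}}$.

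\textbf{Barrier comparison.} Fix $Ca\le r\le c$ and set $\sigma_0=a^{n-2}r^{2-n}+r^{2+\alpha}a^{-2}$. On $\partial B_r$, combining the far-field bound with $|\varphi-\tfrac12|x|^2|\lesssim r^{2+\alpha}$ and the expansion of $W_a^*$ gives
\[
|v_a-d_{n,0}a^2-W_a^*|\lesssim\sigma_0a^2 .
\]
The comparison functions are $(1\pm C_1a^\alpha)W^*_{(1\pm C\sigma_0)a}\pm C_1^2\sigma_0a^2$. Their Monge--Amp\`ere densities equal $(1\pm C_1a^\alpha)^n$ off their flat sets, which dominates, respectively is dominated by, $\det D^2\varphi=1+O(r^\alpha)$ in $B_r$. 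For obstacle problems I will compare through the comparison principle of Lemma \ref{lem:comparison gene} applied on the appropriate sets, or alternatively through the characterization of $v_a$ as the infimum over $\E_h$ together with the uniqueness of $h_a$. Taking $r=a^{n/(n+\alpha)}$ makes $\sigma_0\approx a^\beta$. This yields
\[
|v_a-d_{n,0}a^2-W_{a'}^*|\lesssim a^{2+\beta}\quad\text{in }B_r,\qquad |a'-a|\lesssim a^{1+\beta},
\]
and $|W^*_{a'}-W^*_a|\lesssim a^{2+\beta}$ converts this back to $W_a^*$. Evaluating at $0$, where $W_a^*(0)=0$ and $v_a(0)=h_a$, gives \eqref{eq:dna2vaat0}.

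\textbf{Main obstacle: locating the coincidence set.} A mass mismatch in the barrier would move the constant at order $a^2$, so the barriers must have flat sets of the correct size. The mass condition $\M\varphi(K_a)=\omega_na^n$ with $\det D^2\varphi=1+O(a^\alpha)$ on $B_{Ca}$ gives $|K_a|=\omega_na^n(1+O(a^\alpha))$. The $C^0$ closeness, however, only yields containments up to an error $\epsilon a^2$ in height. I will handle this by nondegeneracy: $W^*_a$ grows like $\mathrm{dist}(x,B_a)^{2}$ quadratically near its free boundary, and more precisely like $a^{(2-n)/?}$ times $\mathrm{dist}^{2}$ (from $W^{*\prime}(r)=(r^n-a^n)^{1/n}$). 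Combined with Savin's quadratic separation \cite[Lemma 2]{savin2005obstacle} for $v_a$, this converts height error $a^{2+\beta}$ into distance error $O(a^{1+\beta/2})$ at worst.

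To sharpen this to $Ca^{1+\beta}$ I will instead use monotonicity in $a$. Sandwich $v_a$ between the obstacle solutions built from $W^*_{(1\pm C a^\beta)a}$; the inclusion of coincidence sets for ordered obstacle solutions, as in Lemma \ref{lem:comparison principle va on h}, then gives
\[
(1-Ca^\beta)B_a\subset K_a\subset(1+Ca^\beta)B_a ,
\]
which is \eqref{eq:dna2 obs ap vaball}. The barrier comparison and this sandwiching need to be arranged consistently.
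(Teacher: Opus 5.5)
Your outline has a genuine gap, and it is the one you flag in your last sentence: as set up, the barrier comparison is circular.

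For the obstacle problem, comparing $v_a$ with $w=L+(1\mp Cr^\alpha)W^*_{a'}$ in $B_r$ only works if the flat level $L$ of $w$ is correctly ordered relative to $h_a$.
For an upper barrier, Alexandrov's inequality $\M v_a(E)\le \M w(E)$ on $E=\{v_a>w\}$ gives a contradiction only if $\M v_a=\M\varphi$ on $E$, i.e.\ $E\cap K_a=\emptyset$. This is guaranteed when $L\ge h_a$.
For a lower barrier you need $E=\{w>v_a\}$ to miss the flat set of $w$, which is guaranteed when $L\le h_a$.
Your barriers have flat levels $d_{n,0}a^2\pm C_1^2\sigma_0a^2$ fixed in advance. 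Using both of them therefore presupposes $|h_a-d_{n,0}a^2|\le C_1^2\sigma_0a^2$, which (with $r=a^{n/(n+\alpha)}$) is \eqref{eq:dna2vaat0} itself.
Neither Lemma \ref{lem:comparison gene} nor a bare appeal to the uniqueness of $h_a$ supplies this ordering. The proposed sandwich by monotonicity in $a$ does not close the loop either. Lemma \ref{lem:comparison principle va on h} compares obstacle solutions for the same $\varphi$ on the same domain at different heights. So it only gives $K_{(1-Ca^\beta)a}\subset K_a\subset K_{(1+Ca^\beta)a}$, which locates nothing, and it does not compare $v_a$ with the radial model $W^*$ at all.

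The missing idea is to determine $h_a$ \emph{first}, by contradiction. Use barriers whose flat level is exactly $h_a$, so the comparison is automatically legitimate, and let the mass identity $\M\varphi(K_a)=\omega_na^n$ do the work.
On $\partial B_r$ you have $v_a=d_{n,0}a^2+W_a^*+O(\sigma_0a^2)$ and $W^*_{(1+s)a}=W^*_a-2d_{n,0}sa^2+O(s^2a^2+\sigma_0a^2)$.
Suppose $h_a\ge d_{n,0}a^2+C_1\sigma_0a^2$. Then $v_a-h_a\le(1-Cr^\alpha)W^*_{(1+cC_1\sigma_0)a}$ on $\partial B_r$, hence in $B_r$. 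Thus $K_a\supset B_{(1+cC_1\sigma_0)a}$ and
$\M\varphi(K_a)\ge(1-Ca^\alpha)(1+cC_1\sigma_0)^n\omega_na^n>\omega_na^n$, since $\sigma_0\gtrsim a^\beta\gg a^\alpha$.
Suppose instead $h_a\le d_{n,0}a^2-C_1\sigma_0a^2$. Then the lower barrier $h_a+(1+Cr^\alpha)W^*_{(1-cC_1\sigma_0)a}$ traps $K_a$ in a ball whose $\M\varphi$-mass is less than $\omega_na^n$.
Hence $|h_a-d_{n,0}a^2|\lesssim\sigma_0a^2$, which is \eqref{eq:dna2vaat0} for $r=a^{n/(n+\alpha)}$.
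With $h_a$ pinned down, the two-sided sandwich
\[
h_a+(1+Cr^\alpha)W^*_{(1+C\sigma_0)a}\le v_a\le h_a+(1-Cr^\alpha)W^*_{(1-C\sigma_0)a}\quad\text{in }B_r
\]
becomes admissible. Since the flat sets of these barriers are exactly the balls $B_{(1\pm C\sigma_0)a}$, it yields \eqref{eq:dna2 obs ap vaball} directly; no nondegeneracy or monotonicity argument is needed.

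One smaller correction: the density factors must be $1\pm Cr^\alpha$, not $1\pm C_1a^\alpha$. On $B_r$ you only know $\det D^2\varphi=1+O(r^\alpha)$, and $r^\alpha\gg a^\alpha$. The resulting cost $r^\alpha W_a^*\lesssim r^{2+\alpha}\le\sigma_0a^2$ is harmless.
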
	
\begin{proof} 
By applying Lemma \ref{lem:interior estimate v}, we obtain 
\[
\varphi(x) \leq v_{a}(x)  \leq
\varphi(x) +C\min\left\{  \frac{a^n}{|x|^{n-2}}, a^2 \right\}\quad  \text{in }\Omega.
\]
Consequently, for any small $Ca \leq r \leq c$ and $
\sigma_0=\sigma_0(r,a):= 	\frac{a^{n-2}}{r^{n-2}} + \frac{r^{2+\alpha}}{a^2}$, we have
\begin{equation}\label{eq:estimate on va}
\begin{split}
&\left| v_{a}(x) -d_{n,0} a^2  - W^*_{a}\left(x\right) \right|  \\
& \leq \left| v_{a}(x) -\varphi(x)\right|+\left| \varphi(x)-\frac{1}{2}|x|^2 \right|+\left|\frac{1}{2}|x|^2  -d_{n,0} a^2  - W^*_{a}\left(x\right) \right| 
\\
& \le \ C \left(\frac{a^{n}}{r^{n-2}}+r^{2+\alpha}+\frac{a^{n}}{r^{n-2}}\right)
\\
& \lesssim  \ \sigma_0 a^2  \quad \text{on }\partial B_{r}(0),
\end{split} 
\end{equation}   
where $W_a^*$ is given by \eqref{eq:obstacle global solution}.

We claim that 
\begin{equation}\label{eq:estimate on ha}
\left| h_a - d_{n,0} a^2 \right | \lesssim \sigma_0 a^2 .
\end{equation}
In fact, if $h_a- d_{n,0} a^2 \geq C_1^2\sigma_0 a^2 $ for a sufficiently large constant $C_1 > 0$, 
then  
\[ 
v_{a}(x)-h_a \leq \left(1-C_1a^{\alpha}\right) W_{ \left(1+C_1a^{\alpha}\right) a}^*(x)\quad \text{on } \partial B_{r}(0).
\]
The comparison principle then implies 
\[
v_{a}(x)-h_a \leq  \left(1-C_1a^{\alpha}\right) W_{ \left(1+C_1a^{\alpha}\right) a}^*(x) \quad \text{in }B_{r}(0).
\]
Then we derive the following contradiction
\begin{align*}
\omega_na^n &= \M \varphi (\{ v_a=h_a\}) \\
&\geq \M \varphi  \left\{ W_{ \left(1+C_1a^{\alpha} \right)a}^*(x)=0\right\}\\  
&\geq \left(1-CC_1^\alpha a^{\alpha} \right)\left(1+C_1a^{\alpha} \right)^n\omega_na^n\\
&>\omega_na^n,
\end{align*}
where we used $|D^2\varphi(x)-D^2\varphi(0)|\le C|x|^\alpha$ in the second inequality. 
Similarly, if $h_a - d_{n,0}a^2 \leq -C_1\sigma_0 a^2$, we derive the lower bound
\[
v_{a}(x)-h_a \geq  \left(1+C_1a^{\alpha}\right) W_{ \left(1-C_1a^{\alpha}\right) a}^*(x) \quad \text{in }B_{r}(0),
\]
which again yields a contradiction. This completes the proof of \eqref{eq:estimate on ha}.

Let $C_2>0$ be large.
Combining estimates \eqref{eq:estimate on va} and \eqref{eq:estimate on ha}, we find on $ \partial B_{r}(0)$ that 
\[
\left(1+C_2 a^{\alpha}\right) W_{ \left(1+C_2^2 \sigma_0   \right)a}^*(x)
\leq v_{a}(x)-h_a \leq  
\left(1-C_2 a^{\alpha}\right) W_{ \left(1-C_2^2 \sigma_0   \right)a}^*(x).
\]
The comparison principle then implies
\[
\left| v_{a}(x) -h_a- W_{a}^*\left(x\right) \right| \lesssim a^{\alpha} W_{a}^*\left(x\right) +\sigma_0 a^2  \lesssim \sigma_0 a^2  \quad \text{in }B_{r}(0).
\] 

We now choose $r = a^{\frac{n}{n+\alpha}}$, which leads to
$\sigma_0=2a^{\beta}$. 
Substituting this into the earlier estimate yields  \eqref{eq:dna2 obs ap vaball},
\begin{equation}\label{eq:dna2vaaround0}
\left| v_{a}(x) - d_{n,0} a^2-W_{ a}^*(x) \right| \lesssim a^{2+\beta}  \quad  \text{in } B_{a^{\frac{n}{n+\alpha}}}(0),
\end{equation}
and 
 \begin{equation}\label{eq:dna2 obs ap x}
\varphi(x) \leq v_{a}(x)  \leq \varphi(x) + \min\left\{ \frac{C a^{n}}{\left|x\right|^{n-2}}, d_{n,0}a^2+C a^{2+\beta} \right\} \quad \text{in } \Omega.
\end{equation}  
In particular,  \eqref{eq:dna2vaat0}  follows from setting $x=0$ in \eqref{eq:dna2vaaround0}.
\end{proof}

\begin{Lemma}\label{lem:dna2 va n=2}
Let $n = 2$, $0< a \leq c$ be small. Suppose assumption \eqref{eq:varphi normalized s7} holds. Let $v_a$ be defined by \eqref{eq:obse ap 3}. Then we have
\[
\left| v_a(0)-\varphi(0)-\frac{1}{2} a^2 |\log a| \right| \lesssim a^2.
\]
\end{Lemma}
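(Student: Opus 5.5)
We will mirror the proof of Lemma~\ref{lem:dna2 ua n=2}, now on the obstacle side. Since $\varphi(0)=0$ and $v_a(0)=h_a$, we set
\[
s_a:=\frac{2h_a}{a^2|\log a|}=\frac{2\|v_a-\varphi\|_{L^\infty(\Omega)}}{a^2|\log a|}\ge 0 .
\]
By \eqref{eq:va-vaphi n=2} we already have $s_a\le C$. The goal is to show $|s_a-1|\lesssim |\log a|^{-1}$, and we prove the two inequalities separately.

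\textbf{Localisation.} By Lemma~\ref{lem:coincidencesetcontrol}, the coincidence set $K_a=\{v_a=h_a\}$ lies in $S^\varphi_{Ca^2}(0)\subset B_{Ca}(0)$. Apply Lemma~\ref{lem:varphi sc from v} to $\tilde v=v_a-h_a$, with $\ell=0$ and $h$ of unit size. It gives $\varphi\le v_a-h_a+Ca^2$ in a neighbourhood of $B_{Ca}(0)$. Hence
\[
v_a\ge \varphi+\tfrac12 s_a a^2|\log a|-Ca^2 \quad\text{on }\partial B_{Ca}(0).
\]

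\textbf{Green barrier (dual to \eqref{eq:ua0 upper bound n=2}).} For $t\in\mathbb{R}$,
\[
\det D^2(\varphi+tG)-\det D^2\varphi=t^2\det D^2G\le 0 \quad\text{in }\Omega\setminus\{0\},
\]
so $\varphi+tG$ is a supersolution there. That is the wrong direction for a lower bound on $v_a$. Instead we argue with a supersolution for the difference. In $\Omega\setminus K_a$ we have $\M v_a=\M\varphi$, and linearising around $v_a$ gives $L_{v_a}(v_a-\varphi)\le0$. Alternatively, we compare $\varphi$ with $v_a - t\pi a^2G$, using convexity of $G$ near $0$ up to lower-order terms.

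The cleaner route is the divergence identity \eqref{eq:div identity smooth}:
\[
\int_{\partial\Omega}A^{ij}\partial_i(v_a-\varphi)\nu_j\,dS=-\pi a^2 ,
\]
because $\mu(\Omega)=-\M\varphi(K_a)=-\pi a^2$. Together with Proposition~\ref{prop:dirichlet problem main}, which gives $|A^{ij}-\Phi^{ij}|\lesssim a^2$ near $\partial\Omega$, this pins down the normal flux of $v_a-\varphi$ at $\partial\Omega$ to be $\pi a^2(1+O(a^2))$ times that of $-G$.

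\textbf{Upper bound $s_a\le 1+C|\log a|^{-1}$.} We first show
\[
v_a\ge\varphi-s_a^-\pi a^2G \quad\text{in }\Omega\setminus B_{Ca}, \qquad s_a^-=s_a-C/|\log a| .
\]
This is a comparison in the annulus. The function $v_a-\varphi$ satisfies a linear equation whose coefficients $A^{ij}$ are close to $\Phi^{ij}$ away from the origin. The comparison function $-\pi a^2 G$ satisfies $L_\varphi(-\pi a^2 G)=0$, and the error is $O(a^4|G|^2)$-type, which is harmless. Granting this, comparing boundary fluxes as in the last display of Lemma~\ref{lem:dna2 ua n=2} gives $\pi a^2\ge s_a\pi a^2-Ca^2|\log a|^{-1}$, which is the upper bound.

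\textbf{Lower bound $s_a\ge 1-C|\log a|^{-1}$.} We argue exactly as in Lemma~\ref{lem:dna2 obs ap}, with $r=a^{2/(2+\alpha)}$. On $\partial B_r$ we compare $v_a-h_a$ with
\[
(1\pm C_1r^\alpha)\,W^*_{(1\mp C_1 r^\alpha)a} ,
\]
using the expansion $W_a^*=\tfrac12|x|^2-\tfrac12a^2\log(|x|/a)+O(a^2)$. The mass constraint $\M\varphi(K_a)=\pi a^2$ forbids too small an $h_a$. Concretely, if $h_a<\tfrac12a^2\log(r/a)-Ca^2$, then $v_a-h_a$ lies above a barrier whose coincidence ball is too small, and $K_a$ would have mass less than $\pi a^2$. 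Since $\log(r/a)=\tfrac{\alpha}{2+\alpha}|\log a|$, this alone yields only a constant multiple of $|\log a|$. We therefore combine it with the Green-barrier bound on $\partial B_r$, namely
\[
v_a-\varphi\le s_a\pi a^2|G|+Ca^2\approx \tfrac12 s_a a^2\log(1/r)+Ca^2 .
\]
Matching the two contributions, as in the $u_a$ case, gives $h_a\ge\tfrac12a^2|\log a|-Ca^2$.

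\textbf{Main obstacle.} I expect the hardest step to be the Green-function comparison in the annulus $\Omega\setminus B_{Ca}$ in the obstacle setting. There $\varphi+tG$ has the wrong sub/supersolution sign. I would handle it through the Legendre transform, which exchanges the obstacle and singularity problems. Failing that, I would use a linearised comparison with the coefficients $A^{ij}$, controlled by Proposition~\ref{prop:dirichlet problem main} near $\partial\Omega$ and by Lemma~\ref{lem:interior estimate v} in the interior.
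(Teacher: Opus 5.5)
There is a genuine gap. You pair the available ingredients with the wrong inequalities, and the pairing you chose for the flux argument needs a barrier you do not have.

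Since $\det D^2G\le 0$ in two dimensions, $\varphi+tG$ is a supersolution for every $t$. So the only Green bound you can actually prove is an \emph{upper} bound on $v_a$. Your upper bound $s_a\le 1+C|\log a|^{-1}$ instead rests on the lower barrier $v_a\ge\varphi-s_a^-\pi a^2G$ in $\Omega\setminus B_{Ca}(0)$, which you only ``grant''. The defect $-\pi a^2(A^{ij}-\Phi^{ij})G_{ij}$ has no sign, so it cannot be dismissed in a comparison argument. Making it work would need an explicit corrector and the estimate $|D^2v_a-D^2\varphi|\lesssim a^2|x|^{-2}$ on every dyadic annulus between scales $a$ and $1$. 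Proposition~\ref{prop:dirichlet problem main} only gives this near $\partial\Omega$, and the Legendre duality between $W_a$ and $W_a^*$ does not survive for general $\varphi$ and $\Omega$.

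Your lower bound has a directional error. The contradiction ``if $h_a$ is too small, $v_a-h_a$ lies above a $W^*$-barrier with too small a coincidence ball'' needs a \emph{lower} bound for $v_a$ on $\partial B_r$. The only one you have is $v_a\ge\varphi$, which gives just the factor $\frac{\alpha}{2+\alpha}$, as you noticed. Adding the upper bound $v_a-\varphi\le s_a\pi a^2|G|+Ca^2$ cannot improve a lower bound on $h_a$ obtained this way.

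The repair is to swap the pairings, as the paper does. Start from the trivial bound $v_a\le\varphi+h_a$ on $\partial B_{Ca}(0)$. You do not need Lemma~\ref{lem:varphi sc from v}; with $h$ of unit size it would in any case only give an error $Ca^2|\log a|$ when $n=2$. Comparison with the supersolution then gives $v_a\le\varphi+\frac{h_a+Ca^2}{\log a}\,2\pi G$ in $\Omega\setminus B_{Ca}(0)$. On $\partial B_r$ with $r=a^{2/(2+\alpha)}$, this reads $v_a-h_a\le\frac12 r^2-\frac12 s_a a^2\log(r/a)+Ca^2$.

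\emph{Upper bound.} Run your mass-constraint contradiction in this direction. If $h_a\ge\frac12 a^2|\log a|+C_1a^2$, then $v_a-h_a$ lies below a $W^*$-barrier whose coincidence ball has radius $(1+\delta)a$ with $\delta\gg a^\alpha$. Comparison then forces $\M\varphi(K_a)>\pi a^2$, a contradiction.

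\emph{Lower bound.} The same barrier coincides with $v_a$ on $\partial\Omega$, so there $\Phi^{ij}\partial_i(\varphi-v_a)\nu_j\le\frac{2\pi(h_a+Ca^2)}{|\log a|}\,\Phi^{ij}G_i\nu_j$. Combine this with the divergence identity, with $|A^{ij}-\Phi^{ij}|\lesssim a^2$ near $\partial\Omega$, and with $\int_{\partial\Omega}\Phi^{ij}G_i\nu_j=1$. This gives $\pi a^2\le 2\pi(h_a+Ca^2)|\log a|^{-1}+Ca^4$, i.e.\ $h_a\ge\frac12a^2|\log a|-Ca^2$.

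So on the obstacle side the flux identity bounds the deviation from below and the mass constraint bounds it from above. This is the reverse of the $u_a$ case you were mirroring.
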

\begin{proof}
Since $\varphi(0)=0$, we can define
\[
h_a=v_a(0)=v_a(0)-\varphi(0)=\|v_a-\varphi\|_{L^{\infty}(\Omega)} >0.
\]
The comparison principle then implies 
\[
\varphi \leq v_a  \leq \varphi +h_a \quad \text{in }\Omega.
\]
Moreover, by \eqref{eq:va-vaphi n=2}, we have $h_a\le C a^2|\log a|$. 

From \eqref{eq:coincidence inclusion}, we have
\[
\{v_a = h_a\} \subset S_{C a^2}^{\varphi}(0) \subset B_{C a}(0).
\]
Define $h_a^+ := h_a +C a^2$ for a possibly larger constant $C$. Then
\[
v_a \le \varphi + \frac{h_a^+}{\log a}  2\pi G \quad \text{on } \partial B_{C a}(0).
\]
Since $\varphi + \frac{h_a^+}{\log a}  2\pi G$ is a supersolution, 
the comparison principle implies
\begin{equation}\label{eq:va0 lower bound n=2}
v_a \le \varphi + \frac{h_a^+}{\log a} 2\pi G
\qquad \text{in } \Omega \setminus B_{C a}(0).
\end{equation}

We claim that
\begin{equation}\label{eq:ha upper bound}
h_a \leq \frac{1}{2}a^2|\log a| +C a^2.
\end{equation}
Suppose, to the contrary, that $h_a- \frac{1}{2} a^2 |\log a|\geq C_1 a^2 $ for some sufficiently large constant $C_1 > 0$.
Choosing $r = a^{\frac{2}{2+\alpha}}$, we obtain
\[
0\le v_a-h_a \leq   \left(1-C_1r^{\alpha}\right) W_{ \left(1+C_1r^{\alpha}\right) a}^*(x)  \quad \text{on } \partial B_{r}(0),
\]
and hence also in $B_r(0)$ by the comparison principle. This implies $v_a = h_a$ in $B_{(1 + C_1 r^{\alpha}) a}(0)$. However, this contradicts the constraint $\M \varphi (\{ v_a=h_a\})=\omega_na^n$.
Therefore, \eqref{eq:ha upper bound} holds.
  
Finally, an application of Proposition \ref{prop:dirichlet problem main} yields the estimate $|D v_a - D\varphi| +|D^2 v_a - D^2 \varphi| \lesssim a^2$ near $\partial\Omega$. 
Consequently, the matrix $\{A^{ij}\}$ defined in \eqref{eq:div equation} satisfies $|A^{ij}-\Phi^{ij}| \lesssim a^2$ near $\partial\Omega$, where $\Phi^{ij} = \operatorname{Cof} D^2 \varphi$. Combining~\eqref{eq:va0 lower bound n=2} with the divergence identity~\eqref{eq:div identity smooth}, we obtain 
\begingroup
\allowbreak
\begin{align*}
\pi a^2
&=\int_{\partial  \Omega}A^{ij} \partial_i(\varphi-v_a)  \nu_jdS  \\
&\leq \int_{\partial  \Omega}\Phi^{ij} \partial_i(\varphi-v_a)  \nu_jdS+Ca^4 \\
&\leq  2 \pi h_a^+ |\log a|^{-1}\int_{\partial  \Omega}\Phi^{ij} G_i \nu_jdS+Ca^4 \\
&\leq 2 \pi (h_a +Ca^2) |\log a|^{-1}     .
\end{align*}
\endgroup
Therefore, 
\[
 h_a \geq \frac{1}{2}a^2|\log a|-C  a^2 .
\]
This completes the proof.
\end{proof}

\subsection{Proof of Theorem \ref{thm:ordera2 more} and an application}\label{sec:stability a2}

Let us summarize the key estimates obtained in Section \ref{sec:singularity problem}.
\begin{Proposition}\label{prop:pointwise sharp}
Under the assumptions $\varphi(0)=0$ and $D\varphi(0)=0$, define $A=(D^2\varphi(0))^{1/2}$ (which is not necessarily the identity) and $\lambda_0=(\det D^2 \varphi(0))^{1/n}$. Then
\begin{itemize}
\item for $n =2 $, Lemmas \ref{lem:dna2 ua n=2} and \ref{lem:dna2 va n=2} imply that
\[
\left|u_a(0)+\frac{1}{2}\lambda_{0}^{-1} a^2 |\log a| \right| \lesssim   a^2 , \quad \left| v_a(0) -\frac{1}{2}\lambda_{0}a^2 |\log a| \right| \lesssim a^2.
\] 
\item for $n \geq 3$, for $\beta = \frac{(n-2)\alpha}{n+\alpha}$,  Lemmas \ref{lem:dna2 uaap} and \ref{lem:dna2 obs ap}  imply that
\[
\left| u_{a}(0) + \lambda_{0}^{-1}d_{n,0} a^2 \right| \lesssim a^{2+\beta}  ,\quad \left| v_{a}(0)-  \lambda_{0}d_{n,0} a^2\right | \lesssim a^{2+\beta},   
\]
and
\[
(1-C a^{\beta} )B_a(0)\subset A^{-1}\left\{v_{a}= h_a\right\} \subset (1+C a^{\beta}   )B_a(0).
\]  
\end{itemize}

This establishes the optimality of the terms $\frac{1}{2}\lambda_{0}^{-1}$, $\frac{1}{2}\lambda_{0}$, and  $a^2 |\log a|$ in \eqref{eq:perturb-result n=2} for $n=2$, as well as the sharpness of  the terms $\lambda_{0}^{-1}$, $\lambda_{0}$ and  $d_{n,0}a^2$ in \eqref{eq:perturb-result} for $n \geq 3$. 
\end{Proposition}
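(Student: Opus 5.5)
The plan is to reduce Proposition \ref{prop:pointwise sharp} to the normalized case \eqref{eq:varphi normalized s7} already handled in Lemmas \ref{lem:dna2 uaap}, \ref{lem:dna2 ua n=2}, \ref{lem:dna2 obs ap} and \ref{lem:dna2 va n=2}, using the affine invariance of the problem. Set $A=(D^2\varphi(0))^{1/2}$ and $\lambda_0=(\det D^2\varphi(0))^{1/n}=(\det A)^{2/n}$. Take $\T=\lambda_0^{1/2}A^{-1}$, so $\det\T=\lambda_0^{n/2}\lambda_0^{-n/2}=1$. Define $\tilde\varphi(x)=\varphi(\T x)$ on $\tilde\Omega=\T^{-1}\Omega$. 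Then $D^2\tilde\varphi(0)=\lambda_0 I_n$.

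To bring the Hessian to the identity, rescale as $\hat\varphi(x)=\lambda_0^{-1}\tilde\varphi(x)$. Now $D^2\hat\varphi(0)=I_n$, $\hat\varphi(0)=0$ and $D\hat\varphi(0)=0$. The measures transform as follows. Since $\det\T=1$, Dirac masses are preserved under $x\mapsto \T x$. Multiplying a function by $\lambda_0^{-1}$ multiplies its Monge--Amp\`ere measure by $\lambda_0^{-n}$. Hence $\hat u:=\lambda_0^{-1}u_a(\T\cdot,0)$ solves
\[
\M\hat u=\M\hat\varphi+\lambda_0^{-n}\omega_na^n\delta_0,
\]
with $\hat u=\hat\varphi$ on $\partial\tilde\Omega$. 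So $\hat u=\hat u_{\hat a}$ with $\hat a=\lambda_0^{-1}a$.

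For the isolated singularity, Lemma \ref{lem:dna2 uaap} now gives
\[
u_a(0)=\lambda_0\hat u(0)=-\lambda_0 d_{n,0}\hat a^2+O(a^{2+\beta})=-\lambda_0^{-1}d_{n,0}a^2+O(a^{2+\beta}).
\]
For $n=2$, Lemma \ref{lem:dna2 ua n=2} gives $u_a(0)=-\lambda_0\tfrac12\hat a^2|\log\hat a|+O(a^2)$. Because $|\log\hat a|=|\log a|+O(1)$, this equals $-\tfrac12\lambda_0^{-1}a^2|\log a|+O(a^2)$.

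The obstacle case works the same way. Rescaling by $\lambda_0^{-1}$ keeps $\hat v$ an obstacle solution for $\hat\varphi$ with obstacle $\lambda_0^{-1}h_a$. Its coincidence set is $\T^{-1}\{v_a=h_a\}$, and its measure is $\M\hat\varphi(\T^{-1}K)=\lambda_0^{-n}\omega_na^n$, so again $\hat a=\lambda_0^{-1}a$. Lemmas \ref{lem:dna2 obs ap} and \ref{lem:dna2 va n=2} then give
\[
v_a(0)=\lambda_0 d_{n,0}\hat a^2+O(a^{2+\beta})=\lambda_0^{-1}d_{n,0}a^2+O(a^{2+\beta}).
\]
The stated constant is $\lambda_0 d_{n,0}a^2$, not $\lambda_0^{-1}d_{n,0}a^2$, so the measure bookkeeping here is the main obstacle.

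To fix it, I would redo the obstacle normalization without rescaling the function. Keep $\tilde\varphi=\varphi(\T x)$, with $D^2\tilde\varphi(0)=\lambda_0 I$, and change variables by $x=\lambda_0^{-1/2}y$ instead. Then the coincidence-set measure is the \emph{Lebesgue-type} quantity $\M\varphi(K)=\int_K\det D^2\varphi\approx\lambda_0^n|K|$. The obstacle-side lemma matches $|\{W^*_{\hat a}=0\}|$ to the $\M$-mass, and this is where the factor inverts. I would carefully recompute the scaling of $W_a^*$ under $x\mapsto sx$ and $f\mapsto tf$, tracking the fact that $\M$-mass and Dirac mass scale identically while heights scale by $t$. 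The contrasting powers $\lambda_0^{\mp1}$ should then come out through consistent use of the two-parameter family $tW_{a}(sx)$ versus $tW^*_a(sx)$, with $t s^2=\lambda_0$ and $t^ns^{n}$ fixed by the mass. I expect that the correct pairing, height scaling $\lambda_0$ with the mass constraint expressed in $\M\hat\varphi$, yields $\lambda_0 d_{n,0}a^2$. Settling this would be the first check.

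The coincidence-set statement follows by pulling back the inclusion \eqref{eq:dna2 obs ap vaball} through $\T$. The radius must be rescaled consistently, and $A^{-1}$ appears as $\T^{-1}$ up to the scalar $\lambda_0^{1/2}$, which is absorbed into the $\hat a$ versus $a$ conversion. Finally, the optimality claims follow because $u_a(\cdot,x_0)$ and $v_a(\cdot,p)$ belong to $\D_{a,\varphi}$, so they realize the bounds in Theorem \ref{thm:ordera2 more}.
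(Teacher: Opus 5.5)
Your normalization is the same device as the paper's: the paper substitutes $\varphi(A^{-1}x)$, while you use the unimodular map $\T=\lambda_0^{1/2}A^{-1}$ and then divide by $\lambda_0$. Your treatment of the isolated singularity is correct. You get $\hat a=\lambda_0^{-1}a$, and Lemmas \ref{lem:dna2 uaap} and \ref{lem:dna2 ua n=2} give $u_a(0)=-\lambda_0^{-1}d_{n,0}a^2+O(a^{2+\beta})$ and $u_a(0)=-\tfrac12\lambda_0^{-1}a^2|\log a|+O(a^2)$.

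The gap is the obstacle half. You compute $v_a(0)=\lambda_0^{-1}d_{n,0}a^2+O(a^{2+\beta})$, see that it contradicts the statement, and then only announce a ``fix'' that you expect to produce $\lambda_0 d_{n,0}a^2$. No such fix exists, and your first computation is the correct one. Here is why:
\begin{itemize}
\item For $\hat w:=t\,w(T\cdot)$ one has $\M\hat w(E)=t^n|\det T|\,\M w(TE)$ for every convex $w$ and Borel set $E$. So the Dirac mass in \eqref{eq:defnuay} and the $\M\varphi$-mass of the coincidence set in \eqref{eq:defnva} are multiplied by the same factor. Both problems therefore get the same $\hat a$, and both heights are multiplied by $t$.
\item Consequently the ratio $(v_a(0)-\varphi(0))/(u_a(0)-\varphi(0))$ is unchanged by every such normalization. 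By the normalized lemmas it equals $-1+O(a^\beta)$, or $-1+O(|\log a|^{-1})$ when $n=2$.
\item More simply, under $(u,\varphi)\mapsto(tu,t\varphi)$ the quantities $u(0)-\varphi(0)$, $a$ and $\lambda_0$ all scale by $t$. Thus $\lambda_0^{-1}a^2$ has the right homogeneity, while $\lambda_0 a^2$ scales like $t^3$ and cannot be right.
\item Explicit check for $n\ge3$: take $\varphi=\frac{\lambda}{2}|x|^2$ on $B_1$. The radial obstacle solution has $v'(r)=\lambda\,(r^n-(a/\lambda)^n)_+^{1/n}$ and coincidence radius $a/\lambda$. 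Hence $v_a(0)-\varphi(0)=\lambda(a/\lambda)^2d_{n,0}+O(a^n)=\lambda^{-1}d_{n,0}a^2+O(a^n)$.
\end{itemize}

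So the obstacle part of the proposition, as written, cannot be proved. The correct statements are $|v_a(0)-\lambda_0^{-1}d_{n,0}a^2|\lesssim a^{2+\beta}$ for $n\ge3$ and $|v_a(0)-\tfrac12\lambda_0^{-1}a^2|\log a||\lesssim a^2$ for $n=2$. The paper's one-line proof misses this because substituting $\varphi(A^{-1}x)$ also replaces $a$ by $\lambda_0^{-1/2}a$ in both problems, and this is not tracked. As a consequence, the upper constants in Theorem \ref{thm:ordera2 more}, which are derived from this proposition, should also be $\lambda_0^{-1}$ and $\tfrac12\lambda_0^{-1}$. As stated they fail for $\varphi=\frac{\lambda}{2}|x|^2$ on $B_1$ with fixed $\lambda<1$ and $a$ small.

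The same bookkeeping error affects the coincidence-set claim, and your justification for it is also incorrect. You have $\T^{-1}=\lambda_0^{-1/2}A$, which is not a multiple of $A^{-1}$. Pulling back $(1\pm Ca^\beta)B_{\hat a}$ with $\hat a=\lambda_0^{-1}a$ gives $(1-Ca^\beta)B_a\subset\lambda_0^{1/2}A\{v_a=h_a\}\subset(1+Ca^\beta)B_a$. For $\varphi=\frac{\lambda}{2}|x|^2$ one has $A^{-1}\{v_a=h_a\}=B_{\lambda^{-3/2}a}$, so the inclusion with $A^{-1}$ is false unless $\lambda=1$.

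Your normalization, carried out as you first did it, proves these corrected statements and the sharpness of the corrected constants. Drop the proposed ``fix''.
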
 
\begin{proof}
By substituting $\varphi$ with $\varphi(A^{-1}x)$, the conclusion follows from Lemmas \ref{lem:dna2 uaap} and \ref{lem:dna2 obs ap} when $n \geq 3$, and from Lemmas \ref{lem:dna2 ua n=2} and \ref{lem:dna2 va n=2} when $n=2$. 
\end{proof}

\begin{proof}[Proof of Theorem \ref{thm:ordera2 more}] 
By Whitney's extension theorem, we may extend $\varphi \in \C_+^{2,\alpha}(\overline{\Omega})$ to a convex function $\tilde{\varphi} \in \C_+^{2,\alpha}(\overline{\widetilde{\Omega}}) $ on a larger $C^{2,\alpha}$  uniformly convex domain $\widetilde{\Omega}$ that strictly contains  $\Omega$. 
As a result, in view of the reduction in Section \ref{app:comparison reduction}, we may assume without loss of generality that the point $x_0$ is away from the boundary $\partial \Omega$.

For the left-hand side, Theorem \ref{thm:abp extremal} shows that we only need to consider the solution $u_a(\cdot,x_0)$. The corresponding lower bounds in Theorem \ref{thm:ordera2 more} are provided by Proposition \ref{prop:pointwise sharp}.

For the right-hand side, by applying Theorem \ref{thm:abp extremal}, it suffices to consider the quantity $\sup_{p \in \partial \varphi (\Omega)}v_{a}(x_0,p)$, which we assume is attained at some $p \in \partial \varphi (\Omega)$ by $v_{a}(x_0,p)$, such that $x_0$ is in the coincidence set of $v_{a}(\cdot,p) $. The estimate \eqref{eq:coincidence inclusion}, together with the $\C_+^{2,\alpha}$ regularity of $\varphi$, implies that the point $x_p$ for which $p = D \varphi(x_p)$ is close to $x_0$, satisfying
\[
|x_p-x_0|\le Ca.
\]
Denote $\lambda_{0}=(\det D^2 \varphi(x_0))^{\frac{1}{n}}$ and $\lambda_{p}=(\det D^2 \varphi(x_p))^{\frac{1}{n}}$. Then we have
\[
|\lambda_{0}-\lambda_{p}|\le C |x_p-x_0|^\alpha \le Ca^\alpha.
\]
Therefore, when $n \geq 3$, Proposition \ref{prop:pointwise sharp} yields
\[
\begin{split}
u(x_0)-\varphi(x_0)
& \leq v_{a}(x_0, p) -\varphi(x_0)\\
& \leq v_{a}(x_p, p) -\varphi(x_p) \\
& \leq \lambda_{p} d_{n,0} a^2 +Ca^{2+\beta} \\
& \leq  \lambda_{0}d_{n,0} a^2 +Ca^{2+\alpha}+Ca^{2+\beta}\\
& \leq \lambda_{0} d_{n,0} a^2 +Ca^{2+\beta}.
\end{split}
\]
Similarly, when $n = 2$, Proposition \ref{prop:pointwise sharp} yields
\[
\begin{split}
u(x_0)-\varphi(x_0)
& \leq v_{a}(x_0, p) -\varphi(x_0)\\
& \leq v_{a}(x_p, p) -\varphi(x_p) \\
& \leq \frac{1}{2}\lambda_{p} a^2 |\log a| +Ca^{2} \\
& \leq \frac{1}{2} \lambda_{0} a^2 |\log a| +Ca^{2}.
\end{split}
\]
This concludes the proof. 
\end{proof}

Here, let us discuss the regularity of solutions to the  Monge-Amp\`ere equations with multiple isolated singularities 
\begin{equation}\label{eq:sing multiple sing}
\det D^2 u=\det D^2 \varphi+\omega_n a^n \sum_{i=1}^{m}  b_i \delta_{y_i} \quad \text{in } \Omega , 
\end{equation}  
where $\varphi \in \C_+^{2,\alpha}(\overline{\Omega})$, $y_1,\dots ,y_m \in \Omega$, $a>0$ and $b_1,\dots,b_m > 0$ satisfying $\sum_{i=1}^mb_i=1$. This equation \eqref{eq:sing multiple sing} arises, e.g., in the study of surface tensions associate with dimer models \cite{astala2025dimer}. 
In two dimensions, all convex solutions remain strictly convex. However, for $n \geq 3$, solutions may lose strict convexity and develop polytopal singularities \cite{caffarelli1990ilocalization,caffarelli1993note}. 
Some results concerning the structure of these singularities have been obtained by Mooney and Rakshit \cite{mooney2021solutions,mooney2023singular}. Our Theorem \ref{thm:ordera2 more} implies the following sufficient condition for strict convexity:  

\begin{Corollary}\label{prop:strictly convex cond}
Suppose that $\Omega \subset \mathbb{R}^n$, $n \geq 3$, is a bounded convex domain with $\partial \Omega \in C^{2,\alpha}$, where $\alpha \in (0,1)$. Let $\varphi \in \C_+^{2,\alpha}(\overline{\Omega})$. 
Let $u \in C(\overline{\Omega})$ be the convex solution of \eqref{eq:sing multiple sing} with $u =\varphi $ on $\partial \Omega$.
Then there exists a positive constant $C_0$ depending only on $n$, $\alpha$, $\operatorname{diam}(\Omega)$, $\left\|\partial \Omega\right\|_{C^{2,\alpha}}$,   $\left\|D^2\varphi\right\|_{C^{\alpha}(\overline{\Omega})}$ and $\left\|(D^2\varphi)^{-1}\right\|_{L^{\infty} (\Omega)}$, such that if
\begin{equation}\label{eq:strictly convex cond}
\min_{1\leq i<j\leq m} |y_i-y_j|> C_0 a,
\end{equation}
then $u$ is strictly convex in $\overline\Omega$, and $ C^{2,\alpha}$ in   $ \overline\Omega \setminus \{y_1,\dots,y_m\} $.   
\end{Corollary}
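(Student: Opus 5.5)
The plan is to show that $u$ cannot contain a line segment in its graph. This relies on Caffarelli's localization theorem. For $n\ge3$, a convex solution of $\M u=f$ with $f$ bounded between positive constants away from the singular set can fail to be strictly convex only if the contact set $\Sigma=\{u=\ell\}$ of some supporting hyperplane $\ell$ is nontrivial. In that case the extremal points of $\Sigma$ must be either boundary points or points where $\M u$ is singular, i.e.\ among the $y_i$; an interior nontrivial contact set avoiding the Dirac masses is excluded by \cite{caffarelli1990ilocalization}. Boundary extremal points are excluded because $u=\varphi$ on $\partial\Omega$ with $\varphi\in\C_+^{2,\alpha}$ and $\partial\Omega$ uniformly convex, so boundary data are strictly convex. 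Hence any $\Sigma$ that is not a point is the convex hull of at least two of the $y_i$. It suffices to rule out $y_i,y_j\in\Sigma$ with $i\ne j$. Once strict convexity holds, interior $C^{2,\alpha}$ regularity away from the $y_i$ follows from Caffarelli's theory, and regularity up to the boundary follows from Proposition~\ref{prop:dirichlet problem main} or the global results \cite{caffarelli1984dirichlet,trudinger2008boundary}.

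To exclude a segment joining $y_i$ and $y_j$, I will use the quantitative bounds. By Theorem~\ref{thm:ordera2} (estimate \eqref{eq:abp gene 2/n}, after the Whitney extension used in the proof of Theorem~\ref{thm:ordera2 more}), $\|u-\varphi\|_{L^\infty}\le Ca^2$. For a finer estimate, I will compare $u$ with $u_{a_i}(\cdot,y_i)$, where $a_i=b_i^{1/n}a$ and $\M u_{a_i}=\M\varphi+\omega_n b_i a^n\delta_{y_i}$. By Theorem~\ref{thm:abp extremal} and \eqref{eq:dna2 uaap}, which states $\varphi-u_{a}\lesssim a^n|x|^{2-n}$, one gets $u\le \varphi$, and near $y_i$ the function $u$ lies below $\varphi$ by at most $Ca^n|x-y_i|^{2-n}$ coming from each other singularity, up to $Ca^2$ overall. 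The cleanest route is a reduction to Lemma~\ref{lem:varphi sc from u}. Suppose $\ell$ supports $u$ at both $y_i$ and $y_j$, so $u=\ell$ on $[y_i,y_j]$. The midpoint $m$ of the segment lies at distance $\ge |y_i-y_j|/2> C_0a/2$ from $y_i$ and $y_j$. I must also check that it is far from the other $y_k$; otherwise I pick on the segment a point far from all $y_k$, which is possible since the hull contains only finitely many $y_k$, each pairwise separated by more than $C_0a$.

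On the segment, $u$ is affine. However, $\varphi$ has Hessian bounded below by some $c>0$. From $u\le\varphi\le u+Ca^2$, the function $\varphi-\ell$ vanishes to within $Ca^2$ at the endpoints and is $\le Ca^2$ on the segment. By uniform convexity, $\varphi-\ell$ at the midpoint is at most its average at the endpoints minus $c|y_i-y_j|^2/8$. Combining these gives
\[
-Ca^2\le (\varphi-\ell)(m)\le Ca^2-c|y_i-y_j|^2,
\]
which forces $|y_i-y_j|\le C'a$. This contradicts \eqref{eq:strictly convex cond} once $C_0>C'$. Strictly, I only need $0\le \varphi-u\le Ca^2$, which follows from Theorem~\ref{thm:ordera2 more}: that theorem gives $u\ge\varphi-Ca^2$, and the comparison principle gives $u\le\varphi$ since $\mu\ge0$. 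The case of a contact set containing several $y_k$ reduces to the same two-point argument on any edge of the hull.

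The main obstacle is justifying the structural first step: that in dimension $n\ge3$ a nontrivial contact set must have its extremal points at the Dirac masses. I would invoke Caffarelli's result that extremal points of $\{u=\ell\}$ cannot lie in the interior region where $\M u$ has density bounded between positive constants, and adapt it to the setting with finitely many atoms. The remaining small issue is the uniform-in-$a$ dependence of constants, which Theorem~\ref{thm:ordera2 more} already provides, with $a$ small. For large $a$ the hypothesis \eqref{eq:strictly convex cond} is vacuous when $m\ge2$ and $C_0a>\operatorname{diam}\Omega$. When $m=1$ the contact set cannot have two extremal atoms, so strict convexity is automatic.
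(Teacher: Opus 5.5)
Your proposal is correct and follows the paper's proof. Caffarelli's localization, together with Proposition~\ref{prop:dirichlet problem main} near $\partial\Omega$, reduces non-strict convexity to $u$ being affine on a segment joining two singular points $y_i,y_j$; then the bound $0\le\varphi-u\le Ca^2$ from Theorem~\ref{thm:ordera2 more} and a second-difference (midpoint) estimate using the uniform convexity of $\varphi$ give $|y_i-y_j|\le Ca$, contradicting \eqref{eq:strictly convex cond}. The detours through $u_{a_i}$, Lemma~\ref{lem:varphi sc from u} and choosing a point far from the other $y_k$ are unnecessary; also, ruling out boundary extremal points rests on the $C^{2,\alpha}$ (hence $C^{1,\beta}$, $\beta>1-\frac{2}{n}$) regularity of the boundary data via \cite{caffarelli1993note}, not on its strict convexity alone, and $\partial\Omega$ is not assumed uniformly convex.
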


Once the solutions are strictly convex, their regularity away from the singularities can be deduced from the established theory of Monge-Amp\`ere equations (see, e.g., \cite{trudinger2008monge,gutierrez2016monge,figalli2017monge,savin2024monge}), and the regularity of the tangent cone at each singularity has been established G\'alvez-Jim\'enez-Mira 
\cite{galvez2015classification} for $n = 2$, and by Savin \cite{savin2005obstacle} and Huang-Tang-Wang \cite{huang2024regularity} for all $n$.

\begin{proof}[Proof of Corollary \ref{prop:strictly convex cond}]
Since $n \geq 3$, using Theorem \ref{thm:ordera2 more}, we find that $\left\| u - \varphi \right\|_{L^{\infty}(\Omega)} \leq Ca^2$.  Suppose, for the sake of contradiction, that $u$ is not strictly convex. Then, according to \cite{caffarelli1990ilocalization,caffarelli1993note},  and Proposition \ref{prop:dirichlet problem main}, $u$ must be linear on a non-degenerate polytope $E$ generated by some points in $\{y_1,\dots,y_m\}$. 
After a suitable translation and rotation, 
we may assume that $u$ is linear on the segment connecting $y_1 = d e_n$ and $y_2 = -d e_n$ for some $d > 0$. 
Given the $\C^{1,1}_+$ regularity of $\varphi$, we must have 
\[
0 = u(y_1)+u(y_2)-2u(0) \geq \varphi(y_1)+\varphi(y_2)-2\varphi (0)-C a^2 \geq c d^2 -Ca^2,
\]
which implies $d \leq Ca$. 
This contradicts the assumption \eqref{eq:strictly convex cond}.  
\end{proof}

\section{Stability of solutions with small perturbed measure}\label{sec:rigidity a2}

In this section, we focus on the case $n \geq 3$ and prove Theorem \ref{thm:rigidity}. 
Following the proof strategy of Theorem \ref{thm:ordera2 more}, we combine Whitney's extension theorem with the Jordan decomposition theorem to reduce the problem to the case where the measure $\mu = \M u - \M \varphi$ is of compact support in $\Omega_1 \subset \subset \Omega$ and is either non-negative or non-positive.
Under this reduction, to prove Theorem \ref{thm:rigidity}, it suffices to show for all $x_0\in \Omega_1$ that 
\begin{itemize}
\item If $\mu$ is non-negative, then 
\[
 \frac{ u(x_0)-\varphi(x_0)}{d_{n,0}a^2} \geq -\lambda_{0}^{-1}-C_{\rho}a^{\beta}+c_{\rho}\left(\frac{\omega_na^n-\mu \left( E \left(x_0, \rho a\right)\right) }{\omega_na^n} \right)^\frac{n}{2}.
\]
\item If $\mu$ is non-positive, then 
\[
 \frac{ u(x_0)-\varphi(x_0)}{d_{n,0}a^2} \leq \lambda_{0} +C_{\rho}a^{\beta}-c_{\rho}\left(\frac{\omega_na^n+\mu \left( E (x_0, (1+\rho)a)\right)}{ \omega_na^n} \right)^n.
\]
\end{itemize}

Unless specifically emphasized, the constants $c$ and $C$ shall also depend on $\rho$ whenever it is involved.

We will use the following lemma in a general form.
\begin{Lemma}[A boundary contact lemma]\label{lem:extends to boundary}
Let $u, \varphi \in C(\overline{\Omega})$ be convex functions satisfying $u = \varphi$ on $\partial \Omega$. Suppose that near $\partial \Omega$, the function $\varphi$ is strictly convex and non-degenerate in the sense of \eqref{eq:varphi equation}, and $\varphi$ is not identically equal to $u$  there. If the measure $\mu = \M u - \M \varphi$ is of compactly support $\Omega$ and satisfies $\mu(\Omega) = 0$, then both sets $\overline{\{u <\varphi\}}$ and $\overline{\{ \varphi<u\}}$ intersect the boundary $\partial\Omega$. In particular, both $\{u <\varphi\}$ and $\{ \varphi<u\}$ are nonempty sets.
\end{Lemma}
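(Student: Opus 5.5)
By symmetry (swap the roles of $u$ and $\varphi$; note $\M\varphi-\M u=-\mu$ also has zero total mass and compact support, and near $\partial\Omega$ we have $\M u=\M\varphi$, so $u$ is also non-degenerate and, once we know $u$ is close to $\varphi$ there, strictly convex), it suffices to show that $\overline{\{u<\varphi\}}\cap\partial\Omega\neq\emptyset$. We argue by contradiction. Suppose $\overline{\{u<\varphi\}}$ stays away from $\partial\Omega$. Then there is a neighborhood $N=\Omega\setminus\overline{\Omega'}$ of $\partial\Omega$, with $\Omega'\subset\subset\Omega$ convex and containing $\operatorname{supp}\mu$, on which $u\ge\varphi$. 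We may also shrink $N$ so that $\varphi$ is strictly convex and satisfies \eqref{eq:varphi equation} on $N$.

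\textbf{Step 1 (strong maximum principle in the collar).} In $N$ we have $\M u=\M\varphi$ with $\lambda\le\det D^2\varphi\le\Lambda$. We also have $u\ge\varphi$ there, with equality on $\partial\Omega$. By \cite{caffarelli1990ilocalization}, $\varphi$ is $C^{1,\alpha}$ and strictly convex in $N$. Since $u\geq \varphi$ and $\M u=\M \varphi$ there, $u$ is strictly convex as well: a line segment of $u$ would have to end on $\partial\Omega$, which is excluded near the boundary by comparison with $\varphi$. Then $\varphi-u$ satisfies the two linearized inequalities $L_u(\varphi-u)\ge0$ and $L_\varphi(\varphi-u)\le0$ used in Lemma~\ref{lem:divergence relation}. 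If $u=\varphi$ at some interior point of a connected component of $N$, the strong maximum principle \cite{jian2025strong} gives $u\equiv\varphi$ on that component. Since $N$ is connected (an annular collar), this contradicts the assumption that $\varphi\not\equiv u$ near $\partial\Omega$. Hence $u>\varphi$ throughout $N$.

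\textbf{Step 2 (flux contradiction).} Now use the divergence identity \eqref{eq:div identity smooth}, justified by the smooth approximation scheme of Section~\ref{sec:alexandrov estimate affine}. Applied on $\Omega$, it gives
\[
\int_{\partial\Omega}A^{ij}\partial_i(u-\varphi)\nu_j\,dS=\mu(\Omega)=0.
\]
Set $w=u-\varphi$. Then $w>0$ in $N$, $w=0$ on $\partial\Omega$, and $w$ solves the uniformly elliptic equation $A^{ij}w_{ij}=0$ in $N$. Hopf's lemma gives $\partial_\nu w<0$ on $\partial\Omega$ (the outward derivative), so the conormal flux is strictly negative. This contradicts the flux being zero.

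To avoid needing boundary smoothness, we can instead use the flux identity on an interior convex $C^1$ surface $\partial\Omega_2\subset N$ enclosing $\operatorname{supp}\mu$, together with a barrier argument in the spirit of Lemma~\ref{lem:divergence relation}. Take $\kappa=\min_{\partial\Omega_2}w>0$, let $\tilde w$ solve $L\tilde w=0$ in $\Omega\setminus\Omega_2$ with $\tilde w=0$ on $\partial\Omega$ and $\tilde w=\kappa$ on $\partial\Omega_2$, and compare $0\le\tilde w\le w$. Comparing with the convex barrier $-\kappa\psi$ then shows that the flux of $w$ through $\partial\Omega_2$ is bounded below by $c\kappa>0$ (with sign pointing inward). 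Since $\mu$ vanishes in $\Omega\setminus\Omega_2$, this flux equals $-\mu(\Omega_2)=-\mu(\Omega)=0$, a contradiction.

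\textbf{Conclusion and main obstacle.} Thus $\overline{\{u<\varphi\}}$ meets $\partial\Omega$, and by symmetry so does $\overline{\{\varphi<u\}}$. Nonemptiness follows immediately. The delicate point is Step 1: we need $u$ strictly convex in the collar to apply the strong maximum principle and the Harnack inequality. We handle this via the comparison $u\ge\varphi$, equality on $\partial\Omega$, and Caffarelli's theory, after possibly shrinking the collar. We also need the approximation to justify the flux identity for nonsmooth data.
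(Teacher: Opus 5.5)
Your core argument is exactly the paper's proof, and it is correct. You assume $\overline{\{u<\varphi\}}\subset\subset\Omega$, use the strong maximum principle \cite{jian2025strong} on the collar $N$ to upgrade $u\ge\varphi$ to $u>\varphi$, and then apply Hopf's lemma at $\partial\Omega$ to the smoothed problem to contradict $\int_{\partial\Omega}A^{ij}(u-\varphi)_i\nu_j=\mu(\Omega)=0$. Two auxiliary claims, however, do not hold up as written and should be dropped or repaired.

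\textbf{Strict convexity of $u$ on $N$.} Your claim that $u$ is strictly convex on all of $N$ ``by comparison with $\varphi$'' is not justified. Suppose a supporting plane $\ell$ of $u$, taken at a point where $u>\varphi$, had a contact segment ending at $z\in\partial\Omega$. Then you only learn that $\varphi\le\ell$ along the segment with $\varphi(z)=\ell(z)$. Even strict convexity of $\varphi$ does not forbid this: think of $\varphi-\ell=|x|^2-1$ on the unit ball. Ruling it out needs boundary regularity, as in Lemma~\ref{lem:dirichlet problem}. Your symmetry reduction needs the same unavailable fact ($u$ strictly convex near $\partial\Omega$).

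The claim is also unnecessary. The strong maximum principle only needs strict convexity of $u$ at coincidence points $x_0\in N$, and this is automatic in both cases:
\begin{itemize}
\item If $u\ge\varphi$ on $N$, the tangent plane $\ell$ of $\varphi$ at $x_0$ supports $u$. Then $\{u=\ell\}\cap N\subset\{\varphi=\ell\}=\{x_0\}$, and convexity of $\{u=\ell\}$ gives $\{u=\ell\}=\{x_0\}$.
\item If $u\le\varphi$ on $N$ (the case your symmetry step was meant to cover), any supporting plane $\ell$ of $u$ at $x_0$ also supports $\varphi$. By strict convexity of $\varphi$ at $x_0$, $u=\varphi>\ell$ on $\partial\Omega$, so $\{u=\ell\}$ is a compact convex subset of $\Omega$. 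Caffarelli's localization \cite{caffarelli1990ilocalization} forbids it from having extremal points in $N$. So if it were not $\{x_0\}$, it would lie in the convex set $\overline{\Omega'}$, contradicting $x_0\in N$.
\end{itemize}
No Harnack inequality is needed anywhere. Uniform ellipticity and Hopf's lemma come from the smoothed problem. That problem only has to inherit $\min_{\partial\Omega_2}(u-\varphi)>0$ on a compact surface $\partial\Omega_2\subset N$, together with the zero-mass perturbation supported inside $\Omega_2$.

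\textbf{The interior-flux variant.} This variant does not avoid the boundary. You have $\tilde w\le w$ but $\tilde w\ne w$ on $\partial\Omega_2$, so nothing follows about the conormal derivative of $w$ on $\partial\Omega_2$. The only place the two can be compared is $\partial\Omega$, where both vanish. The argument therefore necessarily passes through the boundary flux, exactly as in Lemma~\ref{lem:divergence relation}. Repaired that way, it is a barrier substitute for Hopf's lemma, but it requires the same smooth approximation up to $\partial\Omega$.
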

\begin{proof}
Assume, for contradiction, that one of the sets—say, $E = \overline{\{u < \varphi\}}$—is compactly contained in $\Omega$.
Suppose further that both $E$ and $\operatorname{supp}(\mu)$ lie compactly within some $\Omega_1 \subset\subset \Omega_2 \subset\subset \Omega$, and that $\varphi$ is strictly convex and non-degenerate in $\Omega \setminus \Omega_1$.
Since $u \geq \varphi$ in $\Omega \setminus \Omega_1$, the strong maximum principle \cite{jian2025strong} implies that $u - \varphi > 0$ in $\Omega \setminus \Omega_1$.
By approximation, we may assume $\partial \Omega \in \C_+^{3,\alpha}$ and $u, \varphi \in \C_+^{3,\alpha}(\overline{\Omega})$, with $u - \varphi > 0$ on $\partial \Omega_2$. Furthermore, we may maintain that $\mu$ is compactly supported in $\Omega_2$ and satisfies $\mu(\Omega) = 0$. Now, \eqref{eq:div equation} is uniformly elliptic in $\overline{\Omega\setminus \Omega_2}$, and we have $u=\varphi$ on $\partial \Omega$ while $u>\varphi$ on $\partial \Omega_2$.
Applying the classical strong maximum principle and the Hopf boundary lemma, we conclude that the outer normal derivative satisfies
\[
D_{\nu} (u-\varphi)<0\quad \text{on }\partial \Omega.
\]
However, this leads to a contradiction, since it implies
\[
0> \int_{\partial \Omega}  A^{\nu j}(x) D_{\nu} (u-\varphi) \nu_j= \int_{\partial \Omega}  A^{ij}(x) (u-\varphi)_i \nu_j \overset{\eqref{eq:div identity smooth}}{=}\mu(\Omega)=0.
\]
Therefore,   $ \overline{\{u < \varphi\}}$ must intersect $\partial\Omega$. A similar argument shows that $ \overline{\{ \varphi< u\}}$ must intersect $\partial\Omega$ as well.
\end{proof}

The following result on small perturbations of solutions follows from Savin \cite{savin2007perturbation}.
\begin{Theorem}\label{lem:small perturbation}
Let $r > 0$ and $\varphi_1 \in \C_+^{2,\alpha}(B_r(x_0))$ be a convex function satisfying $\left\|D^2\varphi_1\right\|_{C^{\alpha}}+\left\|(D^2\varphi_1)^{-1}\right\|_{L^{\infty} } \leq C_0$.  There exists $c_0 (n,C_0) > 0$ sufficiently small such that if $\varphi_2 \in C(B_r(x_0))$ is a convex function satisfying
\[
\det D^2 \varphi_2 = \det D^2 \varphi_1\quad \text{in }B_r(x_0),\quad \|\varphi_2 - \varphi_1\|_{L^\infty(B_r(x_0))} \leq c_0 r^2,
\] 
then we have $\varphi_2\in \C_+^{2,\alpha}(B_r(x_0))$ with $\left\|D^2\varphi\right\|_{C^{\alpha}(B_{r/2}(x_0))}+\left\|(D^2\varphi)^{-1}\right\|_{L^{\infty} (B_{r/2}(x_0))} \leq 2C_0$.
\end{Theorem}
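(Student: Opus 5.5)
We reduce the statement to Savin's perturbation theorem for small $C^0$ perturbations of smooth solutions \cite{savin2007perturbation}, applied after rescaling $B_r(x_0)$ to the unit ball. Set
\[
\tilde\varphi_i(x):=r^{-2}\bigl(\varphi_i(x_0+rx)-\ell(x_0+rx)\bigr),\quad i=1,2,
\]
where $\ell$ is the tangent plane of $\varphi_1$ at $x_0$. Then $\det D^2\tilde\varphi_2=\det D^2\tilde\varphi_1$ in $B_1$ and $\|\tilde\varphi_2-\tilde\varphi_1\|_{L^\infty(B_1)}\le c_0$. The rescaling leaves $D^2$ and $(D^2)^{-1}$ unchanged pointwise. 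The H\"older seminorm scales by $r^\alpha$, so for $r\lesssim 1$ it only improves; if $r$ is large we restrict to unit balls inside $B_r(x_0)$, which gives the same hypotheses there. Thus we may assume $r=1$, $x_0=0$, and $\tilde\varphi_1\in\C_+^{2,\alpha}(B_1)$ with norm bounded by $C_0$.

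\textbf{Step 1 (strict convexity and interior sections).} The right-hand side $f=\det D^2\tilde\varphi_1$ is $C^\alpha$ and pinched between two constants depending only on $C_0$. Since $\tilde\varphi_2$ is $c_0$-close to the uniformly convex function $\tilde\varphi_1$, for $c_0$ small the sections of $\tilde\varphi_2$ at points of $B_{3/4}$ with heights comparable to $1$ stay compactly inside $B_1$. Caffarelli's theory \cite{caffarelli1990ilocalization} then shows that $\tilde\varphi_2$ is strictly convex and $C^{1,\gamma}$ in $B_{3/4}$, and that its sections there are uniformly comparable to balls.

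\textbf{Step 2 (Savin's perturbation and the $C^{2,\alpha}$ estimate).} We apply the perturbation result of \cite{savin2007perturbation}: a viscosity solution of $F(D^2u)=f$ that is $\varepsilon$-close in $L^\infty$ to a $C^{2,\alpha}$ solution, where $F=\log\det$ is uniformly elliptic and concave near the Hessians of $\tilde\varphi_1$, is itself $C^{2,\alpha}$ in the half ball with
\[
\|D^2\tilde\varphi_2-D^2\tilde\varphi_1\|_{C^{\alpha}(B_{1/2})}\le \eta(c_0),\qquad \eta(c_0)\to0 \text{ as } c_0\to0.
\]
The mechanism is an improvement-of-flatness iteration. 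At each dyadic scale one compares $\tilde\varphi_2$ with the second-order expansion of $\tilde\varphi_1$ and uses the Harnack inequality for the linearized equation; the ellipticity is guaranteed by Step 1 and the closeness of the Hessians. In the Monge–Amp\`ere setting one can also argue directly: the difference $w=\tilde\varphi_2-\tilde\varphi_1$ solves the linear equation $A^{ij}w_{ij}=0$ with $A^{ij}$ as in \eqref{eq:div equation}. Once $D^2\tilde\varphi_2$ is known to be close to $D^2\tilde\varphi_1$, these coefficients are uniformly elliptic and $C^\alpha$, so Schauder theory gives $\|w\|_{C^{2,\alpha}(B_{1/2})}\lesssim\|w\|_{L^\infty(B_1)}\le c_0$.

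\textbf{Step 3 (conclusion).} Choosing $c_0$ small, Step 2 gives $\|D^2\tilde\varphi_2\|_{C^\alpha(B_{1/2})}\le C_0+\eta$. Moreover, $\|D^2\tilde\varphi_2-D^2\tilde\varphi_1\|_{L^\infty}\le\eta$ together with $(D^2\tilde\varphi_1)^{-1}\le C_0$ gives $(D^2\tilde\varphi_2)^{-1}\le C_0/(1-C_0\eta)$. Hence the total is at most $2C_0$. Scaling back yields the claim; the $L^\infty$ quantities are scale invariant, and the H\"older seminorm was already handled in the reduction.

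\textbf{Main obstacle.} The hard step is Step 2. Its key point is obtaining $C^{2,\alpha}$ closeness of $\tilde\varphi_2$ to $\tilde\varphi_1$ from mere $L^\infty$ closeness, without assuming a priori that $\tilde\varphi_2$ is smooth. This is exactly Savin's theorem, which we invoke after verifying its hypotheses. Those hypotheses are uniform ellipticity of $\log\det$ in a neighborhood of $\{D^2\tilde\varphi_1\}$, $C^\alpha$ regularity of $f$, and viscosity solution status of $\tilde\varphi_2$; the last follows from Alexandrov solutions with continuous positive densities.
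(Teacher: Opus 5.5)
Your route is the paper's. The paper gives no argument beyond attributing the statement to Savin's small perturbation theorem, and your Steps 2--3 are what that citation amounts to:
\begin{itemize}
\item normalize to the unit ball;
\item note that $\log\det$ is uniformly elliptic and concave near the Hessians of $\tilde\varphi_1$, and that Alexandrov solutions with continuous positive density are viscosity solutions;
\item obtain $\|D^2\tilde\varphi_2-D^2\tilde\varphi_1\|_{C^\alpha(B_{1/2})}\le\eta(c_0)$, and absorb $\eta$ into the factor $2$.
\end{itemize}

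\textbf{The step that fails is your scaling reduction.} Since $D^2\tilde\varphi_2(x)=D^2\varphi_2(x_0+rx)$, scaling back gives $[D^2\varphi_2]_{C^\alpha(B_{r/2}(x_0))}=r^{-\alpha}[D^2\tilde\varphi_2]_{C^\alpha(B_{1/2})}$.
\begin{itemize}
\item For $r\lesssim 1$, Step~2 therefore only yields $[D^2\varphi_2]_{C^\alpha(B_{r/2}(x_0))}\le [D^2\varphi_1]_{C^\alpha}+r^{-\alpha}\eta(c_0)$. This is not bounded by $2C_0$ uniformly in $r$. The improvement you noted in the hypothesis is exactly undone in the conclusion, so the seminorm was not ``already handled in the reduction''.
\item For $r\gg 1$, passing to unit balls $B_1(y)\subset B_r(x_0)$ does not reproduce the hypotheses. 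On such a ball you only know $|\varphi_2-\varphi_1|\le c_0r^2$, not $\le c_0$.
\end{itemize}

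No argument can close this with the unweighted norm, because the statement read literally is false for small $r$. Take $\varphi_1=\tfrac12|x|^2$, for which $C_0$ depends only on $n$. Let $\varphi_2=r^2\psi(\cdot/r)$, where $\psi$ solves $\det D^2\psi=1$ in $B_1$ with $\psi=\tfrac12|x|^2+\tfrac{c_0}{2}x_1^3$ on $\partial B_1$.
\begin{itemize}
\item Comparison with $\tfrac12|x|^2\pm\tfrac{c_0}{2}$ gives $|\varphi_2-\varphi_1|\le c_0r^2$.
\item $\psi$ is not a quadratic polynomial, since its boundary data has a nonzero degree-three spherical harmonic component. By analyticity, $D^2\psi$ is then non-constant on $B_{1/2}$.
\item Hence $[D^2\varphi_2]_{C^\alpha(B_{r/2})}=r^{-\alpha}[D^2\psi]_{C^\alpha(B_{1/2})}\to\infty$ as $r\to0$.
\end{itemize}

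\textbf{The fix.} The statement must be read with the scale-invariant norm $\|D^2\varphi\|_{L^\infty(B_r)}+r^{\alpha}[D^2\varphi]_{C^\alpha(B_r)}$. This is consistent with the $c_0r^2$ hypothesis and with how the paper uses the result: only on the $a$-rescaled functions, at scales of order one depending on $\rho$. With that reading the statement is invariant under $\varphi\mapsto r^{-2}\varphi(x_0+r\,\cdot)$. It then reduces exactly to $r=1$, with no restriction to sub-balls, and your Steps~2--3 complete the proof.
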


\subsection{The non-negative case}
We start with a lemma:

\begin{Lemma}\label{lem:subdifferential relation}
Let $\Omega$ be a bounded convex domain in $\R^n$ containing the origin.  Let $\mu_0 \geq 0$ be a locally finite Borel measure, and let $\varphi_1, \varphi_2 \in C(\overline{\Omega})$ be two convex functions satisfying  
\[
\M \varphi_1 =\mu_0 + \omega_n a_1^n \delta_0 \quad \text{and} \quad \M \varphi_2 =\mu_0+ \omega_n a_2^n \delta_0
\]
in $\Omega$ for some $0 \leq a_1 \leq a_2$. Suppose $\varphi_1 \leq \varphi_2$ and $\varphi_1(0) = \varphi_2(0)$. Then we have 
\[
t_0:= \inf \left\{ \varphi_2(x) - \varphi_1(x) : x \in \partial \Omega \right\}  \leq C(a_2-a_1),
\] 
where $C> 0$ depending only on $n$ and $\operatorname{dist}(0, \partial \Omega)$.
\end{Lemma}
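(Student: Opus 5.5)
We argue by comparison, constructing from $\varphi_1$ a subsolution that sees the extra Dirac mass. Set $b:=(a_2^n-a_1^n)^{1/n}$ and let $w(\cdot,0)$ be the cone function of \eqref{eq:cone function} on $\Omega$. The candidate comparison function is
\[
\psi := \varphi_1 + t_0 + b\, w(\cdot,0).
\]
On $\partial\Omega$ we have $w=0$, so $\psi=\varphi_1+t_0\le \varphi_2$ by the definition of $t_0$.

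\textbf{Step 1: $\psi$ is a subsolution for $\M\varphi_2$.} We need $\M\psi \ge \M\varphi_2 = \mu_0+\omega_n a_2^n\delta_0$. Superadditivity of Monge--Amp\`ere measures for sums of convex functions, $\M(f+g)\ge \M f+\M g$, gives $\M\psi\ge \mu_0+\omega_n a_1^n\delta_0+\omega_n b^n\delta_0$. At the origin we can do better, since $\partial\psi(0)\supset\partial\varphi_1(0)+b\,\partial w(0,0)$. By Brunn--Minkowski, $|\partial\psi(0)|^{1/n}\ge \omega_n^{1/n}(a_1+b)$, and $a_1+b\ge a_2$ follows from $(a_1+b)^n\ge a_1^n+b^n=a_2^n$.

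\textbf{Step 2: comparison and evaluation at $0$.} The comparison principle gives $\psi\le\varphi_2$ in $\Omega$. Evaluating at $0$ and using $\varphi_1(0)=\varphi_2(0)$ yields
\[
t_0 \le b\,|w(0,0)| \le C(n,\operatorname{dist}(0,\partial\Omega),\operatorname{diam}\Omega)\, b .
\]
This bound is only $(a_2^n-a_1^n)^{1/n}$, which is not linear in $a_2-a_1$ when $a_1>0$, so the cone alone is too weak.

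\textbf{Step 3: the linear improvement (main obstacle).} To get the rate $a_2-a_1$, I would instead use a family $\psi_s$ interpolating Dirac masses at the origin, exploiting that at $0$ the subgradient image of $\varphi_1$ already has measure $\omega_n a_1^n$ and the Minkowski sum with $s\,\partial w$ grows linearly. Concretely, take
\[
\psi=\varphi_1+t_0+(a_2-a_1)\,w(\cdot,0).
\]
Then
\[
|\partial\psi(0)|^{1/n}\ge |\partial\varphi_1(0)|^{1/n}+(a_2-a_1)\,\omega_n^{1/n}=\omega_n^{1/n}a_2
\]
by Brunn--Minkowski, since $|\partial\varphi_1(0)|=\omega_n a_1^n$ (the measure $\mu_0$ may contribute extra mass at $0$, which only helps). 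Away from $0$, $\M\psi\ge\M\varphi_1=\mu_0$. Hence $\M\psi\ge\M\varphi_2$, and Step 2 yields $t_0\le (a_2-a_1)|w(0,0)|\le C(a_2-a_1)$.

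The delicate point is justifying the Monge--Amp\`ere inequality at the atom. We need $\M\psi(\{0\})=|\partial\psi(0)|$ together with the Minkowski-sum inclusion; the latter holds because subdifferentials of sums contain sums of subdifferentials. We also need to check that the comparison principle (e.g. \cite[Theorem 1.4.6]{gutierrez2016monge}) applies to measures with atoms, using $\psi\le\varphi_2$ on $\partial\Omega$. The bound on $|w(0,0)|$ in terms of $\operatorname{dist}(0,\partial\Omega)$ and the diameter follows from Theorem \ref{thm:abp 1/n} and \eqref{eq:alexandrov maximum principle}.
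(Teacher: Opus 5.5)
Your Step 3 is the paper's argument: the paper compares $\varphi_2$ with $\varphi_1+(a_2-a_1)(w-w(0))+t_0/2$ via Brunn--Minkowski at the origin and the comparison principle, arguing by contradiction. Your direct comparison with $\varphi_1+t_0+(a_2-a_1)w$ is the same idea and even gives the sharper $t_0\le (a_2-a_1)|w(0,0)|$, so Steps 1--2 are unnecessary. Two remarks. First, your claim that an atom $m=\mu_0(\{0\})$ ``only helps'' needs the line the paper supplies, because $m$ also enters $\M\varphi_2(\{0\})$: you need $\bigl((m+\omega_na_1^n)^{1/n}+\omega_n^{1/n}(a_2-a_1)\bigr)^n\ge m+\omega_na_2^n$, which is the triangle inequality in $\ell^n(\R^2)$. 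Second, you are right to let the constant depend on $\operatorname{diam}\Omega$, and this is in fact necessary despite the statement: take $\mu_0=0$, $a_1=0$, $\varphi_1\equiv -a_2|w(0,0)|$ and $\varphi_2=a_2w$ on a long thin domain.
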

\begin{proof}
Suppose $t_0>0$. Let $w$ denote the cone function defined in \eqref{eq:cone function} at $y=0$.  
Suppose 
\[
\varphi_{1}+(a_2-a_1)(w-w(0))+\frac{t_0}{2} \leq \varphi_{2} \quad\mbox{on } \partial \Omega.
\] 
Applying the Brunn-Minkowski inequality $|E + F|^{1/n} \geq |E|^{1/n} + |F|^{1/n}$
with $E = (a_2 - a_1)\partial w(0)$ and $F = \partial \varphi_1(0)$, we obtain
\[
\M \big((a_2 - a_1)w + \varphi_{1}\big)(\{0\}) \geq \left( \omega_n^{1/n}(a_2 - a_1) + (\omega_n a_1^n + \mu_0(\{0\}))^{1/n} \right)^n \geq \M \varphi_{2}(\{0\}),
\]
where we used the triangle inequality in $\ell^n(\R^2)$ in the last inequality. 
The comparison principle then implies $\varphi_{1}+(a_2-a_1)(w-w(0))+\frac{t_0}{2}  \leq \varphi_{2}$ in $\Omega$.
However, this contradicts the equality $\varphi_1(0) = \varphi_2(0)$. Consequently, 
\[
\varphi_{1}+(a_2-a_1)(w-w(0))+\frac{t_0}{2} \geq \varphi_{2} \quad\mbox{on } \partial \Omega,
\] 
which implies that $t_0 \leq C(a_2 - a_1)$.
\end{proof}

To study the model for the non-negative case, we characterize the solution behavior of the Monge-Ampère equation with two isolated singularities.

\begin{Lemma}\label{lem:two singularity}
Assuming that $n\geq 3$, $\partial\Omega \in C^{2,\alpha}, \varphi \in \C_+^{2,\alpha}(\overline\Omega)$, and \eqref{eq:varphi normalized s7} is satisfied. Let $a, \varepsilon, \rho > 0$ be  small constants, and let $u_a$ solve \eqref{eq:sing u a 1}. For each fixed $y \in \partial B_{\rho a}(0)$, consider the solution $u$ of
\begin{equation}\label{eq:two singularity}
\M u=\M \varphi+ \omega_n (1-\varepsilon) a^n\delta_0+ \omega_n \varepsilon a^n\delta_y \quad \text{in }  \Omega, \quad u  =\varphi \quad \text{on } \partial \Omega,
\end{equation}
then we have   for all small $\varepsilon \gtrsim a^{\beta}$ with $\beta=\frac{(n-2)\alpha}{n+\alpha}$ that
\begin{align} 
\kappa_1:=&\sup_{x\in \Omega} \left\{u (x)-u_a(x)\right\}  
=u (0)-u_a(0) \approx \varepsilon a^2 , \label{eq:dna2lower uat0}\\  
\kappa_2:=&\sup_{x\in \Omega} \left\{u_a (x)-u(x)\right\} 
=u_a (y)-u(y) \lesssim \varepsilon^{\frac{2}{n}} a^2. \label{eq:dna2upper uatq} 
\end{align}
\end{Lemma}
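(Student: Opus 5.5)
The plan is to prove the two localization statements first, then the two-sided bound on $\kappa_1$, and then the upper bound on $\kappa_2$. The signed measure $\M u-\M u_a=\omega_n\varepsilon a^n(\delta_y-\delta_0)$ has zero total mass.

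\textbf{Localization of the suprema.} Suppose $\kappa_1>0$ and set $U_t=\{u-u_a>t\}$ for $t<\kappa_1$ close to $\kappa_1$. Since $u=u_a$ on $\partial\Omega$, $U_t$ is compactly contained in $\Omega$. If $0\notin \overline{U_t}$, then $\M u\ge \M u_a$ on $U_t$, because the only point where $\M u<\M u_a$ is $0$. The comparison principle, in the form of Lemma~\ref{lem:comparison gene} or \cite[Lemma 1.4.1]{gutierrez2016monge}, then gives $u-u_a\le t$ in $U_t$, a contradiction. Hence $\sup_\Omega(u-u_a)$ is attained at $0$. The same argument with the roles of $u$ and $u_a$ exchanged shows that $\sup_\Omega(u_a-u)$ is attained at $y$. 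Lemma~\ref{lem:extends to boundary} guarantees that both $\kappa_1$ and $\kappa_2$ are positive.

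\textbf{Upper bounds.} Let $\psi:=u_{(1-\varepsilon)^{1/n}a}(\cdot,0)$. Since $\M u\ge \M\psi$ and the boundary data agree, $u\le\psi$, and also $u_a\le \psi$. For $\kappa_1$, Lemma~\ref{lem:dna2 uaap} (with $\lambda_0=1$) gives
\[
\kappa_1=u(0)-u_a(0)\le \psi(0)-u_a(0)= d_{n,0}\bigl(1-(1-\varepsilon)^{2/n}\bigr)a^2+O(a^{2+\beta})\lesssim \varepsilon a^2,
\]
where we use $\varepsilon\gtrsim a^\beta$ to absorb the error. For $\kappa_2$, write $\kappa_2=u_a(y)-u(y)\le \psi(y)-u(y)$. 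Relative to $\psi$, the function $u$ carries only the extra mass $\omega_n\varepsilon a^n$ at $y$. By \eqref{eq:dna2uaaround0}, $\psi$ is a $C^{2,\alpha}$-small perturbation of $W_{(1-\varepsilon)^{1/n}a}$ on $B_{\rho a/2}(y)$, up to errors of order $a^{2+\beta}$. Since $W_{(1-\varepsilon)^{1/n}a}$ is smooth with nondegenerate Hessian near $y$ (because $|y|=\rho a$), Theorem~\ref{lem:small perturbation} makes $\psi$ uniformly convex and nondegenerate on a section around $y$ of height $\approx\rho^2a^2$. Rescaling this section to unit size, the extra mass has parameter $\varepsilon^{1/n}a/(\rho a)$. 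Applying Lemma~\ref{lem:interior estimate u} in the rescaled section, combined with the comparison principle (the reduction of Section~\ref{sec:onesided}, with $u\le\psi$ on the section boundary), yields
\[
\psi(y)-u(y)\lesssim_\rho (\varepsilon^{1/n}a)^2=\varepsilon^{2/n}a^2 .
\]

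\textbf{Lower bound $\kappa_1\gtrsim\varepsilon a^2$ (main obstacle).} The naive comparison loses the contribution of the Dirac mass at $y$, which is of order $\varepsilon a^2\rho^{2-n}$ at $0$ and competes with the gain $\tfrac{2}{n}d_{n,0}\varepsilon a^2$. I would therefore blow up at scale $a$: set $\bar u(x)=a^{-2}u(ax)$ and $\bar u_a(x)=a^{-2}u_a(ax)$. By \eqref{eq:dna2uaaround0}, on $B_{a^{-\alpha/(n+\alpha)}}$ these are $O(a^\beta)$-close to the solutions of
\[
\det D^2 \bar u=1+\omega_n(1-\varepsilon)\delta_0+\omega_n\varepsilon\delta_{\hat y},\qquad \det D^2 \bar u_a=1+\omega_n\delta_0,\qquad |\hat y|=\rho,
\]
with matching quadratic asymptotics at infinity. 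The second of these solutions is exactly $W_1$. For small $\varepsilon$, the difference $(\bar u-W_1)/\varepsilon$ should converge to the decaying solution $w$ of the linearized equation $L_{W_1}w=\omega_n(\delta_{\hat y}-\delta_0)$, which vanishes at infinity. By the maximum principle and the asymmetry of the sources, the maximum of $w$ sits at $0$, and I would show $w(0)\ge c(\rho)>0$, for instance by testing against the Green's function of $L_{W_1}$ at the two poles. The delicate part is making this linearization quantitative. I would run a compactness/contradiction argument in $\varepsilon$, using the boundary-closeness estimate \eqref{eq:dna2 uaap} to control the far field, and use the hypothesis $\varepsilon\gtrsim a^\beta$ to absorb the $O(a^{2+\beta})$ approximation errors. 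This gives $\kappa_1\ge c\,\varepsilon a^2$ and completes \eqref{eq:dna2lower uat0}.
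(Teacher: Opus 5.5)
Your localization of the two suprema is correct. Your upper bound $\kappa_1\lesssim\varepsilon a^2$, obtained from $\psi=u_{(1-\varepsilon)^{1/n}a}(\cdot,0)$ and Lemma~\ref{lem:dna2 uaap}, is also valid. It is simpler than the paper's route (Harnack plus Lemma~\ref{lem:subdifferential relation}).

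The bound on $\kappa_2$ fails as written, because the comparison goes the wrong way. Let $\hat u$ solve $\M\hat u=\M\psi+\omega_n\varepsilon a^n\delta_y$ in a section $S$ around $y$, with $\hat u=\psi$ on $\partial S$. Lemma~\ref{lem:interior estimate u} then bounds $\psi(y)-\hat u(y)$. But $u\le\psi$ on $\partial S$ only gives $u\le\hat u$ in $S$, which is a \emph{lower} bound on $\psi(y)-u(y)$. What you need is $u\ge\psi-O(\varepsilon^{2/n}a^2)$ on $\partial S$, i.e. control of the influence of the mass at $y$ at distance $\sim\rho a$. The bounds at hand do not give this:
\begin{itemize}
\item $\sup(\psi-u)$ itself is circular;
\item Theorem~\ref{thm:ordera2} gives only $O(a^2)$;
\item the classical Alexandrov estimate gives only $O(\varepsilon^{1/n}a)$.
\end{itemize}

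The lower bound on $\kappa_1$ is only heuristic. $W_1$ has a cone point at $0$, where $\operatorname{Cof}D^2W_1$ has eigenvalues of order $r^{1-n}$ (radial) and $r$ (tangential). So ``$L_{W_1}w=-\omega_n\delta_0$'' and a Green's function at that pole have no standard meaning. The claim $w(0)\ge c(\rho)$ is asserted, not proved. The compactness in $\varepsilon$ presupposes uniform bounds on $(\bar u-W_1)/\varepsilon$ away from $\hat y$, which is the same missing control. Separately, Lemma~\ref{lem:extends to boundary} requires $u\not\equiv u_a$ near $\partial\Omega$; the paper gets this from unique continuation.

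The missing idea is to compare $u$ with $u_a$ rather than with $\psi$, using that $\M u-\M u_a$ has zero total mass.
\begin{itemize}
\item Lemma~\ref{lem:extends to boundary} together with your localization shows that $\{u>u_a\}\ni 0$ and $\{u<u_a\}\ni y$ are connected and reach $\partial\Omega$. Hence $u=u_a$ at some point of $\partial B_{3a/2}(0)$.
\item In the regime \eqref{eq:kappa12 small}, Theorem~\ref{lem:small perturbation} makes both functions $\C_+^{2,\alpha}$ at scale $a$ away from $0$ and $y$.
\item The nonnegative functions $u_a+\kappa_1-u$ and $u+\kappa_2-u_a$ equal $\kappa_1$ and $\kappa_2$ at that contact point. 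Harnack then gives $u_a+\kappa_1-u\approx\kappa_1$ and $u+\kappa_2-u_a\approx\kappa_2$ on $B_{7a/4}(0)\setminus(B_{\rho a/2}(0)\cup B_{\rho a/4}(y))$.
\item Applying \eqref{eq:abp inf 2/n} in $B_{\rho a/2}(y)$ yields $c\kappa_2\le C\varepsilon^{2/n}a^2$.
\item For the lower bound, suppose $\kappa_1<\tau\varepsilon a^2$. Then $u\ge u_a-C\tau\varepsilon a^2$ on $\partial B_{\rho a/2}(0)$. The subsolution $u_a(0)+(1+Ca^\alpha)W_{(1-c\varepsilon)a}+\tau\varepsilon a^2$ lies below $u$ there, since the gap $W_a-W_{(1-c\varepsilon)a}\gtrsim\varepsilon\rho a^2$ absorbs the $O(a^{2+\beta})$ errors when $\varepsilon\gtrsim a^\beta$. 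Yet it exceeds $u$ at $0$, a contradiction.
\end{itemize}
Harnack thus bounds the influence of $\delta_y$ near $0$ by $C\kappa_1$ itself, rather than by $\varepsilon a^2\rho^{2-n}$. Finally, the regime \eqref{eq:kappa12 small} is secured by continuity in $\varepsilon$, starting from $\varepsilon\le a^{3n}$.
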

\begin{proof} 
By approximation, we may assume $\partial \Omega \in \C_+^{3,\alpha}$ and $u, \varphi \in \C_+^{3,\alpha}(\overline{\Omega})$.  Then the unique continuation theorem for Monge-Amp\`ere equations (see \cite[Theorem 1.3]{mooney2015partial}) implies that $u$ does not identically equal to $u_a$ near $\partial\Omega$. By Lemma \ref{lem:extends to boundary}, we conclude that both
\[
E := \{u > u_a\}\quad \text{and} \quad F := \{u_a > u\}
\]
are nonempty, and extend to $\partial\Omega$. The comparison principle implies that each connected component of the open set $E$ or $F$ must contain the origin $0$ or the point $y$, respectively. Therefore, both $E$ and $F$ are connected, and 
\[
0 \in  E,\quad y \in  F.
\]
Thus, $u_a$ equals to $u$ at some point on $\partial B_{3 a/2} (0)$ (recalling that $\partial B_{3 a/2} (0)\subset\Omega$ since $a$ is small).

By Proposition \ref{prop:dirichlet problem main}, $u_a \in \C_+^{2,\alpha}(\overline\Omega\setminus\{0\})$.  By lifting the function $u_a$ upward until it touches $u$ from above, we know from the strong maximum principle that the touch point must be $0$, and thus, 
\[   
\kappa_1=u (0)-u_a(0) = \sup_{x\in \Omega} \left\{u (x)-u_a(x)\right\} > 0.
\]
Analogously, 
\[   
\kappa_2=u_a(y) - u (y)= \sup_{x\in \Omega} \left\{u_a (x)-u(x)\right\}   > 0.
\] 

We begin by examining the case where
\begin{equation}\label{eq:kappa12 small}
\frac{\kappa_1 + \kappa_2}{a^2} \le \gamma(n,\rho)
\end{equation}  
for some small positive constant $\gamma(n,\rho)$.  From the approximation \eqref{eq:dna2uaaround0}, the rescaled function $\frac{u_a(ax)}{a^2}$ is close to $W_1(x) - d_{n,0}$ in $B_4(0)$. Then, \eqref{eq:kappa12 small} implies that $\frac{u(ax)}{a^2}$ is also close to $W_1(x) - d_{n,0}$ in $B_4(0)$.    
Applying Theorem \ref{lem:small perturbation}, we have that both rescaled functions $\frac{u_a(ax)}{a^2}$ and $\frac{u(ax)}{a^2}$  are $\C_+^{2,\alpha}$ in $B_{2}(0) \setminus  \left(B_{\rho/4 }(0) \cup B_{\rho/8}\left(a^{-1}y\right)\right) $.

We first derive the upper bound estimate for $\kappa_1$. Since $u_a$ equals to $u$ at some point on $\partial B_{3 a/2} (0)$,  the Harnack inequality yields
\[
\frac{u_a(ax)  +\kappa_1- u(ax)}{a^2}\approx  \kappa_1 \quad  \text{in } B_{7/4}(0) \setminus \left(B_{\rho /2 }(0) \cup B_{\rho /4}\left(a^{-1}y\right)\right).
\] 
Applying Lemma \ref{lem:subdifferential relation} to the rescaled functions $\frac{u_a(ax)+\kappa_1}{a^2}$ and $\frac{u(ax)}{a^2}$ in $B_{3\rho/4}(0)$, we obtain  $\kappa_1 \lesssim \varepsilon a^2$.

Secondly, we show the lower bound estimate for $\kappa$ assuming $\varepsilon \gtrsim a^{\beta}$. If $\kappa_1 < \tau \varepsilon a^2 $ for some sufficiently small $\tau>0$, then we recall the approximation \eqref{eq:dna2uaaround0} and compare $u$ with the barrier function
\[
 w(x):= u_a(0)+\left(1+Ca^{\alpha} \right) W_{(1- c(n)\varepsilon  )a}(x) +  \tau \varepsilon a^2.
\]
Note that $u \geq u_a- C\kappa_1 \geq w$ on $\partial B_{\rho a/2}(0)$, the comparison principle implies $u \geq w$ in $B_{\rho a/2}(0)$. However, this contradicts  $u(0) =u_a(0)+ \kappa_1 <u_a(0)+\tau \varepsilon a^2 = w(0)$, thereby establishing $\kappa_1 \ge \tau \varepsilon a^2 $. This concludes the proof of \eqref{eq:dna2lower uat0}.

Thirdly, we show the upper bound estimate for $\kappa_2$. 
Since $u_a$ equals to $u$ at some point on $\partial B_{3 a/2} (0)$, we may apply the Harnack inequality to the non-negative function $u +\kappa_2-u_a$, which yields   
\[
\frac{u(ax) +\kappa_2-u_a(ax)}{a^2} \approx  \frac{\kappa_2}{a^2}  \quad  \text{in } B_{7/4}(0) \setminus  \left(B_{\rho/2}(0) \cup B_{\rho/4}(a^{-1}y)\right).
\]  
By applying \eqref{eq:abp inf 2/n} to the rescaled functions $\frac{u(ax) +\kappa_2}{a^2}$ and $\frac{u_a(ax)}{a^2} \in \C_+^{2,\alpha}$ in $B_{\rho/2}(a^{-1}y)$, we derive $0 \geq c\kappa_2/a^2-C\varepsilon^{\frac{2}{n}}$, which consequently establishes \eqref{eq:dna2upper uatq}.

Finally, we note that the preceding argument yields the estimate $\kappa_1 + \kappa_2  \le C \varepsilon^{\frac{2}{n}} a^2$ provided that \eqref{eq:kappa12 small} holds, and the Alexandrov estimate \eqref{eq:abp gene 1/n} implies that \eqref{eq:kappa12 small} holds if $\varepsilon \le  a^{3n}$. By the continuous dependence of the solution $u$ on the parameter $\varepsilon$, we conclude that \eqref{eq:kappa12 small} remains valid whenever $C\varepsilon^{\frac{2}{n}}<\gamma(n,\rho)$, which completes the proof.
\end{proof}

\begin{proof}[Proof of Theorem \ref{thm:rigidity}. The Non-negative Case.]
For the non-negative case $\mu \geq 0$, we have the inequality $u \leq \varphi$. After a suitable affine transformation, we now assume for simplicity that $x_0=0$ and  \eqref{eq:varphi normalized s7} hold. In this case, $\lambda_{0}=\det D^2\varphi(0)=1$, and $E \left(x_0, \rho a\right)= B_{\rho a}(0)$.

Let us define $\varepsilon_1$ and $\kappa$ by
\[
u(0)  - \varphi(0)= -\left(1-{\varepsilon_1}\right)d_{n,0}a^2,\quad \mu(B_{\rho a}(0))= (1-\kappa)^n \omega_na^n.
\]
Then it is clear that $\kappa\ge 0$, and Theorem \ref{thm:ordera2 more} implies that $\varepsilon_1\ge -Ca^{\beta}$. Let $\varepsilon=\varepsilon_1+2Ca^{\beta}\ge Ca^{\beta}$. It suffices to show that
\[
\kappa \lesssim \varepsilon^{\frac{2}{n}}.
\]

For each $y \in \partial B_{\rho a}(0)$, let $\omega(
\cdot, y)$ be the solution of \eqref{eq:two singularity}. Combining \eqref{eq:dna2uaat0} and \eqref{eq:dna2lower uat0} yields
\[
\omega(0, y)> u_a(0)+c\varepsilon a^2\geq \varphi(0)- \left(1-{\varepsilon_1}\right)d_{n,0}a^2=u(0).
\]
Then, Lemma \ref{lem:comparison gene} (with the set $W=\{0,y\}$) implies 
\[
\omega(y, y) \leq u(y).
\]
In conclusion, for every $y \in \partial B_{\rho a}(0)$, we have
\[
u(y) \geq \omega(y, y)   \overset{\eqref{eq:dna2upper uatq}}{\geq} u_{a}(y) -C\varepsilon^{\frac{2}{n}}a^2 \overset{\eqref{eq:dna2uaaround0}}{\geq} W_{a}(y)-d_{n,0}a^2 -Ca^{2+\beta}-C\varepsilon^{\frac{2}{n}}a^2 .
\]
The graphs of these three functions $u,u_a$ and $\omega(\cdot,y)$ can be illustrated as follows:
\begin{center}
\begin{tikzpicture}[scale=1.5, line cap=round, line join=round]
  \draw[very thick, red]
    plot[domain=-2:2, samples=200] (\x,{0.825*(\x-2)*(\x+2)+2});
  \node[red!80!black] at (-0.7,-0.65) {$u$};
\draw[very thick, blue]
    plot[domain=-2:0.5, samples=300] (\x,{0.4*(\x - 1)*(\x - 1) - 1.6});
\draw[very thick, green!75!black]
  plot[domain=0.5:2, samples=300] (\x,{(4/5)*\x*\x - (6.0/5.0)});
\draw[very thick, green!75!black]
  plot[domain=-2:-0.5, samples=300] (\x,{(13/15)*\x*\x - (22.0/15.0)});
\draw[very thick, green!75!black]
  plot[domain=-0.5:0.5, samples=300] (\x,{(1/5)*\x*\x +(1/4)*\x- (47.0/40.0)});
\draw[very thick, blue]
  plot[domain=0.5:2, samples=300] (\x,{(14.0/15.0)*\x*\x - (26.0/15.0)});
  \node[blue!80!black] at (1.9,0.5) {$u_a$};
    \node[green!75!black] at (-2,0.5) {$\omega(\cdot,y)$};
\fill[blue] (0.5,-1.5) circle (1.2pt);
\fill[green!75!black] (0.5,-1) circle (1.2pt);
\fill[green!75!black] (-0.5,-1.25) circle (1.2pt);
\foreach \k in {1,2,3} {
  \fill[black] (0.5, {-1.5 - 0.15*\k}) circle (1pt);
}
\foreach \k in {1,2} {
  \fill[black] (0.5, {-1.05 - 0.15*\k}) circle (1pt);
}
\foreach \k in {1,2,3,4} {
  \fill[black] (-0.5, {-1.3 - 0.15*\k}) circle (1pt);
}
\node[blue!80!black, anchor=north] at (0.5,{-1.5 - 0.5}) {$0$};
\node[green!75!black, anchor=north] at (-0.5,{-1.5 - 0.5}) {$y$};
\end{tikzpicture}
\end{center}

 Let 
 \[
w(x):=(1+Ca^{\alpha})\left[W_{(1-\kappa)a}(x) -W_{(1-\kappa)a}( \rho a e_n)\right]+\inf_{y\in \partial B_{\rho a}(0)} u(y).
\]
Then $u\ge w$ on $\partial B_{\rho a}(0)$. Since $\varphi\in C^{2,\alpha}$, $\lambda_{0}=\det D^2\varphi(0)=1$ and $\mu(B_{\rho a}(0))= (1-\kappa)^n \omega_na^n$, we have for every Borel set $E$ satisfying $\{0\}\subset E \subset B_{\rho a}(0)$ that
\[
\begin{split}
\M w(E)  &\geq \M \varphi(E)+  (1-\kappa)^n \omega_na^n\\
&= \M \varphi(E)+\mu( B_{\rho a}(0))\\
&\geq \M \varphi(E)+\mu( E)\\
&=\M u(E).
\end{split}
\] 
By applying Lemma \ref{lem:comparison gene} in $B_{\rho a}(0)$ to $u$ and $w$, along with the fact that  
$\varepsilon^{\frac{2}{n}}>a^{\beta}>a^{\alpha}$, we obtain that
\[
\begin{split}
u(0) \geq w(0) 
& =  -(1+Ca^{\alpha}) W_{(1-\kappa)a}(\rho a e_n)+\inf_{y\in \partial B_{\rho a}(0)} u(y)\\
&\geq  -(1+Ca^{\alpha})W_{(1-\kappa)a}(\rho a e_n)+ W_{a}(\rho a e_n)-d_{n,0}a^2 -C\varepsilon^{\frac{2}{n}}a^2  \\ 
&= \left[ W_{1}\left(\rho e_n\right)-(1+Ca^{\alpha})W_{(1-\kappa)} \left(\rho e_n \right) \right]a^2
-d_{n,0}a^2 -C\varepsilon^{\frac{2}{n}}a^2   \\
& \geq c(n,\rho)\left[\kappa -C a^{\alpha} \right]a^2-d_{n,0}a^2 -C\varepsilon^{\frac{2}{n}}a^2 . 	
\end{split}
\]
Thus, since $\varphi(0)=0$, we have
\[ 
-\left(1-{\varepsilon_1}\right)d_{n,0}a^2 >c(n,\rho)\left[\kappa -C a^{\alpha} \right]a^2-d_{n,0}a^2 -C\varepsilon^{\frac{2}{n}}a^2 , 
\]
which yields 
\[
\kappa \lesssim  {\varepsilon_1}+\varepsilon^{\frac{2}{n}}+  a^{\alpha}\lesssim  \varepsilon^{\frac{2}{n}}.
\]
This establishes \eqref{eq:stabilitypositive} in Theorem \ref{thm:rigidity} for the non-negative case.
\end{proof}

\subsection{The non-positive case} 

To study the model for the non-positive case, we characterize the solution behavior of the Monge-Ampère equation with two linear obstacles. 
Throughout this subsection, we assume for simplicity that $\partial\Omega \in C^{2,\alpha}$, $\varphi \in \C_+^{2,\alpha}(\overline\Omega)$, and that \eqref{eq:varphi normalized s7} holds.

Fix $q \in \partial \varphi(\Omega)$, and let $\ell_{q}$ be the support function of $\varphi$ with slope $q$ at some point $x_q\in\Omega$. Recall that \eqref{eq:varphi normalized s7} implies $\ell_0=0$. 
In the single-obstacle setting, the existence of a solution is established by \cite[Proposition 1.1]{savin2005obstacle} via the Perron method, where the solution is defined as the pointwise infimum of all supersolutions that lie above the obstacle.
Adapting that approach to two linear obstacles, for any sufficiently small  $\kappa_3,\kappa_4\geq 0$, the Perron method yields a unique solution $v_{\kappa_3,\kappa_4}$ to
\begin{equation}\label{eq:double obs 1} 
\M v=\M\varphi \cdot \chi_{  \left\{v> \kappa_3\right\} \cap \left\{v > \ell_{{q}} +\kappa_4\right\} }   \quad \text{in }  \Omega, \quad v=\varphi \quad \text{on } \partial \Omega,
\end{equation}
with the constraint $v \geq \max\left\{ \kappa_3, \ell_{{q}} +\kappa_4\right\}$, where the first equation is interpreted in the same way as \eqref{eq:defnvaq 1} when $\M\varphi$ has unbounded density. The sets $K_3= \left\{v =\kappa_3\right\}$ and $K_4=\left\{v =\ell_{{q}} +\kappa_4\right\}$ are called the  coincidence sets of $v$.

\begin{Lemma}\label{lem:obs monotonicity}
Let $v = v_{\kappa_3,\kappa_4}$ and $v' = v_{\kappa_3',\kappa_4'}$, with coincidence sets denoted by $K_3, K_4$ and $K_3', K_4'$, respectively. Then the following two-sided bound holds:
\begin{equation}\label{eq:obs monotonicity}
v' - \max\left\{ \kappa_3' - \kappa_3, \kappa_4' - \kappa_4, 0 \right\} \leq v \leq v' + \max\left\{ \kappa_3 - \kappa_3', \kappa_4 - \kappa_4', 0 \right\}.
\end{equation}

In the mixed case where $\kappa_3 \geq \kappa_3'$ and $\kappa_4 \leq \kappa_4'$, the coincidence sets satisfy the inclusions $K_3' \subset K_3$ and $K_4 \subset K_4'$.

In the case where $\kappa_3 \geq \kappa_3'$ and $\kappa_4 \geq \kappa_4'$, then $v\geq v'$, and the measure discrepancy satisfies
\[
\left| \M \varphi - \M v \right|(\Omega) \geq \left| \M \varphi - \M v' \right|(\Omega).
\]
\end{Lemma}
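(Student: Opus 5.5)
The plan is to treat $v_{\kappa_3,\kappa_4}$ as the Perron solution, i.e. the pointwise infimum over the class $\E_{\kappa_3,\kappa_4}$ of convex $w$ with $w\ge \max\{\kappa_3,\ell_q+\kappa_4\}$, $\M w\le \M\varphi$ in $\Omega$ and $w=\varphi$ on $\partial\Omega$. All three claims then follow by comparing these admissible classes, together with a comparison principle for the double-obstacle problem analogous to Lemma 2.3 of \cite{jin2025regularity}.

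\textbf{Two-sided bound.} Set $m:=\max\{\kappa_3-\kappa_3',\kappa_4-\kappa_4',0\}$. Since $\varphi\ge\max\{0,\ell_q\}$, the function $\max\{v'+m,\varphi\}$ is convex and lies above both obstacles $\kappa_3$ and $\ell_q+\kappa_4$. Its Monge--Amp\`ere measure is at most $\M\varphi$, because it coincides locally with one of two supersolutions and the maximum of supersolutions is a supersolution. It equals $\varphi$ on $\partial\Omega$. Hence it belongs to $\E_{\kappa_3,\kappa_4}$, and $v\le \max\{v'+m,\varphi\}\le v'+m$, using $v'\ge\varphi$. Swapping the roles of $v$ and $v'$ gives the other inequality.

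\textbf{Monotone case} ($\kappa_3\ge\kappa_3'$, $\kappa_4\ge\kappa_4'$). Here $\E_{\kappa_3,\kappa_4}\subset\E_{\kappa_3',\kappa_4'}$, so $v\ge v'$. The two obstacle functions differ by at most $m$, and $v\le v'+m$. So on the coincidence sets $K_3'\cup K_4'$, where $v'$ equals an obstacle, $v$ lies within $m$ of the corresponding raised obstacle. To show $K_3'\subset K_3$ and $K_4'\subset K_4$, I will use the fact that $v$ and $v'$ both solve $\M=\M\varphi$ away from their contact sets. Apply the comparison principle to $v$ and $v'+(\kappa_3-\kappa_3')$ on the open set $\{v>\kappa_3\}\cap\{v>\ell_q+\kappa_4\}$: there $\M v=\M\varphi\ge \M v'$, and on its boundary $v$ equals an obstacle, which is at most $v'+$ the relevant shift. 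I expect this to give the contact inclusions. Since $\M v$ and $\M v'$ vanish exactly on the coincidence sets and equal $\M\varphi$ elsewhere, we have $|\M\varphi-\M v|(\Omega)=\M\varphi(K_3\cup K_4)$. Inclusion of the coincidence sets then yields the measure inequality. For unbounded densities I will use the inequality formulation \eqref{eq:defnvaq 1}, i.e. $\M v\le\M\varphi$ with equality off the contact set. Then $|\M\varphi-\M v|(\Omega)=\M\varphi(\Omega)-\M v(\Omega)$, and $\M v(\Omega)=|\partial v(\Omega)|$ is controlled through the boundary data. A cleaner route is to mimic Lemma~\ref{lem:comparison principle va on h}: $v'$ touches $v-\mathrm{const}$ from above on contact sets, which gives $\M v\le \M v'$ and hence $\M\varphi-\M v\ge\M\varphi-\M v'\ge0$.

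\textbf{Mixed case} ($\kappa_3\ge\kappa_3'$, $\kappa_4\le\kappa_4'$). Compare $v$ with $v'+(\kappa_3-\kappa_3')$, which dominates $v$ by the bound above. At a point $x\in K_3'$ we get $v(x)\le \kappa_3'+\kappa_3-\kappa_3'=\kappa_3$, and $v\ge\kappa_3$, so $x\in K_3$. Symmetrically, $v'\le v+(\kappa_4'-\kappa_4)$, so on $K_4$ we get $v'\le\ell_q+\kappa_4'$, forcing $x\in K_4'$.

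\textbf{Main obstacle.} The delicate step is the rigorous comparison principle with two obstacles when $\M\varphi$ has no bounded density, in particular justifying that the maximum of supersolutions stays in $\E$ and that Perron infima have the stated contact structure. I would handle it by adapting Savin's Perron argument \cite{savin2005obstacle} verbatim with the obstacle $\max\{\kappa_3,\ell_q+\kappa_4\}$, which is again convex.
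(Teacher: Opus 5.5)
There are two genuine gaps: one in the two-sided bound, and one in the measure inequality of the monotone case.

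\textbf{Two-sided bound.} Since $v'\ge\varphi$, the function $\max\{v'+m,\varphi\}$ is simply $v'+m$. Its boundary values are $\varphi+m$, not $\varphi$. So for $m>0$ it is not in $\E_{\kappa_3,\kappa_4}$, and Perron minimality gives you nothing.

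The general claim that a maximum of functions with $\M w\le\M\varphi$ again satisfies $\M\le\M\varphi$ is also false: $\max\{x,-x\}=|x|$ creates mass on the crease. Here that claim is merely vacuous, but it should not be relied on.

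What you actually need is a comparison principle, and it is short. Let $U=\{v>v'+m\}$. Since $v'+m\ge\max\{\kappa_3,\ell_q+\kappa_4\}$, the function $v$ lies strictly above both obstacles in $U$. Hence $\M v=\M\varphi\ge\M v'=\M(v'+m)$ in $U$. On $\partial U$ we have $v\le v'+m$; in particular $v=\varphi\le\varphi+m$ on $\partial\Omega$. The standard comparison principle then gives $v\le v'+m$ in $U$, which forces $U=\emptyset$.

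The paper does the equivalent by sliding $v'+t$ until it touches $v$ from above. It shows the contact set must meet $K_3\cup K_4$. It then reads off $t\le\kappa_3-\kappa_3'$ (resp.\ $t\le\kappa_4-\kappa_4'$) because $v'$ (resp.\ $v'-\ell_q$) attains its minimum at a contact point. Once the bound is established, your derivation of the mixed-case inclusions directly from it is correct, and cleaner than the paper's.

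\textbf{Monotone case.} Getting $v\ge v'$ from $\E_{\kappa_3,\kappa_4}\subset\E_{\kappa_3',\kappa_4'}$ is fine. However, neither route you commit to for the measure inequality works. Raising $\kappa_4$ can lift $\ell_q+\kappa_4$ above $\kappa_3$ on part of $K_3'$. In general, therefore, $K_3'\not\subset K_3$ and $\M v\not\le\M v'$ as measures.

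A one-dimensional example shows this. Take $\varphi=x^2/2$ on $(-L,L)$, $\kappa_3=\kappa_3'=h$, $\kappa_4'=0$, $\ell_q(x)=qx-q^2/2$ and $\kappa_4=h+q^2/2$. Then $v'$ is the single-obstacle solution, with $K_3'=[-r,r]$. For $0<q<r$ one finds:
\begin{itemize}
\item $v=h$ on $[-r,-q/2]$;
\item $v=h+(x+q/2)^2/2$ on $[-q/2,q/2]$;
\item $v=h+qx$ on $[q/2,b]$ for some $b$.
\end{itemize}
So $K_3=[-r,-q/2]$, and $\M v=\ud x$ on $(-q/2,q/2)\subset K_3'$, where $\M v'=0$.

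Only total masses compare. The right argument is the one you mention in passing and then abandon. Since $v'\le v$ and $v=v'$ on $\partial\Omega$, one has $\partial v(\Omega)\subset\partial v'(\Omega)$, hence $\M v(\Omega)\le\M v'(\Omega)$. Because $\M v,\M v'\le\M\varphi$, this yields $|\M\varphi-\M v|(\Omega)=\M\varphi(\Omega)-\M v(\Omega)\ge\M\varphi(\Omega)-\M v'(\Omega)=|\M\varphi-\M v'|(\Omega)$. This is exactly the paper's proof.
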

\begin{proof}
Suppose $v' < v$ at some point. Then there exists $t > 0$ such that $v' + t \geq v$ touches $v$ from above at some point. Let $E = \{v' + t = v\}$. Then  $E \subset \subset \Omega$. If $E\cap \left( K_3 \cup K_4 \right) = \emptyset$, then for sufficiently small $\epsilon > 0$, the sublevel set $\{v' + t - \epsilon < v\}$ would also be disjoint from $K_3 \cup K_4$. Applying the comparison principle to $v' + t - \epsilon$ and $v$ in this region would then yield $v' + t - \epsilon \ge v$ on $\{v' + t - \epsilon < v\}$, contradicting that $E$ in not empty. Therefore,
\[
E \cap \left( K_3 \cup K_4 \right) \neq \emptyset.
\]
Without loss of generality, assume $E \cap K_3 \neq \emptyset$ and take $x_0 \in E \cap K_3$. Then $0 \in \partial v'(x_0)$, and the convexity of $v'$ implies $v'(x_0) \ge \kappa_3'$ and $t \le \kappa_3 - \kappa_3'$ . Furthermore, if $K'_3\neq\emptyset$, then $v'(x_0) = \kappa_3'$, $t = \kappa_3 - \kappa_3'$ and $K_3' \subset K_3$. Therefore, $t \le \kappa_3 - \kappa_3'$ and $K_3' \subset K_3$. This establishes the right-hand inequality in \eqref{eq:obs monotonicity}. The left-hand inequality follows by a symmetric argument.

In the mixed case where $\kappa_3 \geq \kappa_3'$ and $\kappa_4 \leq \kappa_4'$, the above argument for $\kappa_3 > \kappa_3'$, combined with the fact $v\leq v'$ when $\kappa_3 = \kappa_3'$, shows that $K_3' \subset K_3$. A symmetric argument yields $K_4 \subset K_4'$.

In the case where $\kappa_3 \geq \kappa_3'$ and $\kappa_4 \geq \kappa_4'$, we have $v \geq v'\ge \varphi$ with $v=v'=\varphi$ on $\partial\Omega$, and hence $\partial v(\Omega) \subset \partial v'(\Omega)$. This yields
\[
\left| \M \varphi - \M v \right|(\Omega) \geq \left| \M \varphi - \M v' \right|(\Omega).
\]
\end{proof}

For $q \in \partial \varphi(\Omega)$, recall the obstacle solution $v_a(\cdot,q)$ defined by \eqref{eq:defnva} and \eqref{eq:defnvaq 2} with obstacle $\ell_q + h_{a,q}$, and that \eqref{eq:varphi normalized s7} holds.

\begin{Lemma}\label{lem:exist kappa4}  
Suppose $h_{a,0} < \inf_{\partial \Omega } \varphi$ and $h_{a,q} < \inf_{\partial \Omega } (\varphi-\ell_q)$.
Then for any $0\leq \kappa_3 \leq h_{a,0}$, there exists $0\leq \kappa_4 \leq h_{a,q}$ such that $v:=v_{\kappa_3,\kappa_4}$ solves  \eqref{eq:double obs 1} with 
\begin{equation}\label{eq:double obs 2}  
\left|\M \varphi -\M v \right|(\Omega)=\M \varphi(\Omega) -\M v (\Omega)=\omega_n a^n.
\end{equation}   
Assume in addition that $\partial\Omega \in C^{2,\alpha}$ and $\varphi \in \C_+^{2,\alpha}(\overline\Omega)$, then for each fixed $\kappa_3\in[0,h_{a,0}]$, the corresponding solution $v_{\kappa_3,\kappa_4}$ is unique (even though $\kappa_4$ may vary). Consequently, as $\kappa_3 \to h_{a,0}$, these solutions converge uniformly to $v_a(\cdot,0)$ on $\overline\Omega$.
\end{Lemma}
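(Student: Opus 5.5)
We would prove existence by a continuity (intermediate value) argument in $\kappa_4$ with $\kappa_3$ fixed, and then uniqueness from the comparison principle together with the two-obstacle monotonicity of Lemma \ref{lem:obs monotonicity}.

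\textbf{Existence.} Fix $\kappa_3\in[0,h_{a,0}]$ and consider the map
\[
\kappa_4\mapsto m(\kappa_4):=\M\varphi(\Omega)-\M v_{\kappa_3,\kappa_4}(\Omega)
\]
for $\kappa_4\in[0,h_{a,q}]$.

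First we identify the sign of the discrepancy. Since $v_{\kappa_3,\kappa_4}\ge\varphi$ (the Perron infimum is taken over supersolutions above the obstacles, and $\varphi$ is a subsolution of the same problem), and $\M v\le \M\varphi$ everywhere, we get $|\M\varphi-\M v|(\Omega)=\M\varphi(\Omega)-\M v(\Omega)$. This is the first equality in \eqref{eq:double obs 2}.

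Next, continuity of $m$. By \eqref{eq:obs monotonicity}, $\|v_{\kappa_3,\kappa_4}-v_{\kappa_3,\kappa_4'}\|_\infty\le|\kappa_4-\kappa_4'|$. Weak continuity of Monge--Amp\`ere measures under uniform convergence, with equal boundary values on a fixed domain, then gives continuity of $m$. This follows the proof of Lemma \ref{lem:uniqueha}; the total mass $\M v(\Omega)=|\partial v(\Omega)|$ is continuous because the boundary data are fixed.

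Then the endpoints.
\begin{itemize}
\item At $\kappa_4=0$: the obstacle $\ell_q$ lies below $\varphi$, so $v_{\kappa_3,0}=v_{\kappa_3}(\cdot,0)$ is the single-obstacle solution with obstacle $\kappa_3\le h_{a,0}$. Lemma \ref{lem:comparison principle va on h} and Lemma \ref{lem:uniqueha} then give $m(0)\le\omega_na^n$.
\item At $\kappa_4=h_{a,q}$: Lemma \ref{lem:obs monotonicity}, in the case where both parameters dominate, compares $v_{\kappa_3,h_{a,q}}$ with $v_{0,h_{a,q}}=v_a(\cdot,q)$. This gives $m(h_{a,q})\ge\omega_na^n$.
\end{itemize}
The intermediate value theorem produces the required $\kappa_4$. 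The hypotheses $h_{a,0}<\inf_{\partial\Omega}\varphi$ and $h_{a,q}<\inf_{\partial\Omega}(\varphi-\ell_q)$ guarantee that the obstacles stay below the boundary data, so the Perron solutions are well defined with $v=\varphi$ on $\partial\Omega$.

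\textbf{Uniqueness.} Suppose $v=v_{\kappa_3,\kappa_4}$ and $v'=v_{\kappa_3,\kappa_4'}$ both satisfy \eqref{eq:double obs 2}, with $\kappa_4<\kappa_4'$. By Lemma \ref{lem:obs monotonicity}, $v\le v'$, and the masses are monotone, so we must show that equal masses force $v=v'$.

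Since $\M v'\le\M v$ would follow from a touching argument as in Lemma \ref{lem:comparison principle va on h}, equality of total masses forces $\M v=\M v'$ as measures. The relevant coincidence sets differ only on a $\M\varphi$-null set, and since $\M\varphi$ has positive density, they differ by a Lebesgue-null set. Uniqueness of the Dirichlet problem (Proposition \ref{prop:finitetoinfinite}) then gives $v=v'$.

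The delicate point is justifying $\M v'\le\M v$ globally in the two-obstacle setting. Lemma \ref{lem:obs monotonicity} gives $K_4\subset K_4'$ and $K_3'\subset K_3$, but not an ordering of the measures directly. We would try the following route. On $K_4'\setminus K_4$ we have $\M v'=0\le\M v$. On $K_3\setminus K_3'$ we must show this set is null, and it is here that $C^{2,\alpha}$ regularity enters. Using the nondegeneracy of $\varphi$, the $C^{1,1}$ regularity of obstacle solutions (Savin), and the Lemma \ref{lem:extends to boundary}-type Hopf argument on $\{v<v'\}$, we would exclude two solutions with the same mass: the divergence identity \eqref{eq:div identity smooth} gives zero flux, while Hopf forces a strict sign. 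We expect this step to be the main obstacle.

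\textbf{Convergence.} Let $\kappa_3\to h_{a,0}$. The solutions are uniformly bounded and equicontinuous, being squeezed between $\varphi$ and $\varphi+\max(h_{a,0},h_{a,q})$. Any limit solves the two-obstacle problem with $\kappa_3=h_{a,0}$ and mass $\omega_na^n$. Since $v_a(\cdot,0)$ alone already has mass $\omega_na^n$, the mass constraint forces the second coincidence set to be $\M\varphi$-null. By the uniqueness just proved, the limit is $v_a(\cdot,0)$, and convergence holds along the full family.
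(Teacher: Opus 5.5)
Your existence argument matches the paper's: Lipschitz dependence and monotonicity in $\kappa_4$ from Lemma~\ref{lem:obs monotonicity}, plus the endpoint comparisons $v_{\kappa_3,0}\le v_a(\cdot,0)$ and $v_{\kappa_3,h_{a,q}}\ge v_{0,h_{a,q}}=v_a(\cdot,q)$. Your convergence argument also matches. The genuine gap is in the uniqueness step, and the convergence claim depends on it.

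\textbf{The measure ordering is not available a priori.} You cannot get $\M v'\le\M v$ from a touching argument as in Lemma~\ref{lem:comparison principle va on h}. Here $\kappa_3$ is fixed and only $\kappa_4$ moves. The translate $v'-(\kappa_4'-\kappa_4)$ lies below $v$ and touches it along $K_4$, but not along $K_3$. So nothing controls $\M v'$ on $K_3\setminus K_3'$ beforehand.

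Since $\M v=0$ on $K_3\cup K_4$ and $\M v=\M\varphi$ elsewhere, the ordering $\M v'\le\M v$ is equivalent to $K_3\cup K_4\subset K_3'\cup K_4'$ up to a null set. That is the conclusion to be proved, not an input. Your intermediate target, that $K_3\setminus K_3'$ is null, is also not the reachable one. What can actually be shown is $K_3\setminus K_4'\subset K_3'$.

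\textbf{The flux/Hopf argument proves less than you claim.} Zero total flux plus Hopf only rules out $v'>v$ near $\partial\Omega$. That is, it gives $v'\equiv v$ in a neighbourhood of $\partial\Omega$; this is Lemma~\ref{lem:extends to boundary} applied to $v\le v'$. It gives no contradiction if $v'>v$ only in the interior.

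Even this step needs two inputs you did not supply:
\begin{itemize}
\item \emph{$\M v'-\M v$ compactly supported.} The mixed case of Lemma~\ref{lem:obs monotonicity} gives $K_3'\subset K_3\subset\{v_a(\cdot,0)=h_{a,0}\}$ and $K_4\subset K_4'\subset\{v_a(\cdot,q)=\ell_q+h_{a,q}\}$. The strict hypotheses $h_{a,0}<\inf_{\partial\Omega}\varphi$ and $h_{a,q}<\inf_{\partial\Omega}(\varphi-\ell_q)$ then keep all coincidence sets compactly inside $\Omega$. This is the real role of those hypotheses, not well-posedness of the Perron solutions.
\item \emph{$v\in\C^{2,\alpha}_+$ near $\partial\Omega$.} This comes from Proposition~\ref{prop:dirichlet problem main}. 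It is where the extra $C^{2,\alpha}$ assumption enters, not through Savin's $C^{1,1}$ free-boundary regularity.
\end{itemize}

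\textbf{The missing interior steps.} These are the following.
\begin{itemize}
\item On the connected set $\Omega\setminus(K_3\cup K_4')$, both $v$ and $v'$ solve $\det D^2w=\det D^2\varphi$, with $v\le v'$ and equality near $\partial\Omega$.
\item The strong maximum principle then gives $v=v'$ there, and hence on $\partial(K_3\cup K_4')$. In particular $v'=\kappa_3$ on $\partial K_3\setminus K_4'$.
\item Since $v'\ge\kappa_3$ is convex, $v'\equiv\kappa_3$ on the convex hull of $\partial K_3\setminus K_4'$, and this hull contains $K_3\setminus K_4'$. So $K_3\setminus K_4'\subset K_3'$, and therefore $K_3\cup K_4\subset K_3'\cup K_4'$.
\item By \eqref{eq:double obs 2}, both unions have $\M\varphi$-mass $\omega_na^n$, so their difference is Lebesgue-null. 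Hence $\M v=\M v'$, and Proposition~\ref{prop:finitetoinfinite} gives $v=v'$.
\end{itemize}
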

\begin{proof} 
By Lemma~\ref{lem:obs monotonicity}, the quantity $\left|\M \varphi -\M v_{\kappa_3,\kappa_4}\right|(\Omega)$
is continuous and non-decreasing in $\kappa_4$.
We note that $v_{h_{a,0}, 0} = v_a(\cdot, 0)$ and $v_{0, h_{a,q}} = v_a(\cdot, q)$  by interpreting the obstacle equation in the same way as \eqref{eq:defnvaq 1}.
Fix $\kappa_3 \leq h_{a,0}$. Then we have $v_{\kappa_3,0}\leq v_{h_{a,0},0}=v_a(\cdot,0)$, and hence
\[
\left|\M \varphi -\M v_{\kappa_3,0} \right|(\Omega)\leq\omega_n a^n= \left|\M \varphi -\M v_{0,h_{a,q}} \right|(\Omega)\leq \left|\M \varphi -\M v_{\kappa_3,h_{a,q}} \right|(\Omega).
\]
This implies the existence of $0\leq \kappa_4 \leq h_{a,q}$ such that  $v_{\kappa_3,\kappa_4}$ satisfying \eqref{eq:double obs 2}.

Fix $\kappa_3\in[0,h_{a,0}]$. Let $v = v_{\kappa_3,\kappa_4}$ and $v' = v_{\kappa_3,\kappa_4'}$ be solutions to  \eqref{eq:double obs 1} satisfying \eqref{eq:double obs 2}, with coincidence sets denoted by $K_3, K_4$ and $K_3', K_4'$, respectively. 
Suppose $\kappa'_4 \ge \kappa_4$. Then $v' \ge v$. 
Applying Lemma \ref{lem:obs monotonicity} to $v$ and $v'$, we obtain $K_3' \subset K_3 \subset \{v_a(\cdot,0) = h_{a,0}\}$ and $K_4 \subset K_4' \subset \{v_a(\cdot,q) = \ell_q + h_{a,q}\}$. 
Together with the strict inequalities $h_{a,0} < \inf_{\partial \Omega} \varphi$ and $h_{a,q} < \inf_{\partial \Omega} (\varphi - \ell_q)$, these inclusions imply that all these sets are compactly contained in $\Omega$ and hence stay away from the boundary.
By Proposition \ref{prop:dirichlet problem main}, $v$ belongs to $\C_+^{2,\alpha}$ in a neighbourhood of the boundary. 
Combined with the facts that $v' \geq v$ and $\M v'(\Omega) = \M v(\Omega)$, Lemma~\ref{lem:extends to boundary} yields $v' = v$ near $\partial \Omega$. 
Therefore, by the strong maximum principle \cite{jian2025strong}, we conclude that $v' = v$ in $\Omega \setminus (K_3 \cup K_4')$, and hence also on $\partial (K_3 \cup K_4')$.
Let $F$ be the convex hull of $\partial K_3 \setminus K_4'$. Then $K_3 \setminus K_4' \subset F$. 
Moreover, on $\partial F \cap \partial K_3$ we have $v' = v = \kappa_3$. Since $v'$ is convex and satisfies $v' \ge \kappa_3$, it follows that $v' \equiv \kappa_3$ throughout $F$. Consequently, $(K_3 \setminus K_4') \subset F \subset K_3'$. 
Together with $K_4 \subset K_4'$, this yields $(K_3 \cup K_4) \subset (K_3' \cup K_4')$.
Recalling the measure identity
\[
\M \varphi(K_3' \cup K_4') = \M \varphi(K_3 \cup K_4) = \omega_n a^n,
\]
we obtain $|(K_3' \cup K_4') \setminus (K_3 \cup K_4)| = 0$. Therefore, $\M v' = \M v$, which yields $v' = v$.

Let $\kappa_3 \to h_{a,0}$. Since the convex functions $v_{\kappa_3,\kappa_4} \geq \varphi$ are equicontinuous on $\overline{\Omega}$, the Arzelà–Ascoli theorem implies that, after passing to a subsequence, $\kappa_4 \to \kappa' \leq h_{a,q}$ and $v_{\kappa_3,\kappa_4}$ converges uniformly to a convex function on $\overline{\Omega}$. One can verify that the limit is still an obstacle solution of the form $v_{h_{a,0}, \kappa'}$. Because the measure discrepancy \eqref{eq:double obs 2} is preserved under uniform convergence, we must have $v_{h_{a,0}, \kappa'} = v_{h_{a,0},0} = v_a(\cdot, 0)$. Consequently, the family $v_{\kappa_3,\kappa_4}$ converges uniformly to $v_a(\cdot, 0)$ on $\overline\Omega$ as $\kappa_3 \to h_{a,0}$.
\end{proof}

\begin{Lemma}\label{lem:double obs 1}  
Assume that $n\geq 3$, $\partial\Omega \in C^{2,\alpha}, \varphi \in \C_+^{2,\alpha}(\overline\Omega)$, and \eqref{eq:varphi normalized s7} is satisfied. Let $a, \varepsilon, \rho > 0$ be small constants, and let  $v_{a}(x):=v_{a}(x,0)$ be the obstacle solution of \eqref{eq:obse ap 3} with obstacle $h_a:=h_{a,0}$,
and denote $\kappa_3=h_{(1-\varepsilon)a}$. Let $q$ and $y_q$ be such that $y_q \in \partial B_{(1+\rho)a} (0)$  and  $q=\nabla v_a(y_q)$. 
Suppose $v:=v_{\kappa_3,\kappa_4}$ solves \eqref{eq:double obs 1} with \eqref{eq:double obs 2}. Then we have for all small $\varepsilon \gtrsim a^{\beta}$ with $\beta=\frac{(n-2)\alpha}{n+\alpha}$ that
\begin{align}
h_a-\kappa_3 =\sup_{x\in \Omega} \left\{v_{a}(x)-v (x)\right\}   &= v_{a}(0)-v(0) \approx \varepsilon  a^2, \label{eq:dna2lower vat0} \\
\kappa_5:=\sup_{x\in \Omega} \left\{v(x)-v_{a}(x)\right\}  & = v(y_q)-v_{a}(y_q) \lesssim \varepsilon^{\frac{2}{n}} a^2 . \label{eq:dna2upper vatq}  
\end{align}
Moreover, $v $ is strictly convex and $C^{2,\alpha}$ outside its coincidence set, and we have
\begin{equation}\label{eq:dna2 obs ap 3qta}  
y_q\in \left\{v = \ell_{q}   +\kappa_4\right\} \subset  B_{C\varepsilon^{\frac{1}{n}} a}(y_q) .
\end{equation}
\end{Lemma}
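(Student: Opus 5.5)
The plan is to mirror the proof of Lemma \ref{lem:two singularity}, with the roles of the two Dirac masses played by the two coincidence sets $K_3=\{v=\kappa_3\}$ and $K_4=\{v=\ell_q+\kappa_4\}$.

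\textbf{Step 1: sign structure of $v-v_a$.}
Lemma \ref{lem:obs monotonicity} (with $(\kappa_3',\kappa_4')=(h_a,0)$, so $v'=v_a$) gives
\[
v_a-(h_a-\kappa_3)\le v\le v_a+\kappa_4 .
\]
Its mixed case gives $K_3\subset\{v_a=h_a\}$ and $\{v_a=h_a\}\cap\{v_a=\ell_q\}\subset K_4$. Since both measures have the same total mass by \eqref{eq:double obs 2}, Lemma \ref{lem:extends to boundary} together with Proposition \ref{prop:dirichlet problem main} applies near $\partial\Omega$. It shows that $\{v<v_a\}$ and $\{v>v_a\}$ are nonempty and reach $\partial\Omega$.

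Away from the coincidence sets, the comparison principle gives the following. Every component of $\{v<v_a\}$ meets $K_3$, and the maximal gap is attained there, so $\sup(v_a-v)=h_a-\kappa_3$, attained at $0$. Every component of $\{v>v_a\}$ meets $K_4$. Since $v_a$ is $C^{2,\alpha}$ outside $(1+Ca^\beta)B_a$, and $v-\ell_q-\kappa_4$ is minimised on $K_4$ with gradient $q=\nabla v_a(y_q)$, the sup $\kappa_5$ is attained at the tangency point $y_q$. Proving $y_q\in K_4$ is part of the argument: lift $v_a$ by $\kappa_5$ and use the strong maximum principle \cite{jian2025strong}.

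\textbf{Step 2: estimate for $h_a-\kappa_3$.}
By \eqref{eq:dna2vaat0} and the $a\mapsto h_a$ monotonicity, $h_a-h_{(1-\varepsilon)a}\approx d_{n,0}\varepsilon a^2$ up to $a^{2+\beta}$ errors. This is where $\varepsilon\gtrsim a^\beta$ is used. With $\kappa_3=h_{(1-\varepsilon)a}$ this gives \eqref{eq:dna2lower vat0} directly. The scaling bound $|W^*_{(1+\sigma)a}-W^*_a|\le C|\sigma|a^2$, combined with \eqref{eq:dna2vaaround0}, makes this rigorous.

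\textbf{Step 3: estimate for $\kappa_5$ and the size of $K_4$.}
This is the main obstacle. As in Lemma \ref{lem:two singularity}, I first work in the small regime
\[
(\kappa_5+h_a-\kappa_3)/a^2\le\gamma(n,\rho).
\]
There, the rescalings $v(ax)/a^2$ and $v_a(ax)/a^2$ are both close to $W_1^*+d_{n,0}$ on $B_4$. Theorem \ref{lem:small perturbation} then gives uniform $\C_+^{2,\alpha}$ bounds on annular regions that avoid $B_{\rho/4}(0)$ and $B_{\rho/8}(a^{-1}y_q)$; this also yields the claimed strict convexity and $C^{2,\alpha}$ regularity. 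Since $v-v_a$ vanishes somewhere on $\partial B_{(1+\rho/2)a}$, Harnack's inequality gives $v_a+\kappa_5-v\approx\kappa_5$ on the annulus around $y_q$.

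The mass of $K_4$ is at most $\omega_na^n-\M\varphi(K_3)\lesssim\varepsilon a^n$, because $K_3\supset$ the coincidence set of the $(1-\varepsilon)a$-type problem by comparison. Applying the rescaled \eqref{eq:abp sup 2/n} in $B_{\rho/2}(a^{-1}y_q)$ to $v$ versus $v_a+\kappa_5$ therefore gives $\kappa_5/a^2\lesssim\varepsilon^{2/n}$.

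For \eqref{eq:dna2 obs ap 3qta}, note that $K_4$ is the coincidence set of an obstacle problem for a $\C_+^{2,\alpha}$ background with mass $\lesssim\varepsilon a^n$. Lemma \ref{lem:coincidencesetcontrol}, applied to $v_a$ locally with $a$ replaced by $\varepsilon^{1/n}a$, places $K_4$ in a section of height $C\varepsilon^{2/n}a^2$ around $y_q$, hence inside $B_{C\varepsilon^{1/n}a}(y_q)$.

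\textbf{Step 4: removing the smallness assumption.}
A continuity argument in $\kappa_3$ completes the proof. By Lemma \ref{lem:exist kappa4}, $v$ depends continuously on $\kappa_3$ and converges to $v_a$ as $\kappa_3\to h_a$. For $\varepsilon$ tiny the smallness holds, and the bound $C\varepsilon^{2/n}<\gamma$ keeps it open-closed, exactly as in the final paragraph of Lemma \ref{lem:two singularity}.
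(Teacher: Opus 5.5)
Your architecture matches the paper's: the small regime $\gamma(n,\rho)$, Harnack for $v-v_a$ in an annulus, the rescaled \eqref{eq:abp sup 2/n} near $y_q$, coincidence-set control, and continuity in $\kappa_3$. However, the step that drives every estimate rests on an inclusion that goes the wrong way. In Step 3 you bound $\M\varphi(K_4)=\omega_na^n-\M\varphi(K_3)\lesssim\varepsilon a^n$ by asserting that $K_3$ contains the coincidence set of the $(1-\varepsilon)a$ problem. That problem is $v_{(1-\varepsilon)a}(\cdot,0)=v_{\kappa_3,0}$. Since $\kappa_4\ge 0$, Lemma \ref{lem:obs monotonicity} (or the Perron characterization) gives $v=v_{\kappa_3,\kappa_4}\ge v_{\kappa_3,0}\ge\kappa_3$. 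Hence $K_3=\{v=\kappa_3\}\subset\{v_{(1-\varepsilon)a}(\cdot,0)=\kappa_3\}$. This gives only $\M\varphi(K_3)\le(1-\varepsilon)^n\omega_na^n$, which is a \emph{lower} bound on $\M\varphi(K_4)$. Without an upper bound $\M\varphi(K_4)\lesssim\varepsilon_0a^n$ (with $\varepsilon_0=\varepsilon+Ca^\beta$), neither $\kappa_5\lesssim\varepsilon^{2/n}a^2$ nor \eqref{eq:dna2 obs ap 3qta} follows.

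The same missing ingredient leaves \eqref{eq:dna2lower vat0} unproved. Computing $h_a-h_{(1-\varepsilon)a}\approx\varepsilon a^2$ is the easy part. The real content is that $\sup(v_a-v)=h_a-\kappa_3$ and that it is attained at $0$, i.e.\ $0\in K_3$. Your touching argument does not give this. Components of $\{v<v_a\}$ must meet the coincidence set of $v_a$, namely $\{v_a=h_a\}$, not $K_3$. Lifting only shows $\sup(v_a-v)\le h_a-\kappa_3$, with equality iff $K_3\neq\emptyset$; a priori all the mass could sit on $K_4$.

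The paper gets both facts from one local barrier. In the small regime, apply Harnack to $v+h_a-\kappa_3-v_a\ge 0$, anchored at a point of $\partial B_{4a}(0)$ where $v=v_a$. This gives $v\le v_a+C(h_a-\kappa_3)\le W_a^*+h_a+C\varepsilon_0a^2$ on $\partial B_{(1+\rho/2)a}(0)$. Comparison with $(1-Ca^\alpha)W^*_{(1-M\varepsilon_0)a}+\kappa_3$ then forces $B_{(1-M\varepsilon_0)a}(0)\subset K_3$. That yields $0\in K_3$ and $\M\varphi(K_3)\ge(1-C\varepsilon_0)^n\omega_na^n$, hence the needed bound on $\M\varphi(K_4)$.

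Smaller points:
\begin{itemize}
\item A zero of $v-v_a$ is only guaranteed on spheres of radius at least $(1+\rho)a$, e.g.\ $\partial B_{4a}(0)$. Only there must the connected sets $\{v<v_a\}\ni 0$ and $\{v>v_a\}\ni y_q$ both meet the sphere; $\partial B_{(1+\rho/2)a}(0)$ is not enough.
\item The regularity regions from Theorem \ref{lem:small perturbation} must avoid the rescaled coincidence set $B_{1+\rho/8}(0)$, not merely $B_{\rho/4}(0)$.
\item Invoking Lemma \ref{lem:extends to boundary} requires first handling the case $v\equiv v_a$ near $\partial\Omega$; the paper does this by unique continuation.
\end{itemize}
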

\begin{proof} 
By a standard approximation argument, we may assume $\partial \Omega \in \C_+^{3,\alpha}$ and $\varphi \in \C_+^{3,\alpha}(\overline{\Omega})$. The convergence of the approximating obstacle solutions in this argument is guaranteed by the uniqueness in Lemma \ref{lem:exist kappa4} and the comparison principle. After a further affine transformation, the normalization \eqref{eq:varphi normalized s7} still holds.

\textbf{Step 1.} We claim that $v_a$ and $v$ coincide at some point on $\partial B_{4a}(0)$. The proof proceeds in two cases. 

Case 1. Suppose $v_a$ identically equals to $v$ in a neighborhood of $\partial\Omega$. Then the unique continuation theorem for Monge-Ampère equations (see \cite[Theorem 1.3]{mooney2015partial}) implies that $v_a \equiv v$ outside the union of their coincidence sets. This union is contained in $\{v_{h_{a,0},0} = h_{a,0}\} \cup \{v_{0,h_{a,q}} = \ell_q + h_{a,q}\}$, which, by \eqref{eq:dna2 obs ap vaball}, lies within $(B_{3a/2}(0) \cup B_{3a/2}(y_q)) \subset\subset B_{4a}(0)$.

Case 2. If $v_a$ does not identically equal to  $v$ near $\partial\Omega$, then Lemma~\ref{lem:extends to boundary} implies that both
\[
E := \{v_a> v\}\quad \text{and} \quad F := \{v > v_a\}
\]
are nonempty, and extend to $\partial\Omega$.  
The comparison principle argument, as presented in the first paragraph of Lemma~\ref{lem:obs monotonicity}, implies that each connected component of the open set $E$ or $F$ must intersect $\{v_a = h_a\}$ or $\{v = \ell_q + \kappa_4\}$, respectively, and thus must contain $0$ or $y_q$, respectively. Therefore, both $E$ and $F$ are connected.
It then follows that $v_a$ and $v$ coincide at some point on $\partial B_{4a}(0)$.

\textbf{Step 2.} 
Applying Lemma~\ref{lem:obs monotonicity} yields the two-sided bound
\[
v_a - (h_a - \kappa_3) \le v \le v_a + \kappa_5,
\]
where $\kappa_5 := v(y_q) - v_a(y_q)$.
We shall establish \eqref{eq:dna2lower vat0}, \eqref{eq:dna2upper vatq} and \eqref{eq:dna2 obs ap 3qta} under the assumption that
\begin{equation}\label{eq:kappa5 small}
\frac{h_a-\kappa_3+\kappa_5}{a^2} \leq \gamma(n,\rho)
\end{equation}  
is small.

\textbf{Step 2.1.} 
From the approximation \eqref{eq:dna2vaaround0}, the rescaled function $\frac{v_a(ax)}{a^2}$ is close to $W_1^*(x) + d_{n,0}$ in $B_7(0)$. Then, estimate  \eqref{eq:kappa5 small} show that $\frac{v(ax)}{a^2}$ is also close to $W_1^*(x) + d_{n,0}$ in $B_7(0)$.  
Hence,   the two coincidence sets of $v$ are disjoint, and are contained in $B_{1+\rho a/16 }(0)$ and $B_{\rho a/16}\left( y_q\right)$, respectively.
Applying Theorem \ref{lem:small perturbation}, we conclude that $\frac{v_a(ax)}{a^2}$ is  $\C_+^{2,\alpha}$  in  $B_{6}(0) \setminus  B_{1+\rho/8 }(0) $ and $\frac{v(ax)}{a^2}$  is $\C_+^{2,\alpha}$  in  $B_{6}(0) \setminus  \left(B_{1+\rho/8 }(0) \cup B_{\rho/8}\left(a^{-1}y_q\right)\right) $.

\textbf{Step 2.2.}  Set $\varepsilon_0 = \varepsilon + C a^{\beta}$. From \eqref{eq:dna2vaaround0} we obtain
\[
h_a - \kappa_3 = h_a - h_{(1-\varepsilon)a} \leq C \varepsilon_0 a^2,
\]
and that $h_a - \kappa_3 \approx \varepsilon_0 a^2$ when $\varepsilon \gtrsim a^{\beta}$.

To prove \eqref{eq:dna2lower vat0}, it suffices to show $v_a(0) - v(0) = h_a - \kappa_3$, i.e., $0 \in \{ v = \kappa_3 \}$. This will follow from the measure estimates
\begin{equation}\label{eq:volume coin k4}
\M \varphi \left( \{ v = \kappa_3 \} \right) \geq (1 - C \varepsilon_0)^n \omega_n a^n,\quad  \M \varphi \left(\left\{ v= \ell_{{q}} +\kappa_4\right\}\right)\leq C \varepsilon_0\omega_n a^n.
\end{equation} 
Indeed, since $\{ v = \kappa_3 \} \subset \{ v_a = h_a \}$, combining \eqref{eq:volume coin k4} with \eqref{eq:dna2 obs ap vaball} gives
\[
0 \in B_{a/2}(0) \subset \{ v = \kappa_3 \},
\]
provided $a$ and $\varepsilon$ are sufficiently small.

We now verify \eqref{eq:volume coin k4}. 
Since $v_a$ equals to $v$ at some point on $\partial B_{4a} (0)$,  the Harnack inequality implies
\[
 v(x)+h_a-\kappa_3 -v_a(x)  \approx h_a-\kappa_3  \quad   \text{in } B_{5a}(0) \setminus  \left(B_{(1+\rho/4)a }(0) \cup B_{\rho a/4}( y_q)\right).
\] 
This implies 
\[
v \leq v_a +C(h_a-\kappa_3 ) \leq v_a +C\varepsilon_0 a^2 \leq W_a^*(x)+h_a+C\varepsilon_0 a^2  \quad 
\text{in }B_{(1+\rho/4)a }(0).
\]
Let us consider
\[
w(x):=\left(1-Ca^{\alpha} \right) W_{(1-M \varepsilon_0)a}^*(x) +\kappa_3 \quad \text{on } \partial B_{(1+\rho/2)a }(0).
\]
Thus, for another sufficiently large constant $M$, we have
\[
v(x)-\kappa_3 \leq v(x) -h_a+C\varepsilon_0 a^2 \le w(x) -\kappa_3 \quad \text{on } \partial B_{(1+\rho/2)a }(0).
\]
The comparison principle implies that $w \geq v$ in $B_{(1+\rho/2)a }(0)$, and we have
\[
\left\{ W_{(1-M \varepsilon_0)a}^* =0 \right\} \subset \left\{ v= \kappa_3 \right\}.
\]
Therefore, $\M \varphi \left( \{ v = \kappa_3 \} \right) \geq (1-Ca^\alpha)(1 - M \varepsilon_0)^n \omega_n a^n$, which yields \eqref{eq:volume coin k4} upon noting that
\[
 \M \varphi \left(\left\{ v= \ell_{{q}} +\kappa_4\right\}\right)=  \omega_n a^n-\M \varphi \left( \{ v = \kappa_3 \} \right).
\]

\textbf{Step 2.3.}  
Let us show the upper bound estimate \eqref{eq:dna2upper vatq}  that $\kappa_5 \lesssim \varepsilon_0^{\frac{2}{n}}a^2$. 
Since $v_a$ equals to $v$ at some point on $\partial B_{4a} (0)$,  the Harnack inequality implies
\[
\frac{v_a(ax) +\kappa_5-v(ax) }{a^2}\approx  \frac{\kappa_5}{a^2}\quad   \text{in } B_{5}(0) \setminus  \left(B_{(1+\rho/4) }(0) \cup B_{\rho/4}(a^{-1}y_q)\right).
\] 
Applying \eqref{eq:abp sup 2/n} to the rescaled functions $\frac{v_a(ax) + \kappa_5}{a^2}$ and $\frac{v(ax)}{a^2}$ in $B_{\rho/2}(a^{-1}y_q)$, and noting that $\frac{v(ax)}{a^2} \in \C_+^{2,\alpha}$, we find that
\[
\frac{v_a(y_q) +\kappa_5 }{a^2}-\frac{v(y_q)}{a^2}\geq c\kappa_5-C\left(\M \varphi \left(\left\{ v= \ell_{{q}} +\kappa_4\right\}\right)\right)^{\frac{2}{n}} \geq c\kappa_5-C\varepsilon_0^{\frac{2}{n}}a^2.
\]
From the equality $v_a(y_q) + \kappa_5 = v(y_q)$, it follows that $\kappa_5 \lesssim \varepsilon_0^{2/n} a^2$.

\textbf{Step 2.4.}  
Then, we prove the inclusion \eqref{eq:dna2 obs ap 3qta}.  
Since $v_a + \kappa_5$ is of class $C_+^{2,\alpha}$ in $B_{\rho a}(y_q)$ and touches $v$ from above at $y_q \in \{ v = \ell_q + \kappa_4 \}$, and since 
$b:=\left(\M \varphi \left(\left\{ v= \ell_{{q}} +\kappa_4\right\}\right)\right)^{\frac{1}{n}} \leq C\varepsilon_0^{\frac{1}{n}} a$, 
the same argument used to prove \eqref{eq:coincidence inclusion} implies
\[
\{v = \ell_{q} + \kappa_4\} \subset S_{Cb^2}^{v_a+\kappa_5}(y_q) \subset  B_{C\varepsilon_0^{\frac{1}{n}} a}(y_q).
\] 
This yields \eqref{eq:dna2 obs ap 3qta} provided $\varepsilon_0 \gtrsim a^{\beta}$.

\textbf{Step 3.}  Finally, we note that Lemma \ref{lem:exist kappa4} implies that \eqref{eq:kappa5 small} is satisfied when $\varepsilon$ is sufficiently small. Since the arguments in Steps 2.2 and 2.3 imply the estimate $h_a-\kappa_3+\kappa_5 \leq C (\varepsilon + a^{\beta})^{2/n} a^2$, provided that \eqref{eq:kappa5 small} holds.  By the continuous dependence of the solution $v$ on the parameter $\varepsilon$, we conclude that \eqref{eq:kappa5 small} remains valid whenever $C(\varepsilon + a^{\beta})^{2/n} a^2 < \gamma(n,\rho)$. This proves  \eqref{eq:dna2lower vat0}, \eqref{eq:dna2upper vatq} and \eqref{eq:dna2 obs ap 3qta}, and the proof is completed.
\end{proof}

\begin{proof}[Proof of Theorem \ref{thm:rigidity}. The Non-positive Case.]
For the non-positive case $\mu \leq 0$, we have the inequality $u \geq \varphi$. For simplicity we assume that $0 \in \partial u(x_0)$. After a suitable affine transformation, we can also assume that \eqref{eq:varphi normalized s7} holds. Using the regularity condition $\varphi \in \C_+^{2,\alpha}$, we have
\[
\varphi(0)+c|x_0^2|\le \varphi(x_0)\le u(x_0)\le u(0)\le \varphi(0)+Ca^2,
\]
where we used \eqref{eq:perturb-result} in the last inequality. 
This yields the rough bound $|x_0| \leq C a$, and thus, $|\nabla \varphi(x_0)| \leq C a$ and $\left|\det D^2 \varphi(x_0) -1\right| \le Ca^{\alpha}$.

Let us define $\varepsilon_1$ by
\[
u(x_0)-\varphi(x_0) = \left(1-{\varepsilon_1}\right) d_{n,0}a^2.
\]
Theorem \ref{thm:ordera2 more} implies that $\varepsilon_1\ge -Ca^{\beta}$. Let $\varepsilon=\varepsilon_1+2Ca^{\beta}> Ca^{\beta}$. Then we have
\[
u(0)=u(0)-\varphi(0) \geq u(x_0)-\varphi(x_0) \geq  \left(1-{\varepsilon}\right)d_{n,0}a^2+Ca^{2+\beta}.
\]
Let $v_{a}(x) := v_{a}(x,0)$ be the solution of \eqref{eq:obse ap 3} with obstacle $h_a:=h_{a,0}$, and set $\kappa_3 = h_{(1-\varepsilon)a}$. Let $q$ and $y_q$ be such that $y_q \in \partial B_{(1+\rho)a} (0)$  and  $q=\nabla v_a(y_q)$, and let $\varpi(\cdot, q)$ denote the solution $v$ given by Lemma~\ref{lem:double obs 1}. Note that  on $\left\{ \varpi(\cdot,q) =\kappa_3\right\}$, we have
\[
\varpi(\cdot,q) = \kappa_3 =h_{(1-\varepsilon) a} \leq \left(1-{\varepsilon}\right)^2d_{n,0}a^2+Ca^{2+\beta} <u(0) \leq u .
\]
The graphs of these three functions $u,v_a$ and $\varpi(\cdot,q)$ can be illustrated as follows:
\begin{center}
\begin{tikzpicture}[scale=1, line cap=round, line join=round]
  \draw[very thick, red]
    plot[domain=0.1:3, samples=200] (\x,{(450/841)*\x*\x-(90/841)*\x-84/168+0.007});
  \draw[very thick, red]
    plot[domain=-3:-0.8, samples=200] (\x,{(225/242)*\x*\x+(180/121)*\x+23/242});
\draw[very thick, red]
  plot[domain=-0.8:0.1, samples=200] (\x,{0*\x-0.5});
  \node[red!80!black] at (0,-0.3) {$u$};

\draw[very thick, blue]
    plot[domain=-3:-1, samples=300] (\x,{(\x+1)*(\x+1)});
\draw[very thick, blue]
  plot[domain=1:3, samples=300] (\x,{(\x-1)*(\x-1))});
\draw[very thick, blue!70!black]
  plot[domain=-1:1, samples=300] (\x,{0*\x});
  \node[blue!80!black] at (3.2,3.5) {$v_a$};
  
\draw[very thick, green!75!black]
  plot[domain=-3:-0.9, samples=300] (\x,{1.05*(\x+0.9)*(\x+0.9)-0.6305});
\draw[very thick, green!40!black]
  plot[domain=-0.9:-0.2, samples=300] (\x,{0*\x-0.6305});
\draw[very thick, green!75!black]
  plot[domain=-0.2:1.5, samples=300] (\x,{0.452197265625*\x*\x+0.18087890625*\x-0.612412109375});
\draw[very thick, green!75!black]
  plot[domain=2:3, samples=300] (\x,{0.452197265625*\x*\x+0.18087890625*\x-0.612412109375});
\draw[very thick, green!40!black]
  plot[domain=1.5:2, samples=300] (\x,{1.7635693359375*\x-1.96900390625});
\node[green!75!black] at (-3,1.5) {$\varpi(\cdot,q)$};

\end{tikzpicture}
\end{center}

By applying Lemma \ref{lem:comparison gene} to the set $W=\left\{ \varpi(\cdot,q) =\kappa_3\right\}\cup \left\{ \varpi(\cdot,q) =\ell_{{q}} +\kappa_4\right\}$, we find that  $u(z_q)\leq \varpi(z_q,q) $ for some $z_q \in \left\{ \varpi(\cdot,q) =\ell_{{q}} +\kappa_4\right\}$. Thus, \eqref{eq:dna2 obs ap 3qta} yields $|y_q - z_q| \leq C\varepsilon^{\frac{1}{n}}a$, and  \eqref{eq:dna2upper vatq} yields 
\[
u(z_q)\leq \varpi(z_q,q)\leq v_a(z_q) +C\varepsilon^{\frac{2}{n}}a^2.
\]
Therefore, for sufficiently small $\varepsilon$, by considering all $y_q\in \partial B_{(1+\rho)a} (0)$, we have
\[
(1+\rho-C\varepsilon^{\frac{1}{n}} )B_{a}(0)  \subset
 \operatorname{conv}\left\{ \bigcup_{y_q \in \partial B_{(1+\rho)a}(0)} \{z_q\}\right\}
\subset \left\{ x:\;u (x)\leq  b_1 \right\},
\]
where 
\[
b_1=\sup_{y\in (1+\rho +C\varepsilon^{\frac{1}{n}})B_{a}(0)} v_a(y) +C\varepsilon^{\frac{2}{n}}a^2  \overset{\eqref{eq:dna2vaaround0}}{\leq} W_{a}^*((1+\rho +C\varepsilon^{\frac{1}{n}})ae_n)+d_{n,0}a^2+C\varepsilon^{\frac{2}{n}}a^2.
\] 
Note that we only need to consider $\varepsilon\le c\rho^n$, since otherwise, the estimate \eqref{eq:stabilitynegative} automatically holds. Taking $\sigma =  (1+\rho-C\varepsilon^{\frac{1}{n}} ) a$, this yields for a larger constant $C$ that
\[
u(x) \leq  b_1 \leq   W_{a}^*(\sigma e_n)+d_{n,0}a^2 +C\varepsilon^{\frac{1}{n}}a^2 \quad \text{on } \partial B_{\sigma}(0).
\] 

Let us denote 
\[
-\mu(B_{\sigma }(0))=(1-\kappa)^n \omega_na^n
\]
for some $\kappa\ge 0$, and let
\[
\varpi(x)=\left(1-Ca^{\alpha} \right) \left[W_{(1-\kappa)a}^*(x) -W_{(1-\kappa)a}^*(\sigma e_n)\right]+W_a^*(\sigma e_n)+d_{n,0}a^2 +C\varepsilon^{\frac{1}{n}}a^2 .
\]
Then $u< \varpi$ on $\partial B_{\sigma }(0)$. 
Suppose $u(x_0)>\varpi(0)$, then $u(x) \geq u(x_0)>\varpi(0)$ for all $x$, which yields that
\[
\left\{ W_{(1-\kappa)a}^* =0 \right\}=\{\varpi=\varpi(0)\} \subset E:= \left\{ x \in B_{\sigma}(0):\; u>\varpi \right\}\subset\subset  B_{\sigma}(0).
\]
Since $-\mu(B_{\sigma }(0))=(1-\kappa)^n \omega_na^n$, the first inclusion implies 
\[
\M u(E) = \M \varphi(E) + \mu(E) \ge \M \varphi(E) -(1-\kappa)^n \omega_na^n> \M \varpi(E),
\] 
while the second inclusion yields $\M u(E) \leq \M \varpi(E)$. This is impossible. This yields
\[
\begin{split}
u(x_0) \leq \varpi(0) &= -\left(1-Ca^{\alpha} \right)   W_{(1-\kappa)a}^*(\sigma e_n) +W_a^*(\sigma e_n)+d_{n,0}a^2 +C\varepsilon^{\frac{1}{n}}a^2 \\
&\le (C a^\alpha-c\kappa) a^2 +d_{n,0}a^2 +C\varepsilon^{\frac{1}{n}}a^2.
\end{split}
\]
Combined with
\[
u(x_0) =  \varphi(x_0) +\left(1-{\varepsilon_1}\right)d_{n,0}a^2  \geq  \left(1-{\varepsilon}\right)d_{n,0}a^2-Ca^{2+\beta} ,
\]
we now derive that
\[ 
\kappa \lesssim   \varepsilon^{\frac{1}{n}}.
\] 

Finally, since $\varphi \in \C_+^{2,\alpha}$, we have
\[
\begin{split}
\varpi (0)\geq u(x_0) &\geq  \varphi(x_0)+\left(1-{\varepsilon_1}\right)d_{n,0}a^2 \\
&\geq \frac{1}{2}|x_0|^2 -C|x_0|^{2+\alpha}+\left(1-{\varepsilon}\right)d_{n,0}a^2 -Ca^{2+\alpha} .
\end{split}
\]
We have $|x_0| \leq C \varepsilon^{\frac{1}{2n}}a$, and thus $B_\sigma(0)\subset B_{1+2\rho}(x_0)$. Therefore,
\[
-\mu(B_{1+2\rho}(x_0))\ge -\mu(B_{\sigma }(0))=(1-\kappa)^n \omega_na^n\ge (1-C\varepsilon^{\frac{1}{n}})^n \omega_na^n\ge (1-C\varepsilon^{\frac{1}{n}}) \omega_na^n.
\]
Replacing $\rho$ with $\rho/2$ and noting that $|\det D^2 \varphi(x_0) - 1| \le C a^{\alpha}$, we establish \eqref{eq:stabilitynegative} in Theorem \ref{thm:rigidity} for the non-positive case.

This concludes the proof of Theorem \ref{thm:rigidity}.
\end{proof}

\appendix
\counterwithin*{equation}{section}
\renewcommand\theequation{\thesection\arabic{equation}}

\section{Boundary $C^{2,\alpha}$ regularity under interior perturbation}\label{app:boundary regularity}

In this section, under the assumptions that $\Omega_1 \subset \subset \Omega$ is a convex subdomain and the perturbed measure $\mu=\M u -\M \varphi$ satisfies 
\[
\operatorname{supp}  \mu \subset\subset \Omega_1 \subset \subset \Omega,
\] 
we establish several boundary estimates for the difference $u - \varphi$.

\begin{Lemma}\label{lem:global lipschitz 1}
Suppose $u \in \D_{a,\varphi}$, $\Omega_1 \subset \subset \Omega$ is a convex subdomain and $\operatorname{supp}  \mu \subset\subset \Omega_1 \subset \subset \Omega$. Then we have
\[ 
|u(x)-\varphi(x)| \leq  Ca\operatorname{dist}(x,\partial \Omega) 
\quad \text{in } \Omega,\]
where $C$ is a positive constant depending only on $n$, $\operatorname{diam}(\Omega)$ and $\operatorname{dist}(\Om_1,\partial\Om)$. 
\end{Lemma}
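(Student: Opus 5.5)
We aim to show that, away from the support of $\mu$, the difference $u-\varphi$ decays linearly toward $\partial\Omega$, with slope proportional to $a$. By the reduction to one-signed measures in Section~\ref{sec:signed}, we may treat $u_+\le u\le u_-$ separately. Here $\M u_\pm=\M\varphi\pm\mu_\pm$, both measures are supported in $\Omega_1$, and each has mass at most $\omega_n a^n$. It therefore suffices to prove the bound for $\mu\ge0$ (so $u\le\varphi$) and for $\mu\le0$ (so $u\ge\varphi$).

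\textbf{Interior bound.} First, Theorem~\ref{thm:abp 1/n}, in the form of the Alexandrov estimate \eqref{eq:abp simplify}, gives $|u-\varphi|\le \operatorname{diam}(\Omega)\,a$ in $\Omega$. On $\overline{\Omega_1}$ we have $\operatorname{dist}(x,\partial\Omega)\ge \operatorname{dist}(\Omega_1,\partial\Omega)=:d_1$. Hence $|u-\varphi|\le (\operatorname{diam}(\Omega)/d_1)\,a\,\operatorname{dist}(x,\partial\Omega)$ there, and the problem reduces to the annular region $\Omega\setminus\Omega_1$, on which $\M u=\M\varphi$.

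\textbf{Case $\mu\le0$ (so $u\ge\varphi$).} This is the easy direction and uses only convexity. Fix $x\in\Omega\setminus\Omega_1$ and let $z\in\partial\Omega$ be the boundary point nearest to $x$. Take the ray from a point $y\in\Omega_1$ through $x$; it exits $\Omega$ at some $z'$. Convexity of $u$ along the segment $[y,z']$, together with $u(z')=\varphi(z')$ and $u(y)\le\varphi(y)+Ca$, should bound $u(x)$ from above by the chord value. The difficulty is that $\varphi$ is not linear along this chord, so a direct chord comparison of $u$ against $\varphi$ fails.

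To avoid this, I would instead compare $u$ with the function $\varphi+\psi$ on $\Omega\setminus\overline{\Omega_1}$, where $\psi\ge0$ is convex, vanishes on $\partial\Omega$, satisfies $\psi\ge Ca$ on $\partial\Omega_1$, and obeys $\psi\le Ca\operatorname{dist}(\cdot,\partial\Omega)$. A natural choice is $\psi=Ca\,(\text{gauge-type convex function of }\Omega)$: for instance $\psi(x)=-Ca\,g(x)$ with $g$ a concave function vanishing on $\partial\Omega$ that is Lipschitz with $g\approx-\operatorname{dist}$ near $\partial\Omega$. On reflection, the convenient barrier is actually $\varphi+Ca\,\ell$-type. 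The cleanest route is to invoke the comparison principle: $u$ is a solution of $\M u=\M\varphi$ there, so any \emph{supersolution} lying above $u$ on the boundary of the annulus works. The function $\varphi-a C g$, with $g\le0$ convex, $g=0$ on $\partial\Omega$ and $g\le -1$ on $\Omega_1$ (for example $g$ equal to $\operatorname{diam}(\Omega)/d_1$ times the cone function $w(\cdot,y_0)$ from \eqref{eq:cone function}), is generally not a supersolution. So instead I use the extremal characterization: by Theorem~\ref{thm:abp extremal}, $u\le v_a(\cdot,p)$ for a suitable obstacle solution, which is not immediately helpful either.

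\textbf{Case $\mu\ge0$ (so $u\le\varphi$).} Here a subsolution barrier is natural. Since $\M(\varphi+\tau)\ge\M\varphi$ for convex $\tau$, the function $\varphi+Ca\,w$, with $w$ a convex function vanishing on $\partial\Omega$, is a subsolution. Taking $w=\sum$ or $\max$ of affine functions $-\operatorname{dist}$ to supporting hyperplanes, normalized so that $w\le -\operatorname{diam}(\Omega)/d_1$ on $\Omega_1$, gives $\varphi+Caw\le u$ on $\partial\Omega_1\cup\partial\Omega$. Comparison in the annulus then yields $u\ge\varphi-Ca\,\operatorname{dist}$, since a convex $w$ vanishing on $\partial\Omega$ is bounded below by $-C\operatorname{dist}(x,\partial\Omega)$ (Lipschitz bound from its convexity on the bounded domain). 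This case is routine.

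\textbf{Main obstacle.} The supersolution case $u\ge\varphi$ is the main obstacle. For this I would exploit the symmetry noted after \eqref{eq:abp class Da}: $u\in\D_{a,\varphi}$ implies $\varphi\in\D_{a,u}$. We then have $\M\varphi=\M u+|\mu|$ with $|\mu|\ge0$ supported in $\Omega_1$. Applying the $\mu\ge0$ argument with the roles of $u$ and $\varphi$ exchanged, using the subsolution $u+Caw$, gives $\varphi\ge u-Ca\operatorname{dist}$. The subtle point to check is that this argument never required any regularity of the reference function beyond convexity, and indeed it does not: the constants depend only on $n$, $\operatorname{diam}(\Omega)$ and $d_1$.
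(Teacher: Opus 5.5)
Your final argument is correct and is essentially the paper's proof. On $\Omega\setminus\overline{\Omega_1}$ the convex function $\varphi-(\operatorname{diam}(\Omega)/d_1)\,a\,\operatorname{dist}(\cdot,\partial\Omega)$ is a subsolution. It lies below $u$ on $\partial\Omega\cup\partial\Omega_1$ by the Alexandrov bound, so comparison gives the lower bound. The upper bound then follows by exchanging the roles of $u$ and $\varphi$, exactly as the paper does. Your middle paragraph on the case $\mu\le 0$ consists of abandoned attempts and can simply be dropped.

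One correction is needed. Your parenthetical claim that every convex $w$ vanishing on $\partial\Omega$ satisfies $w\ge -C\operatorname{dist}(\cdot,\partial\Omega)$ is false; for example, $w(x)=-\sqrt{1-|x|^2}$ on $B_1$ is a counterexample. Also, a sum of $-\operatorname{dist}(\cdot,H_i)$ over supporting hyperplanes does not vanish on $\partial\Omega$. Instead, take $w$ to be a multiple of $-\operatorname{dist}(\cdot,\partial\Omega)=\sup_H\bigl(-\operatorname{dist}(\cdot,H)\bigr)$ itself. This function is convex because $\Omega$ is convex, and with it the bound you need holds trivially, as in the paper.
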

\begin{proof} 
Due to the symmetry between $u$ and $\varphi$, it suffices to prove that
$u(x) \geq \varphi (x) -Ca\operatorname{dist}(x,\partial \Omega)$.
By the Jordan decomposition theorem, we may assume without loss of generality that $\mu$ is nonnegative, which implies $u \leq \varphi$.
An application of \eqref{eq:alexandrov estimate} then yields
\[
\varphi(y)-u(y)\leq \varphi(y)-u_a(y ,y )\leq   C\operatorname{dist}(y,\partial \Omega) a, \quad \forall\  y \in \Omega_1,
\]
where $C$ is a positive constant depending only on $n$, $\operatorname{diam}(\Omega)$ and $\operatorname{dist}(\Om_1,\partial\Om)$. Consequently,
\[ 
\varphi(x)-C a\operatorname{dist}(x,\partial \Omega) \leq   u(x) \leq \varphi(x)\quad \text{on } \partial \Omega \cup \partial \Omega_1.
\]
Since $-\operatorname{dist}(x,\partial \Omega)$ is convex, it follows from the comparison principle that
\[ 
\varphi(x)-C a\operatorname{dist}(x,\partial \Omega) \leq   u(x) \leq \varphi(x) \quad \text{on } \Omega \setminus \Omega_1 . 
\]
This concludes the proof.   
\end{proof}

In \cite{trudinger2008boundary}, Trudinger and Wang investigated the Dirichlet problem for Monge-Amp\`ere equations:
\[
\det D^2 u = f \quad \text{in } \Omega, \quad u = \varphi \quad \text{on } \partial\Omega.
\]
They established global Schauder estimates, proving $u \in C^{2,\alpha}(\overline{\Omega})$ when $\partial\Omega, \varphi \in C^3$ and $\partial\Omega$ is uniformly convex. The $C^3$ regularity assumptions are sharp,  through counterexamples in Wang \cite{wang1996regularity} when any $C^3$ condition is relaxed to $C^{2,1}$. 
Savin \cite{savin2013pointwise} obtained a pointwise Schauder estimate at boundary points $x_0$ under the assumptions that: $\partial\Omega, \varphi \in C^{2,\alpha}$, and the quadratic separation condition 
\begin{equation}\label{eq:sqc}
\varphi (x)  \geq \varphi(x_0)+ p \cdot \left(x-x_0\right)+ \kappa |x-x_0|^2, \quad \forall\  x \in B_c(x_0) \cap \partial \Omega 
\end{equation}
for some $p \in \partial \varphi(x_0)$, $\kappa, c > 0$. Notably, condition \eqref{eq:sqc} is satisfied when either $\partial \Omega , \varphi \in C^{3}$ and $\partial \Omega$ is uniformly convex, or  $\partial \Omega   \in C^{2,\alpha}$ is uniformly convex and  $\varphi\equiv 0$ on $\partial \Omega$.
In Lemma \ref{lem:dirichlet problem} below, we will see that these $C^{2,\alpha}$ boundary estimates remain valid under interior perturbations of the Monge-Amp\`ere measure, including both positive and sufficiently small negative variations. 

In the remainder of this section, we adopt the following assumptions:

\begin{Assumption*}
Let $\Omega \in C^{2,\alpha}$ be a bounded convex domain  and let $\Omega_1 \subset \subset \Omega$ be convex. 
Let $\varphi \in C(\overline{\Omega}) \cap \C_+^{2,\alpha}(\overline{\Omega\setminus\Omega_1})$ be a convex function. 
Let $u \in C(\overline{\Omega})$ be a convex function satisfying:
\[
u = \varphi \quad\text{on } \partial\Omega \quad \text{and} \quad \operatorname{supp}\mu \subset\subset \Omega_1,
\]
where $\mu = \M u - \M \varphi$. 
We consider the Jordan decomposition $\mu = \mu_+ - \mu_-$ into non-negative measures $\mu_+$ and $\mu_-$.
\end{Assumption*}

\begin{Lemma}\label{lem:dirichlet problem} 
Under assumption (H), the function $u$ is strictly convex in ${\Omega}\setminus\overline{\Omega}_1$. Moreover, there exist positive constants $c_1$ and $C_1$ such that if 
\[
\mu_-(\Omega) \leq c_1,
\]
then for any open set $\Omega_2$ satisfying $\Omega_1 \subset\subset \Omega_2 \subset \Omega$, we have
\[   
\left\| D^2 u\right\|_{C^{\alpha} (\overline{\Omega\setminus \Omega_2})}+\left\|(D^2 u)^{-1}\right\|_{L^{\infty} (\Omega\setminus \Omega_2)}\leq C_1,
\]
where $c_1$ depends only on $n$, $\alpha$,
$\operatorname{dist}(\Om_1,\partial\Om)$,  $\operatorname{diam}(\Omega)$,
$\left\|\partial\Omega\right\|_{C^{2,\alpha}}$, 
$\left\| D^2 \varphi\right\|_{C^{\alpha} (\overline{\Omega\setminus \Omega_1})}$ and $\left\|(D^2 \varphi)^{-1}\right\|_{L^{\infty} (\Omega\setminus \Omega_1)}$, 
and $C_1$ depends only on $n$, $\alpha$,
$|\mu|(\Omega) $,
$\operatorname{dist}(\Om_1,\partial\Om_2)$, $\operatorname{dist}(\Om_2,\partial\Om)$, $\operatorname{diam}(\Omega)$,
$\left\|\partial\Omega\right\|_{C^{2,\alpha}}$, $\left\|(D^2 \varphi)^{-1}\right\|_{L^{\infty} (\overline{\Omega\setminus \Omega_1})}$ and $\left\| D^2 \varphi\right\|_{C^{\alpha} (\overline{\Omega\setminus \Omega_1})}$.
\end{Lemma}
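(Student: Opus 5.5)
The plan is to prove the strict convexity first, and then to obtain the $C^{2,\alpha}$ bounds near $\partial\Omega$ by a compactness argument combined with Savin's pointwise boundary Schauder estimate \cite{savin2013pointwise}.

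\textbf{Strict convexity.} Suppose $u$ fails to be strictly convex at some $x\in\Omega\setminus\overline{\Omega}_1$. Since $\det D^2u=\det D^2\varphi\in[\lambda,\Lambda]$ in $\Omega\setminus\overline{\Omega}_1$, Caffarelli's localization theorem \cite{caffarelli1990ilocalization} says the contact set $\{u=\ell\}$ of a supporting plane has no extremal points in the region where the density is bounded. Hence this set must run either to $\partial\Omega$ or into $\overline{\Omega}_1$. If it reaches $\partial\Omega$, then $\varphi=u$ is affine along a boundary segment, or along a segment ending at a boundary point. This contradicts the quadratic growth of $\varphi\in\C_+^{2,\alpha}(\overline{\Omega\setminus\Omega_1})$ together with the convexity of $\Omega$; one can argue as in Caffarelli's boundary analysis, using $u=\varphi$ uniformly convex on $\partial\Omega$. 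If instead it only hits $\overline{\Omega}_1$, take the convex set $\{u=\ell\}$. Its extremal points lie in $\overline{\Omega}_1\cup\partial\Omega$, so its intersection with $\Omega\setminus\overline\Omega_1$ is a piece of a convex set that is a cone-like region over $\overline\Omega_1$. Extending the segment beyond $x$ away from $\Omega_1$ again forces it to reach $\partial\Omega$, which is the contradiction above. The clean route is: a segment through $x$ in $\{u=\ell\}$ has endpoints that are extremal, hence in $\overline{\Omega}_1\cup\partial\Omega$. Choosing a direction leaving $\Omega_1$, using the convexity of $\Omega_1$ and a supporting half-space separating $x$ from $\Omega_1$, gives an extremal point outside $\overline\Omega_1$, hence on $\partial\Omega$.

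\textbf{Boundary regularity.} Quadratic separation \eqref{eq:sqc} holds for the boundary data $u=\varphi$ with $p=D\varphi(x_0)$, uniformly in $x_0\in\partial\Omega$, because $\varphi$ is uniformly convex in a neighborhood of $\partial\Omega$. Savin's pointwise result \cite{savin2013pointwise} requires the equation $\det D^2u=f$ with $f\in C^\alpha$ near $x_0$, the $C^{2,\alpha}$ regularity of $\partial\Omega$ and $\varphi$, and separation. Both $f=\det D^2\varphi$ and the data are controlled in $\Omega\setminus\Omega_1$. Its output gives $C^{2,\alpha}$ and uniform convexity of $u$ in a boundary strip, with constants depending on the listed quantities and on a bound for $\|u\|_{L^\infty}$ and the modulus of continuity. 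The modulus of continuity is controlled through Lemma \ref{lem:finitemodulus} by $\mu_+(\Omega)\le|\mu|(\Omega)$.

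\textbf{Interior strip and the role of $c_1$.} In $\Omega_2\setminus\Omega_1$ I would use interior $C^{2,\alpha}$ estimates (Caffarelli), which need quantitative strict convexity there: sections of height $h_0$ compactly contained in $\Omega\setminus\overline{\Omega}_1$. This is where $\mu_-(\Omega)\le c_1$ enters. When $\mu_-=0$ we have $u\le\varphi$, and positive mass only pushes $u$ down. The danger is a large negative mass, which can create a flat region through the obstacle-type mechanism and destroy uniform estimates. So I would argue by compactness: take a sequence with $\mu_-\to0$ and $|\mu|$ bounded but uniform strict convexity failing. A limit $u_\infty$ satisfies the same hypotheses with $\mu_-=0$. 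By the first part, $u_\infty$ is strictly convex in $\Omega\setminus\overline\Omega_1$. Strict convexity is stable in the quantitative sense, since sections converge, so this gives a contradiction. The limit has the same structural bounds because the compactness family (bounded $|\mu|$, fixed boundary data) is closed under uniform convergence.

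\textbf{Main obstacle.} The hardest step is making the strict convexity and the section control uniform, so that the constants depend only on $|\mu|(\Omega)$, the distances, and the $\varphi$ norms. The same compactness argument is applied at the boundary, using Savin's estimate, and in the interior. After that, the $(D^2u)^{-1}$ and $C^\alpha$ bounds follow from the Schauder theory.
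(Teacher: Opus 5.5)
Your strict convexity step matches the paper's argument: Caffarelli's localization, exclusion of $\partial\Omega$ via \cite{caffarelli1993note}, then convexity of $\Omega_1$. The boundary regularity step, however, has a genuine gap.

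Savin's estimate \cite{savin2013pointwise} requires the boundary values to separate quadratically from the \emph{solution's} tangent plane at $x_0$. That plane is $\varphi(x_0)+\nabla u(x_0)\cdot(x-x_0)$, where $\nabla u(x_0)$ is the subgradient of $u$ at $x_0$ with minimal normal component. It is not the tangent plane of $\varphi$, and the paper's proof checks separation for the correct plane. The two gradients agree tangentially, but $\nabla u(x_0)-D\varphi(x_0)=t\nu$. On $\partial\Omega$ one has $\nu\cdot(x-x_0)=-\tfrac12\mathrm{II}_{x_0}(x-x_0,x-x_0)+o(|x-x_0|^2)$, so for $t<0$ the separation constant drops from $c$ to about $c-\tfrac{|t|}{2}\|\mathrm{II}_{x_0}\|$. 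A large negative mass pushes $u$ above $\varphi$ and makes $t$ very negative. This is exactly the mechanism in the remark after the lemma (based on \cite{wang1996regularity}), where boundary $C^{2,\alpha}$ fails. So your claim that Savin's theorem yields the boundary-strip estimate using only the data, $\|u\|_{L^\infty}$ and the modulus of continuity is false as stated.

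What is missing is a quantitative bound on $t$. Let $u_-$ solve $\M u_-=\M\varphi-\mu_-$ with $u_-=\varphi$ on $\partial\Omega$. Comparison and Lemma \ref{lem:global lipschitz 1} give
\[
\varphi-C|\mu|(\Omega)^{\frac1n}\operatorname{dist}(x,\partial\Omega)\le u\le u_-\le\varphi+C\mu_-(\Omega)^{\frac1n}\operatorname{dist}(x,\partial\Omega),
\]
hence $-C\mu_-(\Omega)^{1/n}\le t\le C|\mu|(\Omega)^{1/n}$. The assumption $\mu_-(\Omega)\le c_1$ then keeps the lower separation constant at least $c/2$, while the upper one depends on $|\mu|(\Omega)$. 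This is why $c_1$ is independent of $|\mu|(\Omega)$ and $C_1$ is not.

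Conversely, the interior strip, where you spend $c_1$, does not need it. Your first step already shows that every $u$ satisfying (H) is strictly convex in $\Omega\setminus\overline\Omega_1$ for any $\mu_-$; flat pieces created by negative mass stay inside $\overline\Omega_1$. The uniform lower bound on section heights there therefore follows from two facts, with no smallness assumption:
\begin{itemize}
\item compactness of the class with $|\mu|(\Omega)\le M$, using equicontinuity from Lemma \ref{lem:global lipschitz 1};
\item lower semicontinuity of the maximal admissible section height.
\end{itemize}
Moreover, your contradiction scheme with $\mu_-\to 0$ at a fixed bound $M$ would make $c_1$ depend on $M$, contrary to the statement.

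A smaller point concerns a contact segment that merely ends at a boundary point. There, quadratic growth of $\varphi$ gives no contradiction by itself: Pogorelov's $(1+x_n^2)|x'|^{2-2/n}$ has boundary values growing faster than quadratically at the ends of its flat segment. What \cite{caffarelli1993note} uses, as far as I recall, is $C^{1,\beta}$ regularity of the boundary values with $\beta>1-\frac2n$, which $C^{2,\alpha}$ data provide.
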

\begin{proof}  
From Caffarelli's work \cite{caffarelli1990ilocalization}, we know that the non-strictly convex set of $u$ in $\Omega\setminus \Omega_1$ consists of convex subsets $E$ emanating from $\partial{\Omega}_1 \cup \partial\Omega$. By \cite{caffarelli1993note}, the boundary assumptions prevent $E$ from reaching $\partial\Omega$, which forces $E \subset \overline{\Omega}_1$. Therefore, $u$ is strictly convex in $\Omega \setminus \overline{\Omega}_1$.

First, we verify that $u$ satisfies the quadratic separation condition \eqref{eq:sqc}. Let $u_{-}$ be the convex solution to
\[ 
\M   u_{-} =\M \varphi-\mu_- \quad \text{in } {\Omega}, \quad u_{-}  =\varphi\quad \text{on } \partial \Omega.
\] 
Applying the comparison principle together with Lemma \ref{lem:global lipschitz 1} yields
\[
\varphi (x) -C|\mu|(\Omega)^{\frac{1}{n}}\operatorname{dist}(x,\partial \Omega)\leq u(x) \leq u_{-}(x)\leq \varphi (x)+C\mu_-(\Omega)^{\frac{1}{n}}\operatorname{dist}(x,\partial \Omega).
\]  
Let $\nu$ denote the outer normal vector field on $\partial\Omega$. For any boundary point $x_0 \in \partial\Omega$, we define the canonical gradient $\nabla u(x_0)$ as the unique vector $p \in \partial u(x_0)$ satisfying $p \cdot \nu :=\inf \left\{ q\cdot \nu : q \in \partial u(x_0)  \right\}$.  
Then, $\nabla u(x_0) -\nabla \varphi (x_0)=t   \nu $ for some $t \in \R$, and the previous estimates imply
\[
-C|\mu|(\Omega)^{\frac{1}{n}} \leq (\nabla \varphi (x_0)- \nabla u(x_0)) \cdot \nu \leq C\mu_-(\Omega)^{\frac{1}{n}}.
\] 
Let $\mu_-(\Omega) \leq c_1$ be small,  by the $\C_+^{2,\alpha}$ regularity of $\varphi$ and the $C^{2,\alpha}$ regularity of $\partial\Omega$ at $x_0$, we establish that
\[
\begin{split}
\varphi (x)  
& \geq \varphi(x_0)+ \nabla \varphi(x_0) \cdot \left(x-x_0\right)+ c  |x-x_0|^2 \\ 
& \geq \varphi(x_0)+ \nabla u(x_0) \cdot \left(x-x_0\right)+ \frac{1}{2}c  |x-x_0|^2,
\end{split} 
\]
and
\[
\varphi (x)  \leq \varphi(x_0)+ \nabla u(x_0) \cdot \left(x-x_0\right)+ C_1 |x-x_0|^2 .
\]
  
Applying the pointwise Schauder estimates from \cite[Theorem 7.1]{savin2013pointwise} at boundary points, we obtain $\C^{2,\alpha}$ regularity for $u$ in $\overline\Omega\setminus\Omega_3$ for some $\Omega_3\subset\subset\Omega$ through a standard scaling argument. Moreover, these estimates in $\overline\Omega\setminus\Omega_3$ hold uniformly for all functions $u$ in the class 
\[ 
\C_{M}:= \left\{  u \in C(\overline{\Omega}) :\; u \text{ satisfies assumption (H)}, \quad |\mu|(\Omega) \leq M,\quad   \mu_{-}(\Omega) \leq c_1 \right\}.
\]  

Finally, since $u$ is strictly convex in $\Omega \setminus \overline{\Omega}_1$ and $\M u = \M \varphi$ in $\Omega \setminus \overline{\Omega}_1$ by assumption (H), one can establish the uniform interior $\C_+^{2,\alpha}$ regularity of $u \in \C_M$ in $\overline{\Omega_3}\setminus \Omega_2$ in a standard way, which is sketched as follows.  For any $x \in \overline{\Omega_3} \setminus \Omega_2$ and $u \in \C_M$, define
$
\bar{h}(u,x) := \sup \{ h : S_h(x_0) \subset \Omega \setminus \Omega_1 \} > 0,
$
where $S_h(x_0)$ denotes the section of $u$ at $x_0$ with height $h$.  
The quantity $\bar{h}$ is lower semi-continuous with respect to uniform convergence in both $u$ and $x$. Moreover, the class $\C_M$ is compact due to the uniform Lipschitz estimate. Using compactness arguments, we obtain a uniform positive lower bound for $\bar{h}$ on $\overline{\Omega_3} \setminus \Omega_2$.
Since $u$ is Lipschitz continuous, we have  $B_{c\bar{h}}(x) \subset S_{\bar{h}/2}(x)$ for some constant $c > 0$. Applying the classical regularity theory \cite{caffarelli1990ilocalization,caffarelli1990interiorw2p}, we deduce the $\C_+^{2,\alpha}$ estimate of $u$ in $B_{c\bar{h}}(x)$. A standard covering argument then yields the uniform $C^{2,\alpha}$ estimate in $\overline{\Omega_3 \setminus \Omega_2}$.
\end{proof}

\begin{Remark}
In Lemma \ref{lem:dirichlet problem}, the assumption regarding the smallness of the negative perturbation is necessary for the $C^{2,\alpha}$ regularity of $u$. This can be achieved by considering the function $u$ constructed in \cite[Example 2.1]{wang1996regularity} and resolving $\varphi$ by letting  $\M\varphi=\M u+\delta_{ce_2}$ with $ce_2$ in the interior of $\Omega$.
\end{Remark}

\begin{Lemma}\label{lem:div equation}
Suppose assumption (H) holds. 
Then for any $C^{1}$ domain $\Omega_2$ satisfying $\Omega_1\subset \subset  \Omega_2 \subset \Omega$, the divergence formula \eqref{eq:div identity smooth} still holds.
\end{Lemma}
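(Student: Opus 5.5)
The plan is to reduce the identity \eqref{eq:div identity smooth} on $\partial\Omega_2$ to the smooth case by approximation. Under (H), Lemma~\ref{lem:dirichlet problem} (applied after the reduction in Section~\ref{sec:signed}, or directly when $\mu_-(\Omega)$ is small) gives that $u$ and $\varphi$ are $\C_+^{2,\alpha}$ in a neighborhood of $\overline{\Omega\setminus\Omega_2}$. More precisely, it gives this on $\overline{\Omega\setminus\Omega_2'}$ for some $\Omega_1\subset\subset\Omega_2'\subset\subset\Omega_2$, since $\Omega_2\supset\supset\Omega_1$ and $u$ is strictly convex off $\overline{\Omega}_1$ with $\M u=\M\varphi$ there. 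Consequently the coefficients $A^{ij}=\int_0^1\operatorname{Cof}(tD^2u+(1-t)D^2\varphi)\,dt$ are $C^\alpha$ and uniformly elliptic on $\overline{\Omega\setminus\Omega_2'}$. Moreover, in that region $A^{ij}\partial_{ij}(u-\varphi)=\det D^2u-\det D^2\varphi=0$ holds pointwise.

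The key steps are as follows. First, show that the flux $\int_{\partial\Omega_3}A^{ij}\partial_i(u-\varphi)\nu_j\,dS$ is the same for every $C^1$ domain $\Omega_3$ with $\Omega_2'\subset\subset\Omega_3\subset\Omega$. This follows from the divergence theorem on $\Omega_3\setminus\overline{\Omega_3'}$, using that $\partial_j(A^{ij}\partial_i w)=A^{ij}\partial_{ij}w$ there. That identity holds because the cofactor matrix is divergence free ($\partial_j\operatorname{Cof}(D^2f)^{ij}=0$), which is valid for $C^{2,\alpha}$ functions in the distributional sense. To be safe, I would first mollify $u,\varphi$ locally on the annulus, or approximate them as in the smooth-approximation scheme at the start of Section~\ref{sec:alexandrov estimate affine}. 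Second, identify the common value with $\mu(\Omega)=\mu(\Omega_2)$. For this, take the approximants $u_\epsilon,\varphi_\epsilon\in\C_+^{3,\alpha}$ on $\Omega_\epsilon$ described there, for which \eqref{eq:div identity smooth} holds on $\partial\Omega_2$ with right side $\mu_\epsilon(\Omega_2)\to\mu(\Omega_2)$; here I choose $\mu_\epsilon$ supported in a fixed compact subset of $\Omega_1$, so that no mass crosses $\partial\Omega_2$. Then pass to the limit on the left side.

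The main obstacle is the passage to the limit in the boundary integral, which requires $D u_\epsilon\to Du$ and $D^2u_\epsilon\to D^2u$ uniformly near $\partial\Omega_2$. I would obtain this from the uniform estimates of Lemma~\ref{lem:dirichlet problem}. These apply to the whole approximating family, since it lies in the compact class $\C_M$ with the same structural constants, and Arzel\`a--Ascoli combined with uniqueness of the limit $u$ then gives $C^2$ convergence on $\overline{\Omega_3\setminus\Omega_2'}$. The left side therefore converges to $\int_{\partial\Omega_2}A^{ij}\partial_i(u-\varphi)\nu_j\,dS$, which yields \eqref{eq:div identity smooth}.

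If the boundary approximation $\Omega_\epsilon$ causes trouble, I would instead work only in the interior. In that approach I avoid approximating the boundary at all, because the flux through $\partial\Omega_2$ need only be compared with that through an interior $C^1$ surface. I would mollify $\M u$ only inside $\Omega_1$, solve on $\Omega$ with the same boundary data $u_\epsilon=\varphi$ on $\partial\Omega$, and rely on Lemma~\ref{lem:dirichlet problem} for uniform $C^{2,\alpha}$ control away from $\Omega_1$.
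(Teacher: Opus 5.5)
Your proposal is correct and follows essentially the paper's route: smooth approximations that keep the boundary data and the measure outside $\Omega_1$, uniform $C^{2,\alpha}$ control from Lemma~\ref{lem:dirichlet problem} near $\overline{\Omega\setminus\Omega_2}$, and passage to the limit in \eqref{eq:div identity smooth}, with your flux-independence step as a harmless refinement that lets you take the limit on interior surfaces only. One correction: the reduction of Section~\ref{sec:signed} cannot give regularity of $u$ itself, since it only yields inequalities for the different functions $u_\pm$. So near $\partial\Omega$ you genuinely need the smallness $\mu_-(\Omega)\le c_1$ of Lemma~\ref{lem:dirichlet problem}, which the paper also uses implicitly, whereas for $\Omega_2\subset\subset\Omega$ interior regularity from strict convexity already suffices.
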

\begin{proof}
The conclusion follows by constructing smooth approximations of  $u$ and $ \varphi$ that preserve both the boundary values and the behavior of the measure outside $\Omega_1$. Using Lemma \ref{lem:dirichlet problem}, such approximations imply the convergence in the $C^{2,\alpha}$ norm in a neighborhood of $\overline{\Omega\setminus \Omega_2}$. The desired result is then obtained by applying the identity \eqref{eq:div identity smooth} to these smooth approximations and passing to the limit.
\end{proof}

\begin{Lemma}\label{lem:com a G 02}
Suppose assumption (H) holds and $|\mu|(\Omega) \leq c$ is sufficiently small. Then for any open set $\Omega_2$ satisfying $\Omega_1 \subset \subset \Omega_2 \subset \subset \Omega$, we have 
\[
|u-\varphi|+|Du-D\varphi|+|D^2 u-D^2 \varphi | \leq C_2	|\mu|(\Omega)  \quad \text{in } \Omega \setminus \Omega_2,
\]
where all constants depend only on $n$, $\alpha$,
$\operatorname{dist}(\Om_1,\partial\Om_2)$, $\operatorname{dist}(\Om_2,\partial\Om)$, $\operatorname{diam}(\Omega)$,
$\left\|\partial\Omega\right\|_{C^{2,\alpha}}$,
$\left\| D^2 \varphi\right\|_{C^{\alpha} (\overline{\Omega\setminus \Omega_1})}$ and $\left\|(D^2 \varphi)^{-1}\right\|_{L^{\infty} (\Omega\setminus \Omega_1)}$.
\end{Lemma}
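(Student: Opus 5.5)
We bound $\varphi-u$ and $u-\varphi$ separately through the one-signed reductions. Set $u_+$ and $u_-$ to be the solutions of $\M u_\pm=\M\varphi\pm\mu_\pm$ with boundary data $\varphi$. By the comparison principle $u_+\le u\le u_-$, so it suffices to prove the claimed $C^2$-type bound for $u_\pm-\varphi$ and then for $u-\varphi$ itself.

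\textbf{Step 1: the $C^0$ bound.} By Lemma \ref{lem:global lipschitz 1} we only have $|u-\varphi|\le C|\mu|(\Omega)^{1/n}\operatorname{dist}(x,\partial\Omega)$, which is too weak. To reach a linear dependence, first take $|\mu|(\Omega)\le c$ small. Lemma \ref{lem:dirichlet problem} then gives uniform $\C_+^{2,\alpha}$ bounds for $u$ (and for $u_\pm$) on $\overline{\Omega\setminus\Omega_{2}'}$, where $\Omega_1\subset\subset\Omega_2'\subset\subset\Omega_2$ is intermediate. Consequently the operator $A^{ij}\partial_{ij}$ in \eqref{eq:div equation}, with $A^{ij}=\int_0^1\operatorname{Cof}(tD^2u+(1-t)D^2\varphi)\,dt$, is uniformly elliptic with $C^\alpha$ coefficients on $\Omega\setminus\Omega_2'$. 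Moreover $u-\varphi$ solves $A^{ij}(u-\varphi)_{ij}=0$ there and vanishes on $\partial\Omega$.

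We then adapt the proof of Lemma \ref{lem:divergence relation}. Assume $\mu\ge0$, so that $u\le\varphi$. By the Harnack inequality (now classical, since the equation is uniformly elliptic) on an annular neighborhood of a smooth convex $\partial\Omega_0$ lying between $\Omega_2'$ and $\Omega_2$, we get $\varphi-u\approx\kappa$ on $\partial\Omega_0$. The flux identity \eqref{eq:div identity smooth} (valid by Lemma \ref{lem:div equation}) gives $\int_{\partial\Omega_0}A^{ij}(u-\varphi)_i\nu_j=\mu(\Omega)$. Comparing with the $A$-harmonic function $w$ in $\Omega\setminus\Omega_0$ with data $0$ and $-c\kappa$, and with the convex barrier $c\kappa\psi$, we obtain $\kappa\le C\mu(\Omega)$. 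The maximum principle then yields $|u-\varphi|\le C|\mu|(\Omega)$ in $\Omega\setminus\Omega_0$. In the signed case we apply this to $u_\pm$ and use $u_+\le u\le u_-$, giving $|u-\varphi|\le C|\mu|(\Omega)$ on $\Omega\setminus\Omega_0$.

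\textbf{Step 2: the derivative bounds.} The function $w:=u-\varphi$ satisfies the linear equation $A^{ij}w_{ij}=0$ in $\Omega\setminus\overline{\Omega_0}$, with $w=0$ on the $C^{2,\alpha}$ boundary $\partial\Omega$. The coefficients are uniformly elliptic and have $C^\alpha$ norm bounded by Lemma \ref{lem:dirichlet problem}. Global Schauder estimates (interior and boundary, the latter near $\partial\Omega$) on the region between $\partial\Omega$ and a slightly larger set $\Omega_0'\subset\subset\Omega_2$ give
\[
\|w\|_{C^{2,\alpha}(\overline{\Omega\setminus\Omega_2})}\le C\|w\|_{L^\infty(\Omega\setminus\Omega_0)}\le C|\mu|(\Omega),
\]
which is the claim.

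\textbf{Main obstacle.} The delicate point is Step 1 in the signed case: it requires the uniform $C^{2,\alpha}$ control of $u_\pm$ and $u$ near $\partial\Omega$ with constants independent of $\mu$. This is exactly why the smallness $|\mu|(\Omega)\le c$ is needed, since Lemma \ref{lem:dirichlet problem} demands $\mu_-(\Omega)\le c_1$. The other point to check is that the Harnack and flux argument runs on a fixed annulus between $\Omega_1$ and $\Omega_2$ rather than on sections of $\varphi$. Because $\varphi$ is not assumed non-degenerate inside $\Omega_1$, the construction of $\Omega_0$ must stay in $\Omega\setminus\overline{\Omega_1}$, where $\varphi\in\C_+^{2,\alpha}$.
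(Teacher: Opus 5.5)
Your proof is correct and has the same skeleton as the paper's. You use Lemma \ref{lem:dirichlet problem} for uniform ellipticity near $\partial\Omega$, reduce to one-signed $\mu$, apply Harnack to get $\varphi-u\approx\kappa$ on an interior surface, use the flux identity to tie $\kappa$ to $\mu(\Omega)$, and finish with Schauder estimates for $u-\varphi$. The difference is how you bound the flux from below by $\kappa$.

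The paper works with $\varphi-u$ directly. Since the linearized equation is uniformly elliptic up to the $C^{2,\alpha}$ boundary, the boundary Lipschitz estimate and Hopf's lemma give $\varphi-u\approx\kappa\operatorname{dist}(x,\partial\Omega)$ near $\partial\Omega$. Hence $A^{ij}\partial_i(u-\varphi)\nu_j\approx\kappa$ on $\partial\Omega$, and the flux identity over $\partial\Omega$ gives the two-sided relation $\kappa\approx\mu(\Omega)$ at once.

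You instead reuse the auxiliary $A$-harmonic function $w$ and convex barrier $\psi$ from Lemma \ref{lem:divergence relation}. That device was built for the degenerate setting, where only the Caffarelli--Guti\'errez Harnack inequality is available and Hopf's lemma is not. It works here, but it is heavier than needed.

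One detail must change when you transplant it. In Lemma \ref{lem:divergence relation}, the flux of $\psi$ through $\partial\Omega_0$ is bounded below by integrating $A^{ij}\psi_{ij}$ over all of $\Omega_0$ using $\det D^2\varphi\ge\lambda$. Under (H), $\Omega_0\supset\Omega_1$, where $\varphi$ may degenerate and $A$ is defined only through approximation. You do not need that step. On $\partial\Omega_0=\{\psi=0\}$ one has $D\psi=|D\psi|\nu$ with $|D\psi|\gtrsim 1$ by convexity, and $A$ is uniformly elliptic there, so $A^{ij}\psi_i\nu_j\gtrsim 1$ pointwise. Alternatively, restrict the interior integral to $\Omega_0\setminus\overline{\Omega_1}$ and note the integrand is nonnegative elsewhere. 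Or apply Hopf's lemma to $w$ at $\partial\Omega$, which is essentially the paper's argument.
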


\begin{proof}
Let $\Omega_1 \subset \subset \Omega_3 \subset \subset \Omega_2$. By Lemma \ref{lem:dirichlet problem}, we obtain 
\[
\left\| D^2 \varphi\right\|_{C^{\alpha} (\overline{\Omega\setminus \Omega_3})}+\left\|(D^2 \varphi)^{-1}\right\|_{L^{\infty} (\Omega\setminus \Omega_3)} \leq C_2,
\]
which implies that the equation \eqref{eq:div equation} is uniformly elliptic in $\overline{\Omega \setminus\Omega_3}$. 
It suffices to show that 
\begin{equation}\label{eq:u-var an}
|\varphi(x) - u(x) |\leq C_2|\mu|(\Omega) \quad 
\text{in } \Omega \setminus \Omega_2,
\end{equation}
since the proof is then completed by applying standard elliptic regularity theory for the operator in \eqref{eq:div equation} to $(u - \varphi)/|\mu|(\Omega)$.

By the reduction in Section \ref{app:comparison reduction} regarding the sign of the measure, it suffices to consider the case where $\mu$ is either purely negative or purely positive. We present the proof of \eqref{eq:u-var an} for the case $\mu_- = 0$; the case $\mu_+ = 0$ follows analogously.

Since $\mu_-=0$, the comparison principle yields $\varphi \geq u$ in $\Omega$. 
Applying the classical Harnack inequality to \eqref{eq:div equation}, we may assume $\varphi - u \approx \kappa$ on $\partial\Omega_2$ for some  $\kappa \geq 0$. Through the standard boundary Lipschitz estimate and Hopf's lemma for uniformly elliptic equations, we derive the identity 
\[ 
\varphi(x)-u(x) \approx \kappa \operatorname{dist}\left(x,\partial\Omega\right) \quad \text{in } \Omega \setminus \Omega_{3}. 
\]
Combining this result with \eqref{eq:div identity smooth} shows that $\kappa \approx  \mu(\Omega)$, which establishes \eqref{eq:u-var an}. 
\end{proof}

\begin{proof}[Proof of Proposition \ref{prop:dirichlet problem main} ]
The proof follows by combining Lemmas \ref{lem:dirichlet problem}, \ref{lem:div equation} and \ref{lem:com a G 02}.
\end{proof}

\small

\bibliography{references}
%\begin{thebibliography}{99}

%\end{thebibliography}
\smallskip

\noindent T. Jin

\noindent Department of Mathematics, The Hong Kong University of Science and Technology\\
Clear Water Bay, Kowloon, Hong Kong\\
Email: \textsf{tianlingjin@ust.hk}

\medskip

\noindent X. Tu

\noindent Department of Mathematics, The Hong Kong University of Science and Technology\\
Clear Water Bay, Kowloon, Hong Kong\\[1mm]
Email:  \textsf{maxstu@ust.hk}

\medskip

\noindent J. Xiong

\noindent School of Mathematical Sciences, Laboratory of Mathematics and Complex Systems, MOE\\ Beijing Normal University, 
Beijing 100875, China\\
Email: \textsf{jx@bnu.edu.cn}

\end{document}